\DeclareMathAlphabet{\mathpzc}{OT1}{pzc}{m}{it}
\numberwithin{equation}{section}
\newcommand{\bC}{{\bf C}}
\newcommand{\bF}{{\bf F}}
\newcommand{\bP}{{\bf P}}
\newcommand{\bQ}{{\bf Q}}
\newcommand{\bR}{{\bf R}}
\newcommand{\bZ}{{\bf Z}}
\newcommand{\cC}{\mathcal{C}}
\newcommand{\cS}{\mathcal{S}}
\newcommand{\cT}{\mathcal{T}}
\newcommand{\sP}{\mathscr{P}}
\newcommand{\sS}{\mathscr{S}}
\newcommand{\fb}{{\mathfrak b}}
\newcommand{\ft}{{\mathfrak t}}
\newcommand{\fC}{{\mathfrak C}}
\newcommand{\fD}{{\mathfrak D}}
\newcommand{\fK}{{\mathfrak K}}
\newcommand{\Z}{\bZ}
\newcommand{\Q}{\bQ}
\newcommand{\R}{\bR}
\newcommand{\C}{\bC}
\newcommand{\su}{\mathfrak{su}}
\DeclareMathOperator{\ad}{ad}
\DeclareMathOperator{\rk}{rk}
\DeclareMathOperator{\ind}{ind}
\renewcommand{\det}{\operatorname{det}}
\renewcommand{\P}{\bP}
\renewcommand{\epsilon}{\varepsilon}
\def\({\mathopen{}\left(}
\def\){\right)\mathclose{}}
\def\<{\mathopen{}\left<}
\def\>{\right>\mathclose{}}
\definecolor{gold}{rgb}{0.85,.66,0}
\definecolor{cherry}{rgb}{0.9,.1,.2}
\definecolor{burgundy}{rgb}{0.8,.2,.2}
\definecolor{orangered}{rgb}{0.85,.3,0}
\definecolor{orange}{rgb}{0.85,.4,0}
\definecolor{olive}{rgb}{.45,.4,0}
\definecolor{lime}{rgb}{.6,.9,0}
\definecolor{green}{rgb}{.2,.7,0}
\definecolor{grey}{rgb}{.4,.4,.2}
\definecolor{brown}{rgb}{.4,.3,.1}
\def\makeautorefname#1#2{\AtBeginDocument{\expandafter\def\csname#1autorefname\endcsname{#2}}}
\newcommand{\mynewtheorem}[2]{
  \newaliascnt{#1}{equation}          
  \newtheorem{#1}[#1]{#2}
  \aliascntresetthe{#1}
  \makeautorefname{#1}{#2}
}
\numberwithin{substep}{step}
\numberwithin{subcase}{case}
\theoremstyle{remark}
\theoremstyle{definition}
\newtheorem*{convention*}{Convention}
\newtheorem*{conventions*}{Conventions}
\theoremstyle{introthm}
\newtheorem{introthm}{Theorem}
\theoremstyle{introcor}
\newtheorem{introcor}{Corollary}
\theoremstyle{introprop}
\newtheorem{introprop}{Proposition}
\theoremstyle{introquestion}
\newtheorem{introquestion}{Question}
\newcommand\hrI{{\widehat { I}}}
\newcommand\crI{{\widecheck { I}}}
\newcommand\brI{{\overline { I}}}
\newcommand\diamd{\blacklozenge}
\newcommand\cSO{\mathcal {SO}}
\title{Chern-Simons functional, singular instantons, and the four-dimensional clasp number}
\author{Aliakbar Daemi\thanks{The work of AD was supported by NSF Grant DMS-1812033 and NSF FRG Grant DMS-1952762.} \hspace{1cm} Christopher Scaduto\thanks{The work of CS was supported by NSF Grant DMS-1952762.}}
\date{}
\newcommand{\Addresses}{{
  \bigskip
  \footnotesize
  Aliakbar Daemi, \textsc{Department of Mathematics, Washington University in St. Louis, One Brookings drive, Room 207A,
  St. Louis, MO 63130}\par\nopagebreak
  \textit{E-mail address}: \texttt{adaemi@wustl.edu}
  \vspace{.2cm}

Christopher Scaduto, \textsc{Department of Mathematics, University of Miami, 1365 Memorial Dr 515, Coral Gables, FL 33124}\par\nopagebreak
  \textit{E-mail address}: \texttt{cscaduto@miami.edu}
}}
\begin{document}
\maketitle

\begin{abstract}
Kronheimer and Mrowka asked whether the difference between the four-dimensional clasp number and the slice genus can be arbitrarily large. This question is answered affirmatively by studying a knot invariant derived from equivariant singular instanton theory, and which is closely related to the Chern--Simons functional. This also answers a conjecture of Livingston about slicing numbers. Also studied is the singular instanton Fr\o yshov invariant of a knot. If defined with integer coefficients, this gives a lower bound for the unoriented slice genus, and is computed for quasi-alternating and torus knots. In contrast, for certain other coefficient rings, the invariant is identified with a multiple of the knot signature. This result is used to address a conjecture by Poudel and Saveliev about traceless $SU(2)$ representations of torus knots. Further, for a concordance between knots with non-zero signature, it is shown that there is a traceless representation of the concordance complement which restricts to non-trivial representations of the knot groups. Finally, some evidence towards an extension of the slice-ribbon conjecture to torus knots is provided.
\end{abstract}

%\newpage

\hypersetup{linkcolor=black}
\tableofcontents

%\newpage

%!TEX root = main.tex

\section{Introduction}

In previous work \cite{DS}, the authors introduced a framework for studying $S^1$-equivariant instanton Floer theoretic invariants for a knot $K$ in an integer homology 3-sphere $Y$. Morally the constructions are derived from Morse theory for the Chern--Simons functional on the space of singular $SU(2)$ connections framed at a basepoint on $K$. This space has an $S^1$-action, and the critical set is acted on freely except for a unique isolated fixed point. This critical set is naturally in bijection with {\it traceless} representations of $\pi_1(Y\setminus K)$ into $SU(2)$, i.e., $SU(2)$-representations of the knot group which send a distinguished meridian near the basepoint of $K$ to $\mathbf{i}\in SU(2)$.

The details of the constructions rely on the extensive work of Kronheimer and Mrowka, who developed the foundations of singular instanton Floer theory \cite{KM:YAFT, KM:unknot}. We show in \cite{DS} that many of the invariants defined by Kronheimer and Mrowka can be recovered from the equivariant framework.

In this paper we continue our study of the theory initiated in \cite{DS}, with an emphasis on topological applications. We carry out computations for several families of 2-bridge knots and torus knots. Using the structure of the Chern--Simons filtration and computations of singular Fr\o yshov invariants, we obtain information about the four-dimensional clasp number of knots, fundamental groups of concordance complements, and more.

\subsection*{The four-dimensional clasp number of a knot}

A {\em normally immersed surface} is a smoothly immersed surface in a manifold which has transverse double points. The {\em $4$-dimensional clasp number} $c_s(K)$ of a knot $K$ is the minimal number of double points realized by a normally immersed disk in $B^4$ with boundary $K$ \cite{shibuya}. In the literature $c_s(K)$ is also called the {\em 4-ball crossing number} \cite{owens-strle}. Variations of $c_s$ can be defined by keeping track of the signs of double points. Let $c_s^+(K)$ (resp. $c_s^-(K)$) denote the minimal number of {\em positive} (resp. {\em negative}) double points realized by a normally immersed disk in $B^4$ with boundary $K$. Then $ c_s(K)\geqslant c_s^+(K)+c_s^-(K)$. Recall that two knots $K$ and $K'$ in the 3-sphere are {\em concordant} if they cobound a properly smoothly embedded annulus in $[0,1]\times S^3$. Clearly $c_s(K)$ and $c_s^\pm(K)$ are concordance invariants. 

A standard construction allows us to remove a double point of a normally immersed surface at the expense of increasing the genus. We thus have
\begin{equation*}
	c_s(K) \geqslant g_s(K) \label{eq:claspgenus}
\end{equation*}
where $g_s(K)$ is the slice genus of $K$, the minimal genus of a smoothly embedded oriented surface in the 4-ball which has boundary $K$. 
Kronheimer and Mrowka asked \cite{km-barnatan-v1,kronheimer-talk}:
\begin{introquestion}\label{KM-question}
	Can the difference $c_s(K)-g_s(K)$ be arbitrarily large?
\end{introquestion}

A more refined question is the following which is already implicit in \cite{km-concordance}.
\begin{introquestion}\label{KM-question-ref}
	Can the difference $c_s^+(K)-g_s(K)$ be arbitrarily large?
\end{introquestion}

Part of the subtlety of Question \ref{KM-question-ref} is that many of the known knot invariants which provide lower bounds for $g_s(K)$ do not give better bounds for $c_s^+(K)$. For example, the signature $\sigma$ of a knot $K$ satisfies
\begin{equation}\label{ineq-signature}
	\vert \frac{1}{2}\sigma(K)\vert \leq g_s(K),\hspace{1cm} - \frac{1}{2}\sigma(K)\leq c_s^+(K),\hspace{1cm} \frac{1}{2}\sigma(K)\leq c_s^-(K).
\end{equation}
Similar inequalities hold for Levine-Tristram signatures $\sigma_w(K)$, Heegaard Floer invariants $\tau(K)$ and $\Upsilon(K)$ \cite{Oz-Sz:tau,Ra:HFK,OSS:upsilon,JZ:clasp} and Rasmussen's invariant $s(K)$ \cite{Ras:s-inv,MMSW:s-exotic}. In \cite{DS}, the authors defined various concordance invariants. In this paper, we study the behavior of these invariants with respect to knot cobordisms and use them to prove the following theorem.
\begin{introthm}\label{thm:clasp}
	Let $K_1$ be the knot $7_4$, which is the $(15,4)$ 2-bridge knot, and let $K_n$ be the $n$-fold connected sum of $K_1$. Then we have:
	\[c_s^+(K_n)-g_s(K_n)\geqslant n/5.\]
\end{introthm}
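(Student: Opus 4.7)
The plan is to exploit a Fr\o yshov-type invariant $\Gamma(K)$ derived from the equivariant singular instanton complex, refined by the Chern--Simons filtration, building on the framework developed in the authors' earlier work. The first task is to establish a topological inequality of the form
\[ c_s^+(K) - g_s(K) \;\geq\; \Gamma(K). \]
To prove such an inequality, one starts with an optimal normally immersed disk $D \subset B^4$ bounding $K$, with $c_s^+(K)$ positive and $c_s^-(K)$ negative double points, and resolves the positive double points by the standard tubing construction to produce a properly embedded oriented surface $\Sigma \subset B^4$ with $\partial \Sigma = K$ and genus controlled by $c_s^+(K)$ and $g_s(K)$. Viewing $\Sigma$ as a cobordism from $K$ to the unknot and running singular instanton gauge theory along it produces a filtered chain map whose filtration shift is controlled by the topology of $\Sigma$, and whose existence forces the upper bound on $\Gamma$ stated above. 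The asymmetry between positive and negative double points should enter because a positive double point, after tubing, contributes a handle oriented consistently with the rest of $\Sigma$, whereas a negative one does not; this distinction is invisible to the slice genus but detectable via the Chern--Simons filtration.

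Next, I would compute $\Gamma(7_4) \geq 1/5$. Since $7_4$ is the $(15,4)$ two-bridge knot, its traceless $SU(2)$-character variety is elementary: irreducible characters correspond to roots of a small-degree Riley-type polynomial, and Chern--Simons values at the resulting flat singular connections can be evaluated via the Kirk--Klassen holonomy formula for two-bridge knots. The numerical task is to isolate a critical point whose Chern--Simons value, properly normalized, yields $\Gamma(7_4) \geq 1/5$, and to verify that this critical point contributes nontrivially (i.e., survives the relevant differentials in the equivariant complex).

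For the connected sum $K_n$, one invokes the behavior of the equivariant singular instanton complex under connected sum: critical points of $K_n$ decompose as ordered tuples of critical points of the summands, with Chern--Simons values adding. A suitably phrased additivity or subadditivity statement then yields $\Gamma(K_n) \geq n\, \Gamma(K_1) \geq n/5$, which combined with the first step delivers $c_s^+(K_n) - g_s(K_n) \geq n/5$.

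The main obstacle is the precise formulation and proof of the filtered cobordism inequality in the first step. It is standard to estimate filtration shifts under genuinely embedded cobordisms; the novelty here is in treating the signs of the double points separately and making the asymmetry between $c_s^+$ and $c_s^-$ quantitative. This requires care in orienting the singular cobordism, in tracking the action of instantons concentrating near the double points, and in ensuring that the resulting filtration-shift estimate depends on $c_s^+$ and $g_s$ but not on $c_s^-$, so that the inequality bounds the difference $c_s^+(K) - g_s(K)$ from below rather than $c_s(K) - g_s(K)$.
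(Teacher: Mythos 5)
Your high-level architecture is the paper's: a Chern--Simons-filtered concordance invariant, an inequality coming from immersed cobordisms, a two-bridge computation for $7_4$, and a connected-sum argument. But the mechanism you propose for the crucial step --- the inequality that sees $c_s^+$ but not $c_s^-$ --- does not work. Resolving only the positive double points by tubing does not produce an embedded surface (the negative double points are still there), and resolving all of them produces a surface whose genus, hence whose filtration estimate, depends on $c_s^-$ as well; moreover there is no orientation asymmetry in the tubing construction, since a handle added at a double point of either sign is compatible with the orientation of the disk. In the paper the immersed disk is never resolved: one blows up at the double points and runs the cobordism maps for the proper transform in $B^4\#\,s\,\overline{\C\P}^2$. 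The $\pm$ asymmetry is then a gauge-theoretic fact about the blown-up pair: a negative double point changes neither $\kappa_{\mathrm{min}}$ nor the count of minimal reducibles $\eta$, while each positive double point adds $1/4$ to $\kappa_{\mathrm{min}}$ (hence $1/2$ to the permitted filtration shift) and multiplies $\eta$ by $1-T^4$ --- equivalently, by Proposition \ref{homotopy-moves}, positive twist/finger moves scale the induced map by $T^2-T^{-2}$ while negative twists do not. This is why one must work over a ring with $T^4\neq 1$, and why only $s_+$ survives into the final bound $\Gamma_K\left(-\tfrac{1}{2}\sigma(K)\right)\leqslant \tfrac12 c_s^+(K)$ of Theorem \ref{thm:gammaintro}/Corollary \ref{Gamma-ineq-classical-cob}.

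Two further mismatches between your formulation and what the cobordism maps actually give. First, the inequality produced by gauge theory involves the signature, not the slice genus: $\Gamma_K$ is a function of an integer level, the index formula for the reducible forces the evaluation point $-\tfrac12\sigma(K)$, and $g_s$ enters the theorem only through the separate classical fact $g_s(n7_4)=n=-\tfrac12\sigma(n7_4)$ (Murasugi); a one-number inequality $c_s^+(K)-g_s(K)\geqslant\Gamma(K)$ is not what the filtered maps yield, and for knots with $g_s>-\sigma/2$ it would not follow from them. Second, the relevant computation for $7_4$ is $\Gamma_{7_4}(1)=3/5$ (the Chern--Simons value $9/15$ of a flat connection of the $(15,4)$ knot, Proposition \ref{tilde-complex-dbletwist}); the number $n/5$ appears only after $c_s^+(K_n)\geqslant 2\Gamma_{K_n}(n)=6n/5$ and subtracting $g_s=n$. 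Finally, connected-sum additivity is not simply ``Chern--Simons values add over tuples of critical points'': you need the tensor-product theorem for the filtered $\cS$-complexes (Theorem \ref{thm:connectedsum}), the local equivalence of $\widetilde C(7_4)$ with the one-generator complex $\widetilde \fC(3/5)$, and --- to see that the relevant cycle ``survives the differentials'' and pairs nontrivially with $\delta_1 v^{n-1}$ --- the identification $h_\sS(K)=-\tfrac12\sigma(K)$ of Theorem \ref{thm:htwistedsign} or an explicit two-bridge computation. As written, those are exactly the points your sketch leaves unproved.
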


As a corollary of this theorem, one obtains a positive answer to Question \ref{KM-question} by considering $n$-fold connected sums of $7_4$. Although $\sigma_w(K)$, $\tau(K)$, $\Upsilon(K)$ and $s(K)$ cannot be used to answer Question \ref{KM-question-ref} directly, they can be useful to answer Question \ref{KM-question}. The idea is to use the analogue of the second inequality in \eqref{ineq-signature} for one of these invariants to provide a lower bound for $c_s^+(K)$ and the analogue of the third inequality for another invariant to give a lower bound for $c_s^-(K)$. Then combining these lower bounds gives rise to a lower bound for $c_s(K)$ which potentially could be as large as $2g_s(K)$. In a recent article \cite{JZ:clasp}, Juh\'asz and Zemke answered Question \ref{KM-question} independently by applying this strategy using $\Upsilon(K)$ where $K$ is given by connected sums of certain torus knots. More recently, Feller and Park showed that Levine-Tristram signatures can be also used to answer Question \ref{KM-question} in the case that $K$ is again given by connected sums of torus knots \cite{PP:clasp}.

Theorem \ref{thm:clasp} also addresses a conjecture made in \cite{Liv:slice-num}. The {\emph{slicing number}} of a knot $K$ is the minimum number of crossing changes required to convert $K$ into a slice knot. Any sequence of $i$ crossing changes, which slices $K$, can be used to form an immersed disk in $B^4$ with boundary $K$ and $i$ double points. Thus the slicing number of $K$ is at least as large as $c_s(K)$. A refinement keeps track of positive and negative crossing changes. Let $I\subset \Z_{\geq 0}^2$ be the set of pairs $(m,n)$ such that a collection of $m$ positive crossing changes and $n$ negative crossing changes converts $K$ into a slice knot. Livingston defines
\[
  U_s(K):=\min_{(m,n)\in I}\max(m,n).
\]
Then $U_s$ is again bounded below by $g_s$ and Livingston asked if $U_s-g_s$ can be arbitrarily large \cite[Conjecture 5.4]{Liv:slice-num}. Since $c_s^+(K)\leq U_s(K)$, we have the following corollary.
\begin{introcor}
	The quantity $U_s(K_n)-g_s(K_n)$ is at least $n/5$, and hence the difference $U_s-g_s$ can be arbitrarily large.
\end{introcor}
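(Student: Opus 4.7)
The plan is to combine Theorem \ref{thm:clasp} with the inequality $c_s^+(K)\leqslant U_s(K)$ that was already asserted (but not proved in detail) in the paragraph introducing $U_s$. The proof should require essentially no new ingredients beyond the theorem and a brief topological argument about crossing changes. In particular, I would present this as a short corollary rather than a standalone theorem.

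First I would verify the inequality $c_s^+(K)\leqslant U_s(K)$. Fix a pair $(m,n)\in I$ realizing the minimum $U_s(K)=\max(m,n)$, so that some sequence of $m$ positive and $n$ negative crossing changes turns $K$ into a slice knot $K'$. Each crossing change is realized by a homotopy of $K$ in $[0,1]\times S^3$ that introduces a single transverse double point, whose sign matches the sign of the crossing change. Concatenating all $m+n$ such homotopies yields a normally immersed annulus between $K$ and $K'$ in $[0,1]\times S^3$ with exactly $m$ positive and $n$ negative double points. Capping off with a smooth slice disk for $K'$ produces a normally immersed disk in $B^4$ bounded by $K$ with $m$ positive and $n$ negative double points. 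Hence $c_s^+(K)\leqslant m\leqslant \max(m,n)=U_s(K)$.

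With this in hand, the corollary is immediate: applying Theorem \ref{thm:clasp} gives
\[
   U_s(K_n)-g_s(K_n)\;\geqslant\; c_s^+(K_n)-g_s(K_n)\;\geqslant\; n/5,
\]
so the difference $U_s-g_s$ is unbounded as $n\to\infty$, confirming the conjecture of Livingston.

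There is no genuine obstacle here; the only thing one has to take care with is the sign bookkeeping when trading a positive (resp. negative) crossing change for a positive (resp. negative) double point, so that the bound really is on $c_s^+$ (and symmetrically $c_s^-$) rather than just on $c_s$. The heavy lifting has already been done in Theorem \ref{thm:clasp}, where the singular instanton and Chern--Simons machinery produces the lower bound $n/5$ on $c_s^+(K_n)-g_s(K_n)$.
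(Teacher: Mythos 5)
Your proposal is correct and follows the same route as the paper: the paper likewise deduces the corollary immediately from Theorem \ref{thm:clasp} together with the inequality $c_s^+(K)\leqslant U_s(K)$, which it justifies by the same crossing-change-to-immersed-disk construction you spell out (and note that since $U_s$ takes $\max(m,n)$, the sign bookkeeping is harmless either way). No gaps; your write-up just makes explicit the brief topological step the paper leaves as a remark.
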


It is easy to construct a genus 1 slicing surface for $K_1$ (see Figure \ref{fig:dmn}) and thus a genus $n$ slicing surface for $K_n$. Murasugi's inequality $g_s(K)\geqslant |\frac{1}{2}\sigma(K)|$ and the computation  $\sigma(K_n)/2=-n$ imply $g_s(K_n)=n$. So the non-trivial part of Theorem \ref{thm:clasp} is to provide the lower bound $6n/5$ for $c_s^+(K_n)$. The main result of \cite{Liv:slice-num} states that the slicing number of $K_1$ is $2$ and $c_s^+(K_1)=2$ is computed in \cite{owens-strle}. It is suggested in \cite{km-barnatan-v1} that the knots $K_n$ may give a family for which $c_s-g_s$ becomes arbitrarily large. The tools developed in \cite{km-concordance} by Kronheimer and Mrowka could potentially detect a sequence of knots with $c_s-g_s$ being arbitrarily large. However, the particular invariants of that article do not reproduce the above result for $n7_4$.

We outline the proof of Theorem \ref{thm:clasp}. The main tool is the concordance invariant $\Gamma_K$ for a knot $K$, constructed in \cite{DS}. It is derived from the structure of the Chern--Simons filtration on the equivariant singular instanton complex for $K$, and is an analogue of the invariant $\Gamma_Y$ for homology 3-spheres defined in \cite{AD:CS-Th}. The invariant $\Gamma_K$ is a function from the integers with values in $\R_{\geqslant 0}\cup \infty$. We will show:

\begin{introthm}\label{thm:gammaintro} For $K\subset S^3$, if $-\sigma(K)/2\geqslant 0$, then we have the inequality:
\[
	\Gamma_K\left(-\frac{1}{2}\sigma(K)\right) \leqslant \frac{1}{2} c_s^+(K).
\]
\end{introthm}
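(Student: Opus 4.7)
The plan is to use an immersed disk realizing $c_s^+(K)$ to construct a filtered chain map between equivariant singular instanton complexes for $K$ and the unknot, and then extract the inequality from the resulting filtration estimate.

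First I would fix a normally immersed disk $D \subset B^4$ with $\partial D = K$ having $p^+ := c_s^+(K)$ positive double points and some (unconstrained) number $p^-$ of negative double points. Puncturing $B^4$ disjointly from $D$ presents $(B^4\setminus B^4_0, D)$ as a singular cobordism from $(S^3, K)$ to $(S^3, U)$, where $U$ is the unknot. The functoriality of equivariant singular instanton theory developed in \cite{DS}, together with a standard cone analysis near each double point, should assign to this cobordism a chain map between the equivariant complexes that shifts homological grading by $-\sigma(K)/2$ and raises the Chern--Simons filtration by at most $\tfrac{1}{2}p^+$. The grading shift is the standard identification of the signature with the reducible grading---negative double points contribute oppositely in sign and are absorbed into this shift---while the filtration bound comes from the fact that the minimal Chern--Simons action of a singular $SU(2)$ instanton on the cone neighborhood of a positive double point (a cone on a positive Hopf link of meridians, with traceless meridian holonomies) is $\tfrac{1}{2}$, whereas negative double points admit anti-self-dual models of vanishing action.

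Applying this chain map to the cycle representing the reducible critical point of Chern--Simons for $K$ (the unique $S^1$-fixed generator, living at grading $-\sigma(K)/2$) yields the reducible class for $U$ at grading $0$. Since the equivariant complex for $U$ is essentially trivial and $\Gamma_U \equiv 0$, this image is null-homologous at vanishing Chern--Simons filtration; pulling back a null-bounding through the chain map produces a bounding chain in the complex for $K$, at grading $-\sigma(K)/2$ and with Chern--Simons filtration at most $\tfrac{1}{2}p^+$. By the defining property of $\Gamma_K$ as the minimal such filtration level, this gives
\[
  \Gamma_K\!\left(-\tfrac{1}{2}\sigma(K)\right) \leqslant \tfrac{1}{2}p^+ = \tfrac{1}{2}c_s^+(K),
\]
as desired. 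The hypothesis $-\sigma(K)/2 \geqslant 0$ ensures the grading on the left lies in the range where the invariant is defined.

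The main obstacle is the precise identification of the grading shift and filtration bound for the chain map associated to this singular immersed cobordism. Specifically, one must justify that (i) negative double points contribute no Chern--Simons drop, which reduces to the construction of a small-action (indeed zero-action) anti-self-dual local model near each such point, and (ii) the grading shift is exactly $-\sigma(K)/2$ after accounting for both signs of double points, matching the reducible grading in the sense of \cite{DS}. Both are essentially local computations, but one also needs a careful bookkeeping of the $S^1$-equivariant structure and of the orientation conventions on the cobordism moduli spaces, to ensure that the pulled-back bounding chain lives at the advertised grading and filtration.
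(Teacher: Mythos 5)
Your overall strategy---take an immersed disk realizing $c_s^+(K)$, turn it into a filtered map of equivariant complexes with Chern--Simons shift $\tfrac12 s_+$, and compare with the unknot---is the same as the paper's, but the way you extract the inequality does not work. First, a small but telling slip: to get the unknot you must puncture $B^4$ at a point \emph{on} the interior of $D$ (away from the double points), not disjointly from $D$; this yields an immersed genus-zero cobordism $S\colon U_1\to K$ in $I\times S^3$. More seriously, the direction of your map is reversed. Since $\Gamma_K(k)$ for $k>0$ is defined by the \emph{existence} of a cycle $\alpha\in C_*(K;\Delta_\sS)$ with $d\alpha=0$, $\delta_1 v^{k-1}\alpha\neq 0$ and $\delta_1 v^{j}\alpha=0$ for $j<k-1$, of controlled instanton degree, an upper bound on $\Gamma_K$ requires a morphism \emph{into} $\widetilde C_*(K;\Delta_\sS)$ from a complex with known $\Gamma$ (here the unknot), so that such an $\alpha$ can be pushed forward. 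Your map goes out of $K$'s complex, and the step ``pulling back a null-bounding through the chain map'' is not a legitimate operation: chain maps do not allow pulling back chains, and in any case a bounding chain for the image of the reducible is not what the definition of $\Gamma_K$ at a positive argument asks for. In the paper this is handled by the higher-level morphism formalism (the cobordism $U_1\to K$ is negative definite of level $i=-\sigma(K)/2$), and the required cycle $\alpha'$ is built explicitly from the components $\lambda,\mu,\Delta_2$ of the level-$i$ morphism via the relations of type \eqref{deltavlambda}.

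Second, your local cone-model justification of the filtration bound bypasses the mechanism that actually makes the map exist. The paper blows up each double point: positive double points force $[\overline S]=[S]-2\sum e_i$, the minimal reducibles on $(\overline W,\overline S)$ have total energy $s_+/4$ (hence filtration level $2\kappa_{\min}=s_+/2$), and their monopole numbers give $\eta(\overline W,\overline S)=(1-T^4)^{s_+}\eta(W,S)$. The morphism therefore has a non-zero reducible component only because the theory is taken with local coefficients in which $T^4\neq 1$ (true for the default $R=\Z[T^{\pm1}]$ used for $\Gamma_K$); with untwisted coefficients the reducible count vanishes and no morphism is obtained, so no purely local zero-action/half-action ASD model near the double points can by itself deliver the claimed filtered map. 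Finally, the reducible generator sits in grading $0$ in this framework, not $-\sigma(K)/2$, so the grading bookkeeping you invoke also needs to be redone in terms of the index formula \eqref{eq:index} for the minimal reducibles.
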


\begin{proof}[Proof of Theorem \ref{thm:clasp}]
Let $K_n=n7_4$. In Proposition \ref{double-twist-Gamma} we compute:
\[
		\Gamma_{K_n}(i) = \begin{cases} 0, & i \leqslant 0\\ \frac{3}{5} i, & 1\leq i \leq n\\
		 \infty, & i\geqslant n+1 \end{cases}
\]
Theorem \ref{thm:gammaintro} implies $\Gamma_{K_n}(n)=3n/5 \leqslant c_s^+(K_n)/2$, and so $c_s^+(K_n)\geqslant 6 n /5$.  
\end{proof}

 We can prove similar results for other families of knots, such as connected sums of double twist knots $D_{m,n}$. The double twist knot $D_{m,n}$ is given in Figure \ref{fig:dmn}. It is a 2-bridge knot with $g_s(D_{m,n})=1=-\sigma(D_{m,n})/2$ and in fact $D_{2,2}=7_4$.

\begin{introthm}\label{thm:doubletwist}
	For $m,n\in \Z_{>0}$ let $D_{m,n}$ be the double twist knot. For $k\in \Z_{>0}$ we have:
\[
	c_s^+(kD_{m,n})-g_s(kD_{m,n}) \geqslant \frac{k(4mn-4m-4n+3)}{4mn-1}.
\]
Thus if $m,n\geqslant 2$ then $c_s^+(kD_{m,n})-g_s(kD_{m,n})\geqslant Ck$ for some $C>0$.
\end{introthm}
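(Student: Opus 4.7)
The plan is to carry out the same argument used to deduce Theorem \ref{thm:clasp} from Theorem \ref{thm:gammaintro} and Proposition \ref{double-twist-Gamma}, with $7_4 = D_{2,2}$ replaced by an arbitrary double twist knot $D_{m,n}$. Two ingredients are required: the value of $g_s(kD_{m,n})$, and a lower bound for $\Gamma_{kD_{m,n}}(k)$, where $k = -\sigma(kD_{m,n})/2$.

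The slice genus is straightforward. Figure \ref{fig:dmn} exhibits a genus $1$ slicing surface for $D_{m,n}$, so $g_s(kD_{m,n}) \leqslant k$. Since $\sigma(D_{m,n}) = -2$, Murasugi's inequality $g_s \geqslant |\sigma/2|$ gives the matching lower bound, and hence $g_s(kD_{m,n}) = k$.

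The main step is to generalise Proposition \ref{double-twist-Gamma} to all $D_{m,n}$, establishing
\[
  \Gamma_{D_{m,n}}(i) = \begin{cases} 0, & i \leqslant 0, \\[2pt] \dfrac{4mn-2m-2n+1}{4mn-1}, & i = 1, \\[4pt] \infty, & i \geqslant 2, \end{cases}
\]
and then using the behaviour of $\Gamma$ under connected sum proved in \cite{DS} to conclude
\[
  \Gamma_{kD_{m,n}}(k) \;\geqslant\; \frac{k(4mn-2m-2n+1)}{4mn-1}.
\]
Feeding this into Theorem \ref{thm:gammaintro} yields
\[
  c_s^+(kD_{m,n}) \;\geqslant\; 2\,\Gamma_{kD_{m,n}}(k) \;\geqslant\; \frac{2k(4mn-2m-2n+1)}{4mn-1},
\]
and the identity $2(4mn-2m-2n+1) - (4mn-1) = 4mn - 4m - 4n + 3$ together with $g_s(kD_{m,n}) = k$ gives the stated inequality. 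The asymptotic statement is then immediate: when $m,n \geqslant 2$ one has $4mn - 4m - 4n + 3 = 4(m-1)(n-1) - 1 \geqslant 3$, so any constant $C \leqslant 3/(4mn-1)$ works.

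The main obstacle is the $\Gamma_{D_{m,n}}$ calculation for general $m,n$. This reduces to a character-variety analysis: one enumerates the traceless $SU(2)$ representations of the knot group of $D_{m,n}$ (tractable because $D_{m,n}$ is $2$-bridge), computes the Chern--Simons values of the corresponding singular flat connections, and identifies the minimal positive value with a specific irreducible representation. The predicted denominator $4mn - 1$ is the determinant of $D_{m,n}$, reflecting the fact that the relevant binary dihedral characters lift from the branched double cover. Once this minimal Chern--Simons value is pinned down, one runs the equivariant singular instanton machinery of \cite{DS} as in Proposition \ref{double-twist-Gamma}, verifying that the complex detects exactly this critical point at filtration level $i=1$ and that no cycle of lower filtration is available for $i \geqslant 2$; this last step is where the bookkeeping is most delicate, but it parallels the $D_{2,2}$ case treated there.
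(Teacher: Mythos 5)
Your outline follows the paper's route exactly: compute $g_s(kD_{m,n})=k$ via the genus-one surface of Figure \ref{fig:dmn} and Murasugi's bound, compute $\Gamma_{kD_{m,n}}(k)$, and feed the result into Theorem \ref{thm:gammaintro}. Your value $\frac{4mn-2m-2n+1}{4mn-1}$ agrees with the paper's $\frac{(2m-1)(2n-1)}{4mn-1}$, and the closing arithmetic, including $4mn-4m-4n+3=4(m-1)(n-1)-1\geqslant 3$ for $m,n\geqslant 2$, is correct.

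The genuine gap is the connected-sum step. You invoke ``the behaviour of $\Gamma$ under connected sum'' to deduce $\Gamma_{kD_{m,n}}(k)\geqslant k\,\Gamma_{D_{m,n}}(1)$, but the general connected-sum property of $\Gamma$ is a \emph{subadditivity}-type statement, which yields only the upper bound $\Gamma_{kD_{m,n}}(k)\leqslant k\,\Gamma_{D_{m,n}}(1)$ --- the wrong direction for bounding $c_s^+$ from below. Knowing the function $\Gamma_{D_{m,n}}$ alone does not determine $\Gamma_{kD_{m,n}}$ from below. The paper gets the lower bound (in fact equality) by proving something strictly stronger about the single knot: Proposition \ref{tilde-complex-dbletwist} identifies the \emph{local equivalence class} of the I-graded $\cS$-complex of $D_{m,n}$ with the one-generator model $\widetilde\fC(t)$, $t=\frac{(2m-1)(2n-1)}{4mn-1}$, by exhibiting explicit morphisms in both directions; the connected-sum theorem then reduces $kD_{m,n}$ to the tensor power $\widetilde\fC(t)^{\otimes k}$, whose $\Gamma$-invariant is computed directly in Lemma \ref{lemma:ctconnectedsum}. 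Your sketch needs to be upgraded from ``compute $\Gamma_{D_{m,n}}$'' to ``determine the local equivalence class of $\widetilde C_\ast(D_{m,n};\Delta_\sS)$''.

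A smaller point: $\Gamma_{D_{m,n}}(1)$ is not characterized as the minimal positive Chern--Simons value among traceless representations. It is the instanton grading of the particular degree-one $d$-cycle $\zeta^{2n-1}$ with $\delta_1\zeta^{2n-1}\neq 0$, and identifying that generator requires the index/lattice-point analysis (the conditions $N_1=1$, $N_2=0$ of Lemma \ref{trivial-A}), not just the spectrum of Chern--Simons values; one must also verify $\delta_2=0$ to get $\Gamma(i)=0$ for $i\leqslant 0$ and use $h_\sS(D_{m,n})=1$ to get $\Gamma(i)=\infty$ for $i\geqslant 2$. Your heuristic happens to land on the correct number, but as stated it is not a proof of the $i=1$ value.
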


Recall that the {\em unknotting number} $u(K)$ is the minimal number of crossing changes needed to turn $K$ into the unknot $U_1$. Performing these crossing changes in reverse order, to get from $U_1$ to $K$, induces a movie in $S^3\times [0,1]$, which is an immersed annulus with transverse double points. Capping off the unknot component of our annulus gives a normally immersed disk in $B^4$ with boundary $K$, with $u(K)$ double points. Thus $u(K)\geq c_s(K)\geq c_s^+(K)$, and Theorem \ref{thm:gammaintro} also gives a lower bound on $u(K)$. We can use this to determine the unknotting numbers of some 11-crossing 2-bridge knots.

\begin{introthm}\label{thm:unknottingnumbers}
	The knots $11a_{192}$, $11a_{341}$, $11a_{360}$ each have unknotting number equal to $3$. The unknotting number of $11a_{365}$ is equal to $4$. In fact, for each of these knots, $c_s^+$ and 
	the 4-dimensional clasp number are equal to the unknotting number.
\end{introthm}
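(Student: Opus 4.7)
The plan is to sandwich each quantity $u(K)$, $c_4(K)$, $c_s^+(K)$ between matching upper and lower bounds, using the general chain of inequalities
\[
	u(K)\;\geqslant\; c_4(K)\;\geqslant\; c_s^+(K)\;\geqslant\; 2\,\Gamma_K\!\left(-\tfrac{1}{2}\sigma(K)\right),
\]
where the middle inequalities are routine (a crossing change realizes a clasp disk, and positive double points are a special case), and the last inequality is Theorem \ref{thm:gammaintro}, valid because the four knots in question all have non-positive signature. Once the extremes meet, all three invariants must coincide with the claimed value.

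For the upper bounds, I would exhibit explicit unknotting sequences: three crossing changes for each of $11a_{192}$, $11a_{341}$, $11a_{360}$, and four for $11a_{365}$. Working from standard 2-bridge diagrams, a short experiment with their continued-fraction presentations produces candidate sequences; each is verified by simplifying the resulting diagram to the unknot, or by checking that all relevant invariants (determinant, Alexander polynomial, Jones polynomial) trivialize at the final stage. No Floer-theoretic input is needed here.

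For the lower bounds, I would first read off the signatures of the four knots from their 2-bridge continued fractions, yielding explicit values of $-\sigma(K)/2$. I would then apply the computation of $\Gamma_K$ for 2-bridge knots developed earlier in the paper, in the same spirit as Proposition \ref{double-twist-Gamma} for the double twist family, to establish
\[
	\Gamma_K\!\left(-\tfrac{1}{2}\sigma(K)\right)\;\geqslant\;\tfrac{3}{2}\quad\text{for }K\in\{11a_{192},\,11a_{341},\,11a_{360}\}
\]
and $\Gamma_{11a_{365}}(-\tfrac{1}{2}\sigma)\geqslant 2$. Plugging into the chain of inequalities above forces $u(K)=c_4(K)=c_s^+(K)$ equal to $3$, $3$, $3$, and $4$ respectively.

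The main obstacle is the computation of $\Gamma_K$ for these particular 2-bridge knots: one must identify the traceless $SU(2)$ character variety, determine its Chern--Simons filtration values, and then extract the Fr\o yshov-type cycle that computes $\Gamma_K$ at the specified integer. This is purely combinatorial once the formula for 2-bridge knots (analogous to the one used in the double twist case) is available, but care is needed in tracking both the critical values of the Chern--Simons functional and the differentials of the equivariant complex at the correct level of the filtration. Everything else in the proof, including the upper-bound diagrammatic arguments and the concluding sandwich, is straightforward.
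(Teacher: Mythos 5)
Your overall strategy is exactly the paper's: sandwich $u(K)\geqslant c_s(K)\geqslant c_s^+(K)\geqslant 2\,\Gamma_K\!\left(-\tfrac12\sigma(K)\right)$ (after mirroring so the signature is non-positive), get upper bounds from explicit unknotting sequences, and get lower bounds from the $\Gamma$-invariant of the corresponding 2-bridge knots. However, the specific numerical lower bounds you propose to prove are wrong, and the key step as you state it would fail. For $11a_{365}$ (the 2-bridge knot $(51,16)$, with $-\sigma/2=3$) the paper computes $\Gamma_K(3)=27/17\approx 1.59$, so your claim $\Gamma_K(3)\geqslant 2$ is simply false; and for the three knots with $-\sigma/2=2$ (the 2-bridge knots $(97,26)$, $(61,42)$, $(57,10)$) the candidate instanton (Chern--Simons) gradings of the relevant generators are $104/97$, $62/61$, $62/57$, etc., all barely above $1$, so $\Gamma_K(2)\geqslant 3/2$ is not attainable either. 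What rescues the argument is integrality of $c_s^+$: from $c_s^+\geqslant 2\Gamma_K(2)>2$ one gets $c_s^+\geqslant 3$, and from $c_s^+\geqslant 2\cdot\tfrac{27}{17}>3$ one gets $c_s^+\geqslant 4$. So you should replace your targets by $\Gamma_K(2)>1$ and $\Gamma_K(3)>\tfrac32$, which is exactly what the paper verifies.

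A second, smaller caveat: there is no ready-made closed formula for $\Gamma_K$ of a general 2-bridge knot analogous to Proposition \ref{double-twist-Gamma}. The ADHM description of Subsection \ref{2-bridge} determines $d$, $\delta_1$, $\delta_2$ and the bigradings, but only constrains the possible nonzero entries of $v$; the computation of $\Gamma_K(2)$, $\Gamma_K(3)$ for these examples additionally uses $h_\sS(K)=-\sigma(K)/2$ (Theorem \ref{thm:htwistedsign}) via \eqref{eq:signatureinfoforv}, and for $(61,42)$ and $(97,26)$ an extra argument with the differential is needed to exclude a candidate generator whose instanton grading is below $1$ (any cycle containing it must also contain a partner of grading above $1$). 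Your ``care is needed'' remark gestures at this, but the proof must include these steps explicitly; with them, and with the corrected numerical thresholds, your argument coincides with the paper's.
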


\noindent The unknotting number of $11a_{365}$ was previously computed by Owens \cite{owens}, and its 4-dimensional clasp number follows from the work of Owens and Strle \cite{owens-strle}. The other three examples in the theorem are new, as far as the authors are aware.

\begin{remark} 
	Theorem \ref{thm:clasp} addresses a relative version of the following problem. Let $W$ be a smooth closed $4$-manifold and $\sigma\in H_2(W;\Z)$. Let $g(\sigma)$ be the minimal genus over all connected, orientable smoothly 
	embedded representatives of $\sigma$, and $c^+(\sigma)$ be the minimal number of positive double points over normally immersed sphere representatives. How large can $c^+(\sigma)-g(\sigma)$ be? This question has been 
	advertised by Kronheimer and Mrowka, partly motivated by its relationship to the Caporaso-Harris-Mazur conjecture about algebraic curves in a quintic surface \cite{kronheimer-talk}. $\diamd$
\end{remark}

\subsection*{Signatures, Fr\o yshov invariants, and representations}

For a knot $K$ in the 3-sphere, the knot group $\pi_1(S^3\setminus K)$ is infinite cyclic if and only if $K$ is an unknot, as follows from the work of Papakyriakopoulos \cite{papa}. Using instanton gauge theory, Kronheimer and Mrowka showed that in fact for a non-trivial knot there exists a non-abelian representation from $\pi_1(S^3\setminus K)$ into $SU(2)$ \cite{km-dehn}.

There are analogous problems in 4-dimensions. Given a 2-knot, i.e. a smoothly embedded 2-sphere $F$ in $S^4$, if $\pi_1(S^4\setminus F)$ is infinite cyclic, is $F$ an unknotted 2-sphere? If we instead work in the topologically locally flat category, then Freedman's work \cite{freedman} answers this in the affirmative. In either category, we may further ask: given a non-trivial 2-knot $F$, does $\pi_1(S^4\setminus F)$ admit a non-abelian representation to $SU(2)$? 

We may also pose analogous questions about embedded annuli in $[0,1]\times S^3$. The following gives a simple criterion for the existence of non-abelian $SU(2)$ representations in this scenario under an assumption involving the knot signature.

\begin{introthm}\label{thm:existrep1}
	If $ S \subset [0,1]\times S^3$ is a smooth concordance from a knot $K$ to itself, and $\sigma(K)\neq 0$, then $\pi_1([0,1]\times S^3\setminus S)$ admits a non-abelian representation to $SU(2)$.
\end{introthm}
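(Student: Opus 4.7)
The plan is to argue by contradiction using the equivariant Fr\o yshov invariant $h_{\cR}$ of \cite{DS}. Choose a coefficient ring $\cR$ for which, by the signature-identification result announced in the Introduction, $h_{\cR}(K)$ is a nonzero rational multiple of $\sigma(K)$; under the hypothesis $\sigma(K)\neq 0$ we then have $h_{\cR}(K)\neq 0$. Suppose, toward a contradiction, that every $SU(2)$-representation of $G:=\pi_1([0,1]\times S^3\setminus S)$ is abelian, and let $X$ denote the exterior of an open tubular neighborhood of $S$ in $[0,1]\times S^3$. Then every singular traceless flat $SU(2)$-connection on $(X,S)$ is reducible, and there is a unique such connection $\alpha_0$, sending each meridian of $S$ to $\mathbf{i}\in SU(2)$.

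The cobordism of pairs $([0,1]\times S^3,S)$ from $(S^3,K)$ to itself induces a chain map $\Phi_S$ on the $S^1$-equivariant singular instanton complex of $(S^3,K)$, and $\Phi_S$ is a filtered chain homotopy equivalence by concordance invariance. The heart of the proof is to compute $\Phi_S$ explicitly under the reducible-only hypothesis. The absence of irreducible flat limits along $X$ implies, via compactness and transversality, that the singular instanton moduli spaces entering the definition of $\Phi_S$ are built entirely from the reducible locus over $\alpha_0$, together with interior bubbling on $S$. The resulting equivariant count is then governed by an Atiyah--Patodi--Singer-type index of a Dirac operator on $(X,S)$ twisted by $\alpha_0$, and by a standard signature-defect calculation this index is controlled by $\sigma(K)$.

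Combining the explicit form of $\Phi_S$ with the concordance invariance of $h_{\cR}$ and the identity $h_{\cR}(K)=c\,\sigma(K)$ forces $\sigma(K)=0$, completing the contradiction. The main obstacle is the second step above: rigorously identifying $\Phi_S$, under the reducible-only hypothesis, with an explicit APS-index model. This requires a careful $S^1$-equivariant obstruction analysis around the unique reducible on $(X,S)$, computation of the equivariant Euler class of the obstruction bundle over the arc of reducibles in the character variety of $X$, and control of bubbling at interior points of $S$. Once this rigidification of $\Phi_S$ is in hand, the signature-identification theorem for $h_{\cR}$ proved elsewhere in the paper supplies the remaining step, yielding a non-abelian (in fact traceless non-abelian) representation of $\pi_1([0,1]\times S^3\setminus S)$.
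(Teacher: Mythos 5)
Your top-level architecture is the same as the paper's: Theorem \ref{thm:existrep1} is deduced by combining (a) the identification $h_\sS(K)=-\tfrac{1}{2}\sigma(K)$ for $\sS=\Z[U^{\pm1},T^{\pm1}]$ (Theorem \ref{thm:htwistedsign}), with (b) a criterion saying that $h_\sS(K)\neq 0$ forces the existence of a traceless representation of the concordance complement extending non-abelian representations of the knot groups. For (b) the paper does no new work at all: it simply cites \cite[Theorem~1.17]{DS}, so the entire content of this paper's proof of Theorem \ref{thm:existrep1} is step (a).

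The genuine gap in your proposal is that you attempt to reprove (b) from scratch, and the sketch you give is both incomplete by your own admission and aimed in the wrong technical direction. The mechanism behind \cite[Theorem~1.17]{DS} is not an Atiyah--Patodi--Singer index or signature-defect computation for a Dirac operator twisted by the abelian flat connection $\alpha_0$. It is a compactness/emptiness argument: for a concordance $(W,S)$ one has $\kappa_{\mathrm{min}}=0$ and the unique minimal reducible is flat of index $-1$, so the components $\lambda$, $\mu$, $\Delta_1$, $\Delta_2$ of the induced morphism count singular instantons of very low energy; by Uhlenbeck compactness their limits (and, for the zero-energy strata, the instantons themselves) are traceless flat connections on $W\setminus S$ with prescribed irreducible limits at the ends. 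If every $SU(2)$-representation of $\pi_1(W\setminus S)$ were abelian, these moduli spaces would be empty, the morphism would degenerate, and Proposition \ref{h-reinterpret} would force $h_\sS(K)=0$. Your ``rigidification of $\Phi_S$ via an equivariant Euler class of an obstruction bundle over the arc of reducibles'' is not needed and would not by itself produce the contradiction; what is needed is precisely the statement you defer. As written, the argument therefore rests on an unproved key lemma. The fix is either to cite \cite[Theorem~1.17]{DS} directly (as the paper does) or to run the compactness argument above; in either case the only input from the present paper is Theorem \ref{thm:htwistedsign}, and your first and last paragraphs already use that correctly.
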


The above generalizes as follows. A {\em homology concordance} is a cobordism of pairs $(W,S):(Y,K)\to (Y',K')$ between integer homology 3-spheres with knots such that $W$ is an integer homology cobordism and $S$ is a properly embedded annulus. The collection of integer homology 3-spheres with knots modulo homology concordance gives rise to an abelian group $\cC_\Z$. There is a homomorphism from the smooth concordance group to $\cC_\Z$. There is also a homomorphism $\Theta_\Z^3\to \cC_\Z$ where $\Theta_\Z^3$ is the homology cobordism group of integer homology 3-spheres, induced by $Y\mapsto (Y,\text{unknot})$. Write $h(Y)\in \Z$ for the instanton Fr\o yshov invariant for the integer homology 3-sphere $Y$, which is a homomorphism $h:\Theta_\Z^3\to\Z$ defined in  \cite{froyshov}.

\begin{introthm}\label{thm:existrep2}
	Suppose $(W,S):(Y,K)\to (Y',K')$ is a homology concordance where $K$ is null-homotopic in the integer homology 3-sphere $Y$. Suppose further that:
\begin{equation}
	-\frac{1}{2}\sigma(Y,K) + 4h(Y) \neq 0. \label{eq:existrepcondition}
\end{equation}
Then there exists a traceless representation $\pi_1(W\setminus S)\to SU(2)$ which extends non-abelian traceless representations of $\pi_1(Y\setminus K)$ and $\pi_1(Y'\setminus K')$.
\end{introthm}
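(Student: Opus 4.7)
I would argue by contradiction, assuming that no such extending representation exists; equivalently, every traceless $SU(2)$ representation of $\pi_1(W\setminus S)$ restricts, on at least one boundary component, to the unique abelian traceless representation (the one sending a distinguished meridian to $\mathbf i$).

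The first step is to identify the combination $-\tfrac{1}{2}\sigma(Y,K)+4h(Y)$, under the null-homotopy hypothesis, with an equivariant singular Fr\o yshov-type invariant $h^{\#}(Y,K)$ arising from the framework of \cite{DS}. A null-homotopy of $K$ in $Y$ yields, after a small perturbation, a normally immersed disk bounding $K$; resolving its double points by handle attachments gives a homology cobordism of pairs from $(Y,K)$ to $(Y'',U)$ with $U$ an unknot. The term $4h(Y)$ should match the equivariant singular Fr\o yshov invariant of $(Y'',U)$, which is a fixed multiple of $h(Y'')=h(Y)$, while $-\tfrac{1}{2}\sigma(Y,K)$ is the correction coming from the signs of the double points, coinciding with the shift in $h^{\#}$ across a singular cobordism predicted by the Tristram--Levine signature. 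Thus \eqref{eq:existrepcondition} translates to $h^{\#}(Y,K)\neq 0$, and, since both $h$ and the knot signature are preserved under homology concordance, also $h^{\#}(Y',K')\neq 0$.

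Next, I would analyze the chain map $\widetilde C(Y,K)\to \widetilde C(Y',K')$ induced by $(W,S)$ between the equivariant singular instanton complexes of \cite{DS}. Under the nonexistence assumption, after a generic holonomy perturbation every finite-energy flat singular connection on $W\setminus S$ restricts to the reducible traceless connection on at least one end; hence all contributions to the cobordism map beyond the reducible stratum either vanish or factor through the reducible on a single side. Combining this with the fact that a homology concordance induces a quasi-isomorphism on the subquotient of $\widetilde C$ that computes $h^{\#}$, the factorization forces $h^{\#}(Y,K)=h^{\#}(Y',K')=0$, contradicting the nonvanishing established above.

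\emph{Main obstacle.} The technically hardest step is the first one: pinning down the precise formula $h^{\#}(Y,K)=-\tfrac{1}{2}\sigma(Y,K)+4h(Y)$ for null-homotopic $K$, with all signs and factors of $2$ and $4$ consistent with the normalizations of \cite{DS}, and verifying that the reducible stratum on $(W,S)$ contributes to the cobordism map in exactly the manner needed for the Fr\o yshov-type obstruction to bite. Once that formula and its cobordism compatibility are in hand, the rest is a fairly standard application of the equivariant Fr\o yshov machinery developed earlier in the paper.
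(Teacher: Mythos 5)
Your overall architecture is the same as the paper's: identify $-\tfrac{1}{2}\sigma(Y,K)+4h(Y)$ with the singular instanton Fr\o yshov invariant of $(Y,K)$, and then feed the nonvanishing of that invariant into the Fr\o yshov-type obstruction for homology concordances. The paper does exactly this, quoting \cite[Theorem 1.17]{DS} for the second step and proving the identification as Theorem \ref{thm:htwistedsign} (via Proposition \ref{prop:hinvcrossing} and Proposition \ref{prop:hinvunknot}). However, as written your proposal has two genuine gaps.

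First, your invariant $h^{\#}$ is never pinned to a coefficient system, and this is not a matter of normalizing signs and factors of $4$: the identification $h_{\sS}(Y,K)=-\tfrac{1}{2}\sigma(Y,K)+4h_{\sS}(Y)$ is \emph{false} for the untwisted invariant $h_{\Z}$ (Section \ref{sec:unoriented} shows $h_{\Z}$ vanishes on quasi-alternating knots and is linearly independent of the signature). One must work with local coefficients in which $T^{4}\neq 1$ (and $2\neq 0$): the crossing-change (immersed annulus) cobordism coming from the null-homotopy only induces a usable morphism after blowing up, where the count of minimal reducibles is $(1-T^{4})^{s_{+}}\eta$, so the whole mechanism dies over $\Z$. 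Moreover the constant $4h(Y)$ is not a bookkeeping correction but a separate theorem, $h_{\sS}(Y,U_{1})=4h_{\sS}(Y)$ (Proposition \ref{prop:hinvunknot}), whose proof needs the $-E_{8}$ manifold bounding the Poincar\'e sphere and the Morse--Bott computation for $(\Sigma,U_{1})$; your "should match a fixed multiple" elides this. Also, resolving the double points of the immersed disk by handle attachments changes the genus and destroys the concordance-type control; the paper instead keeps the immersed annulus and uses the blow-up formalism together with the two-sided inequality of Theorem \ref{thm:hineq-cob}.

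Second, the contradiction step does not go through as stated. The cobordism maps count singular \emph{instantons}, not flat connections, so the absence of traceless representations of $\pi_{1}(W\setminus S)$ with non-abelian limits on both ends does not by itself show that the relevant components of $\widetilde\lambda_{(W,S)}$ "vanish or factor through the reducible'': positive-energy instantons can contribute even when no flat connection with the desired limits exists. To rule them out one needs the Chern--Simons filtration --- a concordance has $\kappa_{\min}=0$, so it induces a level-$0$ morphism of enriched complexes --- together with a limiting argument over perturbations shrinking to zero, so that nontriviality of the Fr\o yshov data forces an honest flat connection on $W\setminus S$ in the limit. This is precisely the content of \cite[Theorem 1.17]{DS}, which the paper invokes rather than reproves; your sketch of it (including the claim that a concordance induces "a quasi-isomorphism on the subquotient computing $h^{\#}$'') would not stand on its own without that filtration argument.
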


Here ``traceless'' means that the representation sends classes of the circle fibers of the unit circle normal bundle of $S$ (resp. $K$, $K'$) to traceless elements of $SU(2)$. Theorem \ref{thm:existrep2} should be compared to analogous results in the context of homology cobordism of homology 3-spheres (without knots), see \cite{taubes-periodic,AD:CS-Th}.

The authors established in \cite[Theorem 1.17]{DS} a version of Theorem \ref{thm:existrep2} with the left hand side of \eqref{eq:existrepcondition} replaced by $h_\sS(Y,K)$, the singular instanton Fr\o yshov invariant. This invariant, also introduced in \cite{DS}, is a homomorphism $h_\sS(Y,K):\cC_\Z\to \Z$, and is defined for a coefficient ring $\sS$ which is an integral domain algebra over $\Z[U^{\pm 1}, T^{\pm 1} ]$. Thus Theorem \ref{thm:existrep2} follows from our previous work and the following result, which identifies $h_\sS(Y,K)$ with previously known invariants under some assumptions. We write $h_\sS(S^3,K)=h_\sS(K)$.

\begin{introthm}\label{thm:htwistedsign}
	Let $\sS=\Z[U^{\pm 1}, T^{\pm 1}]$. If $K$ is a knot in the 3-sphere, then we have:
\begin{equation}
	h_\sS(K)=-\frac{1}{2}\sigma(K). \label{eq:htwistedsign3sphere}
\end{equation}
If $K$ is a null-homotopic knot in an integer homology 3-sphere $Y$, then:
\begin{equation}
	h_\sS(Y,K) = -\frac{1}{2}\sigma(Y,K) + 4h(Y). \label{eq:htwistedsign}
\end{equation}
The result \eqref{eq:htwistedsign3sphere} holds more generally when $\sS$ is an integral domain algebra over $\Z[U^{\pm 1}, T^{\pm 1}]$ with $T^4\neq 1$. Further, \eqref{eq:htwistedsign} holds more generally as well; see Section \ref{sec:froyshovandsuspension}.

\end{introthm}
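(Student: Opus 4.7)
The plan is to combine the concordance homomorphism property of $h_\sS$, established in \cite{DS}, with a Fr\o yshov-type cobordism inequality for pairs. I will treat the 3-sphere case first; the general statement will then follow by reducing $(Y,K)$ to a combination of $(Y,U)$ and $(S^3,K')$ data in $\cC_\Z$, using the null-homotopy of $K$.

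For $K\subset S^3$, the goal is to show $h_\sS(K)+\sigma(K)/2=0$. Since both $h_\sS$ and $-\sigma/2$ descend to homomorphisms $\cC_\Z\to\Z$ that vanish on the unknot, it suffices to prove a one-sided inequality and invoke mirror symmetry ($h_\sS(\overline K)=-h_\sS(K)$ and $\sigma(\overline K)=-\sigma(K)$). For the upper bound $h_\sS(K)\leqslant -\sigma(K)/2$, I would take a minimal-genus Seifert surface $\Sigma$, push it into $B^4$, and view it as a surface cobordism of pairs from the unknot to $K$. The singular-instanton Fr\o yshov inequality applied to this cobordism, with the twisted coefficient system $\sS=\Z[U^{\pm 1},T^{\pm 1}]$, yields a bound whose right-hand side should be identified with $-\sigma(K)/2$ via a spectral-flow computation in the equivariant complex. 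The hypothesis $T^4\neq 1$ enters here to guarantee that the unique $S^1$-fixed reducible (sending a distinguished meridian to $\mathbf{i}$) is the only critical orbit that survives in the twisted complex; when $T^4=1$, additional flat connections can contribute and spoil the identification.

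For a null-homotopic knot $K\subset Y$, the null-homotopy assumption should permit a homology concordance carrying $K$ into a 3-ball in $Y$, giving the decomposition $[(Y,K)]=[(Y,U)]+[(S^3,K)]$ in $\cC_\Z$, where $U$ denotes a split unknot. Additivity of $h_\sS$ and the 3-sphere case then reduce \eqref{eq:htwistedsign} to the single identity $h_\sS(Y,U)=4h(Y)$. This remaining identity should follow from a direct comparison of the singular equivariant complex for $(Y,U)$ with the classical instanton Floer complex of $Y$: removing a neighborhood of $U$ identifies the traceless representation variety with the ordinary $SU(2)$-representation variety of $Y$, up to the $S^1$-action at the basepoint, and the Fr\o yshov grading acquires a factor of $4$ from the generator of $H^*(BS^1;\Z[U^{\pm 1}])$. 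The main obstacle will be pinning down the precise coefficients $-1/2$ and $4$ through an Atiyah--Patodi--Singer spectral-flow computation over the family of twisted operators on the knot complement; this is the technical heart of the argument, and it is also where the assumption $T^4\neq 1$ is used most crucially.
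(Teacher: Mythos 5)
There is a genuine gap at the heart of your $S^3$ argument. The singular Fr\o yshov inequality for a cobordism of pairs (Theorem \ref{thm:hineq} / Theorem \ref{thm:hineq-cob} in the paper) reads
\[
h_\sS(K')-h_\sS(K)\;\geqslant\; 4\kappa_{\mathrm{min}}-g(S)+\tfrac14 S\cdot S-\tfrac12\bigl(\sigma(K')-\sigma(K)\bigr),
\]
and the genus term $-g(S)$ does not go away. A pushed-in minimal Seifert surface, viewed as a cobordism $U_1\to K$ in $I\times S^3$, therefore only gives $h_\sS(K)\geqslant -g_3(K)-\tfrac12\sigma(K)$, and its reverse gives $h_\sS(K)\leqslant g_3(K)-\tfrac12\sigma(K)$; no spectral-flow refinement removes the $\pm g_3(K)$ slack, so this sandwiches nothing unless $K$ is the unknot. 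The idea you are missing is to use a \emph{genus-zero immersed} cobordism: since $K$ is (null-)homotopic to $U_1$, a generic homotopy traces out an immersed annulus with transverse double points, and blowing up at a positive double point produces exactly two minimal reducibles whose monopole numbers differ by $4$, so $\eta(\overline W,\overline S)=(1-T^4)^{s_+}\eta(W,S)$. This is where $T^4\neq 1$ enters (not, as you suggest, in controlling which flat connections survive in the twisted complex): it makes positive double points cost nothing, so the inequality applies with $g=0$ in both directions and forces $h_\sS(Y,K)+\tfrac12\sigma(Y,K)=h_\sS(Y,K')+\tfrac12\sigma(Y,K')$ for homotopic knots (Proposition \ref{prop:hinvcrossing}). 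Relatedly, your reduction of the general case via $[(Y,K)]=[(Y,U)]+[(S^3,K)]$ in $\cC_\Z$ is unjustified: a null-homotopy is not a concordance, and no such splitting is available; the paper instead applies the crossing-change equality directly to $K\simeq U_1$ inside $Y$.

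The remaining identity $h_\sS(Y,U_1)=4h(Y)$ is the genuinely hard step, and your sketch understates it. The paper does not identify the two complexes directly (the critical set for $(Y,U_1)$ is Morse--Bott, with an $S^2$-orbit over each irreducible flat connection on $Y$, and the comparison of the degree-$4$ map $U$ with the degree-$2$ map $v$ is only outlined, without proof, in Section \ref{sec:froyshovandsuspension}). Instead it argues: (a) an unknotted disk filling $U_1$ gives a chain map to Floer's complex satisfying $\fD d+\delta_1-D_1\phi=0$, whence $h_\sS(Y,U_1)>0\Rightarrow h_\sS(Y)>0$; (b) both invariants are homomorphisms $\Theta^3_\Z\to\Z$, and comparing kernels using the Poincar\'e sphere shows they are proportional; (c) the constant is pinned down by computing $h_\sS(\Sigma,U_1)=4$ exactly, the lower bound coming from the extremal vector of square $-4$ in $-E_8$ with reducible count $16\neq 0$ (this is where $2\neq 0$ is needed), and the upper bound from the Morse--Bott structure forcing $\rk I_1(\Sigma,U_1)\leqslant 2$. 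Your heuristic that the factor $4$ comes from $H^*(BS^1;\Z[U^{\pm1}])$ does not survive contact with this computation, and without steps (a)--(c) or a full Morse--Bott comparison your argument for the coefficient $4$ is incomplete.
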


The identification of $h_\sS(K)$ in terms of the classical knot signature helps elucidate the general structure of the singular instanton Floer invariants of $K$, as we now explain. Let $I_\ast(Y,K)$ denote the irreducible singular instanton homology of $(Y,K)$, defined in \cite{DS}. It is a $\Z/4$-graded abelian group and its euler characteristic satisfies
\[
	\chi\left(I_\ast(Y,K)\right) =  \frac{1}{2}\sigma(Y,K)+ 4\lambda(Y)
\]
where $\lambda(Y)$ is the Casson invariant of $Y$, a result essentially due to Herald \cite{herald}. More generally, we can define an $\sS$-module $I_\ast(Y,K;\Delta_\sS)$, where as before $\sS$ is an algebra over $\Z[U^{\pm 1}, T^{\pm 1}]$, and $\Delta_\sS$ is a local coefficient system over $\sS$.

\begin{introcor}\label{eq:corirrineq}
Suppose $T^4\neq 1$ in $\sS$. If $K\subset S^3$ and $\sigma(K)\leqslant 0$ then we have inequalities:
\begin{equation}\label{eq:irrhomologyineq}
  \rk(I_1(K;\Delta_\sS))\geq \left\lceil -\frac{\sigma(K)}{4}\right\rceil,\hspace{.5cm}
  \rk(I_3(K;\Delta_\sS))\geq \left\lfloor  -\frac{\sigma(K)}{4} \right\rfloor.
\end{equation}
\end{introcor}

\noindent Inequalities in the case $\sigma(K) > 0$ are obtained using $\text{rk}(I_i(-K;\Delta_\sS))=\text{rk}(I_{3-i}(K;\Delta_\sS))$ and $\sigma(-K)=-\sigma(K)$. Corollary \ref{eq:corirrineq} follows from Theorem \ref{thm:htwistedsign} and elementary inequalities relating the irreducible instanton homology to the Fr\o yshov invariant, see \eqref{h-twisted-ineq-irr-I}. See Theorem \ref{thm:scomplexsign} for a stronger version of Corollary \ref{eq:corirrineq}.

The following was essentially conjectured in \cite[Section 7.4]{PS}.

\begin{introcor}\label{cor:torusknots}
Let $T_{p,q}$ be the $(p,q)$ torus knot. Then as a $\Z/4$-graded abelian group, 
\begin{equation}
	I_\ast(T_{p,q})\cong  \Z_{(1)}^{\lceil -\sigma(T_{p,q})/4 \rceil}\oplus \Z_{(3)}^{\lfloor -\sigma(T_{p,q})/4 \rfloor}.
\label{eq:irrhomtorusknots}
\end{equation}
\end{introcor}

\begin{proof}
	The result in fact holds for any coefficient ring $\sS$. From \cite{PS} and \cite[Section 9.5]{DS}, the chain complex for the irreducible homology of $T_{p,q}$ has total rank $-\sigma(T_{p,q})/2$ and is supported in gradings $1$ and $3$ (mod $4$). The differential is necessarily zero. The lower bounds from Corollary \ref{eq:corirrineq} imply the result when $T^4\neq 1$ in $\sS$, and the ranks of the $\Z/4$-graded chain complex are independent of the ring $\sS$.
\end{proof}

Another strategy to prove Corollary \ref{cor:torusknots} involves directly computing the mod $4$ gradings of the singular flat connections.  Anvari computed these mod $4$ gradings in terms of certain arithmetic functions and verified Corollary \ref{cor:torusknots} for the $(3,6k+1)$ torus knots \cite{anvari}. Corollary \ref{cor:torusknots} has implications about the arithmetic functions appearing in \cite{anvari}.

The special structure of the instanton homology for torus knots described above will be used to upgrade Theorem \ref{thm:existrep2} in the case that one of the knots is a torus knot. For simplicity we state the result for classical concordances in $[0,1]\times S^3$.

\begin{introthm}\label{torus-conc-reps}
	Suppose $S:T_{p,q} \to K$ is a smooth concordance. Then every traceless $SU(2)$ representation of $\pi_1(S^3\setminus T_{p,q})$ extends over the concordance complement.
\end{introthm}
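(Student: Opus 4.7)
The approach combines the vanishing of the differential on the irreducible singular instanton chain complex of torus knots, established in Corollary~\ref{cor:torusknots}, with the concordance-induced cobordism map and its compatibility with the Chern--Simons filtration developed in \cite{DS}.

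First I would reduce to the irreducible case: the unique abelian traceless representation of $\pi_1(S^3\setminus T_{p,q})$ sends the meridian to $\mathbf{i}$ and extends along any meridian of the concordance $S$, so it suffices to treat non-abelian $\alpha$. Working with a coefficient ring $\sS$ satisfying $T^4\neq 1$, Theorem~\ref{thm:htwistedsign} gives $h_\sS(T_{p,q})=-\sigma(T_{p,q})/2$, while the proof of Corollary~\ref{cor:torusknots} shows that the irreducible chain complex $\widetilde C_*(T_{p,q};\Delta_\sS)$ has zero differential and total rank $-\sigma(T_{p,q})/2$. Consequently, each irreducible traceless representation $\alpha$ represents a distinct nonzero homology class, and the Froyshov invariant fully accounts for the rank of $I_*(T_{p,q};\Delta_\sS)$ -- every generator sits at Froyshov level.

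Now suppose for contradiction that a non-abelian traceless $\alpha$ fails to extend over the concordance complement $W\setminus S$. Considering the concordance-induced cobordism map $\lambda_S \colon I_*(T_{p,q};\Delta_\sS) \to I_*(K;\Delta_\sS)$ together with its refinement via the Chern--Simons filtration, the non-extendability of $\alpha$ forces $\lambda_S([\alpha])$ to lie strictly above the minimum CS level: the absence of a flat extension of $\alpha$ kills the zero-energy contribution to the relevant trajectory count on $(W,S)$, so any surviving contribution arises from strictly positive-energy instantons. On the other hand, since $\widetilde C_*(T_{p,q};\Delta_\sS)$ has zero differential and $h_\sS$ exhausts the rank, the class $[\alpha]$ cannot be ``absorbed'' within $\widetilde C_*(T_{p,q};\Delta_\sS)$, and concordance invariance of $h_\sS$ forces $\lambda_S([\alpha])$ to sit at Froyshov level in $I_*(K;\Delta_\sS)$ -- a contradiction with the CS shift.

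The main obstacle is the third step: rigorously verifying that the failure of a specific irreducible critical point $\alpha$ to extend produces an honest Chern--Simons filtration shift in $\lambda_S([\alpha])$. This requires the full moduli-space framework of \cite{DS}, in particular the identification of zero-energy components of instanton moduli spaces on the concordance complement with flat singular extensions, together with the compatibility of cobordism maps with the CS filtration underlying $\Gamma_K$ and $h_\sS$. Once this compatibility is in place, the rank identification from Corollary~\ref{cor:torusknots} and Theorem~\ref{thm:htwistedsign} forces the contradiction above for each non-abelian $\alpha$ in turn.
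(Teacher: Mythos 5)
Your proposal assembles the right ingredients (vanishing differential for torus knots, $h_\sS(T_{p,q})=-\sigma(T_{p,q})/2$, the identification of zero-energy instantons on the concordance complement with flat extensions, and the filtration-compatibility of cobordism maps), but the step you flag as ``the main obstacle'' is not merely a technical verification -- as stated it is the wrong statement, and the contradiction you aim for does not materialize. Two problems. First, you work directly with $\lambda_S\colon I_*(T_{p,q};\Delta_\sS)\to I_*(K;\Delta_\sS)$ and try to play a Chern--Simons shift against ``sitting at Fr\o yshov level'' inside $I_*(K;\Delta_\sS)$. But the CS values of the generators of $\widetilde C_*(K;\Delta_\sS)$ are unconstrained real numbers with no a priori relation to those of $T_{p,q}$; a strictly positive-energy contribution only says $\lambda_S(\alpha)$ has instanton grading strictly below $\deg_I(\alpha)$, and nothing in the concordance invariance of $h_\sS$ (or of $\Gamma$) forbids that. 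Second, the Fr\o yshov invariant does not detect each class individually: $h_\sS(T_{p,q})=m$ guarantees the existence of \emph{some} cycle $\zeta$ with $\delta_1 v^{m-1}(\zeta)\neq 0$, not that every generator $[\alpha]$ carries such a property, so even granting a filtration drop for one particular $\alpha$ no contradiction follows. (The full $v$-map structure for general $T_{p,q}$ is not computed in the paper, so one cannot argue generator-by-generator this way.)

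The paper's route repairs exactly this by making the statement global rather than class-by-class, and by arranging for source and target to be the \emph{same} complex. One composes $S$ with its reverse to get a self-concordance of $(S^3,T_{p,q})$ (Theorem \ref{conc-torus}); since the Chern--Simons functional of a torus knot is nondegenerate and $d=0$, for small perturbations the induced map $\lambda$ is upper-triangular with respect to the $(\Z,\R)$-bigrading, with diagonal blocks having \emph{integer} entries that count precisely the flat (zero-energy, monopole-number-zero) connections on the concordance complement. Invertibility of these diagonal blocks is then forced by the Fr\o yshov computation $h_{\sS'}(T_{p,q})=-\sigma(T_{p,q})/2=\operatorname{rk} C_*$ over the fields $\sS'=\operatorname{Frac}((\Z/r)[T^{\pm1}])$ for every prime $r$: if a block were singular over some $\Z/r$, the image $\zeta'=\lambda(\zeta)$ of a maximal-depth cycle would still satisfy $\delta_1 v^{m-1}(\zeta')\neq 0$, making $\zeta',v\zeta',\dots,v^{m-1}\zeta'$ a basis and yielding a contradiction. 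Finally, if some irreducible traceless $\alpha$ admitted no flat extension over the concordance complement, an entire column of a diagonal block would vanish, contradicting invertibility (Corollary \ref{torus-conc-reps-gen}). If you want to salvage your outline, this composition-with-the-reverse step and the resulting integrality/upper-triangularity of the self-map are the missing ideas; the CS-shift-versus-$h_\sS$ contradiction in $I_*(K)$ cannot be made to work as written.
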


\noindent A consequence of this result is that if a knot $K$ is concordant to a torus knot $T_{p,q}$ then the (singular) Chern--Simons invariants of $T_{p,q}$ are a subset of those of $K$. We also have the following relation at the level of Floer homology.

\begin{introthm}\label{torus-conc-floer-homology}
	Suppose $K$ is a knot which is concordant to a torus knot $T_{p,q}$. Then $I_\ast(T_{p,q})$ is a module summand of $I_\ast(K)$ for any choice of ordinary or local coefficient system. The same holds for other versions of singular instanton homology, such as $I^\natural_\ast(K)$ and $\widehat I_\ast(K)$.
\end{introthm}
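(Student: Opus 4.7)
The plan is to use functoriality of singular instanton Floer homology under concordances of pairs, combined with the rigidity of $I_\ast(T_{p,q})$ from Corollary \ref{cor:torusknots}, to construct the desired splitting.

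The first step is to invoke that a smooth concordance $S\co T_{p,q}\to K$ in $[0,1]\times S^3$, together with its reverse $\bar S\co K\to T_{p,q}$, induces cobordism maps $\Phi_S\co I_\ast(T_{p,q})\to I_\ast(K)$ and $\Phi_{\bar S}\co I_\ast(K)\to I_\ast(T_{p,q})$ by counting singular instantons on the cobordism of pairs. This construction carries over to any ordinary or local coefficient system, and to the variants $I^\natural_\ast$ and $\widehat I_\ast$ using the Kronheimer--Mrowka framework.

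Next, I would analyze the endomorphism $\Phi=\Phi_{\bar S}\circ\Phi_S$ of $I_\ast(T_{p,q})$, which is induced by the self-concordance $\bar S\cup S$. By Corollary \ref{cor:torusknots}, the chain complex for $I_\ast(T_{p,q})$ has trivial differential and is concentrated in $\Z/4$-gradings $1$ and $3$; its generators correspond bijectively to the irreducible traceless $SU(2)$ representations of $\pi_1(S^3\setminus T_{p,q})$. By Theorem \ref{torus-conc-reps}, each such representation extends to the complement of the self-concordance. Using the Chern--Simons filtration from \cite{DS}, the map $\Phi$ can be shown to be upper-triangular with respect to the filtration by Chern--Simons value: off-diagonal entries record strictly higher Chern--Simons jumps, while the diagonal records the contributions of the extended representations, each of which is $\pm 1$ after an orientation analysis. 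Consequently $\Phi$ is invertible.

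Invertibility of $\Phi$ yields that $\Phi_S$ is split injective (with $\Phi^{-1}\circ \Phi_{\bar S}$ providing the retraction), realizing $I_\ast(T_{p,q})$ as a module summand of $I_\ast(K)$. The argument applies verbatim with any ordinary or local coefficient system, since the proof of Corollary \ref{cor:torusknots} is coefficient-independent (the generators and the vanishing of the differential are unchanged). For $I^\natural_\ast$ and $\widehat I_\ast$, one uses the respective cobordism maps together with the analogous concentration of the chain complex for torus knots in those theories, noting that the same representation-extension input from Theorem \ref{torus-conc-reps} controls the diagonal of the composition.

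The hard part will be verifying that the diagonal of $\Phi$ consists of $\pm 1$'s rather than arbitrary non-zero integers; equivalently, that the $0$-dimensional moduli space associated to each extended representation from Theorem \ref{torus-conc-reps} contributes $\pm 1$ to $\Phi$. This reduces to a transversality and orientation analysis in which the zero-differential structure of $I_\ast(T_{p,q})$ prevents accidental cancellations and secures the upper-triangular invertibility required for the splitting.
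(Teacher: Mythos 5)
Your overall architecture coincides with the paper's (Theorem \ref{conc-torus} and Corollary \ref{instanton-map-summand-torus}): compose the concordance with its reverse, observe that the induced endomorphism of the torus-knot complex is upper-triangular for the Chern--Simons filtration with integer diagonal blocks (since the complex has vanishing differential and unperturbed flat generators), and deduce a splitting once the diagonal is invertible. The genuine gap is in how you propose to establish that invertibility. You reduce it to showing that each extended traceless representation guaranteed by Theorem \ref{torus-conc-reps} contributes $\pm 1$ to the diagonal via ``a transversality and orientation analysis,'' and you flag this as the hard part without supplying the mechanism. That reduction does not work as stated: the existence of an extension of a flat connection over the concordance complement says nothing about the \emph{signed count} of the zero-dimensional (possibly perturbed) moduli space — there may be several extensions, and their signed contributions could a priori cancel or sum to any integer. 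Worse, the logical order is reversed relative to the paper: Theorem \ref{torus-conc-reps} is itself deduced (as Corollary \ref{torus-conc-reps-gen}) \emph{from} the invertibility of the diagonal blocks, so using it as the input to prove that invertibility is circular within this development.

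The paper's actual argument for the diagonal avoids any direct moduli count. It uses the identification $h_{\sS'}(T_{p,q})=-\tfrac{1}{2}\sigma(T_{p,q})=m=\rk C_*(T_{p,q};\Delta_{\sS'})$ over each field $\sS'=\mathrm{Frac}((\Z/r)[T^{\pm1}])$ (Proposition \ref{prop:hinvcrossing} / Theorem \ref{thm:htwistedsign}), together with Proposition \ref{h-reinterpret}: there is a cycle $\zeta$ with $\delta_1 v^{m-1}\zeta\neq 0$ and $\delta_1 v^j\zeta=0$ for $j<m-1$. Because $\widetilde\lambda_{(W,S)}$ is a \emph{morphism of $\cS$-complexes} (its reducible component $\eta$ is a unit for a homology concordance), $\zeta'=\lambda_{(W,S)}(\zeta)$ inherits the same property, so $\zeta', v\zeta',\dots,v^{m-1}\zeta'$ already span the whole $m$-dimensional complex; hence $\lambda_{(W,S)}$ is surjective over every such field, forcing the integer diagonal blocks to have determinant $\pm1$ and the map to be invertible over an arbitrary coefficient ring. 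If you replace your unproven ``diagonal entries are $\pm1$ by orientation analysis'' step with this Fr\o yshov-invariant argument, the rest of your proof (split injectivity of $\Phi_S$ with retraction $\Phi^{-1}\circ\Phi_{\bar S}$, and the transfer to $I^\natural_\ast$ and $\widehat I_\ast$) goes through as in the paper.
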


Such results for instanton Floer homology and character varieties also hold when two knots are related by a ribbon concordance, see e.g. \cite{dlvw}, \cite[Theorem 7.4]{km-concordance}. Thus Theorems \ref{torus-conc-reps} and \ref{torus-conc-floer-homology} provide some evidence towards the following question due to Gordon, which is closely related to the slice-ribbon conjecture.
\begin{introquestion}\label{torus-slice-ribbon}
	If there is a concordance from a torus knot $T_{p,q}$ to another knot $K$, is there then a ribbon concordance from $T_{p,q}$ to $K$?
\end{introquestion}
\noindent A ribbon concordance from a knot $K_0$ to a knot $K_1$ is a properly embedded surface $S$ in $[0,1]\times S^3$ such that $S\cap \{i\}\times S^3=K_i$ and the projection of $S$ to $[0,1]$ is a Morse function without local maxima. (In particular, $S$ is a ribbon concordance from $K_1$ to $K_0$ in the convention of \cite{Gor:Rib}.) In fact, a more general version of Question \ref{torus-slice-ribbon} is proposed by Gordon. 
Now, let a knot $\fK$ be minimal with respect to the {\it ribbon concordance relation}, i.e., there is no ribbon concordance from a different knot to $\fK$. Then Question 6.1 in \cite{Gor:Rib} asks whether for any such $\fK$ and any other knot $K$ concordant to $\fK$, there is a ribbon concordance from $\fK$ to $K$. For $\fK=U_1$ this question specializes to the slice-ribbon conjecture. It is shown in \cite{Gor:Rib} that torus knots are minimal with respect to the ribbon concordance relation and Question \ref{torus-slice-ribbon} is the special case of \cite[Question 6.1]{Gor:Rib} for torus knots.

%This type of result for instanton Floer homology also holds when two knots are related by a ribbon concordance, see e.g. \cite{dlvw}, \cite[Theorem 7.4]{km-concordance}. This motivates the following, which is closely related to the slice-ribbon conjecture.
%
%\begin{introquestion}
%	If there is a concordance from a torus knot $T_{p,q}$ to another knot $K$, is there then a ribbon concordance from $T_{p,q}$ to $K$?
%\end{introquestion}
%\noindent Our convention is that a ribbon concordance from a knot $K_0$ to a knot $K_1$ is a properly embedded surface $S$ in $[0,1]\times S^3$ such that $S\cap \{i\}\times S^3=K_i$ and the projection of $S$ to $[0,1]$ is a Morse function without local maxima. (In particular, $S$ is a ribbon concordance from $K_1$ to $K_0$ in the convention of \cite{Gor:Rib}.)

Turning back to the situation of more general knots in $S^3$, we now describe an algebraic refinement of Theorem \ref{thm:htwistedsign}.
The equivariant singular instanton theory developed in \cite{DS} associates to a knot $K$ a $\Z/4$-graded chain complex $\widetilde C_\ast(K;\Delta_\sS)$ over $\sS$. This complex has the additional structure of an {\em $\cS$-complex}, which formalizes the homological properties of the $S^1$-action on the framed configuration space. The homotopy type of the $\cS$-complex $\widetilde C_\ast(K;\Delta_\sS)$ is an invariant of $K$, and determines the invariant $h_\sS(K)$. Thus the following result is an upgrade of the computation $h_\sS(K)=-\sigma(K)/2$ from Theorem \ref{thm:htwistedsign}.

\begin{introthm}\label{thm:scomplexsign}
	If $T^4-1$ is invertible in $\sS$, then the $\cS$-complex $\widetilde C_\ast(K;\Delta_\sS)$ is homotopy equivalent to $\widetilde C_\ast(\varepsilon T_{2,2k+1};\Delta_\sS)$, where $k=|\sigma(K)|/2$ and $\varepsilon$ is the sign of $-\sigma(K)$. As a consequence, in the inequalities \eqref{eq:irrhomologyineq} of Corollary \ref{eq:corirrineq}, equality holds.
\end{introthm}

\noindent Furthermore, the $\cS$-complex for $T_{2,2k+1}$ is very simple. We prove a similar statement for any null-homotopic knot in an integer homology 3-sphere. See Theorem \ref{knot-inv-twisted}. 

Theorem \ref{thm:scomplexsign} shows that if our coefficient ring $\sS$ has $T^4\neq 1$, then the ranks of the irreducible instanton homology groups are determined by the signature of the knot in the same way as for torus knots in \eqref{eq:irrhomtorusknots}. In the special case of a 2-bridge knot, it was shown in \cite[Theorem 1.12]{DS} that the irreducible instanton homology for a certain local coefficient system with $T^4\neq 1$ determines a version of lens space instanton homology, denoted $I_\ast(L(p,q))$, defined by Furuta \cite{furuta-invariant} and Sasahira \cite{sasahira-lens}. The latter is a $\Z/4$-graded vector space over $\bF$, the field with 2 elements. In \cite{sasahira-lens}, Sasahira computed these groups for the family $L(8N+1,2)$. Combining \cite[Theorem 1.12]{DS} with Theorem \ref{thm:scomplexsign} we obtain:

\begin{introcor}\label{cor:sasahira}
	Suppose $\sigma(K_{p,q})\leqslant 0$ where $K_{p,q}$ is the 2-bridge knot with double branched cover $L(p,-q)$. Then the lens space instanton homology of Furuta and Sasahira is given by
	\[
		I_\ast(L(p,-q)) \cong \bF_{(1)}^{\lceil -\sigma(K_{p,q})/4 \rceil}\oplus \bF_{(3)}^{\lfloor -\sigma(K_{p,q})/4 \rfloor}
	\]
as a $\Z/4$-graded $\bF$-vector space. If $\sigma(K_{p,q})>0$, use $I_\ast(L(p,-q))\cong I_{3-\ast}(L(p,q))$. 
\end{introcor}

In \cite{sasahira-lens}, Sasahira proved a gluing result for 2-torsion instanton invariants of 4-manifolds which decompose along lens spaces. Using this result and knowledge of $I_\ast(L(p,q))$, he studied the question of whether $\bC\bP^2\#\bC\bP^2$ can be decomposed along $L(p,q)$.

\subsection*{The singular Fr\o yshov invariant and the unoriented $4$-ball genus}

The result of Theorem \ref{thm:htwistedsign} shows that the singular instanton Fr\o yshov invariant for a knot in the 3-sphere is determined by the signature, so long as we work with a coefficient ring $\sS$ for which $T^4\neq 1$. When $T^4=1$ the situation is rather different, as already evidenced by the computations in \cite{DS}. The following highlights this dichotomy further.

\begin{introprop}\label{prop:hzcomps} Suppose $T^4=1$ in $\sS$. Then we have the following computations for $h_\sS$:
	\begin{itemize}
		\item[{\em (i)}] If $K$ is alternating, or more generally quasi-alternating, then $h_\sS(K)=0$.
		\item[{\em (ii)}] For torus knots, $h_\sS$ is determined recursively by: for coprime $p,q\in \Z_{>0}$, 
		\[
			h_\sS(T_{p,p+q}) +\frac{1}{2}\sigma(T_{p,p+q}) = h_\sS(T_{p,q}) + \frac{1}{2}\sigma(T_{p,q}) - \left\lfloor p^2/4 \right\rfloor.
		\]
	\end{itemize}
\end{introprop}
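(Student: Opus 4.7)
For part (i), I would argue by induction on the determinant $\det(K)$ of the quasi-alternating knot $K$. The base case $\det(K)=1$ forces $K$ to be the unknot, whose $\cS$-complex is essentially trivial and for which $h_\sS=0$. For the inductive step, the defining property of quasi-alternating knots supplies a crossing whose two unoriented resolutions $K_0, K_1$ are quasi-alternating with strictly smaller determinants and with $\det(K)=\det(K_0)+\det(K_1)$. The plan is to lift the unoriented skein exact triangle for singular instanton homology of Kronheimer--Mrowka to the equivariant $\cS$-complex with local coefficients $\Delta_\sS$. The hypothesis $T^4 = 1$ is exactly what renders $\Delta_\sS$ compatible with the three bands appearing in the triangle, since the relevant meridional holonomies differ by $T^{\pm 4}$. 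By induction $h_\sS(K_0) = h_\sS(K_1) = 0$, and the additivity of determinants forces the total rank of the reduced equivariant complex of $K$ to match $\det(K)$. A thinness argument on the long exact sequence, combined with the structure of the reducible tower defining $h_\sS$, would then pin the Fr\o yshov element at its lowest possible position and conclude $h_\sS(K) = 0$.

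For part (ii), my plan is to prove both directions of the stated equality by exhibiting cobordisms between $T_{p,q}$ and $T_{p,p+q}$ realising matching Fr\o yshov bounds. The crucial geometric input is the braid factorisation
\[
 (\sigma_1 \cdots \sigma_{p-1})^{p+q} \;=\; (\sigma_1 \cdots \sigma_{p-1})^q \cdot \Delta_p^2,
\]
where $\Delta_p^2$ denotes the full twist on $p$ strands. Resolving this full twist by a sequence of saddle moves produces a (typically non-orientable) cobordism $\Sigma$ from $T_{p,p+q}$ to $T_{p,q}$; both its first Betti number and its normal Euler number in $[0,1]\times S^3$ are readable off from the braid combinatorics. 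Applying the Fr\o yshov cobordism inequality in the $\cS$-complex framework (with $T^4=1$), which in the non-orientable setting carries a correction quadratic in the normal Euler number, would yield
\[
 h_\sS(T_{p,p+q}) + \frac{1}{2}\sigma(T_{p,p+q}) \;\leqslant\; h_\sS(T_{p,q}) + \frac{1}{2}\sigma(T_{p,q}) - \lfloor p^2/4\rfloor.
\]
The value $\lfloor p^2/4\rfloor = \max\{ab : a,b\in\Z_{>0},\; a+b=p\}$ emerges naturally as the optimal quadratic pairing of strands across the twist.

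For the reverse inequality I would either construct a complementary cobordism---for example via a Milnor-fibre smoothing of the algebraic singularity $x^p + y^{p+q}=0$ interpolating to $x^p + y^q = 0$---or work it out algebraically by a direct comparison of $\cS$-complexes using the positive braid presentation of torus knots. The main obstacle I anticipate is establishing the precise form of the non-orientable Fr\o yshov cobordism inequality, and in particular verifying that the quadratic correction matches $\lfloor p^2/4\rfloor$ exactly. A useful consistency check is that iterating the recursion with $p=2$ stays within the alternating family $T_{2,q}$, and therefore must yield $h_\sS=0$ throughout by part (i).
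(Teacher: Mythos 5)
For part (i), your skein-triangle induction rests on two steps that are not supplied and are far from routine. First, an unoriented skein exact triangle for the equivariant $\cS$-complexes with local coefficients $\Delta_\sS$ is not available in this framework, and producing one (together with the asserted role of $T^4=1$ for the holonomies along the three bands) would be a substantial project in itself. Second, and more fundamentally, even granted such a triangle and the rank count $\det K$, exactness and ranks do not determine $h_\sS$: the Fr\o yshov invariant depends on the local equivalence class of $\widetilde C_\ast(K;\Delta_\sS)$, i.e.\ on the interplay of $d$, $v$, $\delta_1$, $\delta_2$, and your ``thinness argument'' would need a structure theorem for thin $\cS$-complexes that you neither state nor prove. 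The paper's route is entirely different and much shorter: by Ozsv\'ath--Szab\'o, a quasi-alternating $K$ admits (possibly non-orientable) cobordisms $U_1\to K$ and $K\to U_1$ in $I\times S^3$ whose branched double covers are negative definite; this is exactly the condition $\ind(\Theta)=-1$ of Lemma \ref{index-reducible-non-orientable}, and precisely because $T^4=1$ these non-orientable cobordisms induce morphisms of $\cS$-complexes in both directions, so $\widetilde C_\ast(K)$ is locally equivalent to the trivial complex and $h_\sS(K)=0$ (indeed $\Gamma_K$ is also trivial, Proposition \ref{prop:gammazquasialt}).

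For part (ii) the proposed cobordisms provably cannot produce the constant $\lfloor p^2/4\rfloor$ once $p\geqslant 3$. Resolving the $p(p-1)$ crossings of $\Delta_p^2$ by saddles compatible with the braid orientation gives an \emph{orientable} cobordism $S\colon T_{p,p+q}\to T_{p,q}$ in $I\times S^3$ with $\chi(S)=-p(p-1)$ and $S\cdot S=0$ (numerically it is the Milnor-fibre cobordism), and Theorem \ref{crosscap-number-ineq} then yields only $h_\sS(T_{p,p+q})+\frac12\sigma(T_{p,p+q})\leqslant h_\sS(T_{p,q})+\frac12\sigma(T_{p,q})+\frac{p(p-1)}{2}$, which is useless; if instead you use unoriented smoothings you no longer land on $T_{p,q}$, and in any case bands contained in crossing balls change the normal Euler number by at most $2$ apiece, so $\frac12\chi(S)+\frac14 S\cdot S\leqslant 0$, whereas the ``$\leqslant$'' half of the recursion requires a cobordism with $\frac12\chi(S)+\frac14 S\cdot S\geqslant\lfloor p^2/4\rfloor$. (Note also that the correction in the non-orientable inequality is linear, not quadratic, in the normal Euler number.) Your fallback for the other half, the Milnor fibre, has genus $\frac{p(p-1)}{2}>\lfloor p^2/4\rfloor$, so it too only gives the recursion with $\frac{p(p-1)}{2}$ in place of $\lfloor p^2/4\rfloor$. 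The paper's proof is structured differently: the ``$\geqslant$'' direction comes from the full twist realized as an annulus of self-intersection $-p^2$ in $(I\times S^3)\#\overline{\C\P}^2$ (Proposition \ref{eq:blowuptwistineq}), where $\lfloor p^2/4\rfloor$ emerges from $4\kappa_{\text{min}}+\frac14 S\cdot S$ after choosing the auxiliary class $c$ according to $p\bmod 4$ so that $\eta\neq 0$ when $T^4=1$; the ``$\leqslant$'' direction uses Batson's pinch-move cobordism, which has $\chi=-1$ but large positive self-intersection and lands on $T_{r,r+s}$ with smaller braid index rather than on $T_{p,q}$, so the recursion is closed by an induction on $p+q$ rather than by a direct pair of cobordisms between $T_{p,q}$ and $T_{p,p+q}$. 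Some such detour appears unavoidable, and your construction falls short of the sharp constant.
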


\noindent These computations show that in the case $T^4=1$, for quasi-alternating knots and torus knots the invariant $h_\sS(K)+\frac{1}{2}\sigma(K)$ agrees with the Heegaard Floer invariant $\upsilon(K)$ (upsilon of $K$) defined by Ozsv\'{a}th--Stipsicz--Szab\'{o} \cite{oss} and also with the invariant $-\frac{1}{2}t(K)$ defined by Ballinger \cite{ballinger} in the setting of Khovanov homology.

An important ingredient in the computations of Proposition \ref{prop:hzcomps} is the relationship between $h_\sS(K)$ and the unoriented 4-ball genus $\gamma_4(K)$. Also called the 4-dimensional crosscap number, $\gamma_4(K)$ is defined to be the minimal $b_1(S)=b_1(S;\Z/2)$ among all smoothly embedded possibly non-orientable surfaces $S$ in the 4-ball with boundary $K$. Clearly $\gamma_4(K)\leq 2g_s(K)$. We have the following:

\begin{introthm}\label{thm:hzineq1}
	Suppose $T^4=1$ in $\sS$. Suppose $S$ is a properly smoothly embedded (possibly non-orientable) surface in the 4-ball with boundary the knot $K$. Then
	\begin{equation}
		\left|h_\sS(K)+\frac{1}{2}\sigma(K) - \frac{1}{4}S\cdot S \right| \leq b_1(S). \label{eq:ineqhznonor}
	\end{equation}
\end{introthm}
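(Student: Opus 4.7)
The plan is to derive the two-sided bound from a non-orientable cobordism inequality for the singular Fr\o yshov invariant $h_\sS$, applied to $(B^4, S)$ viewed as a cobordism between the unknot and $K$. The hypothesis $T^4 = 1$ is essential here: by Theorem \ref{thm:htwistedsign}, whenever $T^4 \neq 1$ one has $h_\sS(K) = -\frac{1}{2}\sigma(K)$, so the quantity $h_\sS(K) + \frac{1}{2}\sigma(K)$ vanishes identically and the bound would be vacuous; under $T^4 = 1$, however, this combination behaves as a slice-genus-type invariant, analogous to Ozsv\'ath--Stipsicz--Szab\'o's $\upsilon(K)$, as indicated by Proposition \ref{prop:hzcomps}.

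First I would puncture $(B^4, S)$ at an interior point of $S$ to obtain a surface cobordism $(W, F) \colon (S^3, U_1) \to (S^3, K)$, where $W \cong [0,1]\times S^3$ and $F = S\setminus \mathrm{int}(D^2)$. A short Euler-characteristic computation gives $b_1(F) = b_1(S) + 1$, while $F \cdot F = S \cdot S$ since the puncture does not affect the normal Euler number. Granting a cobordism inequality of the form
\[
h_\sS(K') - h_\sS(K) \;\leq\; (b_1(F) - 1) + \tfrac{1}{4} F \cdot F - \tfrac{1}{2}(\sigma(K') - \sigma(K))
\]
for a (possibly non-orientable) embedded surface cobordism $(W, F)\colon (S^3, K) \to (S^3, K')$ in the $T^4 = 1$ coefficient system, the upper bound in the theorem follows immediately by taking $K = U_1$ and using $h_\sS(U_1) = 0$, $\sigma(U_1) = 0$. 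For the matching lower bound, I would reverse the cobordism to get $(\overline{W}, \overline{F})\colon (S^3, K) \to (S^3, U_1)$: the orientation reversal of $W$ negates the normal Euler number, so $\overline{F} \cdot \overline{F} = -S\cdot S$, while $b_1$ is unchanged. Applying the same inequality to $(\overline{W}, \overline{F})$ and rearranging gives $h_\sS(K) + \tfrac{1}{2}\sigma(K) - \tfrac{1}{4} S \cdot S \geq -b_1(S)$, and combining the two directions yields the theorem.

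The hard part is establishing the cobordism inequality itself. This requires (i) extending the construction in \cite{DS} of cobordism maps on $\cS$-complexes to the setting of non-orientable embedded surfaces; (ii) carrying out the Atiyah--Patodi--Singer-type index computation that produces the $\tfrac{1}{4} F \cdot F$ term from the normal Euler number contribution to the expected dimension of the singular instanton moduli space, together with the $b_1(F) - 1$ correction from the homology of the surface; and (iii) pinning down the $-\tfrac{1}{2}\sigma$ correction with the right sign and coefficient. Step (iii) is really the conceptual heart of the matter, as it encodes why the \emph{corrected} Fr\o yshov-type invariant $h_\sS + \tfrac{1}{2}\sigma$, rather than $h_\sS$ itself, is the quantity monotone under such cobordisms in the $T^4 = 1$ setting, mirroring the role of $\upsilon$ in Heegaard Floer theory.
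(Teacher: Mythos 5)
Your reduction is exactly the paper's: puncture $S$ to get a cobordism $F\colon U_1\to K$ in $I\times S^3$, apply a non-orientable cobordism inequality for $h_\sS$ to $F$, and apply it again to the reversed cobordism (which negates $F\cdot F$) to get the other side of the absolute value. Your bookkeeping ($b_1(F)=b_1(S)+1$, $F\cdot F=S\cdot S$) is correct, and the sharp form of the inequality the paper proves (Theorem \ref{crosscap-number-ineq}) carries $\tfrac12\chi(F)=\tfrac{1-b_1(F)}{2}$ rather than your $-(b_1(F)-1)$, which is why Section \ref{sec:unoriented} actually obtains the stronger bound $-\chi(S)/2$ in place of $b_1(S)$. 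The problem is that the entire content of the theorem is the cobordism inequality that you explicitly ``grant'': everything before and after it is two lines of arithmetic, so as written the proposal has a genuine gap.

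Your sketch of how to fill that gap also misses the points where the work actually lies. First, the reason $T^4=1$ is needed is not that the statement would be vacuous otherwise; it is that a non-orientable $S$ admits no global reduction $L\oplus L^*$ along the surface, hence no monopole number $\nu$, so cobordism-induced maps cannot be made compatible with the local system $\Delta_\sS$ unless $T^4=1$ (or one works over $\Z$). The paper handles this by reformulating singular connections on $(W,S)$ as $\tau$-equivariant $SO(3)$-connections on the double branched cover, and the index of the unique reducible $\Theta$ (Lemma \ref{index-reducible-non-orientable}) is $\ind(\Theta)=\chi(S)+\tfrac12 S\cdot S+\sigma(K)-\sigma(K')-1$; the $\sigma$-term you single out as the ``conceptual heart'' simply falls out of this index computation exactly as in the orientable case \eqref{eq:index}. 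Second, and more seriously, when $\ind(\Theta)<-1$ there is no induced morphism of $\cS$-complexes at all, so the inequality cannot be read off from Proposition \ref{h-reinterpret}; the paper gets around this by boundary-summing with copies of $(\overline{\C\P}^2\setminus B^4,S_3)$ and using $h_\sS(T_{3,4})=1$ when $T^4=1$ (Corollary \ref{cor:mmplus1}) to reduce to the case $\ind(\Theta)\geq -1$, where one has a level-$(\ind(\Theta)+1)$ morphism. Your outline omits this negative-index case entirely, and without it the argument only yields the inequality for surfaces of sufficiently small $b_1$.
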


\noindent The term $S\cdot S$ should be interpreted as the euler number of the normal bundle of $S$. Just as in \cite{batson,oss,ballinger}, we may combine the inequality \eqref{eq:ineqhznonor} with an inequality $|\sigma(K)-\frac{1}{2}S\cdot S|\leq b_1(S)$ of Gordon--Litherland \cite{gl} to obtain the following.

\begin{introthm}\label{thm:hzunorientedgenus} Suppose $T^4=1$ in $\sS$. For a knot $K\subset S^3$ we have 
\[
	|h_\sS(K)| \leq \gamma_4(K).
\]
\end{introthm}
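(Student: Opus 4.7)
The plan, following the strategy of \cite{batson, oss, ballinger}, is to apply Theorem \ref{thm:hzineq1} and the Gordon-Litherland inequality $|\sigma(K) - \tfrac{1}{2}S\cdot S| \leq b_1(S)$ simultaneously to an optimal surface bounding $K$, and then sharpen the resulting bound via the boundary twist trick.

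Fix a properly smoothly embedded surface $S \subset B^4$ with $\partial S = K$ and $b_1(S) = \gamma_4(K)$. If $b_1(S) = 0$ then $S$ is a slice disk, $S\cdot S = 0$, Gordon-Litherland forces $\sigma(K) = 0$, and Theorem \ref{thm:hzineq1} immediately yields $h_\sS(K) = 0$. Assume henceforth $b_1(S) \geq 1$ and set $c := \tfrac{1}{2}\sigma(K) - \tfrac{1}{4}S\cdot S$. Theorem \ref{thm:hzineq1} reads $|h_\sS(K) + c| \leq b_1(S)$, and halving the Gordon-Litherland bound gives $|c| \leq b_1(S)/2$. A naive triangle inequality yields only $|h_\sS(K)| \leq \tfrac{3}{2} b_1(S)$, which is not sharp enough.

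To sharpen this to $|h_\sS(K)| \leq b_1(S)$, I invoke the boundary twist construction: for each integer $k$ with $|c - k| \leq b_1(S)/2$, there exists a surface $S_k \subset B^4$ with $\partial S_k = K$, $b_1(S_k) = b_1(S)$, and $S_k\cdot S_k = S\cdot S + 4k$. (The constraint $|c-k| \leq b_1(S)/2$ is exactly Gordon-Litherland applied to $S_k$.) Choose $k$ to be the nearest integer to $c$; since $b_1(S) \geq 1$, this $k$ lies in the allowed range, and $|c - k| \leq \tfrac{1}{2}$. Applying Theorem \ref{thm:hzineq1} to $S_k$ then gives
\[
\bigl|h_\sS(K) + (c - k)\bigr| \leq b_1(S_k) = b_1(S),
\]
whence $|h_\sS(K)| \leq b_1(S) + \tfrac{1}{2}$. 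Since both $h_\sS(K)$ and $b_1(S)$ are integers, this sharpens to $|h_\sS(K)| \leq b_1(S) = \gamma_4(K)$, as desired.

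The main technical ingredient is the boundary twist itself — a standard local modification on a non-orientable surface in $B^4$ which alters the normal Euler number by $\pm 4$ without changing $b_1$, and whose realizability within the Gordon-Litherland range is classical. All new input comes from Theorem \ref{thm:hzineq1}; the passage from it to Theorem \ref{thm:hzunorientedgenus} is a direct translation of the argument used for $\upsilon$ in \cite{oss}, for the $d$-invariant of the double branched cover in \cite{batson}, and for $-\tfrac{1}{2}t(K)$ in \cite{ballinger}.
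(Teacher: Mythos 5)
Your diagnosis of the difficulty is accurate: with \eqref{eq:ineqhznonor} as stated in the introduction (right-hand side $b_1(S)$) and Gordon--Litherland halved to $|\tfrac12\sigma(K)-\tfrac14 S\cdot S|\le b_1(S)/2$, the triangle inequality only yields $\tfrac32 b_1(S)$. But your fix has a genuine gap. The entire weight of your argument rests on the claim that for each integer $k$ in the Gordon--Litherland-allowed range there exists a surface $S_k\subset B^4$ with $\partial S_k=K$, $b_1(S_k)=b_1(S)$ and $S_k\cdot S_k=S\cdot S+4k$. This is not a classical fact, and your justification is circular: Gordon--Litherland is an \emph{obstruction} to the existence of such a surface, not a realization theorem, so ``the constraint is exactly GL applied to $S_k$'' provides no existence statement. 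The local moves that preserve an embedding's $b_1$ do not change the normal Euler number of an \emph{embedded} surface, while the moves that do shift $e$ (connect-summing with standard $\PP^2$'s of $e=\pm2$, adding crosscaps) change $b_1$. Indeed, if every GL-admissible Euler number were realized at fixed $b_1$, inequalities such as \eqref{eq:ineqhznonor} (and their analogues for $\upsilon$ and $t$) would carry far less information than they do; determining which pairs $(b_1(S),S\cdot S)$ occur for surfaces bounding $K$ is exactly the kind of question these bounds are designed to constrain. Your scheme also breaks down when the minimizing surface is orientable with $b_1\ge 2$: then $S\cdot S=0$ is forced and no twisting is available, yet you would need $S_k\cdot S_k=4k$ with $k=\sigma(K)/2$ possibly nonzero.

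The paper's proof needs none of this, because the inequality actually established in Section \ref{sec:unoriented} is sharper than the form quoted in the introduction: inequality \eqref{non-orientable-ineq-1}, obtained by applying Theorem \ref{crosscap-number-ineq} to the punctured surface $S\setminus B^4$ viewed as a cobordism $U_1\to K$ and to its reverse, bounds $|h_\sS(K)+\tfrac12\sigma(K)-\tfrac14 S\cdot S|$ by $-\chi(S)/2$, which is at most $b_1(S)/2$. With both this and Gordon--Litherland bounded by $b_1(S)/2$, the direct triangle inequality already gives $|h_\sS(K)|\le b_1(S)=\gamma_4(K)$, with no modification of the surface; this is also how the arguments of \cite{batson,oss,ballinger} that you cite actually proceed. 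So the correct repair of your argument is not to twist the surface but to use the stronger form of Theorem \ref{thm:hzineq1} that the paper proves.
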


\noindent These inequalities are also satisfied upon replacing $h_\sS(K)+\frac{1}{2}\sigma(K)$ by the invariant $\upsilon(K)$ \cite[Theorems 1.1, 1.2]{oss} and also by $-\frac{1}{2}t(K)$ \cite[Theorem 1.1]{ballinger}. We remark that Theorem \ref{thm:hzunorientedgenus} applied to connected sums of $T_{3,4}$ shows singular instanton theory detects that $\gamma_4(K)$ can be arbitrarily large, a result first proved by Batson \cite[Theorem 2]{batson}, and which can also be proved using the invariants $\upsilon(K)$ and $t(K)$.

We will upgrade Proposition \ref{prop:hzcomps} (i) and show that in the case $T^4=1$, if $K$ is a quasi-alternating knot then the associated $\Gamma$-invariant is the same as that for the unknot; see Proposition \ref{prop:gammazquasialt}. We also remark that Theorem \ref{thm:existrep2} holds with $-\sigma(K)/2$ replaced by $h_\sS(K)$ when $T^4=1$ in $\sS$. Because in this situation $h_\sS$ and $\sigma$ are linearly independent concordance homomorphisms, this provides more examples of existence results for traceless $SU(2)$ representations; see Theorem \ref{thm:existrep3}.

In \cite{km-concordance}, Kronheimer and Mrowka define homomorphisms $f_\sigma$ from the smooth concordance group to $\Z$ for suitable base changes $\sigma:\bF[T_0^{\pm 1}, T_1^{\pm 1}, ^{\pm 1}, T_2^{\pm 1},T_3^{\pm 1}]\to \sS$ where $\sS$ is a ring and $\bF$ is the field with 2 elements. If $\sigma(T_0)=\sigma(T_1)=1$, then the corresponding invariant $f_\sigma$ behaves similarly to $h_\sS$ (here $T^4=1$), in that it satisfies the same kinds of inequalities; see Proposition 5.5 of \cite{km-concordance} and the discussion thereafter. It would be interesting to determine the precise relationship between these invariants.

\subsection*{Outline}

Section \ref{sec:prelims} provides background on the equivariant singular instanton constructions from \cite{DS}. The discussion therein of cobordism-induced morphisms is slightly more general than that of \cite{DS}, and is generalized even further in Section \ref{sec:inequalities}. In Section \ref{sec:prelims} we also consider cobordisms with immersed surfaces, and the relations for some basic cobordism moves in this setting are discussed following \cite{Kr:obs}. Section \ref{sec:comps} contains computations for $2$-bridge knots, and we prove Theorems \ref{thm:doubletwist} and \ref{thm:unknottingnumbers}. In Section \ref{sec:inequalities} we prove inequalities for singular Fr\o yshov invariants and for the $\Gamma_K$ invariant, of which Theorem \ref{thm:gammaintro} is a special case, and this in turn completes the proof of Theorem \ref{thm:clasp}. Theorems \ref{torus-conc-reps} and \ref{torus-conc-floer-homology} are also proved in Section \ref{sec:inequalities}. In Section \ref{sec:froyshovandsuspension} we prove Theorems \ref{thm:htwistedsign} and \ref{thm:scomplexsign}. In Section \ref{sec:unoriented}, we discuss the invariant $h_\sS$ in the case that $T^4=1$ and prove Proposition \ref{prop:hzcomps} and Theorem \ref{thm:hzineq1}.

\subsection*{Acknowledgments}

The authors thank Peter Kronheimer and Tom Mrowka for steering attention to the problem answered by Theorem \ref{thm:clasp}, and for suggesting the potential utility of using connected sums of $7_4$, and more generally double twist knots. The authors also thank Charles Livingston and Brendan Owens for helpful correspondences and the anonymous referee for helpful comments and suggestions on earlier versions of the paper.

\subsection*{Conventions}

We write $\sS$ for a commutative integral domain algebra over $\Z[U^{\pm 1}, T^{\pm 1}]$ with unit. The pair $(Y,K)$ always denotes a knot $K$ embedded in an integer homology 3-sphere $Y$. In the case that $Y=S^3$, we drop $S^3$ from our notation whenever it does not cause any confusion. The unknot in the 3-sphere is denoted $U_1$. If $K$ is a knot in the 3-sphere, $-K$ denotes its mirror. Given a knot $K$ and a positive integer $n$, we write $nK$ for the $n$-fold connected sum of $K$. If $n$ is negative, $nK=n(-K)$, and $0K=U_1$.

%\newpage

%!TEX root = main.tex

\section{Preliminaries}\label{sec:prelims}

In this section we provide some background material. Most but not all of the material discussed is from \cite{DS}, and this section serves in part as an introduction to that more detailed reference. In the first two subsections, we review the algebra of $\cS$-complexes and their Fr\o yshov invariants, and briefly recall some of the invariants defined for knots from \cite{DS}. Subsections \ref{subsec:reducibles} and \ref{sec:morphisms} slightly generalize the class of cobordism-induced morphisms from \cite{DS}. In Subsection \ref{subsec:cobrels}
 we review some relations on morphisms induced by cobordism moves following \cite{Kr:obs}. In Subsection \ref{sec:enriched} we review the structure of the Chern--Simons filtration on the $\cS$-complex for a knot, and the resulting concordance invariants $\Gamma_{(Y,K)}$ are discussed in Subsection \ref{sec:gammadef}.

\subsection{$\cS$-complexes and Fr\o yshov invariants}\label{subsec:scomplexes} Let $R$ be an integral domain. An {\emph{$\cS$-complex over $R$} is the data $(\widetilde C_\ast,\widetilde d,\chi)$ where 
\begin{itemize}
\item $(\widetilde C_\ast, \widetilde d)$ is a chain complex with $\widetilde C_\ast$ a free $R$-module;
\item $\chi$ is a degree 1 endomorphism of $\widetilde C_\ast$ such that $\chi^2=0$ and $\widetilde d \chi + \chi \widetilde d = 0$;
\item $\text{ker}(\chi)/\text{im}(\chi)$ is isomorphic to $R_{(0)}$, a copy of $R$ in grading $0$.
\end{itemize}
All $\cS$-complexes in the sequel will be $\Z/4$-graded. A typical $\cS$-complex is given by $\widetilde C_\ast = C_\ast \oplus C_{\ast-1}\oplus R$ where $(C_\ast,d)$ is a chain complex and:
\[
	\widetilde d = \left[ \begin{array}{ccc} d & 0 & 0 \\ v & -d & \delta_2 \\ \delta_1 & 0 & 0  \end{array} \right], \qquad \chi =  \left[ \begin{array}{ccc}0 & 0 & 0 \\ 1 & 0 & 0 \\ 0 & 0 & 0  \end{array} \right].
\]
Here $v:C_\ast\to C_{\ast-2}$, $\delta_1:C_1\to R$ and $\delta_2:R\to C_{-2}$ are maps that necessarily satisfy $\delta_1d=0$, $d\delta_2=0$, and $dv-vd-\delta_2\delta_1=0$. In fact every $\cS$-complex is isomorphic to such an $\cS$-complex, and we therefore freely use this latter description.

A {\emph{morphism}} $\widetilde \lambda:\widetilde C_\ast\to \widetilde C_\ast'$ between two $\cS$-complexes $\widetilde C_\ast = C_\ast \oplus C_{\ast-1}\oplus R$ and $\widetilde C'_\ast = C'_\ast \oplus C'_{\ast-1}\oplus R$ is a chain map which has the form
\begin{equation}\label{eq:morphism}
	\widetilde \lambda = \left[ \begin{array}{ccc} \lambda & 0 & 0 \\ \mu & \lambda & \Delta_2 \\ \Delta_1 & 0 & \eta  \end{array} \right]
\end{equation}
where $\eta\in R$ is a {\em non-zero} element. Here we slightly diverge from \cite{DS}, where a morphism was required to have $\eta=1$. However, all of the constructions easily extend to the case in which $\eta$ is non-zero.

\begin{remark}
In Subsection \ref{sec:higherlevelmorphisms} we will consider more general types of morphisms. $\diamd$
\end{remark}

An {\emph{$\cS$-chain homotopy}} $\widetilde C_\ast\to \widetilde C_\ast'$ between two morphisms is a chain homotopy in the usual sense which with respect to the decompositions has the form
\[
	 \left[ \begin{array}{ccc} K & 0 & 0 \\ L & -K & M_2 \\ M_1 & 0 & 0  \end{array} \right].
\]
A {\emph{chain homotopy equivalence}} between $\cS$-complexes $\widetilde C_\ast$ and $\widetilde C_\ast'$ is a pair of morphisms $\widetilde \lambda:\widetilde C_\ast\to \widetilde C_\ast'$ and $\widetilde\lambda':\widetilde C'_\ast\to \widetilde C_\ast$ such that $\widetilde \lambda'\widetilde\lambda$ and $\widetilde \lambda\widetilde\lambda'$ are $\cS$-chain homotopy equivalent to the identity morphisms.

Given $\cS$-complexes $(\widetilde C_\ast, \widetilde d,\chi)$ and $(\widetilde C'_\ast, \widetilde d',\chi')$ we have the tensor product $\cS$-complex $(\widetilde C^\otimes_\ast , \widetilde d^\otimes, \chi^\otimes)$ defined by setting $\widetilde C_\ast^\otimes =\widetilde C_\ast \otimes \widetilde C'_\ast$ with the tensor product grading, and
\begin{equation}
	\widetilde d^\otimes = \widetilde d\otimes 1 + \epsilon \otimes \widetilde d', \qquad \chi^\otimes = \chi\otimes 1 + \epsilon \otimes \chi'.\label{eq:tensorprod}
\end{equation}
Here $\epsilon$ is the ``sign map'' on $\widetilde C_\ast'$, multiplying elements of homogeneous even (resp. odd) degree by $+1$ (resp. $-1$). The dual of an $\cS$-complex $(\widetilde C_\ast, \widetilde d,\chi)$ has underlying complex the dual chain complex, and with the dual of $\chi$ acting on it.

We say two $\cS$-complexes $\widetilde C_\ast$ and $\widetilde C_\ast'$ are {\emph{locally equivalent}} if there exist morphisms in both directions: $\widetilde C_\ast\to \widetilde C_\ast'$ and $\widetilde C'_\ast\to \widetilde C_\ast$. The equivalence classes form an abelian group $\Theta_R^\cS$ with addition and inversion induced by tensor product and dual, respectively. We call $\Theta_R^\cS$ the {\emph{local equivalence group}} of $\cS$-complexes (over $R$ and $\Z/4$-graded). 

The Fr\o yshov invariant is a surjective homomorphism $h:\Theta_R^\cS\to \Z$. It is characterized as follows, which the reader may take as a definition:

\begin{prop}[Prop. 4.15 of \cite{DS}]\label{h-reinterpret}
	The invariant $h(\widetilde C_*)$ is positive if and only if there is an $\alpha\in C_1$ such that $d(\alpha)=0$ and $\delta_1(\alpha)\neq 0$.
	If $h(\widetilde C_*)=k$ for a positive integer $k$, then $k$ is the largest integer such that there exists $\alpha\in C_*$
	satisfying the following properties:
	\begin{equation*}
	  d\alpha=0,\hspace{1cm}\delta_1v^{k-1}(\alpha)\neq 0,\hspace{1cm}\delta_1v^{i}(\alpha)=0\hspace{.5cm} \text{ for }i\leq k-2.
	\end{equation*}
\end{prop}

\noindent If $R$ is a field, then $h:\Theta_R^\cS\to \Z$ is an isomorphism. In general, $h:\Theta_R^\cS\to \Z$ factors through the isomorphism given by the Fr\o yshov invariant for the field of fractions of $R$.

\subsection{Invariants for knots in homology spheres}\label{subsec:invariants}

Let $(Y,K)$ be a pair of an oriented integer homology 3-sphere $Y$ with a knot $K\subset Y$. In \cite{DS}, the authors associated to $(Y,K)$ a $\Z/4$-graded $\cS$-complex over $\Z$:
\[
	\widetilde C_\ast(Y,K) = C_\ast(Y,K) \oplus C_{\ast-1}(Y,K)\oplus \Z.
\]
In general, this $\cS$-complex depends on an orbifold metric on $Y$ with $\Z/2$-orbifold singularity along $K$ and some perturbation data. But the $\cS$-chain homotopy type of this is an invariant of $(Y,K)$. Its homology was identified in \cite[Theorem 8.9]{DS} with Kronheimer and Mrowka's $I^\natural_\ast(Y,K)$ from \cite{KM:unknot}, up to a grading shift. The homology of $C_\ast(Y,K)$ is denoted $I_\ast(Y,K)$ and is called the {\emph{irreducible}} instanton homology of the knot, as its chain complex is generated by (perturbed) irreducible $SU(2)$ flat connections on $Y\setminus K$ with holonomy of order 4 around small meridians. We have
\[
	\chi\left( I_\ast(Y,K)\right) = \frac{1}{2}\sigma(Y,K) + 4\lambda(Y)
\]
where $\lambda(Y)$ is the Casson invariant of $Y$, a result essentially due to Herald \cite{herald}.

The $\cS$-complex structure of $\widetilde C_\ast(Y,K)$ is closely tied to the following important fact. For knots in integer homology 3-spheres $(Y,K)$, among singular flat $SU(2)$ connections mod gauge on $(Y,K)$ as above, all are irreducible except for a unique {\emph{reducible}}, which is typically denoted $\theta$. This reducible has gauge stabilizer $S^1$ and corresponds to the abelian holonomy representation $\pi_1(Y\setminus K)\to SU(2)$ with image  $\{\pm 1,\pm i\}$, and sends a meridian to $i$. The meridian is around a basepoint $p\in K$, and its orientation is fixed by choosing an orientation of $K$. However the choices of the basepoint $p$ and the orientation of $K$ do not affect the isomorphism classes of the invariants we consider, and so they are usually suppressed from the notation.

The Fr\o yshov invariant associated to the $\cS$-complex $\widetilde C_\ast(Y,K)$ is denoted $h_\Z(Y,K)\in \Z$. This induces a homomorphism from the homology concordance group to the integers. The same constructions carry through for any coefficient ring $R$ which is an integral domain. Some computations of $h_\Z(Y,K)$ were given in \cite[Section 9]{DS}. 

Motivated by constructions from \cite{KM:YAFT}, the authors also defined versions of the above invariants with local coefficients. Generally, for an integral domain algebra $\sS$ over $\Z[U^{\pm 1}, T^{\pm 1}]$, we construct a $\Z/4$-graded $\cS$-complex over $\sS$, denoted
\begin{equation}\label{eq:twistedscomplex}
	\widetilde C_\ast(Y,K;\Delta_\sS),
\end{equation}
whose $\cS$-chain homotopy type is an invariant of $(Y,K)$. The variable $U$ encodes information about the Chern-Simons functional, while $T$ has to do with the monopole numbers of instantons. In fact there is more structure on \eqref{eq:twistedscomplex} than just the $\cS$-chain homotopy type; see Subsection \ref{sec:enriched}. From the $\cS$-complex $\widetilde C_\ast(Y,K;\Delta_\sS)$ we obtain a Fr\o yshov invariant
\[
	h_\sS(Y,K)\in \Z,
\]
also a homomorphism from the homology concordance group to $\Z$. 

The $\Z/4$-graded irreducible homology $I_\ast(Y,K;\Delta_\sS)$ is related to the Fr\o yshov invariant $h_\sS(Y,K)$. If $h_\sS(Y,K)\geq 0$, then Proposition \ref{h-reinterpret} directly implies:
\begin{equation}\label{h-twisted-ineq-irr-I}
  \rk(I_1(Y,K;\Delta_\sS))\geq \left\lceil \frac{h_\sS(Y,K)}{2}\right\rceil,\hspace{.5cm}
  \rk(I_3(Y,K;\Delta_\sS))\geq \left\lfloor \frac{h_\sS(Y,K)}{2} \right\rfloor.
\end{equation}
Inequalities for the case $h_\sS(Y,K)\leq 0$ are then obtained from the general properties $\text{rk}(I_*(-Y,-K;\Delta_\sS))=\text{rk}(I_{3-\ast}(Y,K;\Delta_\sS))$ and $h_\sS(-Y,-K)=-h_\sS(Y,K)$.

Given two pairs $(Y,K)$ and $(Y',K')$ of knots in integer homology 3-spheres, we may form their connected sum $(Y\# Y',K\# K')$. The invariants are related by:

\begin{theorem}[Theorem 6.1 of \cite{DS}]\label{thm:connectedsum}
	There is a chain homotopy equivalence
\[
	\widetilde C_\ast(Y\#Y',K\#K';\Delta_\sS) \simeq \widetilde C_\ast(Y,K;\Delta_\sS) \otimes \widetilde C_\ast(Y',K';\Delta_\sS) 
\]
of $\Z/4$-graded $\cS$-complexes, and it is natural with respect to split cobordisms, when defined.
\end{theorem}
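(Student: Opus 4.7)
The plan is to construct explicit morphisms of $\cS$-complexes in both directions, induced by a natural connected-sum cobordism of knots and by its reverse, and then to verify that the two compositions are $\cS$-chain homotopic to the identity up to multiplication by a nonzero element of $\sS$. This is a Künneth-type argument whose main novelty in the present setting is keeping track of the $\cS$-structure coming from the $S^1$-action at the reducible, together with the local coefficient system $\Delta_\sS$.

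First I would introduce the standard cobordism $W_\#\colon (Y,K)\sqcup(Y',K')\to (Y\# Y',K\# K')$ whose underlying $4$-manifold is obtained from a pair of cylinders by attaching a $1$-handle inside a small ball meeting the two knots in one point each, and whose embedded surface is a standard saddle band realizing the connected sum on the boundary. Let $W_\#^{\mathrm{op}}$ denote the reverse cobordism. By the cobordism-induced morphism machinery of Subsection~\ref{sec:morphisms}, each of these defines a morphism of $\cS$-complexes. Writing $\otimes$ for the tensor-product $\cS$-complex of \eqref{eq:tensorprod}, the cobordism $W_\#$ yields
\[
\widetilde\mu\colon \widetilde C_\ast(Y,K;\Delta_\sS)\otimes \widetilde C_\ast(Y',K';\Delta_\sS)\longrightarrow \widetilde C_\ast(Y\# Y',K\# K';\Delta_\sS),
\]
and $W_\#^{\mathrm{op}}$ yields a morphism $\widetilde\nu$ in the opposite direction. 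The local system $\Delta_\sS$ extends over both cobordisms because the holonomy and monopole data recorded by $T$ and $U$ are additive under disjoint unions and connected sums along arcs.

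Next I would analyze the two compositions. The composition $\widetilde\nu\circ\widetilde\mu$ is chain homotopic to the morphism induced by the composite cobordism $W_\#^{\mathrm{op}}\circ W_\#$. A handle-cancellation argument in the bulk, paired with the cancellation of a saddle and its inverse saddle on the surface, identifies this composite rel boundary with the product cobordism on $(Y,K)\sqcup(Y',K')$ together with a piece supported in a small ball, which at the chain level contributes only multiplication by a nonzero element of $\sS$. The analogous argument handles $\widetilde\mu\circ\widetilde\nu$. The key analytic input is the neck-stretching and gluing theorem for singular instantons from \cite{KM:YAFT, DS}, which decomposes moduli on the composite across the separating $3$-sphere (meeting the surface transversally in two points) into fiber products of moduli on the two factors; the unique reducible on each side is responsible for producing precisely the tensor-product $\cS$-complex structure on the target.

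The hard part will be checking that the homotopies produced in the previous step are actually $\cS$-chain homotopies, i.e. that they intertwine the operators $\chi\otimes 1+\epsilon\otimes\chi'$ and $\chi^{\#}$ in the prescribed block form. Concretely, one has to trace through the contribution of the $S^1$ gluing parameter at each reducible, verify that this parameter on the composite matches the $\chi$ operator in the tensor construction, and confirm that the off-diagonal pieces $v$, $\delta_1$ and $\delta_2$ assemble correctly on the connected sum. Once this matching is in place, naturality with respect to a split cobordism $(W_1,S_1)\sqcup(W_2,S_2)$ follows formally from the functoriality of cobordism-induced morphisms, because such a split cobordism commutes with $W_\#$ up to a cobordism realizable by $1$-handle slides, which induce chain homotopic morphisms.
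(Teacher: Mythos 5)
Your central step fails for a topological reason, and it is precisely the point where the real difficulty of the theorem lives. The composite $W_{\#}^{\mathrm{op}}\circ W_{\#}$ is \emph{not} identified rel boundary with the product cobordism on $(Y,K)\sqcup (Y',K')$ plus a piece in a small ball, and no handle cancellation can make it so: the merge cobordism is a $4$-dimensional $1$-handle attachment while the split is a $3$-handle attachment, and handles cancel only in adjacent indices. Concretely, the composite $4$-manifold is connected (for $Y=Y'=S^3$ it is $S^4$ with four balls removed), whereas the product cobordism has two components; likewise the composite surface is a connected four-holed sphere, not the two product annuli, since a saddle followed by its inverse saddle does not cancel rel boundary. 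So the assertion that $\widetilde \nu\circ\widetilde\mu$ is induced by a product-plus-local piece is false, and with it the formal deduction that the two compositions are $\cS$-chain homotopic to a unit in $\sS$ times the identity. Proving that the maps induced by these genuinely non-product composites are homotopic to the identity is the hard content of the connected sum theorem; it cannot be extracted from functoriality together with handle moves. There is also a framework gap: the cobordism-induced morphisms of Subsection \ref{sec:morphisms} are defined for cobordisms $(W,S):(Y,K)\to (Y',K')$ with $S$ connected and both ends single knots in homology spheres, so a map into or out of the tensor product $\widetilde C_\ast(Y,K;\Delta_\sS)\otimes \widetilde C_\ast(Y',K';\Delta_\sS)$ associated to a cobordism with disconnected end is not available without an additional construction.

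Note also that the present paper does not prove this statement at all; it quotes it as Theorem 6.1 of \cite{DS}. The argument there is not the formal one you propose: it is a Fukaya-style gluing and neck-stretching analysis along the connected-sum region (the $2$-sphere meeting $K\# K'$ in two points), in which critical points and moduli spaces in the stretched limit are matched with the tensor-product data, keeping track of the reducibles, the circle-valued gluing parameter, and the local coefficient system, with the required homotopies produced by families of metrics on the relevant (non-product) cobordisms rather than by a topological identification. Your instinct that the neck-stretching/gluing theorem and the role of the reducibles are the key inputs is correct, but in a correct proof they must carry the entire argument, not merely the verification of the $\cS$-structure after an (unavailable) identification of composites with products.
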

\noindent This result was a key ingredient in \cite{DS} for relating the equivariant theory to the invariants defined previously by Kronheimer and Mrowka.

To $(Y,K)$ we may also associate three equivariant Floer groups, similar in flavor to the Heegaard Floer package for knots \cite{os-knot, rasmussen-thesis}, which comes with three $\Z/4$-graded $\sS[x]$-modules fitting into an exact triangle:
\begin{equation}\label{eq:exacttriangletwisted}
	\cdots \longrightarrow \hrI(Y,K;\Delta_\sS) \xrightarrow{i}  \brI(Y,K;\Delta_\sS) \xrightarrow{p} \crI(Y,K;\Delta_\sS) \xrightarrow{j}  \hrI(Y,K;\Delta_\sS) \longrightarrow \cdots
\end{equation}
In fact, this is an entirely algebraic construction, depending only on the associated $\cS$-complex for $(Y,K)$; for any $\cS$-complex one has a similar set of equivariant homology groups. See \cite[Section 4]{DS} for details.

\subsection{Reducibles on negative definite cobordisms}\label{subsec:reducibles}

Let $(W,S):(Y,K)\to (Y',K')$ be a cobordism of pairs between oriented integer homology 3-spheres with knots. In particular, $W$ is an oriented cobordism $Y\to Y'$, and we assume that $S$ is a connected, oriented surface cobordism $K\to K'$ embedded in $W$. Fix a cohomology class $c\in H^2(W;\Z)$. Let $E$ be a $U(2)$-bundle over $W$ with $c_1(E)=c$.

Let $W^+$ (resp. $S^+$) be obtained from $W$ (resp. $S$) by attaching cylindrical ends to the boundary. A singular connection associated to $(W,S,c)$ is a connection on $E|_{W^+\backslash S^+}$ which is asymptotic to connections on $Y\setminus K$ and $Y'\setminus K'$ with a controlled singularity along $S^+$. The latter condition is that the holonomy of $A$ around a meridian of $S^+$ is asymptotic to 
\[
  \left[
  \begin{array}{cc}
  	i&0\\
	0&-i
  \end{array}
  \right]
\]
as the size of the meridian goes to zero. The adjoint action of $U(2)$ on $\su(2)$, the Lie algebra of $SU(2)$, induces the adjoint map $\ad:U(2) \to SO(3)$. We focus on singular connections $A$ such that $\det(A)$ is a fixed (non-singular) $U(1)$ connection $\lambda$ on  $\det(E)$. Determinant one automorphisms of the bundle $E$ which are compatible with the asymptotic and singularity conditions of singular connections define a gauge group acting on the space of singular connections. A connection is irreducible if its stabilizer with respect to this gauge group is $\pm 1$, and is reducible otherwise. (See \cite{KM:YAFT} for more details on singular connections.)

The topological energy, or action, of a singular connection $A$ is given by
\begin{equation}
	\kappa(A) = \frac{1}{8\pi^2}\int_{W^+\setminus S^+} \text{Tr}(F_{\ad (A)}\wedge F_{\ad(A)}).\label{eq:kappadef}
\end{equation}
The curvature of $\ad(A)$ extends to the singular locus and its restriction to $S^+$ has the form 
\[
  \left[
  \begin{array}{cc}
  	\Omega&0\\
	0&-\Omega\\
  \end{array}
  \right]
\]
where $\Omega$ is a 2-form with values in the orientation bundle of $S^+$. The monopole number of a singular connection $A$ is given by
\begin{equation}
	\nu(A) = \frac{i}{\pi}\int_{S^+} \Omega - \frac{1}{2} S\cdot S.\label{eq:nudef}
\end{equation}
Note that our convention here is slightly different from \cite{DS} where the self-intersection term in \eqref{eq:nudef} does not appear. 
Equip $W^+$ with a Riemannian metric which is singular of cone angle $\pi$ along $S^+$ and agrees with product metrics at the ends. A {\emph{singular instanton}} associated to $(W,S,c)$ is a singular connection $A$ such that $\ad(A)$ is a finite-energy anti-self-dual connection. 

We write $M(W,S,c;\alpha,\alpha')_d$ for the moduli space, of expected dimension $d$, of singular instantons on $(W,S,c)$ which are asymptotic to flat connections $\alpha$ on $Y\setminus K$ and $\alpha'$ on $Y'\setminus K'$. In general, perturbations are required to achieve transversality for these moduli spaces. We write $M(\alpha,\alpha')_d$ in the case that $(W,S,c)$ is the product $[0,1]\times (Y,K)$, and $c=0$, with product metric structure. This moduli space has a natural $\R$-action and we write $\breve{M}(\alpha,\alpha')_{d-1}$ for the quotient of the part where $\R$ acts freely.

We now discuss {\it reducible singular instantons}, or simply {\it reducibles} for short. Assume $b^1(W)=b^+(W)=0$. In particular, we may assume for simplicity that $\lambda$ has harmonic anti-self-dual $L^2$ curvature. Consider singular instantons on $E\to W^+\setminus S^+$ that are asymptotic to reducible flat singular connections associated to $(Y,K)$ and $(Y',K')$. The reducibles among these singular instantons are in bijection with elements of $H^2(W;\Z)$; the correspondence sends a singular instanton compatible with a splitting
\begin{equation}\label{eq:bundlesplitting}
	E = L\oplus L^\ast \otimes \det(E)
\end{equation}
to $c_1(L)\in H^2(W;\Z)$. In fact, there is a unique singular instanton $A_L$ of the form $\lambda_L\oplus \lambda_L^\ast\otimes \lambda$, compatible with \eqref{eq:bundlesplitting}, where $\lambda_L$ is a $U(1)$ connection on $L\to W^+\setminus S^+$ with harmonic anti-self-dual $L^2$ curvature representing the cohomology class $c_1(L)+\frac{1}{4}S$, and for which $\lambda_L$ has holonomy of a meridian asymptotic to $i\in U(1)$. We will slightly abuse notation and write $S$ for both the homology class induced by $S$ and its Poincar\'{e} dual. Unlike in the non-singular setup, the order of the line bundle factors in the reduction \eqref{eq:bundlesplitting} matters.

The topological energy of the reducible instanton $A_L$ is computed from \eqref{eq:kappadef} to be
\begin{equation}\label{eq:kappa}
	\kappa(A_L)  = -\left(c_1(L) + \frac{1}{4}S - \frac{1}{2}c\right)^2.
\end{equation}
The monopole number of $A_L$ is computed from \eqref{eq:nudef} to be
\begin{equation}\label{eq:nu}
	\nu(A_L) = (2c_1(L)-c)\cdot S.
\end{equation}
The index of the reducible $A_L$ is computed in \cite[Lemma 2.6]{DS} as follows:
\begin{equation}\label{eq:index}
	\text{ind}(A_L) = 8\kappa(A_L) - \frac{3}{2}(\chi(W) + \sigma(W)) + \chi(S) + \frac{1}{2}S\cdot S + \sigma(Y,K) - \sigma(Y',K') -1.
\end{equation}
Let us call $A_L$ (and $c_1(L)$) {\emph{minimal}} (with respect to $c$) if it minimizes $\kappa(A_L)$ among all reducibles on $E$. Define $\kappa_\text{min}(W,S,c)$ to be $\kappa(A_L)$ for any minimal reducible $A_L$:
\begin{equation}\label{eq:kappamin}
	\kappa_\text{min}(W,S,c) := \min_{z\in H^2(W;\Z)} -\left( z+\frac{1}{4}S - \frac{1}{2}c\right)^2.
\end{equation} 
Note that this quantity depends only on the cohomology ring of $W$, the homology class of $S$, and the index 2 coset of $c$. If $c=0$, we simply write $\kappa_\text{min}(W,S)$ for the above quantity.

The moduli spaces $M(W,S,c;\alpha,\alpha')_d$ are orientable. We use similar conventions as in \cite{km-embedded-ii,DS} to orient these moduli spaces and we refer the reader to these references for more details. The determinant line bundle $l(W,S,c;\alpha,\alpha')$ of the deformation complex of the elements of $M(W,S,c;\alpha,\alpha')_d$ can be identified with the orientation line bundle of this moduli space. In fact, these determinant line bundles can be extended to the configuration space of all singular connections and they are related to each other with respect to gluing of connections. This allows us to orient all moduli spaces $M(W,S,c;\alpha,\alpha')_d$ after fixing a small amount of data as it is recalled in \cite[Subsection 2.9 and Section 3]{DS}. To be more precise, we only considered the case that $c=0$ in \cite{DS}. 

In the more general case, we need to fix a (not necessarily minimal) reducible, and we let this reducible be $A_{L_0}$ with $c_1(L_0)=0$. Since $b_1(W)=b^+(W)=0$, there is a canonical way to trivialize the orientation bundle at any reducible $A_L$. To fix orientations of $M(W,S,c;\alpha,\alpha')_d$, we firstly trivialize the determinant bundle of the moduli space containing $A_{L_0}$ in the canonical way and then extend this trivialization to the other moduli spaces using compatibility of determinant bundles with respect to gluing of connections. Using this convention, the determinant bundle at a reducible $A_L$ is oriented in the standard way if and only if $c_1(L)^2$ is even  \cite[Appendix 1(ii)]{km-embedded-ii}. (The result in \cite[Appendix 1(ii)]{km-embedded-ii}, which is stated in the case that $(W,S)$ is closed, can be extended to the case of cobordisms of pairs by an excision argument.) Motivated by this the signed {\it count} of minimal reducibles with powers of $T$ keeping track of monopole numbers is:
\begin{equation}\label{eq:etadef}
	\eta(W,S,c) := \sum (-1)^{c_1(L)^2}T^{\nu(A_L)}\in \Z[T^{\pm 1}].
\end{equation}
Here the sum is over all $c_1(L)$ corresponding to a minimal reducible $A_L$ with respect to $c$. In the case that $c=0$, we drop again $c$ from our notation in \eqref{eq:etadef}.

\subsection{Morphisms from cobordisms}\label{sec:morphisms}

We now slightly generalize the setup from \cite{DS}, and formulate a class of cobordisms amenable for inducing morphisms between $\cS$-complexes for knots.

\begin{definition}\label{defn:negdefpair}
	Let $(W,S):(Y,K)\to (Y',K')$ be a cobordism of pairs between oriented knots in integer homology 3-spheres, where $S$ is connected and oriented, and $c\in H^2(W;\Z)$. Let $\sS$ be an algebra over $\Z[U^{\pm 1}, T^{\pm 1}]$. We say $(W,S,c)$ is {\emph{negative definite over $\sS$}} if:
	\begin{itemize}
		\item[(i)] $b^1(W)=b^+(W)=0$;
		\item[(ii)] the index of one (and thus every) minimal reducible $A_L$ is $-1$;
		\item[(iii)] $\eta(W,S,c)$ is non-zero as an element in $\sS$. 
	\end{itemize}
If $c=0$, we also say $(W,S)$ is a {\emph{negative definite pair over $\sS$}}.  $\diamd$
\end{definition}

\begin{remark}
	More general types of cobordisms will be considered in Subsection \ref{sec:higherlevelmorphisms}. $\diamd$
\end{remark}

\begin{remark} Using \eqref{eq:index}, condition (ii) simplifies to the following condition:
\[
	 8\kappa_\text{min}(W,S,c) + \chi(S) + \frac{1}{2}S\cdot S +  \sigma(Y,K) - \sigma(Y',K')  = 0. \,\,\,\,\,\, \diamd
\]
\end{remark}

\begin{remark} Let $\cT\subset H^2(W;\Z)$ be the torsion subgroup of cardinality $|\cT|$. Call $z\in H^2(W;\Z)/\cT$ minimal (with respect to $c$) if it lifts to a minimal class in $H^2(W;\Z)$. Then
\[ 
	\eta(W,S,c) =|\cT|\cdot\sum_{\substack{z\in H^2(W;\Z)/\cT\\ \text{minimal}}} (-1)^{z^2}T^{(2z-c)\cdot S}.
\]
Consequently, if the order of torsion $|\cT|$ is zero in $\sS$, then $(W,S,c)$ is not negative definite over $\sS$.  $\diamd$
\end{remark}

Suppose $(W,S,c)$ is negative definite over $\sS$. Then we can associate to it a morphism of $\cS$-complexes, in the sense discussed in Subsection \ref{subsec:scomplexes}:
\begin{equation}\label{eq:cobmap}
	\widetilde \lambda_{(W,S,c)} : \widetilde C_\ast(Y,K;\Delta_\sS)\to \widetilde C_\ast(Y',K';\Delta_\sS)
\end{equation}
To explain this we recount some more background on the construction of $\widetilde C_\ast(Y,K;\Delta_\sS)$.

First, we mention the two key identities on which the local coefficients structure relies. First, let $(W,S):(Y,K)\to (Y',K')$ be a cobordism of pairs with $S$ oriented, and suppose $\alpha$ and $\alpha'$ are singular $SU(2)$ connections on $Y\setminus K$ and $Y'\setminus K'$ respectively. Let $E$ be a $U(2)$-bundle over $W$ with $c=c_1(E)$. Then if $A$ is a singular connection on $W\setminus S$ which restricts to $\alpha$ and $\alpha'$ at the boundary components, we have:
\begin{equation}
	2\kappa(A) - 2\kappa_\text{min}(W,S,c) \equiv \text{CS}(\alpha)-\text{CS}(\alpha') \pmod \Z.\label{eq:kappacs}
\end{equation}
Furthermore, we note that $2\kappa_\text{min}(W,S,c)\equiv -\frac{1}{8}(S-2c)^2 \pmod \Z$. The above may be taken as a definition of $\text{CS}(\alpha)\in \R/\Z$ by letting $\alpha'$ be the reducible. The second identity is:
\begin{equation}
	\nu(A)  \equiv \text{hol}_{K'}(\alpha') - \text{hol}_K(\alpha) \pmod \Z.\label{eq:nuhol}
\end{equation}
Here $\text{hol}_K(\alpha)$ is the limiting $U(1)$ holonomy around a Seifert longitude for $K$ of the restriction of $\alpha$ to the complex line bundle $L$ defining the reduction near $K$. 

We next recall from \cite[Subsection 7.1]{DS} that $C_\ast(Y,K;\Delta_\sS)= \bigoplus_\alpha \Delta_\alpha$ where the sum is over (perturbed) irreducible flat $SU(2)$ connections $\alpha$ on $Y\setminus K$ with traceless holonomy around meridians, and $\Delta_\alpha$ is defined to be the $\sS$-module
\[
	\Delta_\alpha := \sS\cdot U^{\text{CS}(\alpha)}T^{\text{hol}_K(\alpha)}. 
\]
Here $\text{CS}(\alpha)$ is any lift to $\R$ of the Chern--Simons invariant of $\alpha$, and similarly for $\text{hol}_K(\alpha)$. The maps $d$, $v$, $\delta_1$, $\delta_2$ for $C_\ast(Y,K;\Delta_\sS)$ are defined by certain counts of singular instantons $[A]$ on $\R\times Y$, where each instanton has weight $U^{-2\kappa(A)}T^{\nu(A)}$. For example, we have
\[
	d(\alpha) = \sum \varepsilon(A)\Delta(A)(\alpha)
\]
where the sum is over singular instantons $[A]$ on $\R\times Y$ in $0$-dimensional moduli spaces $\breve{M}(\alpha,\alpha')_0$ and $\Delta(A):\Delta_\alpha\to \Delta_{\alpha'}$ is the module homomorphism which is multiplication by $U^{-2\kappa(A)}T^{\nu(A)}$. The sign $\varepsilon(A)=\pm 1$ is determined by the orientation of the moduli space. When carrying out these constructions generic metrics and perturbations are chosen, but we typically omit these from our discussions. For more details see \cite{DS}.

We now assume $(W,S,c)$ is negative definite over $\sS$. To construct the morphism $\widetilde \lambda_{(W,S,c)}$ we must prescribe the maps appearing in the decomposition \eqref{eq:morphism}. The components $\lambda$, $\mu$, $\Delta_1$, $\Delta_2$ are defined just as in \cite[Section 3]{DS}, adapted to the local coefficients setting of \cite[Section 7]{DS}. For example, $\lambda (\alpha) = \sum\epsilon(A)\Delta(A)(\alpha)$ where the sum is over $[A]\in M(W,S,c;\alpha,\alpha')_0$ and $\Delta(A):\Delta_\alpha\to \Delta_{\alpha'}$ is multiplication by: 
\[
	\Delta(A)=U^{2\kappa_\text{min}(W,S,c)-2\kappa(A)} T^{\nu(A)}.
\]
The other components of $\widetilde \lambda_{(W,S,c)}$ are defined similarly. The term $\eta\in \sS$ in the decomposition \eqref{eq:morphism} is defined to be the count of reducibles $\eta(W,S,c)$ from \eqref{eq:etadef}, which is non-zero by assumption. The compatibility of these maps with the local coefficient systems is owed to \eqref{eq:kappacs} and \eqref{eq:nuhol}.

Having prescribed the components of $\widetilde \lambda_{(W,S,c)}$, the relations
\[
	\lambda  d - d'  \lambda = 0, \qquad  \mu d +  \lambda v + \Delta_2 \delta_1 - v' \lambda + d'  \mu - \delta_2' \Delta_1 = 0
\]
follow just as for negative definite pairs; see e.g. \cite[Proposition 3.19]{DS}. The relations 
\[
	\Delta_1  d  + \eta \delta_1 - \delta'_1 \lambda = 0,  \qquad d' \Delta_2 - \eta\delta'_2 + \lambda \delta_2  = 0
\]
are only slight modifications of those in \cite[Proposition 3.10]{DS}; in the argument, one must keep track of trajectory breakings at all possible minimal reducibles with index $-1$, instead of a unique flat reducible. There is one technical point to make here: unlike for a negative definite pair, a general $(W,S,c)$ which is negative definite over $\sS$ may a priori have obstructed (degenerate) minimal reducibles. However, this can be rectified by using a small perturbation, for example as is done in \cite[Section 7.3]{dcx}. We obtain:

\begin{prop}\label{prop:morphism}
	Let $(W,S,c):(Y,K)\to (Y',K')$ be negative definite over $\sS$. Then there is an induced morphism of $\cS$-complexes $\widetilde \lambda_{(W,S,c)}:\widetilde C_\ast(Y,K;\Delta_\sS)\to \widetilde C_\ast(Y',K';\Delta_\sS)$.
\end{prop}

\begin{remark}
	Suppose $c=0$, $H_1(W;\Z)=0$, and the homology class of $S$ is divisible by $4$. The minimum \eqref{eq:kappamin} is achieved uniquely by $z=- \frac{1}{4}S$. Call the corresponding reducible $A_L$. Then $\kappa_\text{min}(W,L)=\kappa(A_L)=0$ and $\nu(A_L)=-\frac{1}{2}S\cdot S$. In particular, $A_L$ is flat, and $\eta(W,S)=T^{-\frac{1}{2}S\cdot S}\neq 0$. Thus $(W,S)$ is negative definite over $\sS$, for any $\sS$. This is the class of ``negative definite pairs'' in \cite[Defintion 2.33]{DS}. $\diamd$
\end{remark}

\begin{remark}
	If we drop the condition (iii) in Definition \ref{defn:negdefpair}, so that $\eta(W,S,c)$ may be zero, then we still have a map $\widetilde \lambda_{(W,S,c)}$ which respects the structure of the $\cS$-complexes. However, it is not a morphism in the terminology of Subsection \ref{subsec:reducibles}.  $\diamd$
\end{remark}

\begin{remark}
	In defining the cobordism map for $(W,S,c)$, a path on $S$ that interpolates between chosen basepoints on $K$ and $K'$ must be specified to define the component $\mu$. However, this choice is never important for us and so we suppress it from the notation. $\diamd$
\end{remark}

\begin{remark}
	A more functorial discussion for cobordism maps replaces $c\in H^2(W;\Z)$ with an oriented 2-manifold properly embedded in $W$ transverse to $S$ which represents the Poincar\'{e} dual to $c$. This is because such a submanifold naturally determines a $U(2)$ bundle over $(W,S)$, compare \cite{KM:unknot}. However, this is not important for our purposes and in this paper we settle for using cohomology classes $c\in H^2(W;\Z)$. $\diamd$
\end{remark}

\subsection{Some cobordism relations}\label{subsec:cobrels}

The importance of using local coefficients and in particular the invertibility of $T^4-1$ comes from the effect, on the singular instanton invariants, of some basic moves on surfaces in 4-manifolds. These relations were first established in the closed case by Kronheimer \cite{Kr:obs}, and have played an important role in Kronheimer and Mrowka's work on singular instanton Floer homology; see \cite{KM:YAFT, km-rasmussen, km-barnatan, km-concordance}. Here we state the relations as they apply in our current setup, at the chain level. The proof is a direct adaptation of the one for \cite[Proposition 3.1]{Kr:obs}; see also \cite[Proposition 5.2]{KM:YAFT}. 

Thus far, our cobordisms of pairs $(W,S):(Y,K)\to (Y',K')$ have assumed the surface $S$ to be embedded in $W$. We slightly relax our above hypotheses so that $S$ may be a smoothly {\emph{immersed}} cobordism with normal crossings, i.e. transverse double points. Blow up $W$ at each double point of $S$ to obtain $\overline W$, diffeomorphic to a connected sum of $W$ with copies of $\overline{\C\P}^2$, and let $\overline S\subset \overline W$ be the proper transform of $S$. We say $(W,S,c)$ is negative definite over $\sS$ if $(\overline W,\overline S,c)$ satisfies the conditions of Definition \ref{defn:negdefpair}. In this case we define
\begin{equation}
	\widetilde \lambda_{(W,S,c)} := (-1)^{s_+}T^{-2s_+}\widetilde \lambda_{(\overline W, \overline S,c)}.\label{eq:immersedconvention}
\end{equation}
Here $s_+$ is the number of positive double points of $S$. The factor $(-1)^{s_+}T^{-2s_+}$ is not so important and is only included to symmetrize some expressions. Here $c\in H^2(W;\Z)$, and we write $c$ also for its image under $H^2(W;\Z)\to H^2(\overline{W};\Z)$.

Our main motivation for the extension of cobordism maps to the normally immersed case is to study the behavior of $\widetilde \lambda_{(W,S,c)} $ with respect to relative homotopies of the surface $S$ in $W$. Any such homotopy in general position is a composition of a sequence of standard moves: relative ambient isotopy of $S$, positive twist, negative twist, finger move and the reverses of these moves \cite{FQ:top-4-manifolds}. A positive twist increases the number of positive double points by one, the negative twist increases the number of negative double points by one and a finger move introduces a cancelling pair of double points.

\begin{prop}\label{homotopy-moves}
	Suppose $(W,S,c)$ is negative definite over $\sS$, where $S$ is possibly immersed in $W$ as above. Let $S^\ast$ be obtained from $S$ by either a positive twist move or a finger move. Then
	\[
		\widetilde \lambda_{(W,S^\ast,c)} \sim (T^2-T^{-2})\widetilde \lambda_{(W,S,c)}
	\]
	where $\sim$ stands for $\cS$-chain homotopic, as morphisms of $\Z/4$-graded $\cS$-complexes over $\sS$.
	If $S^\ast$ is obtained by a negative twist move, then $\widetilde \lambda_{(W,S^\ast,c)}$ is $\cS$-chain homotopic to $\widetilde \lambda_{(W,S,c)}$.
\end{prop}

\begin{proof}	
	As already mentioned, the proof is adapted from \cite{Kr:obs} to our current setup. For this reason we omit some of the details and only highlight the key ingredients. 

	Without loss of generality, we may assume $S$ is smoothly embedded in $W$, and $c=0$. Suppose $S^\ast$ is obtained from $S$ by a positive twist move. Then the map $\widetilde \lambda_{(W,S^\ast)}=-T^{-2}\widetilde \lambda_{(\overline{W},\overline{S}^\ast)}$ is defined by choosing a metric and generic perturbation for the pair
\begin{equation}\label{eq:connsumtwist}
	(\overline{W},\overline{S}^\ast)=(W,S)\#(\overline{\C\P}^2, S_2)
\end{equation}
where $S_2$ is a conic in $\overline{\C\P}^2$, and in particular $[S_2]=2e$ where $e\in H^2(\overline{\C\P}^2;\Z)$ is a generator. Thus $S_2\cdot S_2 =-4$. Consider reducibles on $(\overline{\C\P}^2, S_2)$. There are exactly two that have minimal topological energy; the two reducibles $A_0$ and $A_{-e}$ correspond, respectively, to the minimizers $z=0$ and $z=-e$ in \eqref{eq:kappa}, in which we set $c=0$. Thus $\kappa_\text{min}=1/4$. 

A 1-parameter family of auxiliary data that stretches along the connected sum sphere induces an $\cS$-chain homotopy between $\widetilde \lambda_{(\overline{W},\overline{S}^\ast)}$ and the morphism $\widetilde \lambda^\infty$ for the pair \eqref{eq:connsumtwist} defined using a broken metric along the connected sum region. A gluing argument shows
\[
	\widetilde \lambda^\infty = \eta(\overline{\C\P}^2, S_2)\cdot \widetilde \lambda_{(W',S')}
\]
where $\widetilde \lambda_{(W',S')}$ is the morphism for $(W',S')$, a puncturing of $(W,S)$, equipped with auxiliary data involving a metric with cylindrical ends at the punctures. In other words, the instantons that contribute to $\widetilde \lambda^\infty$ can be described as instantons on $(W',S')$ grafted to the minimal reducible instantons on $(\overline{\C\P}^2, S_2)$, which are unobstructed. For the two minimal reducibles $A_0$ and $A_{-e}$ we compute the monopole numbers:
\[
	\nu(A_z) = 2z\cdot 2e= \begin{cases}0, & z=0\\ 4, & z=-e \end{cases}
\]
Thus $\eta(\overline{\C\P}^2, S_2)=1-T^4$.

Finally, a 1-parameter family of auxiliary data from any given metric and perturbation that defines $\widetilde \lambda_{(W,S)}$ and which stretches along the 3-sphere which encloses the connected sum location on $S$ provides a chain homotopy from $\widetilde \lambda_{(W,S)}$ to the morphism $\widetilde \lambda_{(W',S')}$.

The other moves are dealt with by similarly adapting \cite{Kr:obs}.
\end{proof}

\begin{cor}
	Suppose $(W,S,c)$ is negative definite over $\sS$, where the surface $S$ is possibly immersed in $W$ with transverse double points.
	\begin{itemize}	
		\item[(i)] If $S^\ast$ is obtained from $S$ by a negative twist, then $(W,S^\ast,c)$ negative definite over $\sS$.
		\item[(ii)] If $T^4\neq 1$ and $S^\ast$ is obtained from $S$ by a positive twist or a finger move, then $(W,S^\ast,c)$ is negative definite over $\sS$. 
	\end{itemize}
\end{cor}

\subsection{Chern--Simons filtration and enriched complexes}\label{sec:enriched}

In this subsection we review the structure of the Chern--Simons filtration on our $\cS$-complexes for knots. We will be brief, and refer to \cite[Section 7]{DS} for more details. 

\begin{definition}\label{defn:igraded}
	Let $R$ be an integral domain. An {\em I-graded $\cS$-complex} over $R[U^{\pm 1}]$ is an $\cS$-complex $(\widetilde C, \widetilde d, \chi)$ over $R[U^{\pm 1}]$ with a $\Z\times \R$-bigrading as an $R$-module which satisfies the following. Write $\widetilde C_{i,j}$ for the $(i,j)\in \Z\times \R$ graded summand. Then
\begin{itemize}
	\item[(i)] $U \widetilde C_{i,j} \subset  \widetilde C_{i+4,j+1}$
	\item[(ii)] $\widetilde d \widetilde C_{i,j} \subset  \bigcup_{k<j}\widetilde C_{i-1,k}$
	\item[(iii)] $\chi \widetilde C_{i,j} \subset  \widetilde C_{i+1,j}$
\end{itemize}
Also, $\widetilde C$ is generated as an $R[U^{\pm 1}]$-module by homogeneously bigraded elements. The distinguished summand in $\widetilde C$, isomorphic to $R[U^{\pm 1}]$, has generator $1$ in bigrading $(0,0)$. $\diamd$
\end{definition}

For an I-graded $\cS$-complex we write $\widetilde{\text{gr}}$ for the $\Z$-grading and $\text{deg}_I$ for the $\R$-grading, which we also call the ``instanton'' grading. When we write $\widetilde C_\ast$ for an I-graded $\cS$-complex, the subscript only records the $\Z$-grading, or the induced $\Z/4$-grading.

Note that if we forget the $\R$-grading, we still have a $\Z$-graded $\cS$-complex in the sense that $\widetilde C_\ast$ is a graded module over $R[U^{\pm 1}]$, where the latter is a graded ring with $U$ in degree $4$. However, in this paper, for simplicity the algebraic objects we consider are either I-graded $\cS$-complexes as above, or $\Z/4$-graded $\cS$-complexes as discussed earlier.

Suppose no perturbations are needed when constructing an $\cS$-complex for $(Y,K)$. Then $\widetilde C(Y,K;\Delta_\sS)$ has the structure of an I-graded $\cS$-complex with $\sS=R[U^{\pm 1}]$ and $R=\Z[T^{\pm 1}]$, as we now explain. For a singular connection $\alpha$ on $Y\setminus K$ choose a path to the reducible, i.e. a singular $SU(2)$ connection $A$ on $\R\times Y$ whose restrictions to $(-\infty,-1]\times Y$ and $[1,\infty)\times Y$ are pullbacks of $\alpha$ and a representative for $\theta$, respectively. Another such path $A'$ is equivalent to  $A$ if there is a determinant one automorphism $u$ of the underlying bundle of $A$ and $A'$ such that the restrictions of $u$ to $(-\infty,-1]\times Y$ and $[1,\infty)\times Y$ are pullbacks of gauge transformations of $E$ and the difference between $A$ and the pullback of $A'$ by $u$ is compactly supported on $\R\times Y$. We write $\widetilde \alpha$ for the equivalence class of the connection $A$. We call such a choice of $\widetilde \alpha$ a {\em lift} of $\alpha$. The Chern--Simons invariant of $\widetilde \alpha$ is
\[
	\text{CS}(\widetilde \alpha) := 2\kappa(\widetilde \alpha) \in \R,
\]
and is a lift of $\text{CS}(\alpha)\in \R/\Z$. The invariant $\text{CS}(\widetilde \alpha)$ depends only on the homotopy class of $\widetilde \alpha$, viewed as a path in the configuration space of singular connections on $\R\times Y$ relative to its limits $\alpha$ and $\theta$. Similarly, the path $\widetilde \alpha$ determines a lift $\text{hol}_K(\widetilde \alpha)\in \R$ of the holonomy invariant $\text{hol}_K(\alpha)\in \R/\Z$.
%Suppose no perturbations are needed when constructing an $\cS$-complex for $(Y,K)$. Then $\widetilde C(Y,K;\Delta_\sS)$ has the structure of an I-graded $\cS$-complex with $\sS=R[U^{\pm 1}]$ and $R=\Z[T^{\pm 1}]$, as we now explain. For a flat singular connection $\alpha$ on $Y\setminus K$ choose a path to the reducible, i.e. a singular $SU(2)$ connection on $\R\times Y$ with limits $\alpha$ and $\theta$ at $-\infty$ and $+\infty$, respectively. Call the gauge equivalence class of this connection $\widetilde \alpha$. We call such a choice of $\widetilde \alpha$ a {\em lift} of $\alpha$. The Chern--Simons invariant of $\widetilde \alpha$ is
%\[
%	\text{CS}(\widetilde \alpha) := 2\kappa(\widetilde \alpha) \in \R,
%\]
%and is a lift of $\text{CS}(\alpha)\in \R/\Z$. The invariant $\text{CS}(\widetilde \alpha)$ depends only on the homotopy class of $\widetilde \alpha$, viewed as a path in the configuration space of singular connections on $\R\times Y$ relative to its limits $\alpha$ and $\theta$. Similarly, the path $\widetilde \alpha$ determines a lift $\text{hol}_K(\widetilde \alpha)\in \R$ of the holonomy invariant $\text{hol}_K(\alpha)\in \R/\Z$.

Given a singular connection $A$ on $\R\times Y$, we can ``add an instanton'' by grafting on to $A$ a standard instanton on $S^4$. This changes $(\kappa(A),\nu(A))$ into $(\kappa(A)+1,\nu(A))$. 
Similarly, we can ``add a monopole'' by grafting on to $A$ a standard singular instanton on $(S^4,S^2)$. This changes $(\kappa(A),\nu(A))$ to $(\kappa(A)+1/2,\nu(A)+2)$. In the case that $A$ represents a lift $\widetilde \alpha$ 
of $\alpha$, we can faithfully generate the set of all lifts of $\alpha$ by a sequence of these operations and their inverses. The pair $(\text{CS},\text{hol}_K)$ defines a bijection from the set of lifts of $\alpha$ into a subspace of $\R^2$
which is given as 
\[
  (\text{CS}(\widetilde \alpha_0),\text{hol}_K(\widetilde \alpha_0))+\{(i,2j)\in \Z^2\mid i \equiv j \mod 2\},
\] 
with $\widetilde \alpha_0$ being a fixed lift of $\alpha$. The above shows that the correspondence defined by
\begin{equation}\label{lifts-translates}
  \widetilde \alpha \mapsto U^{\text{CS}(\widetilde \alpha)-\text{CS}(\widetilde \alpha_0)}
  T^{\text{hol}_K(\widetilde \alpha)-\text{hol}_K(\widetilde \alpha_0)}\widetilde \alpha_0
\end{equation}
allows us to identify an index $4$ subgroup of the monomials in $\Z[U^{\pm 1},T^{\pm 1}]\widetilde \alpha_0$ with 
the lifts of $\alpha$. Motivated by this, we make the following.

\begin{definition}\label{defn:lift}
	An {\emph{honest lift}} of a (perturbed) flat connection $\alpha$ is a choice of path $\widetilde{\alpha}$ of connections mod gauge from $\alpha$ to the reducible $\theta$. We consider honest lifts up to homotopy relative to the endpoints. A {\em{lift}} of $\alpha$ is a monomial $U^{i}T^{j} \widetilde \alpha$ where $\widetilde \alpha$ is an honest lift. The set of all such lifts are denoted by $\sP_\alpha$. $\diamd$
\end{definition}

\noindent We extend $(\text{CS},\text{hol}_K)$ to any lift of $\alpha$ by setting
	  \begin{equation}\label{extend-CS-hol}
	    \text{CS}(U^{i}T^j\widetilde \alpha)=i+\text{CS}(\widetilde \alpha),\hspace{1cm}
	    \text{hol}_K(U^{i}T^j\widetilde \alpha)=j+\text{hol}_K(\widetilde \alpha). 
	  \end{equation}

Turning to the I-graded $\cS$-complex structure, the instanton gradings are defined by
\[
	\text{deg}_I(\widetilde\alpha) := \text{CS}(\widetilde\alpha) \in \R.
\]
and we extend this to $\widetilde C_\ast(Y,K;\Delta_\sS)$ using \eqref{extend-CS-hol}. The index of the ASD operator for $\widetilde\alpha$ determines $\widetilde{\text{gr}}(\widetilde\alpha)\in \Z$, which in this case is the expected dimension of the moduli space of instantons in the same homotopy class as $\widetilde\alpha$. We extend the $\Z$-grading $\widetilde{\text{gr}}$ to all of $\widetilde C_\ast(Y,K;\Delta_\sS)$ by setting $\widetilde{\text{gr}}(U^iT^j\widetilde\alpha)=4i + \widetilde{\text{gr}}(\widetilde \alpha)$. The reducible $\theta$ is identified with the constant path $\widetilde\theta$, with bigrading $(0,0)$. The component of $\alpha'$ in $d (\widetilde \alpha)$ is given by the following sum over $[A]\in \breve{M}(\alpha,\alpha')_0$:
\[
  \sum \epsilon(A) \Delta(A)(\widetilde \alpha)
\]
where $\Delta(A):\sP_\alpha\to \sP_{\alpha'}$ is defined by requiring that the concatenation of $A$ with $\widetilde \alpha'$ is homotopic to the path $\widetilde \alpha$. Fixing a lift for each generator of $\widetilde C_\ast(Y,K;\Delta_\sS)$ and using \eqref{lifts-translates} we can regard the above expression as an element of $\Z[U^{\pm 1},T^{\pm1 }]$. The other maps are described similarly. That $\widetilde C_\ast(Y,K;\Delta_\sS)$ is an I-graded $\cS$-complex follows from the additivity of topological energy, which implies
\[
	2\kappa(A)  =  \text{CS}(\widetilde\alpha) - \text{CS}(\widetilde \alpha')
\]
in this situation, and the fact that the energy $\kappa(A)$ is always non-negative for instantons, and is positive for instantons on the cylinder with $\alpha\neq \alpha'$.

A {\em morphism } $\widetilde \lambda:\widetilde C_\ast\to C_\ast'$ {\em of level $\delta\in\R$} of I-graded complexes is an $R[U^{\pm 1}]$-module homomorphism which is a morphism of $\cS$-complexes and satisfies
\begin{equation}\label{eq:morigraded}
	\widetilde \lambda \widetilde C_{i,j} \subset  \bigcup_{k\leqslant j+\delta}\widetilde C'_{i,k}
\end{equation}
When $\delta=0$ we simply call $\widetilde \lambda$ a morphism. Similarly, a chain homotopy of level $\delta$ of I-graded $\cS$-complexes is a chain homotopy of the underlying $\cS$-complexes which increases the instanton grading by at most $\delta$. We may also form tensor products, duals, and local equivalence classes of I-graded $\cS$-complexes.

\begin{prop}\label{prop:morphismigraded}
	Let $(W,S,c):(Y,K)\to (Y',K')$ be negative definite over $\sS$. Suppose no perturbations are required in the construction of $\widetilde \lambda_{(W,S,c)}$ from Proposition \ref{prop:morphism}. Then this map lifts to a morphism of level $2\kappa_\text{{\em min}}(W,S,c)$ of I-graded $\cS$-complexes.
\end{prop}

The lift is straightforward to define, following the construction of Proposition \ref{prop:morphism}. For instance, the contribution of $\alpha'$ in $\lambda(\widetilde \alpha)$ may be defined as a sum over $[A]\in M(W,S,c;\alpha,\alpha')_0$:
\[
	 \sum \epsilon(A) \Delta(A)(\widetilde \alpha)
\]
where $\Delta(A):\sP_\alpha\to \sP_{\alpha'}$ maps a lift $\widetilde \alpha$ of $\alpha$ to the lift $\widetilde \alpha'$ of $\alpha'$ satisfying
\begin{equation}\label{lift-relation}
	2\kappa(A)-2\kappa_\text{min}(W,S,c) = \text{CS}(\widetilde \alpha)-\text{CS}(\widetilde \alpha'), \hspace{0.5cm}\nu(A) = \text{hol}_{K'}(\alpha') - \text{hol}_K(\alpha).
\end{equation}
The other components are described similarly.
That the resulting morphism is of level $2\kappa_\text{min}(W,S,c)$ follows from the first identity in \eqref{lift-relation}. The above definitions can be adapted to the case that $R$ is an algebra over the ring $\Z[T^{\pm 1}]$ in a straightforward way.

The following lemma is a consequence of the discussion above.
\begin{lemma}\label{lift-morphisms}
	Suppose $\alpha$ and $\alpha'$ are generators of $ C_\ast(Y,K;\Delta_\sS)$ with honest lifts
	$\widetilde \alpha$ and $\widetilde \alpha'$. Then the matrix entry 
	$ \langle d ( \widetilde \alpha) , \widetilde\alpha'\rangle$ has the form
	\begin{equation}
	  U^{m}T^{2m}
	  \sum_{i\in \Z}a_iT^{4i}\in \Z[U^{\pm 1}, T^{\pm1}]
	\end{equation}
	where $m=(\widetilde{\rm {gr}}(\widetilde\alpha')-\widetilde{\rm {gr}}(\widetilde\alpha)-1)/4$ and $a_i$ is an integer for each $i$. 
	A similar result holds for other matrix entries of $\widetilde d$ and for a morphism $\widetilde \lambda_{(W,S,c)}$ associated to a negative definite pair; that is to say,
	any matrix entry is a Laurent polynomial of the above form for an appropriate choice of $m$.
\end{lemma}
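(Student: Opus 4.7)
The plan is to verify the claim in two independent steps: pin down the $U$-exponent using the $\widetilde{\mathrm{gr}}$-grading, then constrain the $T$-exponent modulo $4$ using the classification of lifts from Subsection \ref{sec:enriched}. The extension to the remaining components of $\widetilde d$ and to $\widetilde \lambda_{(W,S,c)}$ then follows by the same argument with appropriate adjustments.

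Each term of $\langle d(\widetilde \alpha), \widetilde \alpha'\rangle$ comes from an instanton $[A]\in \breve M(\alpha,\alpha')_0$, contributing $\epsilon(A) U^a T^b$, where the monomial $U^a T^b\widetilde\alpha'=\Delta(A)(\widetilde\alpha)$ is characterized by the requirement that $A \# (U^a T^b \widetilde\alpha')$ be homotopic to $\widetilde\alpha$. First I would use the grading: $\widetilde d$ is $\widetilde{\mathrm{gr}}$-homogeneous of degree $-1$, and $U,T$ have $\widetilde{\mathrm{gr}}$-degrees $4,0$ respectively, so equating $\widetilde{\mathrm{gr}}$ on both sides forces $4a$ to equal the single value $\widetilde{\mathrm{gr}}(\widetilde\alpha)-\widetilde{\mathrm{gr}}(\widetilde\alpha')-1$ (up to the paper's sign convention for matrix entries). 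Consequently $a=m$ is uniquely determined, independently of the instanton $[A]$.

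Next I would invoke the lift-classification. Since both $\widetilde\alpha'$ and $\Delta(A)(\widetilde\alpha)=U^aT^b\widetilde\alpha'$ are honest lifts of $\alpha'$, the monomial $U^aT^b$ lies in the subgroup of $\Z[U^{\pm 1}, T^{\pm 1}]$ identified, in the paragraph preceding Definition \ref{defn:lift}, with the set of possible differences $(\mathrm{CS},\mathrm{hol}_K)$ between two lifts of the same flat connection. This subgroup is generated by the two elementary operations ``add an instanton'' (shifting $(\mathrm{CS},\mathrm{hol}_K)$ by $(2,0)$) and ``add a monopole'' (shifting by $(1,-2)$), and a direct computation identifies it with $\{(a,b)\in\Z^2 : b \equiv 2a \pmod 4\}$. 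With $a=m$ fixed, this forces $b\equiv 2m \pmod 4$, so every contribution is of the form $U^m T^{2m+4i}$, proving the claimed expression $U^m T^{2m}\sum_{i\in\Z} a_i T^{4i}$.

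The same two-step argument applies to the other components $v,\delta_1,\delta_2$ of $\widetilde d$ and to the components of $\widetilde\lambda_{(W,S,c)}$; the only change is that $m$ is recomputed from the $\widetilde{\mathrm{gr}}$-degree of the component in question (e.g.\ $-2$ for $v$, $0$ for $\lambda$, etc.). For morphisms from $(W,S,c)$ the role of concatenation on a cylinder is played by instantons on the cobordism via the identities \eqref{lift-relation}, and the integrality of the associated exponents $2\kappa(A)-2\kappa_{\mathrm{min}}(W,S,c)$ and $\nu(A)$ (for fixed lifts) is guaranteed by \eqref{eq:kappacs} and \eqref{eq:nuhol}. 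I do not foresee any serious obstacle here: once the grading constraint and the lift-classification are both in place, the conclusion is essentially immediate, and most of the work is simply tracking degree shifts and sign conventions across the various components.
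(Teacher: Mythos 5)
Your proof is correct and matches the paper's: the lemma is stated there as an immediate consequence of the preceding discussion, and that discussion consists of exactly your two ingredients — the $\widetilde{\rm gr}$-relation pinning down the $U$-exponent, and the identification of the set of lifts of $\alpha'$ with the index-4 subgroup $\{(a,b): b\equiv 2a \pmod 4\}$ generated by ``adding an instanton'' $(2,0)$ and ``adding a monopole'' $(1,-2)$. Your value $m=(\widetilde{\rm gr}(\widetilde\alpha)-\widetilde{\rm gr}(\widetilde\alpha')-1)/4$ is in fact the consistent one, since $\widetilde d$ lowers $\widetilde{\rm gr}$ by one; the formula printed in the lemma appears to have the two gradings transposed (a discrepancy that is invisible mod 4 in the $T$-exponent but matters for the $U$-power), so your parenthetical caveat is well placed.
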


In general we cannot get by without using perturbations. In the terminology of \cite{DS}, the I-graded $\cS$-complexes of Definition \ref{defn:igraded} are of ``level $0$''. More generally, an I-graded complex of level $\delta\geqslant 0$ is defined as in Definition \ref{defn:igraded}, except that in (ii) we allow $\widetilde d$ to increase the instanton grading by at most $\delta$. Then $\widetilde C(Y,K;\Delta_\sS)$ always has the structure of an I-graded $\sS$-complex of level some small $\delta\geqslant 0$ determined by the perturbation data. (Alternatively, one can always give it the structure of an I-graded complex, with the caveat that instanton gradings are tethered to a {\em perturbed} Chern--Simons functional.) 

An {\em enriched complex} is a sequence of I-graded $\cS$-complexes of levels $\delta_i\geqslant 0$ with $\lim_{i\to\infty}\delta_i=0$, where the complexes are related to one another by suitable morphisms. Associated to a sequence of admissible perturbation data going to zero is an enriched complex for $(Y,K)$, unique up to chain homotopy of enriched complexes. All invariants of $(Y,K)$ considered here may be derived from it. For more details see \cite[Section 7]{DS}.

\begin{remark}
	In this paper we typically work with I-graded $\cS$-complexes instead of enriched complexes. 
	In fact, most of the examples in our computations require no perturbations, and so do not 
	require the formalism of enriched complexes.
	The remaining computations involve the connected sum of knots such that each of the summands does not require any perturbation.  	
	Theorem \ref{thm:connectedsum} for connected sums holds at the level of enriched complexes, 
	see \cite[Theorem 7.20]{DS}. 
	So for the knots that appear in our computations this theorem provides provides I-graded $\cS$-complexes.
	$\diamd$
\end{remark}

\subsection{The concordance invariant $\Gamma_K$} \label{sec:gammadef}

We next review the definition of concordance invariants $\Gamma_{(Y,K)}^R$ from \cite{DS}. These invariants, similar to the case of the Fr\o yshov invariants, are obtained by applying an algebraic construction to the enriched complex for $(Y,K)$.

We first define the invariant $\Gamma_{\widetilde C}$ for an I-graded $\cS$-complex $\widetilde C=C_\ast \oplus C_{\ast-1}\oplus R[U^{\pm 1}]$ over $R[U^{\pm 1}]$. For $\alpha = \sum s_k \zeta_k \in C$ where $\zeta_k$ are homogeneous of distinct bigradings and $s_k\in R$, define $\text{deg}_I(\alpha)$ to be the maximum of the $\text{deg}_I(\zeta_k)$ such that $s_k\neq 0$. The invariant $\Gamma_{\widetilde C}$ is a function from the integers to $\overline{\R}_{\geqslant 0}:=\R_{\geqslant 0}\cup \infty$. For $k$ a positive integer, set
\[
	\Gamma_{\widetilde C}(k) := \text{inf}\left( \text{deg}_I(\alpha)\right) \in \overline{\R}_{\geqslant 0}
\]
where the infimum is over $\alpha \in C$ with $\widetilde{\text{gr}}(\alpha)=2k-1$ such that
\[
	d \alpha =0, \qquad k-1 = \min\left\{i\geqslant 0: \delta_1 v^i \alpha \neq 0  \right\}.
\]
For $k$ a non-positive integer we define
\[
	\Gamma_{\widetilde C}(k) := \max\left(\text{inf}\left( \text{deg}_I(\alpha)\right),0\right) \in\overline{\R}_{\geqslant 0}
\]
where the infimum is over $\alpha\in C$ with $\widetilde{\text{gr}}(\alpha)=2k-1$ such that there exist elements $a_0,\ldots,a_{-k}\in R[U^{\pm 1}]$ with $a_{-k}\neq 0$ and 
\[
	d \alpha  = \sum_{i=0}^{-k} v^i\delta_2(a_i).
\]

Let $\sS=R[U^{\pm 1}]$ where $R$ is an algebra over $\Z[T^{\pm 1}]$. If $\widetilde C_\ast(Y,K;\Delta_\sS)$ has the structure of an I-graded $\cS$-complex, then we define
\[
	\Gamma_{(Y,K)}^R := \Gamma_{\widetilde  C_\ast(Y,K;\Delta_\sS)}
\]
In the more general case, for an enriched complex, one should take an appropriate limit of the above construction. We refer to \cite[Section 7]{DS} for details.

When $R=\Z[T^{\pm 1}]$ we omit it from the notation, and write $\Gamma_{(Y,K)}$. Similarly, when $K\subset S^3$ we write $\Gamma_{K}^R$. For basic properties of these invariants see \cite[Theorem 7.24]{DS}.

%\newpage

%!TEX root = main.tex

\section{Computations for 2-bridge knots}\label{sec:comps}

In this section, we firstly recall some of the computations of \cite[Section 9]{DS} where we used the ADHM description of the moduli of instantons on $\R^4$ to study the $\cS$-complexes of 2-bridge knots. There we characterized the $\cS$-complex of the trefoil. In Subsections \ref{subsec:2bridgetorus} and \ref{subsec:doubletwist}, respectively, we consider two families of 2-bridge knots both of which generalize the trefoil example: 2-bridge torus knots and double twist knots. In Subsection \ref{subsec:connectedsums} we compute $\Gamma$-invariants for connected sums of double twist knots. Finally, in Subsection \ref{subsec:othercomps} we compute the $\Gamma$-invariants for some 2-bridge knots with 11 crossings.

\subsection{Background}\label{2-bridge}

Let $K_{p,q}$ denote the 2-bridge knot whose branched double cover is the lens space $L(p,q)$ with $p$ being an odd integer. Fix the orbifold metric on $S^3$ which is induced by the spherical metric on $L(p,q)$. The $\cS$-complex of $K_{p,q}$, defined using this orbifold metric and the trivial perturbation data, was studied in \cite[Section 9]{DS}, which we recall here. There are $(p+1)/2$ singular flat $SU(2)$ connections on $S^3\setminus K_{p,q}$ up to gauge, denoted $\xi^0,\xi^1,\ldots,\xi^{(p-1)/2}$ where $\xi^0=\theta$ is the unique reducible. Let $\sS=\Z[T^{\pm 1}]$. Then the underlying module of the $\cS$-complex $\widetilde C_\ast=\widetilde C_\ast(K_{p,q};\Delta_\sS)$ of $K_{p,q}$ is given by 
\[
  \widetilde C_\ast = C_\ast \oplus C_{\ast -1 } \oplus \sS, \qquad C_\ast = \bigoplus_{i=1}^{\frac{p-1}{2}} \sS\cdot \zeta^i
\]
where $\zeta^i$ is a lift of $\xi^i$. The remaining part of the $\cS$-complex structure on $\widetilde C_\ast$ is given by the maps $d,v:C_\ast\to  C_\ast$, $\delta_1:C_*\to \sS$, $\delta_2:\sS\to C_*$, which are defined using the unparametrized moduli spaces $\breve{M}(\xi^i,\xi^j)_d$ with dimension $d\leq 1$. 

All of the critical points and moduli spaces here are non-degenerate. Given an instanton $[A]\in \breve{M}(\xi^i,\xi^j)_{d}$, there are positive integers $k_1,k_2$ and $\epsilon_1,\epsilon_2\in \{\pm 1\}$ satisfying
\begin{equation}
		k_1 \equiv \varepsilon_1 i +  \varepsilon_2 j\;\; (\text{mod }p), \quad qk_2 \equiv -\varepsilon_1 i  + \varepsilon_2 j \;\;(\text{mod }p)\label{eq:k1k2}
\end{equation}
and which furthermore satisfy the following relations:
\begin{equation}
	d = N_1(k_1,k_2;p,q) + \frac{1}{2}N_2(k_1,k_2;p,q)-1, \qquad 2\kappa(A) = \frac{k_1k_2}{p}. \label{eq:filtdata}
\end{equation}
Here $N_1(k_1,k_2;p,q)$ is the number of $(a,b)\in \Z^2$ with $|a|<k_1$, $|b|<k_2$ solving
\begin{equation}
	a+qb \equiv 0 \pmod p \label{eq:congruence}
\end{equation}
and similarly $N_2(k_1,k_2;p,q)$ is the number of solutions $(a,b)\in \Z^2$ with either $|a|=k_1$, $|b|<k_2$ or $|a|<k_1$, $|b|=k_2$. Note that $(a,b)=(0,0)$ always satisfies \eqref{eq:congruence} and given any solution $(a,b)$ of \eqref{eq:congruence}, $(-a,-b)$ is also a solution. Thus $N_1(k_1,k_2;p,q)$ is an odd positive integer and $N_2(k_1,k_2;p,q)$ is an even non-negative integer.

The $0$-dimensional moduli space $\breve{M}(\xi^i,\xi^j)_{0}$ is non-empty only if there are $k_1$ and $k_2$ satisfying \eqref{eq:k1k2} such that the following hold:
\begin{equation}\label{N1N2-d0}
  N_1(k_1,k_2;p,q)=1,\hspace{1cm}N_2(k_1,k_2;p,q)=0.
\end{equation}
In fact, if there are such $k_1$ and $k_2$, then $\breve{M}(\xi^i,\xi^j)_{0}$ consists of two oppositely oriented points, related to each other by the so-called {\it flip symmetry}, which is an involution acting on the moduli spaces. The set of monopole numbers for this pair of singular instantons is $\{2,-2\}$ if $k_1$ and $k_2$ are both odd and $\{0\}$ otherwise. 

For any $\xi^i$, we assume the lift $\zeta^i$ satisfies $\text{hol}_K(\zeta^i)=0$. This is possible because $\text{hol}_K(\xi^i)$ is zero in $\R/\Z$. (However, note that this lift might not always be honest in the sense of Definition \ref{defn:lift}.) Then the matrix entry $\langle d\zeta^i,\zeta^j\rangle$, for $1\leq i,j\leq \frac{p-1}{2}$, is non-zero if and only if there are odd positive integers $k_1,k_2$ satisfying \eqref{eq:k1k2} and \eqref{N1N2-d0}. We obtain
\begin{equation}
	\langle d\zeta^i,\zeta^j\rangle  = \begin{cases} \pm (T^2-T^{-2}) & \exists {\text{ odd }} k_1,k_2\in \Z_{>0} {\text{ solving \eqref{eq:k1k2} and \eqref{N1N2-d0}}}  \\ 0 & \text{ otherwise}\end{cases}  \label{eq:d2bridge}
\end{equation}
By letting $j=0$ (resp. $i=0$) above, we obtain a similar characterization of the element $\delta_1(\zeta^i)$ (resp. $\langle \delta_2(1),\zeta^j\rangle$). 

There is a similar characterization for the $1$-dimensional moduli spaces $\breve{M}(\xi^i,\xi^j)_{1}$, which are the geometrical input in the definition of the map $v$. The space $\breve{M}(\xi^i,\xi^j)_{1}$ is non-empty if and only if there are positive integers $k_1$ and $k_2$ satisfying \eqref{eq:k1k2} such that
\begin{equation}\label{N1N2-d1}
  N_1(k_1,k_2;p,q)=1,\hspace{1cm}N_2(k_1,k_2;p,q)=2.
\end{equation}
Moreover, as before, the set of monopole numbers for instantons in $\breve{M}(\xi^i,\xi^j)_{1}$ is $\{2,-2\}$ if $k_1$ and $k_2$ are odd and is $\{0\}$ otherwise. Thus we have
\[
  \langle v\zeta^i,\zeta^j\rangle= T^2A^+ + T^{-2}A^- + B
\]
where either $B=0$ or $A^+=A^-=0$ depending on whether the relevant integers $k_1$ and $k_2$ for the moduli space $\breve{M}(\xi^i,\xi^j)_{1}$ are odd or not. It is shown in \cite{DS} that the homology of $\widetilde C_\ast$ after evaluating $T$ at $1$ and working with coefficient ring $\Z$ is isomorphic to $I^\natural(K_{p,q};\Z)$. This is free abelian of rank $p$, by \cite[Corollary 1.6]{KM:unknot}. As $\widetilde C_\ast$ has rank $p$, we must have $A^-=-A^+$, and $B=0$. Consequently, we have:
\begin{eqnarray}\label{eq:v2bridge}
	\lefteqn{\langle v\zeta^i,\zeta^j\rangle  = n(T^2-T^{-2}), \; n\in \Z, \text{and}}\\[1.5ex] &&  \hspace{1.5cm}  n\neq 0 \implies  \exists {\text{ odd }} k_1,k_2\in \Z_{>0} {\text{ solving \eqref{eq:k1k2} and \eqref{N1N2-d1}}}  \nonumber
\end{eqnarray}

The congruences \eqref{eq:k1k2} can be also used to determine Floer and instanton gradings. Let $k_1$ and $k_2$ be any pair of positive integers that solve \eqref{eq:k1k2} where $0\leq i, j\leq \frac{p-1}{2}$. Then there are lifts $\zeta^i$ and $\zeta^j$ of $\xi^i$ and $\xi^j$ respectively such that their $\Z\times \R$ bigradings satisfy
\begin{eqnarray} \lefteqn{\left({\widetilde{\text{gr}}}(\zeta^i),\deg_I(\zeta^i)\right)-\left({\widetilde{\text{gr}}}(\zeta^j),\deg_I(\zeta^j)\right)= } \nonumber \\[1.5ex] & & \hspace{1.5cm}(N_1(k_1,k_2;p,q) + \frac{1}{2}N_2(k_1,k_2;p,q),\frac{k_1k_2}{p})\label{eq:2brbigr}
 \end{eqnarray}
We always assume $\zeta^0$ is the constant path for $\theta$, so that its bigrading is $(0,0)$. Having fixed $\zeta^0$, if $j=0$ (resp. $i=0$), then imposing \eqref{eq:2brbigr} for some $k_1,k_2$ determines the bigrading of $\zeta^i$ (resp. $\zeta^j$). If instead $i$ and $j$ are both non-zero, it determines $\zeta^i$ and $\zeta^j$ up to simultaneously multiplying them by $U^k$ for some $k\in \Z$, which alters bigradings by $(4k,k)$. It is important to note that the rule \eqref{eq:2brbigr} depends on $k_1$, $k_2$.

Now consider $\widetilde C_\ast(K_{p,q};\Delta_\sS)$ as an I-graded $\cS$-complex over $\sS=\Z[U^{\pm 1},T^{\pm 1}]$. Suppose $k_1,k_2$ solve \eqref{eq:k1k2} and \eqref{N1N2-d0}, and $i,j\neq 0$. If we choose lifts $\zeta^i$ and $\zeta^j$ satisfying \eqref{eq:2brbigr}, then \eqref{eq:d2bridge} continues to hold. Letting one of $i$ or $j$ be zero gives an analogous statement for $\delta_1$ and $\delta_2$. Next, suppose $i,j\neq 0$ and $k_1,k_2$ solve \eqref{eq:k1k2} and \eqref{N1N2-d1}. If in this case we choose lifts $\zeta^i$ and $\zeta^j$ satisfying \eqref{eq:2brbigr}, then \eqref{eq:v2bridge} continues to hold. (Different choices of lifts in any of these situations might introduce powers of $U$ into the expressions.)

\begin{remark}
The authors expect that a more thorough investigation of the equivariant ADHM description of instantons on $\R^4$ in this context should allow for the direct computation of the $v$-maps for 2-bridge knots. This would yield a complete combinatorial description of the I-graded $\cS$-complexes for 2-bridge knots, and their $\Gamma$-invariants. $\diamd$
\end{remark}

\subsection{The torus knots $T_{2,2k+1}$}\label{subsec:2bridgetorus}

In this section we determine the full structure of the invariants defined in \cite{DS} for the $(2,2k+1)$ torus knots $T_{2,2k+1}$ where $k$ is a positive integer.

\begin{prop}\label{prop:22kp1} The I-graded $\cS$-complex 
	$\widetilde C_\ast=\widetilde C_\ast(T_{2,2k+1};\Delta_\sS)$ is given by
	\[
	  \widetilde C_\ast = C_\ast \oplus C_{\ast -1 } \oplus \sS, \qquad 
	  C_\ast = \bigoplus_{i=1}^{k} \sS\cdot \zeta^i.
	\]
	The differential $\widetilde d$ has the components $d=\delta_2=0$ and
	\[
	  \delta_1\left(\zeta^{i}\right) = \begin{cases} (T^2-T^{-2}) & i=1 \\ 0 & 2\leqslant i \leqslant k\end{cases}
	\]
	\[
	  v\left( \zeta^{i} \right)=\begin{cases} (T^2-T^{-2})\zeta^{i-1} & 2\leqslant i \leqslant k\\ 0 & i=1 \end{cases}
	\]
	The $\Z\times \R$ bigrading of $\zeta^i$ is given by $(2i-1, i^2/(2k+1))$.
\end{prop}

\begin{proof}
	Most of the statement follows from the content of Subsection \ref{2-bridge}.
	The torus knot $T_{2,2k+1}$ is the 2-bridge knot $K_{p,-1}$ where $p=2k+1$. In this case, the congruence \eqref{eq:congruence} is $a\equiv b$ (mod $p$). 
	The constraints $N_1(k_1,k_2;p,q)=1$, $N_2(k_1,k_2;p,q)=0$ in \eqref{N1N2-d0} imply that $(0,0)$ is the only 
	solution to $a\equiv b$ (mod $p$) with $|a|\leqslant k_1$, $|b|\leqslant k_2$, with one of these two inequalities strict. This is true 
	if and only if $k_1=k_2=1$, and from \eqref{eq:k1k2}, this in turn happens if and only if $j=0$, $i=1$, and 
	$\epsilon_1=1$. Thus $d=\delta_2=0$, and 
	$\delta_1\left(\zeta^{i}\right)$ is equal to $\pm (T^2-T^{-2})$ if $i=1$ and is zero otherwise.
	Here we have chosen the lift $\zeta^1$ of $\xi^1$ satisfying \eqref{eq:2brbigr}, so that it has $\Z\times \R$ bigrading $(1,\frac{1}{p})$.
	
	We now turn to the map $v$. The equations \eqref{eq:congruence} and \eqref{N1N2-d1} hold simultaneously
	if and only if one of $k_1$, $k_2$ is equal to $1$ and the other is an odd integer $>1$ and $<p$. This implies 
	that $j=i-1$. Choose the lifts $\zeta^i$ of $\xi^i$ for $2\leq i \leq k$ satisfying \eqref{eq:2brbigr}:
	\begin{equation}\label{rel-bigrading}
	  \left({\widetilde{\text{gr}}}(\zeta^i),\deg_I(\zeta^i)\right)-\left({\widetilde{\text{gr}}}(\zeta^{i-1}),\deg_I(\zeta^{i-1})\right)=
	  (2,\frac{(2i-1)}{p}).
	\end{equation}
	This implies that the $\Z\times \R$ bigrading of $\zeta^i$ is given by $(2i-1, i^2/(2k+1))$. In the basis $\zeta^1,\ldots,\zeta^k$ the map $v$ has the form:
\[	  v= (T^2-T^{-2})\left[\begin{array}{cccccc} 0 & a_1 & 0 & \cdots  & 0 \\ 
										0 &  0  & a_2 &   & \vdots \\
										 \vdots &    &  & \ddots  & \\
										 & &   & & a_{k-1} \\ 
										0 &  &  \cdots &  & 0 \end{array} \right]. \qquad
	\]
	To compute the entries $a_i$ we use Theorem \ref{thm:htwistedsign}, which asserts when $T^2-T^{-2}\neq 0$ that 
	\[
	  h_\sS(K_{p,-1})=-\sigma(K_{p,-1})/2=k.
	\]
	Proposition \ref{h-reinterpret} implies there is an $\alpha\in C_\ast$ with $\delta_1v^{k-1}(\alpha)\neq 0$, 
	which happens only if $a_1,\ldots,a_{k-1}$ are nonzero. This holds over any characteristic, and so $a_i=\pm 1$. 
	After possibly replacing $\zeta^i$ with $-\zeta^i$ we may assume $a_i=1$, and $v$ has the claimed form.
\end{proof}

\begin{cor}
	For any $k$, the $\Gamma$-invariant of $T_{2,2k+1}$ is given as
	\[
		\Gamma_{T_{2,2k+1}}(i) = \begin{cases} 0, & i \leqslant 0\\ \frac{i^2}{2k+1}, & 1\leqslant i \leqslant k\\ \infty, 
		& i\geqslant k+1 \end{cases}
	\]
\end{cor}

The case $k=1$ of Proposition \ref{prop:22kp1} recovers the trefoil complex determined in \cite[Section 9]{DS}. This simple type of complex appears in the sequel, and so we make the following.

\begin{definition}\label{defn:atomcomplex}
	For any positive real number $t$ we let $\widetilde \fC(t)=\fC_\ast \oplus \fC_{\ast-1}\oplus \sS$ be the I-graded $\cS$-complex where $\fC_\ast$ is freely generated by a single generator $\zeta$ with bigrading
\[
	(\widetilde{\text{gr}}(\zeta), \text{deg}_I(\zeta))=(1, t).
\]
The differential has $\delta_1(\zeta)=T^{2}-T^{-2}$ while $d$, $v$, $\delta_2$ are zero. $\diamd$
\end{definition}

\noindent The trefoil $T_{2,3}$ has an I-graded $\cS$-complex isomorphic to $\widetilde \fC(\frac{1}{3})$. We note that the Fr\o yshov invariant of $\widetilde\fC(t)$ is equal to $1$ if $T^4\neq 1$ in $\sS$ and is otherwise zero. Recall that the undecorated $\Gamma$-invariant for an I-graded $\cS$-complex is to be understood as defined for the coefficient ring $\sS=\Z[T^{\pm 1}]$. The $\Gamma$-invariant of the complex $\widetilde\fC(t)$ is given by:
\[
		\Gamma_{\widetilde{\fC}(t)}(i) = \begin{cases} 0, & i \leqslant 0\\  t, & i=1 \\ \infty, 
		& i\geqslant 2 \end{cases}
\]

\subsection{Double twist knots}\label{subsec:doubletwist}

For a pair of positive integers $m,n$, let $D_{m,n}$ denote the 2-bridge knot with parameters 
\[
  (4mn-1,2n).
\]
In particular, the branched double cover of $D_{m,n}$ is the lens space $L(4mn-1,2n)$. The signature of this knot is equal to $-2$, and its slice genus is equal to $1$. The main result of this section is the following.

\begin{prop}\label{tilde-complex-dbletwist}
	The I-graded $\cS$-complex of the double twist knot $D_{m,n}$ is locally equivalent to the I-graded $\cS$-complex $\widetilde \fC(t)$ of Definition \ref{defn:atomcomplex} where $t=\frac{(2m-1)(2n-1)}{4mn-1}$.
\end{prop}

Recall from Subsection \ref{sec:enriched} that I-graded $\cS$-complexes $(\widetilde C, \widetilde d, \chi)$ and $(\widetilde C', \widetilde d', \chi')$ are locally equivalent if there are level $0$ morphisms $\widetilde \lambda:\widetilde C\to \widetilde C'$ and $\widetilde \lambda':\widetilde C'\to \widetilde C$. We begin the proof of Proposition \ref{tilde-complex-dbletwist} with several combinatorial lemmas.

\begin{figure}[t]
\centering
\includegraphics[scale=2.25]{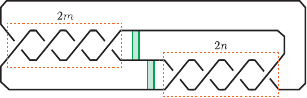}
\caption{The double twist knot $D_{m,n}$ has $2m$ and $2n$ half twists in the indicated boxes. The example shown has $m=n=2$, and is the two bridge knot $(15,4)$. The two indicated band moves induce a genus 1 cobordism from $D_{m,n}$ to the unknot.}
\label{fig:dmn}
\end{figure}

\begin{lemma}\label{trivial-A}
	For integers $p$, $q$ and positive odd integers $k_1$, $k_2$ define
	\[
  	  A(k_1,k_2;p,q)=\left\{(a,b)\in \Z^2 \mid a+qb \equiv 0 \;(\text{{\em mod }} p),\, 
	  \begin{array}{c}|a|<k_1, |b|\leq k_2 \text{{ \em{or}}}  \\ |a|\leq k_1, |b|<k_2 \phantom{\text{{ \em{or}}}}\end{array}\right\}.
	\]
	The set $A(k_1,k_2;4mn-1,2n)$ has only $(0,0)$ if and only if 
	$k_1\leq 2n-1$ and $k_2\leq 2m-1$.
\end{lemma}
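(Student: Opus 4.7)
The plan is to prove the biconditional by analyzing the congruence $a + 2nb \equiv 0 \pmod{4mn-1}$ in combination with the triangle inequality. Set $p = 4mn - 1$ throughout.

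For the ``if'' direction, assume $k_1 \le 2n-1$ and $k_2 \le 2m-1$ and take any $(a,b) \in A$. The defining constraints force $|a| \le 2n-1$ and $|b| \le 2m-1$, with at least one inequality strict. The triangle inequality gives
\[
	|a + 2nb| \le (2n-1) + 2n(2m-1) = 4mn - 1 = p,
\]
so the congruence pins $a + 2nb \in \{-p, 0, p\}$. The case $a + 2nb = 0$ yields $a = -2nb$, and combined with $|a| \le 2n-1$ this forces $b = 0$ and hence $(a,b) = (0,0)$. The case $a + 2nb = p$ reduces to $a = 2n(2m - b) - 1$; inspecting the size of $a$ subject to $|a| \le 2n-1$ and $|b| \le 2m-1$ pins down the unique candidate $(a,b) = (2n-1, 2m-1)$. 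The case $a + 2nb = -p$ is symmetric under $(a,b) \mapsto (-a,-b)$ and gives $(-(2n-1), -(2m-1))$.

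For the ``only if'' direction I argue the contrapositive: since $k_1, k_2$ are positive odd integers, the negation reads $k_1 \ge 2n+1$ or $k_2 \ge 2m+1$. In the first case the pair $(-2n, 1)$ satisfies the congruence $-2n + 2n \cdot 1 = 0$ and the inequalities $|-2n| = 2n < k_1$, $|1| \le k_2$, so it lies in $A$. In the second case one uses the identity $-1 + 2n \cdot 2m = 4mn - 1 = p$ to place $(-1, 2m)$ in $A$ via $|-1| \le k_1$ and $|2m| = 2m < k_2$. Both exhibit a nontrivial element of $A$.

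The only delicate point, and the one step requiring care, is handling the boundary candidates $(\pm(2n-1), \pm(2m-1))$ coming from $a + 2nb = \pm p$: these do satisfy the congruence, and under the hypotheses $k_1 \le 2n-1$, $k_2 \le 2m-1$ they would force both $|a| = k_1$ and $|b| = k_2$ simultaneously, which contradicts the strict-inequality alternative in the definition of $A$. This near-miss is exactly what makes the sharp threshold $2n-1$ and $2m-1$ (rather than being off by one), and its analysis is the main content of the argument.
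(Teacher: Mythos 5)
Your proof is correct and follows essentially the same route as the paper: the ``only if'' direction uses the same explicit solutions (the paper's $(2n,-1)$ and $(-1,2m)$, yours up to the sign symmetry $(a,b)\mapsto(-a,-b)$), and your triangle-inequality analysis pinning $a+2nb\in\{-p,0,p\}$ and ruling out the boundary pair $(\pm(2n-1),\pm(2m-1))$ is precisely the verification the paper dismisses as ``straightforward to check.'' No gaps; the careful treatment of the boundary candidates is the right place to spend the effort.
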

\begin{proof}
	Note that the pairs $(-1,2m)$ and $(2n,-1)$ satisfy
	\[
	  a+2n b \equiv 0\pmod{4mn-1}.
	\]
	Thus $A(k_1,k_2;4mn-1,2n)$ has a non-trivial element unless 
	$k_1\leq 2n-1$ and $k_2\leq 2m-1$.  It is straightforward to check that for such choices of $(k_1,k_2)$, the 
	set $A(k_1,k_2;4mn-1,2n)$ does not have any non-trivial element.
\end{proof}

\begin{lemma}\label{alm-trivial-A}
	The set $A(k_1,k_2;4mn-1,2n)$ consists of three elements $(0,0)$, $(a_0,b_0)$ and $(-a_0,-b_0)$ with $|a_0|=k_1$ or $|b_0|=k_2$ if and only if 
	$k_1=1$ and $2m+1 \leq k_2 \leq 4mn-2m-1$ or $2n+1 \leq k_1 \leq 4mn-2n-1$ and $k_2=1$.
\end{lemma}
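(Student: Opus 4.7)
The approach is to recognize $A(k_1,k_2;4mn-1,2n)$ as a lattice point count. The integer solutions to $a+2nb\equiv 0\pmod{4mn-1}$ form a rank $2$ sublattice $\Lambda=\Lambda_{m,n}\subset\Z^2$. Using the identity $2m(a+2nb)\equiv 2ma+b\pmod{4mn-1}$ (valid because $4mn\equiv 1$), one sees $v_1=(-1,2m)$ and $v_2=(2n,-1)$ both lie in $\Lambda$, and their determinant $(-1)(-1)-(2m)(2n)=-(4mn-1)$ matches the index, so they form a $\Z$-basis. The set $A(k_1,k_2;4mn-1,2n)$ is $\Lambda\cap([-k_1,k_1]\times[-k_2,k_2])$ with the four corners removed, and the lemma asks for this set to equal $\{(0,0),\pm(a_0,b_0)\}$ with the non-trivial pair on the boundary but not at a corner. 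The whole setup is invariant under the swap $(a,b,k_1,k_2,m,n)\mapsto(b,a,k_2,k_1,n,m)$, which preserves the structure and exchanges the two cases in the lemma, so it suffices to prove one of them.

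For the ``if'' direction, assume $k_1=1$ and $2m+1\leq k_2\leq 4mn-2m-1$. Any non-trivial $(a,b)\in A$ must have $|a|=1$, since $a=0$ forces $b=0$. For $a=1$ the congruence becomes $b\equiv -2m\pmod{4mn-1}$, whose representatives of smallest absolute value are $-2m$ and $4mn-2m-1$. The hypothesis $k_2\geq 2m+1$ puts the first in the open interval $(-k_2,k_2)$, while $k_2\leq 4mn-2m-1$ keeps the second out of it. Hence the non-trivial elements of $A$ are exactly $\pm(1,-2m)$, with $|a_0|=1=k_1$.

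For the ``only if'' direction, assume $A$ has the stated form and, by symmetry, that $|a_0|=k_1$. I will prove $k_1=1$ by contradiction, assuming $k_1\geq 3$. First, $k_2\geq 3$ in this case: if $k_2=1$ then $|a_0|=k_1$ together with $(a_0,b_0)\in A$ forces $|b_0|<1$, so $b_0=0$, and then $a_0\equiv 0\pmod{4mn-1}$, impossible with $0<|a_0|<4mn-1$. With $k_1,k_2\geq 3$, the element $v_1=(-1,2m)$ satisfies $|-1|<k_1$, so $v_1\in A$ if and only if $|2m|\leq k_2$. If $v_1\in A$, it is distinct from $\pm(a_0,b_0)$ (since $|-1|=1\neq k_1$), producing $|A|\geq 5$ and contradicting $|A|=3$. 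Hence $v_1\notin A$, which forces $k_2\leq 2m-1$. The symmetric argument applied to $v_2=(2n,-1)$ gives $k_1\leq 2n-1$. But then Lemma \ref{trivial-A} forces $A=\{(0,0)\}$, contradicting $|A|=3$. So $k_1=1$.

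With $k_1=1$, the coset analysis from the ``if'' direction identifies $\pm(1,-2m)$ as the unique candidate non-trivial pair; existence in $A$ requires $k_2\geq 2m+1$ and uniqueness (excluding the next coset representative $b=4mn-2m-1$) requires $k_2\leq 4mn-2m-1$. The symmetric case $|b_0|=k_2$ is obtained by the involution above. The main subtlety is controlling which nearby lattice vectors can enter the box as $k_1,k_2$ grow; the key insight is that by reducing to Lemma \ref{trivial-A}, one only needs to explicitly exclude $v_1$ and $v_2$, and the lemma automatically handles all higher lattice combinations such as $v_1\pm v_2$, $2v_1$, or $2v_2$.
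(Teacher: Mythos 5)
Your argument is correct in substance and follows essentially the same route as the paper's (much terser) proof: the whole lemma turns on the two explicit solutions $(-1,2m)$ and $(2n,-1)$, on Lemma \ref{trivial-A} to rule out everything else, and on the coset analysis in the $a=\pm 1$ (respectively $b=\pm 1$) column; your lattice-basis framing and the explicit swap symmetry $(a,b,k_1,k_2,m,n)\mapsto(b,a,k_2,k_1,n,m)$ are just a cleaner packaging of that, and the reduction of ``higher'' lattice vectors to Lemma \ref{trivial-A} is exactly the point the paper leaves implicit. One step is not justified as written: in excluding $k_2=1$ you conclude $b_0=0$, $a_0\equiv 0\pmod{4mn-1}$, and call this ``impossible with $0<|a_0|<4mn-1$'' --- but $|a_0|=k_1$ and no upper bound on $k_1$ is available at that point (the bound $k_1\leq 4mn-2n-1$ is part of the conclusion, not a hypothesis). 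The case $|a_0|=k_1\geq 4mn-1$ is easily killed by the same device you use two sentences later: then $k_1>2n$, so $(2n,-1)\in A$ (as $|2n|<k_1$ and $|-1|\leq k_2$), and it is distinct from $(0,0)$ and from $\pm(a_0,0)$, giving $|A|\geq 5$. With that one-line patch the proof is complete.
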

\begin{proof}
	Lemma \ref{trivial-A} implies that either $k_1\geq 2m+1$ or $k_2\geq 2n+1$. Therefore, either $(a_0,b_0)=(-1,2m)$ or $(a_0,b_0)=(2n,-1)$ belongs to $A(k_1,k_2;4mn-1,2n)$. In the first case, $k_1=1$ and $k_2\leq4mn-2m-1$.
	In the latter case, $k_2=1$ and $k_1\leq4mn-2n-1$.
\end{proof}

\begin{lemma}\label{matching-d}
	For any non-zero $l\in \Z/{(4mn-1)}$, there exists a unique pair of positive odd integers $k_1$ and $k_2$ and $\epsilon\in \{\pm1\}$ such that $k_1\leq 2n-1$, $k_2\leq 2m-1$ and exactly one of the following congruences modulo $4mn-1$ hold:
	\[
	  k_1+2n k_2 \equiv \epsilon l, \hspace{1cm} k_1-2n k_2 \equiv \epsilon l.
	\]
	 In the case that $l=0$, and under the same constraints on $k_1$ and $k_2$, the only solutions for the first equation are $(k_1,k_2,\epsilon)=(2n-1,2m-1,+1)$ and $(2n-1,2m-1,-1)$, and the second congruence does not have any solution.
\end{lemma}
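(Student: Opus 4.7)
The plan is to package both congruences into a single map and reduce the lemma to a bijectivity statement. Set $N = 4mn - 1$ and define
\[
\phi\colon \{1,3,\ldots,2n-1\} \times \{1,3,\ldots,2m-1\} \times \{\pm 1\} \times \{\pm 1\} \longrightarrow \Z/N,
\]
\[
\phi(k_1, k_2, \epsilon, \delta) := \epsilon(k_1 + \delta \cdot 2nk_2) \bmod N,
\]
where $\delta = \pm 1$ selects between the first and second congruence. The domain has cardinality $4mn$ and the codomain $4mn - 1$. The whole lemma is the assertion that $\phi^{-1}(0) = \{(2n-1, 2m-1, \pm 1, +1)\}$ and that $\phi$ is a bijection from the complement onto $\Z/N \setminus \{0\}$. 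Since $4mn - 2 = |\Z/N \setminus \{0\}|$, the second half follows from the first together with injectivity on the complement.

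The fiber over $0$ will be handled by direct size bounds. For $\delta = +1$, one has $0 < k_1 + 2nk_2 \leq (2n-1) + 2n(2m-1) = N$, so the congruence $k_1 + 2nk_2 \equiv 0 \pmod{N}$ forces equality, hence $k_1 = 2n-1$ and $k_2 = 2m-1$, giving the two $\epsilon = \pm 1$ tuples. For $\delta = -1$, one has $0 < 2nk_2 - k_1 \leq 2n(2m-1) - 1 < N$, so $k_1 - 2nk_2 \not\equiv 0 \pmod{N}$, ruling out the second congruence entirely.

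For injectivity on the complement, I would substitute $A := \epsilon k_1$ and $B := \epsilon\delta k_2$, so that $A, B$ are odd integers with $|A| \leq 2n - 1$ and $|B| \leq 2m - 1$, and $\phi = A + 2nB$ becomes symmetric in the sign data. A collision $\phi(\text{tuple}) = \phi(\text{tuple}')$ is equivalent to $\alpha + 2n\beta \equiv 0 \pmod N$, where $\alpha := A - A'$ and $\beta := B - B'$ are even with $|\alpha| \leq 4n - 2$ and $|\beta| \leq 4m - 2$. Writing $\alpha + 2n\beta = cN$, the triangle inequality yields $|c| N \leq (4n - 2) + 2n(4m - 2) = 2N$, so $|c| \leq 2$. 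The case $|c| = 1$ is impossible by parity, since $\alpha + 2n\beta$ is even while $N = 4mn - 1$ is odd. The case $c = 0$ forces $2n|\beta| \leq 4n - 2$, hence $|\beta| \leq 1$, hence $\beta = 0$ (being even), hence $\alpha = 0$, so the tuples coincide. Finally, $|c| = 2$ saturates the triangle inequality, forcing $|A| = |A'| = 2n-1$, $|B| = |B'| = 2m-1$, with $A = -A'$ and $B = -B'$; unpacking, both tuples are $(2n-1, 2m-1, \pm 1, +1)$, i.e., the expected $l = 0$ collision.

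The only mildly delicate step is the bound $|c| \leq 2$ combined with the parity obstruction ruling out $|c| = 1$; everything else is bookkeeping. The parity observation (that $N$ is odd while $\alpha + 2n\beta$ is even) is what makes the counting close up exactly, turning the pigeonhole discrepancy of one into the single doubled fiber over $l = 0$.
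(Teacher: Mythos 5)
Your proposal is correct and takes essentially the same route as the paper: both arguments count the $4mn$ residues of the form $\pm k_1 \pm 2nk_2$ modulo $4mn-1$ and identify the single coincidence $(2n-1)+2n(2m-1)\equiv -(2n-1)-2n(2m-1)\equiv 0$, so that the remaining $4mn-2$ values biject onto the non-zero residues. The only difference is one of detail: the paper simply asserts that these residues are otherwise distinct, while you verify it explicitly via the bound $|c|\leq 2$, the parity obstruction ruling out $|c|=1$, and the saturation analysis for $|c|=2$.
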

\begin{proof}
	We obtain $4mn$ elements of $ \Z/{(4mn-1)}$ by considering numbers of the form 
	\[
	   \pm k_1\pm 2n k_2
	\]
	for a pair of positive odd integers $k_1$ and $k_2$ with $k_1\leq 2n-1$, $k_2\leq 2m-1$. These numbers represent distinct elements of $ \Z/{(4mn-1)}$ with one exception:
	\[
	  (2n-1)+ 2n(2m-1)  \equiv -(2n-1)- 2n(2m-1).
	\]	
	This verifies the desired claim.
\end{proof}

\begin{remark}
	All of the congruences in this subsection will be taken modulo $4mn-1$, and so we will often omit this from the notation. $\diamd$
\end{remark}

Following the construction of Subsection \ref{2-bridge}, an I-graded $\cS$-complex for the knot $D_{m,n}$ with coefficient ring $\sS$ has underlying $\sS$-module given by 
\[
	  \widetilde C_\ast = C_\ast \oplus C_{\ast -1 } \oplus \sS, \qquad 
	  C_\ast = \bigoplus_{i=1}^{2mn-1} \sS\cdot \zeta^i
\]
where the $\zeta^i$ are lifts of the connections $\xi^i$. We will only need to specify the precise lifts of a few generators below. As before the lift $\zeta^0$ is in bigrading $(0,0)$.

Lemma \ref{trivial-A} can be used to characterize the maps $d$, $\delta_1$ and $\delta_2$ for this I-graded $\cS$-complex. If $\delta_1(\zeta^i)$ is non-zero, it is equal to $\pm (T^2-T^{-2})$ and this happens if and only if there are positive odd integers $k_1$ and $k_2$ and $\varepsilon_1\in\{\pm 1\}$ such that 
\begin{equation}
	k_1 \equiv \varepsilon_1 i, \quad 
	2nk_2 \equiv -\varepsilon_1 i \label{eq:k1k2-dbletwist},
\end{equation}
$N_1(k_1,k_2;4mn-1,2n)=1$ and $N_2(k_1,k_2;4mn-1,2n)=0$. The last two identities mean $A(k_1,k_2;4mn-1,2n)$ has only the element $(0,0)$. Thus by Lemma \ref{trivial-A}, non-vanishing of $\delta_1(\zeta^i)$ is equivalent to existence of a solution for \eqref{eq:k1k2-dbletwist} with $k_1\leq 2n-1$ and $k_2\leq 2m-1$. The only solution is $(k_1,k_2)=(2n-1,2m-1)$ which implies that $i=2n-1$. We fix the lift $\zeta^{2n-1}$ using \eqref{eq:2brbigr}, so that its bigrading is equal to
\[
  (1, \frac{(2m-1)(2n-1)}{4mn-1}).
\]
Another application of Lemma \ref{trivial-A} shows that $\delta_2(1)\in C_*$ has a contribution from $\zeta^j$ if and only if there are positive odd integers $k_1$ and $k_2$ and $\varepsilon_2\in \{\pm 1\}$ such that 
\begin{equation*}
	k_1 \equiv \varepsilon_2 j, \quad 
	2nk_2 \equiv \varepsilon_2 j 
\end{equation*}
with $k_1\leq 2n-1$ and $k_2\leq 2m-1$. Since there is no solution to the congruence $k_1-2nk_2 \equiv 0$ satisfying these inequlaities, the map $\delta_2$ vanishes.

Finally, we may apply similar arguments making use of Lemmas \ref{trivial-A} and \ref{matching-d} to choose lifts such that the generators of $C_*$ can be partitioned as
\[
  \{\zeta^{i_1},\zeta^{i_2},\dots,\zeta^{i_{mn-1}}\}\cup \{\zeta^{j_1},\zeta^{j_2},\dots,\zeta^{j_{mn-1}}\}\cup \{\zeta^{2n-1}\}
\]
such that $d\zeta^{i_k}=(T^2-T^{-2})\zeta^{j_k}$, $d\zeta^{j_k}=0$ and $d\zeta^{2n-1}=0$. Of course, we can be more specific about the values $i_k$ and $j_k$. However, the exact values of most of these integers are not relevant below. We just single out two of the relations which shall be useful later:
\begin{equation}\label{eq:twochainidentities}
  d\zeta^{2n-2}=(T^2-T^{-2})\zeta^{1},\hspace{1cm}d\zeta^{4n-1}=(T^2-T^{-2})\zeta^{2n}.
\end{equation}
These identities hold as a result of the following respective pairs of congruences:
\begin{equation}\label{bi-gr-2n-2}
  2n-3 \equiv (2n-2)-1, \hspace{.2cm}
	2n(2m-1) \equiv  -(2n-2)-1, \text{ and} 
\end{equation}
\begin{equation}\label{bi-gr-2n-1}
  2n-1 \equiv (4n-1)-2n , \hspace{.2cm}
	2n(2m-3) \equiv  -(4n-1)-2n 
\end{equation}
Note that the first relation in \eqref{eq:twochainidentities} only holds under the assumption that $n\geqslant 2$, while the second relation holds only when $m\geqslant 2$.

Lemma \ref{alm-trivial-A} may also be used to obtain some information about the map $v$ associated to the I-graded $\cS$-complex $\widetilde C_*$. Based on the discussion of Subsection \ref{2-bridge}, the coefficient $\langle v\zeta^i,\zeta^j\rangle$ is non-zero only if there are positive odd integers such that 
\[
  	k_1 \equiv \varepsilon_1 i+\varepsilon_2j , \quad \quad
	2nk_2 \equiv -\varepsilon_1i+\varepsilon_2j  ,
\]
$N_1(k_1,k_2;4mn-1,2n)=1$ and $N_2(k_1,k_2;4mn-1,2n)=2$. Lemma \ref{alm-trivial-A} classifies all $k_1$ and $k_2$ which satisfy the latter two identities. Therefore, this lemma allows us to constrain the non-zero terms among $\langle v\zeta^i,\zeta^j\rangle$. For example, for $m,n\geq 2$, $v\zeta^{2n-1}=0$ and the only possibilities for integers $i$ that have $\langle v\zeta^i,\zeta^{2n-1}\rangle\neq 0$ are $\zeta^{2n-2}$ and $\zeta^{4n-1}$ and the relevant integers $k_1$ and $k_2$ for each of these two cases are given, respectively, by the pairs of congruences
\begin{equation*}
  1 \equiv -(2n-2)+(2n-1) , \hspace{.2cm}
	2n(4mn-6m+1) \equiv  (2n-2)+(2n-1), \text{ and}
\end{equation*}
\begin{equation*}
  4mn-6n+1 \equiv -(4n-1)-(2n-1), \hspace{.2cm}
	2n \equiv  (4n-1)-(2n-1).
\end{equation*}
The above identities, with \eqref{bi-gr-2n-2} and \eqref{bi-gr-2n-1}, and the computation of the bigrading of $\zeta^{2n-1}$, imply that the bigradings of $\zeta^{2n-2}$, $\zeta^{4n-1}$, $\zeta^{1}$ and $\zeta^{2n}$ are respectively equal to
\[
  (3,\tfrac{2(4m-1)(n-1)}{4mn-1}),\hspace{.4cm}(3,\tfrac{2(m-1)(4n-1)}{4mn-1}),\hspace{.4cm}(2,\tfrac{4mn-2m-1}{4mn-1}),
  \hspace{.4cm}(2,\tfrac{4mn-2n-1}{4mn-1}).
\]
Note that we are choosing the lift $\zeta^1$ (resp. $\zeta^{2n}$) such that \eqref{eq:2brbigr} holds with $i=1$ (resp. $i=2n$) and \eqref{N1N2-d0} holds. Now we are ready to prove Proposition \ref{tilde-complex-dbletwist}.

\begin{proof}[Proof of Proposition \ref{tilde-complex-dbletwist}]
Let $\widetilde \fC=\fC(t)$ be as in Definition \ref{defn:atomcomplex} with $t=\frac{(2m-1)(2n-1)}{4mn-1}$. Recall that $\fC_\ast$ has a single generator $\zeta$ in bigrading $(1,t)$.

	In the case that $m,n\geq 2$, define the morphism $\widetilde \lambda:\widetilde C_*\to \widetilde \fC_*$ using the components $\lambda,\mu:C_*\to \fC_*$, $\Delta_1:C_*\to \sS$ and $\Delta_2:\sS\to \fC_*$ which 
	have the non-zero matrix entries
	\[
	  \langle \lambda \zeta^{2n-1},\zeta\rangle=1,\hspace{0.5cm}\langle \mu \zeta^{1},\zeta\rangle=\frac{\langle v \zeta^{2n-2},\zeta^{2n-1}\rangle}{T^{-2}-T^2},
	  \hspace{0.5cm}\langle \mu \zeta^{2n},\zeta\rangle=\frac{\langle v \zeta^{4n-1},\zeta^{2n-1}\rangle}{T^{-2}-T^2}.
	\]
	Furthermore, the component $\eta\in \sS$ of $\widetilde \lambda$ is equal to $1$. Using the above discussion of the bigradings for the generators of $C_*$, one can see easily that $\widetilde \lambda$ is a morphism of I-graded $\cS$-complexes.
	Similarly, we have a morphism $\widetilde \lambda':\widetilde \fC_*\to \widetilde C_*$ defined using the maps 
	$\lambda',\mu':\fC_*\to C_*$, $\Delta_1':\fC_*\to \sS$ and $\Delta_2':\sS\to C_*$ with the only non-zero entry $\langle \lambda'\zeta,\zeta^{2n-1} \rangle=1$. Again, the component $\eta'\in \sS$ of $\widetilde \lambda'$ is equal to $1$.
	Here to show that $\widetilde \lambda'$ is a morphism we need the fact that $v\zeta^{2n-1}=0$.
	This shows that $\widetilde C_*$ and $\widetilde \fC_*$ are locally equivalent as I-graded $\cS$-complexes.

	If $m=1$ and $n\geqslant 2$, then the above proof goes through with the simplification that the generator $\zeta^{4n-1}$ is omitted. If instead $n=1$ and $m\geqslant 2$, we omit $\zeta^{2n-1}$. Alternatively, we note that $D_{1,n}=D_{n,1}$. Finally, the case $D_{1,1}$ is the trefoil, whose complex we have already determined; this is the case where only $\zeta^1$ and $\zeta^0$ appear.
\end{proof}

\begin{remark}
	For general $m,n\in \Z_{>0}$, the $\cS$-complexes $\widetilde C_*$ and $\widetilde\fC_*$ are not $\cS$-chain homotopy equivalent to each other as enriched complexes because the homology of the graded complexes associated to the instanton
	grading for these two $\cS$-complexes have ranks $4mn-1$ and $3$. After forgetting the I-gradings, however, one can see that the morphisms $\widetilde \lambda$ and $\widetilde \lambda'$ in the proof of the above 
	lemma are $\cS$-chain homotopy equivalences once $T^2-T^{-2}$ is a unit in $\sS$. Later we shall see this statement is not special to the knot $D_{m,n}$, and that for any pair $(Y,K)$ with $\sigma=-2$, 
	the $\cS$-chain homotopy equivalence of the $\cS$-complex associated to $(Y,K)$ after forgetting the Chern-Simons filtration, over a ring $\sS$ with $T^2-T^{-2}$ a unit, is $\cS$-chain homotopy equivalent to $\widetilde\fC_*$. $\diamd$
\end{remark}

\begin{cor}
	For positive integers $m,n$, the $\Gamma$-invariant of $D_{m,n}$ is given as
	\[
		\Gamma_{D_{m,n}}(i) = \begin{cases} 0, & i \leqslant 0\\ \frac{(2m-1)(2n-1)}{4mn-1}, & i=1\\ \infty, 
		& i\geqslant 2 \end{cases}
	\]
\end{cor}

\begin{figure}[t]
\centering
\begin{tikzpicture}
\draw (0,0) node {$\zeta^0$};
\node [below, purple] at (0,-0.25) {$(0,0)$};
\draw (1.5,0.0) node {$\zeta^3$};
\node [below, purple] at (1.5,-0.25) {$(1, \frac{9}{15})$};
\draw [->, blue] (1.15,0.0) -- (0.35,0.0); 
\draw (3.0,0.0) node {$\zeta^1$};
\node [below, purple] at (3.0,-0.25) {$(2, \frac{11}{15})$};
\draw (3.0,-1.5) node {$\zeta^4$};
\node [below, purple] at (3.0,-1.75) {$(2, \frac{11}{15})$};
\draw (4.5,0.0) node {$\zeta^2$};
\node [below, purple] at (4.5,-0.25) {$(3, \frac{14}{15})$};
\draw [->, blue] (4.15,0) -- (3.35,0); 
\draw (4.5,-1.5) node {$\zeta^7$};
\node [below, purple] at (4.5,-1.75) {$(3, \frac{14}{15})$};
\draw [->, blue] (4.15,-1.5) -- (3.35,-1.5); 
\draw (6.0,0.0) node {$\zeta^5$};
\node [below, purple] at (6.0,-0.25) {$(4, \frac{20}{15})$};
\draw (7.5,0.0) node {$\zeta^6$};
\node [below, purple] at (7.5,-0.25) {$(5, \frac{21}{15})$};
\draw [->, blue] (7.15,0.0) -- (6.35,0.0); 
\end{tikzpicture}
\caption{Above are generators of $C_\ast(D_{2,2};\Delta_\sS)$ along with the reducible $\zeta^0$. Bigradings are listed under each corresponding generator. Arrows are multiplication by $T^2-T^{-2}$ and represent $d$ except for the left-most arrow, which is $\delta_1$. The component $\delta_2$ is zero. While $v$ might be nonzero, there is no cycle that $v$ maps to $\zeta^3$. The local chain equivalence to $\widetilde \fC(t)$ with $t=\frac{9}{15}$ leaves only $\zeta^0\leftarrow \zeta^3$ in this diagram.}
\label{fig:74complex}
\end{figure}

A simple example that illustrates the above computations is the case of $D_{2,2}$, which is the same as $7_4$ and the $2$-bridge knot $(15,4)$, featured in Theorem \ref{thm:clasp}. See Figure \ref{fig:74complex} for an I-graded $\cS$-complex for this example.

\subsection{Connected sums}\label{subsec:connectedsums}

Given I-graded $\cS$-complexes $\widetilde C_\ast$ and $\widetilde C_\ast'$ for knots $K$ and $K'$, the connected sum formula of Theorem \ref{thm:connectedsum} applies in this setting and gives $\widetilde C_\ast\otimes \widetilde C_\ast'$ as an I-graded $\cS$-complex for $K\# K'$. The same is true if we are working with local equivalence classes of I-graded $\cS$-complexes. In particular, the $\Gamma$-invariant for $K\# K'$ can be computed from the local equivalence classes associated to $K$ and $K'$. We first carry this out for a $k$-fold connected sum of a knot that has an I-graded $\cS$-complex locally chain equivalent to $\widetilde{\fC}(t)$ from Definition \ref{defn:atomcomplex}, for some real number $t>0$.

\begin{lemma}\label{lemma:ctconnectedsum}
The $\Gamma$-invariant for the $k$-fold tensor product of $\widetilde \fC(t)$ is given by
	\[
		\Gamma_{\widetilde \fC(t)^{\otimes k}}(i) = \begin{cases} 0, & i \leqslant 0\\ it, & 1\leq i \leq k\\
		 \infty, & i\geqslant k+1 \end{cases}
	\]
\end{lemma}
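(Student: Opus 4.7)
The plan is to realize $\widetilde{\fC}(t)^{\otimes k}$ as the free $\sS$-module on basic tensors $\mathbf{x}=x_1\otimes\cdots\otimes x_k$ with $x_j\in\{e,f,g\}$: here $e$ denotes the distinguished generator of the $\sS$-summand (bigrading $(0,0)$), $f=\zeta$ (bigrading $(1,t)$), and $g=\chi(\zeta)$ (bigrading $(2,t)$). A tensor containing $b$ entries $f$ and $c$ entries $g$ has bigrading $(b+2c,\,(b+c)t)$. Graded Leibniz gives $\widetilde{d}^{\otimes}(\mathbf{x})=\tau\sum_{j:\,x_j=f}\pm\mathbf{x}_{j\to e}$ and $\chi^{\otimes}(\mathbf{x})=\sum_{j:\,x_j=f}\pm\mathbf{x}_{j\to g}$, where $\tau=T^2-T^{-2}$ and signs come from the mod-$2$ reductions of the $\Z$-gradings of earlier entries. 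I will fix any splitting $\widetilde{C}^{\otimes}=C^{\otimes}\oplus\chi^{\otimes}C^{\otimes}\oplus\sS\cdot e^{\otimes k}$ compatible with the $\cS$-complex axioms; the particular choice is immaterial since $\Gamma$ is a local equivalence invariant. A crucial homogeneity to extract: $v^{\otimes}$, whenever non-zero, preserves the number of $f$-entries and reduces the number of $g$-entries by one, while $\delta_1^{\otimes}$ is non-zero only on basic tensors with exactly one $f$ and no $g$.

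For the upper bound when $1\le i\le k$, I will take $\alpha_i:=f\otimes g^{\otimes(i-1)}\otimes e^{\otimes(k-i)}$, of bigrading $(2i-1,\,it)$, choosing the splitting so that $\alpha_i$ and its iterated $\chi^{\otimes}$-preimages all lie in $C^{\otimes}$. A direct calculation gives $\widetilde{d}^{\otimes}(\alpha_i)=\tau\cdot e\otimes g^{\otimes(i-1)}\otimes e^{\otimes(k-i)}$, which lies in the image of $\chi^{\otimes}$ applied to $e\otimes f\otimes g^{\otimes(i-2)}\otimes e^{\otimes(k-i)}$ when $i\ge 2$, and in $\sS$ when $i=1$. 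Hence $d^{\otimes}(\alpha_i)=0$, and a short induction on $j$ yields
\[
  (v^{\otimes})^j(\alpha_i)=\tau^j\cdot e^{\otimes j}\otimes f\otimes g^{\otimes(i-1-j)}\otimes e^{\otimes(k-i)},\qquad 0\le j\le i-1.
\]
Remaining $g$-entries prevent the image under $\widetilde{d}^{\otimes}$ from reaching $\sS$ for $j<i-1$, so $\delta_1^{\otimes}(v^{\otimes})^j(\alpha_i)=0$ there, while at $j=i-1$ one reads off $\delta_1^{\otimes}(v^{\otimes})^{i-1}(\alpha_i)=\tau^i\neq 0$. This yields $\Gamma(i)\le\deg_I(\alpha_i)=it$.

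The matching lower bound will rest on a sharp rigidity: every non-zero matrix entry of $\widetilde{d}^{\otimes}$ between basic tensors drops $\deg_I$ by exactly $t$, since $\widetilde{d}(f)=\tau e$ trades a weight-$t$ entry for a weight-$0$ one, and $\chi^{\otimes}$ preserves $\deg_I$ (as $f$ and $g$ share the same instanton weight $t$). Consequently both $v^{\otimes}$ and $\delta_1^{\otimes}$ decrease $\deg_I$ by at least $t$ whenever non-zero; iterating for any $\alpha$ with $\delta_1^{\otimes}(v^{\otimes})^{i-1}\alpha\neq 0$ yields $0=\deg_I(\delta_1^{\otimes}(v^{\otimes})^{i-1}\alpha)\le\deg_I(\alpha)-it$, so $\deg_I(\alpha)\ge it$. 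For $i>k$, the homogeneity observation forces $\delta_1^{\otimes}(v^{\otimes})^{i-1}\mathbf{x}=0$ on every basic tensor $\mathbf{x}$ (this would require a tensor with one $f$ and $i-1$ $g$-entries, totalling $i>k$ non-$e$ slots); $\sS$-linearity extends this to arbitrary $\alpha$, so the defining set of $\Gamma(i)$ is empty and $\Gamma(i)=\infty$. The case $i\le 0$ follows from the general properties of $\Gamma$ recorded in \cite[Theorem 7.24]{DS}. The main subtlety will be formalizing the homogeneity argument uniformly for arbitrary $\sS$-linear combinations including $U^N$-shifted tensors, but since $U$ acts $\sS$-linearly and preserves the $(b,c)$-structure, the same $\deg_I$-drop bounds apply termwise.
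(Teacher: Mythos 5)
Your argument is correct and is essentially the paper's own proof: the paper likewise describes $\widetilde\fC(t)^{\otimes k}$ by basic tensors (using the explicit splitting of \cite[Section 4.5]{DS}, with the single $\zeta$ placed after the $\underline{\zeta}$'s rather than before the $g$'s), realizes $\Gamma(i)=it$ on the tensors with one $\zeta$ and $i-1$ copies of $\underline{\zeta}$, and obtains the lower bound, the vanishing for $i>k$, and the case $i\leq 0$ from the same homogeneity of $v$ and $\delta_1$ in the number of $\zeta$'s and $\underline{\zeta}$'s together with $\delta_2^{\otimes}=0$. The only point to tighten is that your homogeneity claims for $v^{\otimes}$ and $\delta_1^{\otimes}$ are not valid for an arbitrary splitting (for a general complement of $\ker\chi^{\otimes}$ the ``number of $f$- and $g$-entries'' of an element of $C^{\otimes}$ is not even defined): you need $C^{\otimes}$ to be spanned by tensors homogeneous in that bigrading and to contain your $\alpha_i$ and their $v$-iterates --- such a splitting exists, e.g.\ a homogeneous modification of the canonical one --- and the $i\leq 0$ case should be attributed to $\delta_2^{\otimes}=0$ (which your Leibniz computation already gives) rather than to a general property of $\Gamma$.
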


\begin{proof}
	In the following we use the decomposition for tensor products of $\cS$-complexes as described in \cite[Section 4.5]{DS}, and refer the reader there for more details. 

Let $\widetilde \fC=\widetilde \fC(t) = \fC_\ast\oplus \fC_{\ast-1}\oplus \sS$. Write $\zeta$ for the generator in $\fC_\ast$ of bigrading $(1,t)$ and $\underline\zeta$ its copy in $\fC_{\ast-1}$ of bigrading $(2,t)$. The tensor product $\widetilde \fC^{\otimes k}$ splits as $C_\ast(k)\oplus C_{\ast-1}(k)\oplus \sS$ where $C_\ast(k)$ is the module of rank $(3^k-1)/2$ generated by elements of the form
\begin{equation}
	\sigma = \sigma_1 \otimes \sigma_2 \otimes \cdots \otimes \sigma_k\label{eq:sigmatensor}
\end{equation}
where $\sigma_i\in\{1,\zeta,\underline{\zeta}\}$, not all $\sigma_i$ are equal to $1$, and the largest $i$ such that $\sigma_i\neq 1$ has $\sigma_i=\zeta$. The number of appearances of $\zeta$ and $\underline{\zeta}$ are respectively denoted by $\fb(\sigma)$ and $\ft(\sigma)$. The bigrading of $\sigma$ is given by
\[
	(\fb( \sigma)+2\ft( \sigma),(\fb( \sigma)+\ft( \sigma))t).
\]
Note the $\sS$-module $C_*(k)$ can be alternatively described inductively as 
	\begin{equation}\label{decomp-Ck}
		C_*(k)=\widetilde\fC_*\otimes C_*(k-1)\oplus \sS\langle\zeta\otimes 1\otimes 1\otimes \dots\otimes 1\rangle.
	\end{equation}
The maps $d_k,v_k:C_*(k)\to C_*(k)$, $\delta_{1,k}:C_*(k)\to \sS$ and 
	$\delta_{2,k}:\sS\to C_*(k)$ associated to the $\cS$-complex structure on $\widetilde \fC^{\otimes k}$ can be described explicitly as follows. First, $v_k( \sigma)$, for $\sigma$ as in \eqref{eq:sigmatensor}, is non-zero if the smallest $i$ that $\sigma_i\neq 1$ is $\underline \zeta$, and in this case we have
\[
	v_k(\sigma) =  (T^{2}-T^{-2})\underbrace{1\otimes \cdots \otimes 1}_{i}\otimes \sigma_{i+1}\otimes \dots  \otimes \sigma_{n}.
\]
%\[
%	v_k(\sigma) = (T^{2}-T^{-2})^{\ft(\sigma)}.
%\]
The map $\delta_{2,k}$ vanishes and $\delta_{1,k}(\sigma)$ is equal to $T^2-T^{-2}$ if  $\fb( \sigma)=1$ and $\ft( \sigma)=0$ and is zero otherwise. The map $d_k$ sends the rank one summand in \eqref{decomp-Ck} to zero. A generator in the first summand has the form $\sigma_1\otimes \sigma'$  for $\sigma'\in C_*(k-1)$. For $\sigma_1\in\{1,\zeta,\underline{\zeta}\}$, we have
	\[
	  d_k(\zeta \otimes  \sigma')=-\zeta \otimes d_{k-1}(\sigma')-\zeta \otimes v_{k-1}(\sigma')-\zeta \otimes \delta_{1,k-1}( \sigma')+
	  (T^2-T^{-2})1\otimes \sigma',
	\]
	\[
	   d_k(\underline \zeta \otimes  \sigma')=\underline \zeta \otimes d_{k-1} \sigma',\hspace{1cm} d_k(1\otimes \sigma')=1\otimes d_{k-1} \sigma'.
	\]
The above description of $\widetilde \fC^{\otimes k}$ implies that for any $1\leq i\leq k$, $\sigma$ satisfies the constraint
	\[
	  i-1=\min\{j\mid \delta_{1,k}v_k^j(\sigma)\neq 0\}
	\]
	if and only if $\fb( \sigma)=1$ and $\ft( \sigma)=i-1$. Moreover, any such element is in the kernel of $d_{k-1}$. This shows that $\Gamma_{\widetilde\fC^{\otimes k}}(i)=i t$. For any $i\geq k+1$, the above description of $v_k$ and $\delta_{1,k}$ also implies that $\delta_{1,k}v_k^{i-1}$ is trivial and hence $\Gamma_{\widetilde\fC^{\otimes k}}(i)=\infty$. Vanishing of $\Gamma$ for non-positive integers follows from $\delta_{2,k}=0$.
\end{proof}

Together with Proposition \ref{tilde-complex-dbletwist} we obtain the following.

\begin{prop}\label{double-twist-Gamma}
	For any positive integers $m$, $n$ and $k$, we have
	\[
		\Gamma_{kD_{m,n}}(i) = \begin{cases} 0, & i \leqslant 0\\ i \frac{(2m-1)(2n-1)}{4mn-1}, & 1\leq i \leq k\\
		 \infty, & i\geqslant k+1 \end{cases}
	\]
\end{prop}

It is straightforward to see that the method of the proof of Lemma \ref{lemma:ctconnectedsum} can be used to compute the $\Gamma$-invariant for arbitrary tensor products of the complexes $\widetilde \fC(t)$ for varying $t$.

\begin{lemma}
	For a set $A=\{t_1,\dots,t_k\}$ of positive real numbers, let $\widetilde \fC(A)$ denote the tensor product of the I-graded $\cS$-complexes $\widetilde \fC(t_i)$.
	Then we have
	\[
		\Gamma_{\widetilde \fC(A)}(i) = \begin{cases} 0, & i \leqslant 0\\ \min_{\{t_{j_1},\dots,t_{j_i}\}\subseteq A}\{t_{j_1}+\dots+t_{j_i}\}, & 1\leq i \leq k\\
		 \infty, & i\geqslant k+1 \end{cases}
	\]
\end{lemma}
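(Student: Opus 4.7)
The plan is to adapt the proof of Lemma \ref{lemma:ctconnectedsum} to allow varying parameters. The key observation is that the algebraic structure of the tensor product of the $\cS$-complexes $\widetilde{\fC}(t_j)$ depends only on the vanishing pattern of the component differentials $d,v,\delta_1,\delta_2$ and not on the specific instanton gradings; the values $t_j$ enter only through the bigradings of the generators.

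I would first decompose $\widetilde{\fC}(A) = C_\ast(A) \oplus C_{\ast-1}(A) \oplus \sS$, writing $1_j$, $\zeta_j$, $\underline{\zeta}_j$ for the natural basis of the $j$-th factor $\widetilde{\fC}(t_j)$. Here $C_\ast(A)$ has $\sS$-basis consisting of pure tensors $\sigma = \sigma_1 \otimes \cdots \otimes \sigma_k$ with $\sigma_j \in \{1_j, \zeta_j, \underline{\zeta}_j\}$, not all $\sigma_j$ equal to $1_j$, and such that the largest index $j$ with $\sigma_j \neq 1_j$ satisfies $\sigma_j = \zeta_j$. Write $\fb(\sigma), \ft(\sigma) \subseteq \{1,\dots,k\}$ for the sets of indices with $\sigma_j = \zeta_j$ and $\sigma_j = \underline{\zeta}_j$, respectively; the bigrading of $\sigma$ is then
\[
\bigl(|\fb(\sigma)| + 2|\ft(\sigma)|,\; \textstyle\sum_{j \in \fb(\sigma) \cup \ft(\sigma)} t_j\bigr).
\]
By the same inductive derivation as in the proof of Lemma \ref{lemma:ctconnectedsum}, the structural maps satisfy $\delta_{2,A} = 0$; $\delta_{1,A}(\sigma) = \pm(T^2 - T^{-2})$ precisely when $|\fb(\sigma)| = 1$ and $\ft(\sigma) = \emptyset$; $v_A(\sigma)$ equals $(T^2 - T^{-2})$ times the tensor obtained from $\sigma$ by replacing its smallest non-$1_j$ entry with $1_j$, provided that this entry is an $\underline{\zeta}$, and otherwise vanishes; and $d_A(\sigma) = 0$ whenever $|\fb(\sigma)| = 1$.

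For $1 \leq i \leq k$, the argument of Lemma \ref{lemma:ctconnectedsum} shows that a generator $\sigma \in C_\ast(A)$ satisfies
\[
i - 1 = \min\{\ell \geq 0 : \delta_{1,A} v_A^\ell(\sigma) \neq 0\}
\]
if and only if $|\fb(\sigma)| = 1$ and $|\ft(\sigma)| = i - 1$. Indeed, since $\sigma \in C_\ast(A)$ forces the unique $\zeta$-entry to lie to the right of all $\underline{\zeta}$-entries, iterating $v_A$ peels off the $\underline{\zeta}$'s in increasing order of their positions and terminates after exactly $i - 1$ steps at a generator on which $\delta_{1,A}$ is non-trivial. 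The instanton grading of such a $\sigma$ equals $\sum_{j \in S} t_j$ where $S := \fb(\sigma) \cup \ft(\sigma)$ is an arbitrary $i$-element subset of $\{1,\dots,k\}$ (every such $S$ is realized by placing the $\zeta$-entry at $\max S$). Since $\text{deg}_I$ of a linear combination equals the maximum of the $\text{deg}_I$'s of its summands, the infimum defining $\Gamma$ is achieved by a single generator, giving the claimed formula.

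For $i \geq k + 1$, no generator has $|\fb(\sigma)| + |\ft(\sigma)| \geq i$, so no cycle satisfies the defining condition and $\Gamma(i) = \infty$. For $i \leq 0$, vanishing of $\delta_{2,A}$ yields $\Gamma(i) = 0$ exactly as in the previous lemma. The main obstacle is the inductive verification of the explicit formulas for $d_A$ and $v_A$ under tensor products, requiring careful tracking of signs from the $\epsilon$-map; however, the combinatorics is identical to the equal-parameter case and no new ideas are required.
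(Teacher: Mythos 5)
Your proof is correct and is exactly the argument the paper has in mind: the paper offers no separate proof of this lemma, remarking only that ``the method of the proof of Lemma \ref{lemma:ctconnectedsum} can be used,'' and your write-up is precisely that method with the count $\fb(\sigma)+\ft(\sigma)$ upgraded to the subset $\fb(\sigma)\cup\ft(\sigma)$ and hence $it$ replaced by $\min_{|S|=i}\sum_{j\in S}t_j$. The only point worth phrasing slightly more carefully is the lower bound: one should say that any admissible $\alpha$ must contain in its support some generator $\sigma$ with $\delta_{1,A}v_A^{i-1}(\sigma)\neq 0$, which forces $|\fb(\sigma)|=1$, $|\ft(\sigma)|=i-1$ and hence $\deg_I(\alpha)\geq\deg_I(\sigma)\geq\min_{|S|=i}\sum_{j\in S}t_j$, while the single minimizing generator realizes the bound.
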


We may then apply Proposition \ref{tilde-complex-dbletwist} to obtain the following.

\begin{prop}
	For the connected sum $K=D_{m_1,n_1}\#\dots\#D_{m_k,n_k}$ we have
	\[
		\Gamma_{K}(i) = \begin{cases} 0, & i \leqslant 0\\ \min_{\{{j_1},\dots,{j_i}\}\subseteq \{1,\dots,k\}}\left\{\sum_{a=1}^i\frac{(2m_{j_a}-1)(2n_{j_a}-1)}{4m_{j_a}n_{j_a}-1}\right\}, & 1\leq i \leq k\\
		 \infty, & i\geqslant k+1 \end{cases}
	\]	
\end{prop}

\subsection{Other examples}\label{subsec:othercomps}

For any 2-bridge knot $K$, we can use the partial description of I-graded $\cS$-complexes given in Subsection \ref{2-bridge} to give lower bounds on $\Gamma_K( \ell)$ for any $ \ell\in \Z$, and sometimes determine $\Gamma_K$ completely. We in turn obtain lower bounds on the 4-dimensional clasp numbers and unknotting numbers for 2-bridge knots via Theorem \ref{thm:gammaintro}.

\begin{figure}[t]
\centering
\begin{tikzpicture}[scale=0.9]
\draw (0,0) node {$\zeta^0$};
\node [below, purple] at (0,-0.25) {$(0,0)$};
\draw (1.5,0.0) node {$\zeta^{3}$};
\node [below, purple] at (1.5,-0.25) {$(1, \frac{9}{51})$};
\draw [->, blue] (1.0,0.0) -- (0.5,0.0); 
\draw (3.0,0.0) node {$\zeta^{1}$};
\node [below, purple] at (3.0,-0.25) {$(2, \frac{35}{51})$};
\draw (3.0,-1.5) node {$\zeta^{16}$};
\node [below, purple] at (3.0,-1.75) {$(2, \frac{35}{51})$};
\draw (4.5,0.0) node {$\zeta^{2}$};
\node [below, purple] at (4.5,-0.25) {$(3, \frac{38}{51})$};
\draw [->, blue] (4.0,0.0) -- (3.5,0.0); 
\draw (4.5,-1.5) node {$\zeta^{19}$};
\node [below, purple] at (4.5,-1.75) {$(3, \frac{38}{51})$};
\draw [->, blue] (4,-1.5) -- (3.5,-1.5);
\draw (4.5,-3) node {$\zeta^{6}$};
\node [below, purple] at (4.5,-3.25) {$(3, \frac{36}{51})$};
\draw [->,green] (4.15,-3) [rounded corners=10pt] -- (1.5,-3) -- (1.5,-1.15);
\draw (6.0,0.0) node {$\zeta^{4}$};
\node [below, purple] at (6.0,-0.25) {$(4, \frac{50}{51})$};
\draw (6.0,-1.5) node {$\zeta^{13}$};
\node [below, purple] at (6.0,-1.75) {$(4, \frac{50}{51})$};
\draw (6.0,-3.0) node {$\zeta^{17}$};
\node [below, purple] at (6.0,-3.25) {$(4, \frac{68}{51})$};
\draw (7.5,0.0) node {$\zeta^{5}$};
\node [below, purple] at (7.5,-0.25) {$(5, \frac{59}{51})$};
\draw [->, blue] (7.0,0.0) -- (6.5,0.0); 
\draw (7.5,-4.5) node {$\zeta^{15}$};
\node [below, purple] at (7.5,-4.75) {$(5, \frac{72}{51})$};
\draw (7.5,-6) node {$\zeta^{9}$};
\node [below, purple] at (7.5,-6.25) {$(5, \frac{81}{51})$};
\draw [->,green] (7.15,-6) [rounded corners=10pt] -- (4.5,-6) -- (4.5,-4.15);
\draw (7.5,-3.0) node {$\zeta^{18}$};
\node [below, purple] at (7.5,-3.25) {$(5, \frac{69}{51})$};
\draw [->, blue] (7,-3) -- (6.5,-3); 
\draw (7.5,-1.5) node {$\zeta^{22}$};
\node [below, purple] at (7.5,-1.75) {$(5, \frac{59}{51})$};
\draw [->, blue] (7,-1.5) -- (6.5,-1.5); 
\draw (9.0,0.0) node {$\zeta^{7}$};
\node [below, purple] at (9.0,-0.25) {$(6, \frac{83}{51})$};
\draw (9.0,-1.5) node {$\zeta^{10}$};
\node [below, purple] at (9.0,-1.75) {$(6, \frac{83}{51})$};
\draw (9.0,-3.0) node {$\zeta^{14}$};
\node [below, purple] at (9.0,-3.25) {$(6, \frac{77}{51})$};
\draw [->, blue] (8.5,-3.15) -- (8,-4.15); 
\draw (9.0,-4.5) node {$\zeta^{20}$};
\draw [->, blue] (8.5,-4.5) -- (8,-4.5); 
\node [below, purple] at (9.0,-4.75) {$(6, \frac{77}{51})$};
\draw (10.5,0) node {$\zeta^{8}$};
\node [below, purple] at (10.5,-0.25) {$(7, \frac{98}{51})$};
\draw [->, blue] (10.0,0.0) -- (9.5,0.0); 
\draw (10.5,-1.5) node {$\zeta^{12}$};
\node [below, purple] at (10.5,-1.75) {$(7, \frac{93}{51})$};
\draw (10.5,-4.5) node {$\zeta^{21}$};
\node [below, purple] at (10.5,-4.75) {$(7, \frac{84}{51})$};
\draw [->, blue] (10.2,-4.35) -- (9.5,-3.1); 
\draw [->, blue] (10.0,-4.5) -- (9.5,-4.5); 
\draw (10.5,-3) node {$\zeta^{25}$};
\node [below, purple] at (10.5,-3.25) {$(7, \frac{98}{51})$};
\draw [->, blue] (10.2,-2.85) -- (9.5,-1.6); 
\draw (12.0,0.0) node {$\zeta^{11}$};
\node [below, purple] at (12.0,-0.25) {$(8, \frac{104}{51})$};
\draw [->, blue] (11.5,-0.15) -- (11,-1.15); 
\draw (12.0,-1.5) node {$\zeta^{23}$};
\node [below, purple] at (12.0,-1.75) {$(8, \frac{104}{51})$};
\draw [->, blue] (11.5,-1.5) -- (11,-1.5); 
\draw (13.5,0) node {$\zeta^{24}$};
\node [below, purple] at (13.5,-0.25) {$\;(9, \frac{117}{51})$};
\draw [->, blue] (13,0) -- (12.5,0); 
\draw [->, blue] (13.1,-0.1) -- (12.5,-1.35); 
\end{tikzpicture}
\caption{This diagram shows generators for $C_\ast(K;\Delta_\sS)$ and the reducible $\zeta^0$ for $K=K_{51,16}$. As in Figure \ref{fig:74complex}, all arrows are multiplication by $\pm (T^2-T^{-2})$. The short (blue) arrows represent $d$ except for the left-most one, which is $\delta_1$. Again, $\delta_2=0$. The two L-shaped (green) arrows are multiplication by $\pm(T^2-T^{-2})$, representing components of $v$. The $v$-map might have other non-zero components, but the only ones that contribute to $\Gamma_{K}$ are the ones shown.}
\label{fig:5116complex}
\end{figure}
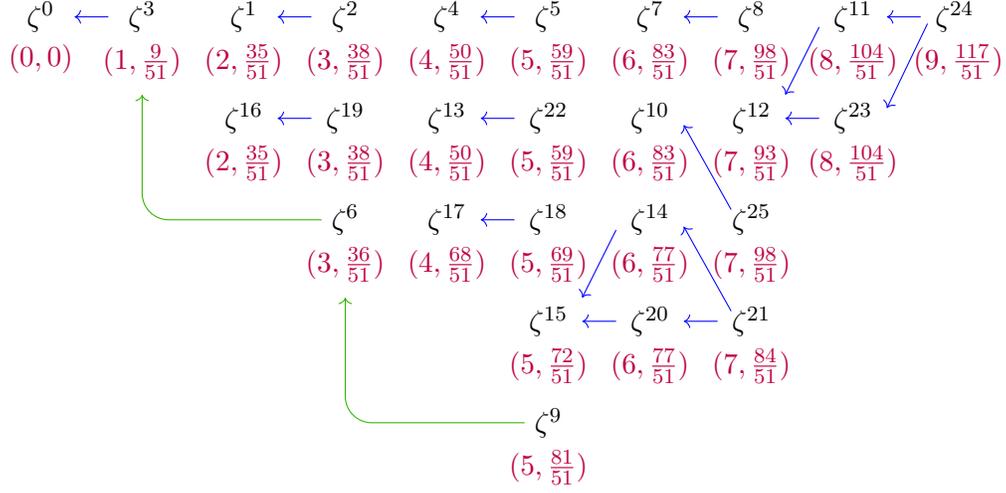

First we note that $\Gamma_K(0)$ and $\Gamma_K(1)$ only depend on knowledge of the maps $d,\delta_1,\delta_2$, and these are completely determined in the discussion of Subsection \ref{2-bridge}. Now we consider $\Gamma( \ell)$ for $ \ell\geqslant 2$. We assume for a moment that $-\sigma(K)/2\geqslant 1$. Although the $v$-map is not entirely determined there, condition \eqref{eq:v2bridge} narrows down the possibile nonzero matrix entries $\langle v \zeta^i , \zeta^j\rangle$. Further information about $v$ comes from Theorem \ref{thm:htwistedsign}, which implies
\begin{equation}
	 -\frac{1}{2}\sigma(K) = \max \{ k: \exists \alpha \in C_{2k-1}, \; \delta_1v^{k-1}\alpha \neq 0, \; \delta_1v^i\alpha = 0\;  (i\leq k-2)\}. \label{eq:signatureinfoforv}
\end{equation}
Using this information we obtain lower bounds on $\Gamma_K( \ell)$ for $ \ell\geq 2$. Similarly, our determination of $d,\delta_2$ and the possible non-zero entries $\langle v \zeta^i , \zeta^j\rangle$ give us lower bounds on $\Gamma_K(\ell)$ for $\ell  \leq -1$. Note that \eqref{eq:signatureinfoforv} implies $\Gamma_K(\ell)=\infty$ for $\ell > -\sigma(K)/2$. We remark that for the double twist family $D_{m,n}$ of Subsection \ref{subsec:doubletwist}, which have $-\sigma/2=1$, we verified \eqref{eq:signatureinfoforv} directly, without having to appeal to Theorem \ref{thm:htwistedsign}.

\begin{figure}[t]
\captionsetup[subfigure]{labelformat=empty}
    \centering
    \begin{subfigure}{0.45\textwidth}
        \centering
        \includegraphics[scale=0.55,angle=90,origin=c]{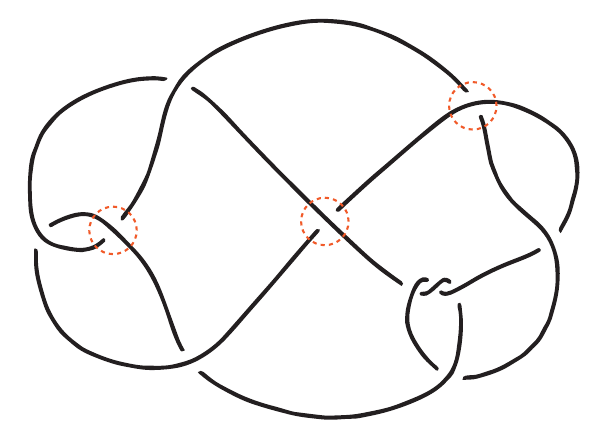} 
	\vspace{-.65cm}
        \caption{$11a_{192}$} 
    \end{subfigure}
    \hspace{.25cm}
    \begin{subfigure}{0.45\textwidth}
        \centering
        \includegraphics[scale=0.55,angle=90,origin=c]{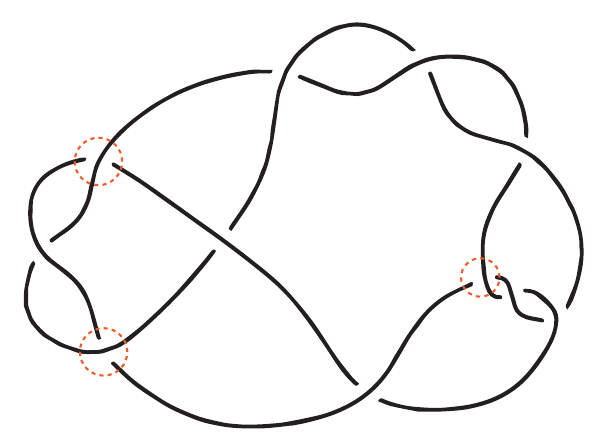} 
\vspace{-.65cm}
        \caption{$11a_{341}$} 
    \end{subfigure}
    \vspace{0.31cm}

    \begin{subfigure}{0.45\textwidth}
        \centering
        \includegraphics[scale=0.55,angle=90,origin=c]{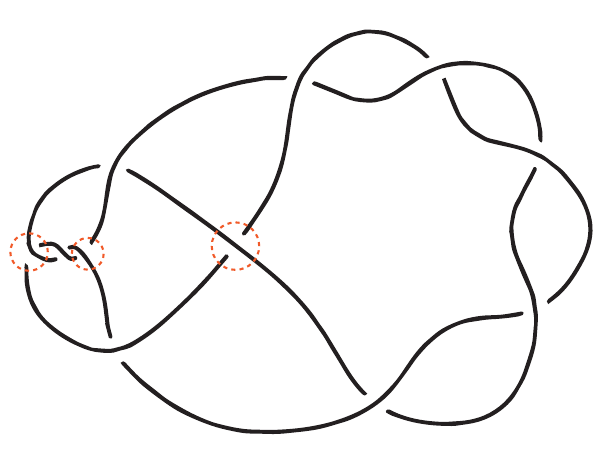} 
\vspace{-.5cm}
        \caption{$11a_{360}$} 
    \end{subfigure}
    \hspace{.25cm}
    \begin{subfigure}{0.45\textwidth}
        \centering
        \includegraphics[scale=0.55]{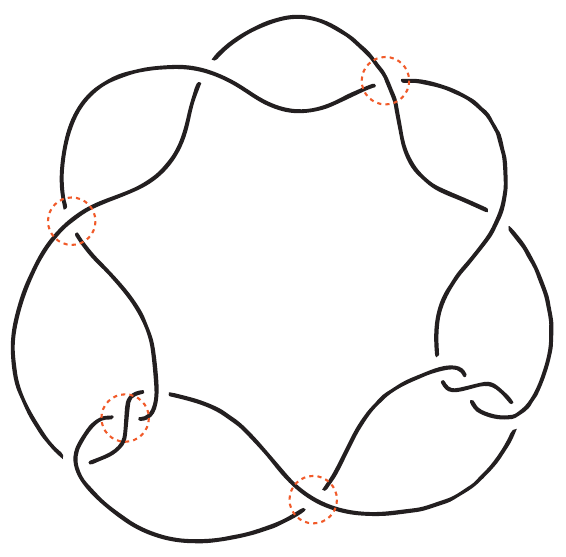} 
        \caption{$11a_{365}$} 
    \end{subfigure}
    \caption{Crossing changes realizing the unknotting numbers of the indicated knots.}\label{fig:unknotting}
\end{figure}

Consider the 2-bridge knot $K=K_{51,16}$. Up to taking mirrors this is the $11$-crossing knot $11a_{365}$, and it has $-\sigma/2=3$. Generators for an I-graded $\cS$-complex for $K$ are shown in Figure \ref{fig:5116complex}. The choice of lifts $\zeta^i$ are such that any possible non-zero component of $v$ moves a generator two columns to the left. The constraints discussed above imply that the instanton gradings of $\zeta^3$, $\zeta^6$, $\zeta^9$ realize the values of $\Gamma_K(1)$, $\Gamma_K(2)$, $\Gamma_K(3)$, respectively. We obtain
\[
	\Gamma_{K}(i) = \begin{cases} 0 & \qquad i\leqslant 0 \\ 3/17 &\qquad i=1 \\  12/17 &\qquad i=2 \\  27/17 &\qquad i=3 \\ \infty &\qquad i \geq 4 \end{cases}
\]
Theorem \ref{thm:gammaintro} then implies $27/17=\Gamma_{K}(3)\leq \frac{1}{2}c_s^+(K)$ so that $c_s^+(K)>3$. On the other hand, the unknotting number of $K$ is at most $4$, as illustrated in Figure \ref{fig:unknotting}. This in turn implies $c_s^+(K)=u(K)=4$, so that the unknotting number is in fact $4$.

We remark that the unknotting number of $11a_{365}$ was previously computed by Owens \cite{owens} using different techniques, and that the computation of $c_s^+(K)$ can be obtained from the work of Owens and Strle \cite{owens-strle}.

We next consider some 2-bridge knots whose unknotting numbers and four-dimensional clasp numbers have not been computed previously, as far as the authors are aware. The examples are the 2-bridge knots $(97,26)$, $(61,42)$, $(57,10)$. Up to mirrors these are the 11-crossing knots $11a_{192}$, $11a_{341}$, $11a_{360}$, respectively. Each has $-\sigma/2 = 2$. We use the above constraints to compute $\Gamma_K(2)>1$ in each case, implying $c_s^+(K)>2$. On the other hand, each of these knots has unknotting number at most $3$, as illustrated in Figure \ref{fig:unknotting}, so that in fact $c_s^+(K)=u(K)=3$ in each case. These observations together give Theorem \ref{thm:unknottingnumbers}.

To explain a bit more, consider first $(57,10)$. We check that lifts $\zeta^8$, $\zeta^{11}$, $\zeta^{17}$ with instanton gradings $62/57$, $62/57$, $68/57$ are the only possibilities with $\delta_1 v(\zeta^i)\neq 0$. Because all the instanton gradings are bigger than $1$, we have $\Gamma_K(2) >1$.

Next consider $(61,42)$. Here $\zeta^2$, $\zeta^{18}$, $\zeta^{22}$ are the only possibilities with $\delta_1v(\zeta^i)\neq 0$, with respective instanton gradings $58/61$, $62/61$, $64/61$. The first of these is not greater than $1$. However, $d\zeta^{2}=\pm (T^2-T^{-2})\zeta^{1}$, and the only other generator with $\zeta^{1}$ in its differential is $\zeta^{18}$. Thus any $\alpha$ with $\delta_1 v(\alpha)=0$ and $d\alpha=0$ which involves $\zeta^{2}$ must also involve $\zeta^{18}$. In this case $\alpha$ has instanton grading at least $62/61$. Thus $\Gamma_K(2)\geq 62/61 >1$. 

The case of $(97,26)$ is similar. We can check that the only lifts $\zeta^i$ which can possibly have $\delta_1 v (\zeta^i)\neq 0$ are $\zeta^{20}$, $\zeta^{27}$, $\zeta^{31}$, $\zeta^{45}$. These have respective instanton gradings $104/97$, $110/97$, $116/97$, $90/97$. However the instanton grading of $\zeta^{45}$ can be ruled out as in the previous example, since $d\zeta^{45}=\pm (T^2-T^{-2})\zeta^{26}$, and the only other $\zeta^i$ with $\zeta^{26}$ in its differential is $\zeta^{27}$. We conclude $\Gamma_K(2)\geqslant 104/97 > 1$.

%which we explain for concreteness. Define subsets $S_k\subset \{1,\ldots,\frac{p-1}{2}\}\times \Q_{>0}$ for $k\in \Z_{>0}$ as follows. First,
%\[
%	S_1 = \{ (i, a):  \exists \text{ odd }k_1,k_2>0 {\text{ solving }} \eqref{eq:k1k2}, \eqref{N1N2-d0} \text{ with } j=0,\;\; a =\tfrac{k_1k_2}{p} \}.
%\]
%An element $(i,a)\in S_1$ corresponds to a lift $\zeta^i$ of $\xi^i$ with bigrading $(1,a)$ such that $\delta_1 \zeta\neq 0$. Next, define $S_k$ inductively for $k\geq 2$ as follows:
%\[
%	S_k = \{ (i, a): \exists (j, b)\in S_{k-1}, \;\; \exists \text{ odd }k_1,k_2>0 {\text{ solving }} \eqref{eq:k1k2}, \eqref{N1N2-d1},\;\; a =b+\tfrac{k_1k_2}{p} \}.
%\]
%An element $(i,a)\in S_k$ corresponds to a lift $\zeta^i$ of $\xi^i$ with bigrading $(2k-1,a)$ and such that possibly $\langle v\zeta^i,\zeta^j\rangle \neq 0$ (according to \eqref{eq:v2bridge}) for some $(j,b)\in S_{k-1}$. 
%
%
%
%
%Now suppose $2\leqslant \ell \leqslant -\frac{1}{2}\sigma(K)$. A crude lower bound for $\Gamma_K(\ell)$ is given by the minimum $a$ such that there is some $(i,a)\in S_\ell$. This is improved by looking at all cycles $\sum c_i\zeta^i$ (with $\Z/2$ coefficients, say) where $\zeta^i$ is a lift of $\xi^i$ of bigrading $(2\ell-1, a_i)$. We then take the minimum, over such cycles $\sum c_i\zeta^i$ which have some $\zeta^i$ with $(i,a_i)\in S_\ell$, of the maximum $a_i$ whose corresponding coefficient $c_i\neq 0$. 

%Then we have $\Gamma_K(\ell)\geqslant \$.
%\eqref{eq:d2bridge}

% \newpage

%!TEX root = main.tex

\section{Inequalities for singular instanton concordance invariants}\label{sec:inequalities}

In this section we establish an inequality for the singular instanton Fr\o yshov invariants similar to ones for the non-singular Fr\o yshov invariant obtained in \cite{froyshov, froyshov-inequality}. We also obtain some inequalities for $\Gamma_{(Y,K)}$, and discuss the relationship to the 4D clasp number. Finally, we discuss constraints on cobordisms involving torus knots.

\subsection{An inequality for the $h$-invariant}

In this subsection we prove the following inequality for the invariant $h_\sS(Y,K)$.

\begin{theorem}\label{thm:hineq}
	Let $W$ be a compact, oriented, smooth 4-manifold with $b^1(W)=b^+(W)=0$, $\partial X=Y$ an integer homology 3-sphere, and $S\subset W$ an oriented surface with $\partial S=K$ a knot in $Y$. Let $c\in H^2(W;\Z)$. Suppose that $\eta(W,S,c)\neq 0 \in \sS$. Then:
	\begin{equation}
		h_\sS(Y,K) \geqslant 4\kappa_\text{\emph{min}}(W,S,c) - g(S) + \frac{1}{4}S\cdot S - \frac{1}{2}\sigma(Y,K). \label{eq:hinvinequality}
	\end{equation}
\end{theorem}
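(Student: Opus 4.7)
The plan is to puncture $W$ at a point on $S$ to reduce to a cobordism of pairs $(W',S')\co (S^3,U_1)\to (Y,K)$, to verify that its minimal reducible has index $2r-1$ for $r$ equal to the right-hand side of \eqref{eq:hinvinequality}, and then to use this cobordism to build a (possibly higher-level) morphism between the associated $\cS$-complexes from which the inequality follows. Concretely, remove a small $4$-ball $B\subset W$ centered at a point $p\in S$ with $B\cap S$ a standard disk; the result satisfies $\chi(W')=\chi(W)-1$, $\sigma(W')=\sigma(W)$, $\chi(S')=-2g(S)$, $S'\cdot S'=S\cdot S$, $\kappa_\text{min}(W',S',c)=\kappa_\text{min}(W,S,c)$, and $\eta(W',S',c)=\eta(W,S,c)$. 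The hypotheses $b^1(W)=b^+(W)=0$ with $\partial W=Y$ an integer homology sphere force $b_3(W)=0$ via the long exact sequence of $(W,Y)$, whence $\chi(W)+\sigma(W)=1$ and $\chi(W')+\sigma(W')=0$. Plugging into \eqref{eq:index} (with $\sigma(S^3,U_1)=0$) yields $\ind(A_L)=8\kappa_\text{min}(W,S,c)-2g(S)+\frac{1}{2} S\cdot S-\sigma(Y,K)-1=2r-1$.

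\emph{The case $r\leqslant 0$.} When $r=0$ the minimal reducible has index $-1$, so $(W',S',c)$ is negative definite over $\sS$ in the sense of Definition \ref{defn:negdefpair}. Proposition \ref{prop:morphism} then yields a morphism $\widetilde C_\ast(S^3,U_1;\Delta_\sS)\to \widetilde C_\ast(Y,K;\Delta_\sS)$ whose scalar component $\eta$ is nonzero, and functoriality of $h_\sS$ under morphisms, together with $h_\sS(S^3,U_1)=0$, gives $h_\sS(Y,K)\geqslant 0=r$. For $r<0$ the inequality follows from an analogous but dual argument using the negative-Fr\o yshov characterization implicit in Proposition \ref{h-reinterpret}.

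\emph{The case $r>0$.} Here the minimal reducibles sweep out a $(2r-1)$-dimensional family in the reducible locus of $M(W',S',c;\theta_0,\theta)$, so $(W',S',c)$ is not a morphism in the strict sense of Proposition \ref{prop:morphism}. To detect the Fr\o yshov level $r$ via Proposition \ref{h-reinterpret}, construct the cycle
\[
	\beta=\sum \varepsilon(A)\Delta(A)(\alpha),\qquad \Delta(A)=U^{2\kappa(A)-2\kappa_\text{min}}T^{\nu(A)},
\]
summing over irreducible $\alpha$ on $(Y,K)$ and $[A]\in M(W',S',c;\theta_0,\alpha)_0$. Standard compactness shows $d\beta=0$, since the reducible stratum has codimension $2r\geqslant 2$ and cannot occur as the boundary of any $1$-dimensional irreducible component. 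To show $\delta_1 v^{r-1}(\beta)\ne 0$, compactify $M(W',S',c;\theta_0,\theta)_{2r-1}$, cut it down by $r$ generic $\mu$-classes realizing the $v$-action on the target, and identify the boundary of the resulting $0$-dimensional space as a sum of (a) the minimal-reducible stratum, contributing $\pm\eta(W,S,c)$ via the local $\CP^{r-1}$ link around a reducible, and (b) broken trajectories at the $(Y,K)$-end contributing $\delta_1 v^{r-1}(\beta)$. Cancellation then yields $\delta_1v^{r-1}(\beta)=\pm \eta(W,S,c)\neq 0$ in $\sS$, and Proposition \ref{h-reinterpret} concludes $h_\sS(Y,K)\geqslant r$.

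\emph{Main obstacle.} The technical heart is the boundary analysis in the last step: identifying the boundary of the $\mu$-cut $(2r-1)$-dimensional moduli space with precisely the two contributions (a) and (b). This requires handling obstructed non-minimal reducibles via small perturbations as in \cite[Section 7.3]{dcx}, suppressing bubbling by energy bounds, and matching $\mu$-class cuts with the chain-level $v$-map via a neck-stretching argument along $\{1\}\times(Y,K)$ together with a gluing argument as in \cite{DS}. The $\CP^{r-1}$ linking computation at a minimal reducible extends the classical argument of Donaldson, Kronheimer, and Fr\o yshov to the singular setting with local coefficients.
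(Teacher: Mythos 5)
Your reduction to the punctured cobordism $(W',S',c)\co (S^3,U_1)\to (Y,K)$ and the index computation $\ind(A_L)=2r-1$ are correct, and your treatment of the cases $r=0$ and $r>0$ follows essentially the same route as the paper (its Proposition \ref{prop:relinv} and Lemma \ref{lemma:inequalityforh}): a relative cycle defined by $0$-dimensional moduli spaces, together with a cut-down of $M(W',S',c;\theta)_{2r-1}$ whose boundary/ends give $\delta_1 v^{r-1}(\beta)=\pm\eta(W,S,c)$. Two smaller points there: cutting by $r$ $\mu$-classes gives a space of formal dimension $-1$; you should cut by $r-1$ classes and analyze the ends of the resulting $1$-manifold, as the link of each minimal reducible is $\pm\CP^{r-1}$. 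Also, to invoke Proposition \ref{h-reinterpret} and conclude $h_\sS(Y,K)\geqslant r$ you need not only $\delta_1 v^{r-1}(\beta)\neq 0$ but also $\delta_1 v^{j}(\beta)=0$ for $j\leqslant r-2$; this is obtained from the lower-energy moduli spaces $M(W',S',c;\theta)_{2j+1}$, which contain no reducibles, and your write-up omits it.

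The genuine gap is the case $r<0$, which you dispose of with ``an analogous but dual argument.'' No such dual argument is available: turning the cobordism around (or passing to mirrors/dual complexes) replaces $b^+(W)=0$ by $b^-=0$ for the reversed orientation, so the negative-definiteness hypothesis is destroyed, and on $(W',S',c)$ itself the minimal reducible has index $2r-1\leqslant -3$, so it is neither negative definite (Definition \ref{defn:negdefpair}) nor negative definite of level $i\geqslant 0$ (Definition \ref{defn:negdefpair-higher-level}); the reducible is then an obstructed solution of negative formal dimension and the counting arguments above produce nothing. The paper handles this case by a stabilization trick that your proposal is missing: take the boundary sum of $(W,S)$ with enough copies of $(\overline{\C\P}^2\setminus B^4, S_2)$ (bounding $T_{2,3}$) when $T^4\neq 1$ in $\sS$, or of $(\overline{\C\P}^2\setminus B^4, S_3)$ (bounding $T_{3,4}$) when $T^4=1$, as in Lemma \ref{blowuptwist} and Corollary \ref{cor:mmplus1}. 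These pieces realize equality in \eqref{eq:hinvinequality}, preserve $\eta\neq 0$ (note $\eta(\overline{\C\P}^2\setminus B^4,S_2)=1-T^4$, which is exactly why the dichotomy on $T^4$ appears), and raise the reducible index to the admissible range; one then uses additivity of $h_\sS$ under connected sum together with the computations $h_\sS(T_{2,3})=1$ (for $T^4\neq 1$) and $h_\sS(T_{3,4})=1$ (for $T^4=1$) to deduce the inequality for the original $(W,S,c)$. Without this (or some substitute analysis of the obstructed low-index reducible), your argument only proves the theorem when the right-hand side of \eqref{eq:hinvinequality} is non-negative.
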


The quantities $\eta(W,S,c)$ and $\kappa_\text{\rm{min}}(W,S,c)$ are defined  as in Subsection \ref{sec:morphisms} for $(W,S,c)$. Alternatively one can remove a small regular neighborhood of a point on the interior of $S$ to obtain a cobordism from $(S^3,U_1)$ to $(Y,K)$, and then use the definitions of Subsection \ref{sec:morphisms} for cobordisms of pairs. The reader may easily check that the right side of \eqref{eq:hinvinequality} is an integer. We begin with a result about maps on the irreducible singular instanton Floer homology induced by the types of cobordisms appearing above.

\begin{prop}\label{prop:relinv}
	Let $(W,S,c)$ and $(Y,K)$ be as in Theorem \ref{thm:hineq}. Define
\begin{equation}
	n:=4\kappa_\text{\emph{min}}(W,S,c) - g(S) + \frac{1}{4}S\cdot S - \frac{1}{2}\sigma(Y,K)-1.\label{eq:indexforineq}
\end{equation}
If $n\geqslant 0$, there is a cycle $C(W,S,c)\in C_{2n+1}(Y,K;\Delta_\sS)$ satisfying
	\[
		\delta_1 v^j(C(W,S,c)) = \begin{cases} 0, & 0\leqslant j < n \\  \eta(W,S,c), \qquad & j=n \end{cases}
	\]
\end{prop}

\begin{proof}
	The proof is a direct adaptation of the non-singular case, \cite[Proposition 1]{froyshov}. The invariant $C(W,S,c)$ is defined using a moduli space of singular instantons associated to  $(W,S,c)$.  For a given irreducible critical point $\alpha$ defining a basis element of $C_\ast(Y,K;\Delta_\sS)$, we define the coefficient of $\alpha$ in $C(W,S,c)$ to be
\[
	\langle \alpha,C(W,S,c) \rangle = \sum_{[A]\in M(W,S;\alpha)_0} \epsilon(A)U^{2\kappa_\text{min}(W,S,c)-2\kappa(A)} T^{\nu(A)}
\]
where $\epsilon(A)\in\{\pm 1\}$ is determined by the orientation of the moduli space. The space $M(W,S;\alpha)_0$ is the $0$-dimensional moduli space of instantons on $(W,S,c)$ with limiting flat connection $\alpha$. The usual argument involving 1-dimensional moduli spaces shows that $dC(W,S,c)=0$. The key point is that reducibles on $(W,S,c)$ do not show up in the argument because of the assumption on $n$.

Now consider the moduli space $M(W,S,c;\theta)_{2n+1}$ of instantons for $(W,S,c)$ which are asymptotically reducible and have energy $\kappa_\text{min}(W,S,c)$. This has dimension $2n+1$. By following the perturbation scheme of \cite[Section 7.3]{dcx} (here we use $n\geqslant 0$), we can ensure that the reducibles have neighborhoods homeomorphic to cones on $\pm\bC\bP^{n}$, and that $M(W,S,c;\theta)_{2n+1}$ is a (possibly non-compact) smooth manifold away from the singularities corresponding to reducibles.

Let $M'$ be the result of first removing small open $(2n+1)$-ball neighborhoods of the reducibles in $M(W,S,c;\theta)_{2n+1}$, and then cutting down by the codimension 2 divisors associated to the euler class of the $U(1)$ basepoint fibrations determined by the reduction in neighborhoods of $n$ points in $S$. (See \cite{Kr:obs} for more details on these codimension 2 divisors. We use the same convention as in \cite[Remark 8.3]{DS} to define these divisors.) Then $M'$ is a 1-manifold with boundary. The argument in \cite[Section 5.1.4]{dk} shows that the signed count of the boundary points of the cut down moduli space on the links $\pm \C\P^n$ of the reducibles is $\pm 1$. Thus counting the boundary points of $M'$ yields $\eta(W,S,c)$. 

However, in general $M'$ also has ends, which come from instantons with breakings along the cylinder, giving rise to a term of the form $\delta_1\psi$. Here $\psi\in C_1(Y,K;\Delta_\sS)$ and the coefficient of a generator $\alpha$ in $\psi$ is obtained by counting the instantons in $M(W,S,c;\alpha)_{2n}$ after cutting down the moduli space by the codimension 2 divisors induced by the basepoint fibrations over the $n$ points in $S$. Moving these basepoints along the cylindrical end, one by one, shows that $\psi$ is homologous to $v^nC(W,S,c)$. In particular, we obtain $\delta_1 v^n \psi = \eta(W,S,c)$. After replacing $M(W,S,c;\theta)_{2n+1}$ with the moduli space of lower energy $M(W,S,c;\theta)_{2j+1}$ for $j< n$, which contains no reducibles, and is empty if $j$ does not have the same parity as $n$ for index reasons, the same argument shows that $\delta_1 v^j \psi = 0$.
\end{proof}

\begin{lemma}\label{lemma:inequalityforh}
Theorem \ref{thm:hineq} holds when the right side of \eqref{eq:hinvinequality} is non-negative, i.e. $n\geqslant -1$.
\end{lemma}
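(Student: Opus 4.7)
The plan is to split into two cases based on whether the right side of \eqref{eq:hinvinequality}, call it $m = n+1$, satisfies $m\geqslant 1$ or $m=0$, and invoke Proposition \ref{prop:relinv} in the former case and Proposition \ref{prop:morphism} in the latter. In both cases the conclusion follows almost immediately once the correct algebraic object has been extracted from the geometric input.

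When $m \geqslant 1$, so that $n\geqslant 0$, Proposition \ref{prop:relinv} directly produces a cycle $C = C(W,S,c) \in C_{2n+1}(Y,K;\Delta_\sS)$ with
\[
  \delta_1 v^j(C) = 0 \text{ for } 0\leqslant j < n, \qquad \delta_1 v^n(C) = \eta(W,S,c).
\]
Since $\sS$ is an integral domain and by hypothesis $\eta(W,S,c)\neq 0$, this element $C$ exhibits precisely the witness required by the characterization of the Fr\o yshov invariant in Proposition \ref{h-reinterpret} at the level $k=n+1$. Thus $h_\sS(Y,K) \geqslant n+1 = m$, as desired.

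When $m=0$, so that $n=-1$, I would instead puncture $(W,S)$ at an interior point of $S$ to view it as a cobordism $(\hat{W},\hat{S}): (S^3,U_1) \to (Y,K)$. The assumptions on $(W,S,c)$, together with the identity of the remark following Definition \ref{defn:negdefpair}, are exactly the statement that $(\hat{W},\hat{S},c)$ is negative definite over $\sS$ in the sense of Definition \ref{defn:negdefpair}. Proposition \ref{prop:morphism} then supplies a morphism of $\cS$-complexes
\[
  \widetilde\lambda_{(\hat{W},\hat{S},c)} \co \widetilde C_\ast(S^3,U_1;\Delta_\sS) \longrightarrow \widetilde C_\ast(Y,K;\Delta_\sS).
\]
The existence of such a morphism, with $\eta \neq 0$ in the integral domain $\sS$, implies $h_\sS(S^3,U_1) \leqslant h_\sS(Y,K)$: applying the components of the morphism to a witness $\alpha$ for $h_\sS(S^3,U_1)$ in the sense of Proposition \ref{h-reinterpret} and using the chain-map relations (which yield $\delta_1' v'^j (\lambda\alpha) = \eta\cdot\delta_1 v^j(\alpha)$ modulo boundary terms for $\alpha$ a cycle) produces a corresponding witness in the target. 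Since $h_\sS(S^3,U_1)=0=m$, this finishes the case.

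The main obstacle, as in the preceding proposition, is technical rather than conceptual: guaranteeing that the minimal reducibles appearing in either the construction of $C(W,S,c)$ (when $n\geqslant 0$) or the morphism $\widetilde\lambda_{(\hat W,\hat S,c)}$ (when $n=-1$) can be simultaneously regularized, which is handled by the perturbation scheme of \cite[Section 7.3]{dcx} already invoked in the proof of Proposition \ref{prop:morphism}. A secondary point that deserves care is checking that the algebraic passage from a morphism of $\cS$-complexes to the inequality $h_\sS \leqslant h_\sS$ does not require $\eta$ to be a unit, only nonzero; this is where the integral domain hypothesis on $\sS$ is used.
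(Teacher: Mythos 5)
Your proposal is correct and follows essentially the same two-case argument as the paper: for $n\geqslant 0$ it applies Proposition \ref{prop:relinv} together with the characterization in Proposition \ref{h-reinterpret}, and for $n=-1$ it punctures $(W,S)$ to get a negative definite cobordism from $(S^3,U_1)$ and invokes Proposition \ref{prop:morphism} plus monotonicity of $h_\sS$ under morphisms. The additional remarks on regularizing reducibles and on $\eta$ needing only to be nonzero are consistent with the paper's conventions.
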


\begin{proof}
		First suppose $n= -1$. Then the cobordism $(W,S)$, with a small $(B^4,D^2)$ deleted in the interior, along with the element induced by $c$, is negative definite over $\sS$. By Proposition \ref{prop:morphism} we have an induced morphism $\widetilde C_\ast(S^3,U_1) \to \widetilde C_\ast(Y,K)$ of $\cS$-complexes. 
		This implies $h_\sS(Y,K)\geqslant h_\sS(S^3,U_1)=0$.

Next suppose $n\geqslant 0$. Then Proposition \ref{prop:relinv}, with our assumptions, implies that there is a cycle $C(W,S,c)\in C_{2n+1}(Y,K;\Delta_\sS)$ such that
	\[
		\delta_1 v^n C(W,S,c) \neq 0, \quad \delta_1 v^j C(W,S,c) = 0 \;\; (0\leqslant j < n).
	\]
	Then by Proposition \ref{h-reinterpret} we have $h_\sS(Y,K) \geqslant n + 1$. This completes the proof.
\end{proof}

\begin{remark}
	In Subsection \ref{sec:higherlevelmorphisms}, we introduce a more general notion of morphisms between $\cS$-complexes. 
	In Lemma \ref{lemma:inequalityforh}, it is possible to construct such generalized
	 morphisms $\widetilde C_\ast(S^3,U_1) \to \widetilde C_\ast(Y,K)$ in the case that $n\geq 0$ and then conclude the claim in the same way as in the case $n=-1$. $\diamd$
\end{remark}

Our approach to remove the condition $n\geqslant -1$ divides into two cases, depending on whether $T^4-1$ vanishes in $\sS$ or not. We search for the simplest non-trivial instances of the inequality for $h_\sS(Y,K)$ in each of these two cases. In the first case we will see that $T_{3,4}$ provides a suitable example, and in the second case $T_{2,3}$.

\begin{lemma}\label{blowuptwist}
	Let $K'\subset Y$ be obtained from a knot $K$ by a full twist around a collection of strands with linking number $d$, as in Figure \ref{fig:blowuptwist}. Then there is a cobordism 
	$(W,S):(Y,K) \to (Y,K')$ where $W$ is the connected sum of the product cobordism $I \times Y$ and $\overline{\C\P}^2$, and $S$ is a cylinder with $S\cdot S=-d^2$. 
	In particular, there is a compact oriented connected genus $0$ surface $S_m \subset \overline{\C\P}^2\setminus B^4$ of degree $m$ with boundary the 
	torus knot $T_{m,m+1}\subset S^3$.
\end{lemma}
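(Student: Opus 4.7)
The plan is to realize the full twist by a blow-up, which is the standard Kirby calculus trick. Let $C\subset Y\times\{1\}$ be a small unknotted circle encircling the $d$ strands to be twisted, chosen so that it bounds an embedded disk $D\subset Y\times\{1\}$ meeting $K\times\{1\}$ transversely in $d$ points with algebraic intersection $d$. Attach a $2$-handle to $Y\times I$ along $C$ with framing $-1$ (relative to the disk framing). Call the result $W$. Since $C$ is unknotted in $Y$, $(-1)$-surgery on $C$ yields $Y$ again, and the effect on $K\times\{1\}$ under the surgery diffeomorphism is precisely a full twist on the $d$ strands, so the outgoing end of $W$ is $(Y,K')$. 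A Mayer-Vietoris computation (or: attaching a $(-1)$-framed $2$-handle along an unknot is the operation of interior connect-summing with $\overline{\C\P}^2$) identifies $W\cong Y\times I\mathbin{\#}\overline{\C\P}^2$.

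Next I construct the cylinder $S$. Start with $K\times I\subset Y\times I$. At level $\{1\}$ this surface meets the disk $D$ in $d$ points. Remove small disks around these $d$ points and tube the resulting boundary circles to $d$ parallel copies of the core disk $D'\subset W$ of the attached $2$-handle (these parallel copies are disjoint disks bounded by $d$ parallel push-offs of $C$ in $Y\times\{1\}$). The boundary of the resulting surface on the outgoing end of $W$ is, by design, the knot $K'$ obtained from $K$ by the full twist, and the resulting surface $S$ is an annulus (a cylinder) with $\partial S=K\sqcup(-K')$. To compute $S\cdot S$, observe that the exceptional sphere $E\subset W$ (from the connect-sum description) meets $S$ transversely in exactly $d$ points (one for each tube), so as a class in $H_2(W,\partial W)\cong \Z$ generated by the dual of $E$, one has $[S]=d\cdot[E^\vee]$, giving $S\cdot S = d^2 (E\cdot E)=-d^2$. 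Equivalently, one can perform the analogous blow-up picture: in $Y\times I$ the union $K\times I \cup D\times\{1\}$ has $d$ transverse intersections; taking the proper transform after blowing up gives $S$, and the self-intersection comes from the $d^2$ exceptional contributions.

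For the second part, the braid $\beta_0=\sigma_1\sigma_2\cdots\sigma_{m-1}$ on $m$ strands closes to the unknot, while $\beta_0\cdot \Delta_m^2$, with $\Delta_m^2=(\sigma_1\cdots\sigma_{m-1})^m$ the full twist on $m$ strands, equals $(\sigma_1\cdots\sigma_{m-1})^{m+1}$, whose closure is $T_{m,m+1}$. Thus $T_{m,m+1}$ is obtained from the unknot $U_1$ by one full twist on $m$ coherently oriented strands, giving linking number $d=m$. Applying the first part with $Y=S^3$, $K=U_1$, $d=m$ produces a cylinder from $U_1$ to $T_{m,m+1}$ in $S^3\times I\mathbin{\#}\overline{\C\P}^2$. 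Capping off the $U_1$ end with a standard slice disk $D_0\subset B^4$ yields a closed-off $4$-manifold $B^4\cup (S^3\times I\mathbin{\#}\overline{\C\P}^2)\cong \overline{\C\P}^2\setminus B^4$ (since the $B^4$ absorbs the $S^3\times I$ collar). The surface $S_m:=S\cup D_0$ is then a compact, oriented, connected genus $0$ surface in $\overline{\C\P}^2\setminus B^4$ with boundary $T_{m,m+1}$, of degree $m$ (its class meets the exceptional sphere in $m$ points, matching the first part).

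The only substantive step is the geometric construction of $S$ and the identification $W\cong Y\times I\mathbin{\#}\overline{\C\P}^2$; both are standard in $4$-manifold topology, and the self-intersection computation is really the heart of the argument but is routine once one sees the picture.
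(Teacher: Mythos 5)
Your proof is correct and follows the same route as the paper, which simply declares the first claim elementary and obtains $S_m$ by exactly your capping-off argument; you have merely filled in the standard blow-up/proper-transform details (the $(-1)$-framed $2$-handle along the encircling unknot, the tubing, and the $-d^2$ self-intersection count). No issues.
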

\begin{proof}
	The first claim is elementary. To see the second claim, note that the first part gives a cobordism from the unknot $U_1=T_{m,1}$ to the torus knot $T_{m,m+1}$ embedded in the connected sum of 
	$I\times S^3$ and $\overline{\C\P}^2$ which has degree $m$. By capping off the incoming end with a disc embedded in the $4$-ball we obtain the desired surface $S_m$.
\end{proof}

\begin{figure}[t]
\centering
\includegraphics[scale=.9]{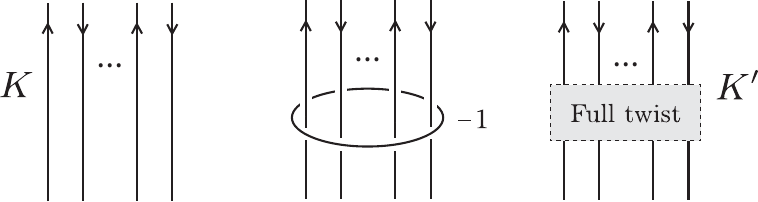}
\caption{The knot $K'$ is obtained from $K$ by blowing up around the strands indicated, which introduces one full right-handed twist. The linking number $d$ is the number of upwards pointing strands minus the number of downwards pointing strands.}
\label{fig:blowuptwist}
\end{figure}

\begin{cor}\label{cor:mmplus1}
	For any $\sS$ and $m$ odd we have $h_\sS(T_{m,m+1})\geqslant (m-1)/2$.
\end{cor}

\begin{proof}

We have $\sigma(T_{m,m+1}) = -(m-1)(m+3)/2$. Furthermore, for the pair $(\overline{\C\P}^2\setminus B^4 ,S_m)$ in Lemma \ref{blowuptwist} with $c=0$, we have
\begin{equation}
	\kappa_\text{min}(\overline{\C\P}^2\setminus B^4, S_m) = \min_{a\in \Z} \left( a + \frac{m}{4} \right)^2 = \frac{1}{16} \label{eq:kappminodd}
\end{equation}
realized uniquely by $a = (\pm 1 -m)/4$ when $m\equiv \pm 1 \pmod 4$. Thus $\eta(W,S)$ is a power of $T$, up to sign. The right side of \eqref{eq:hinvinequality} computes to $(m-1)/2$. The result then follows from Lemma \ref{lemma:inequalityforh}.
\end{proof}

\begin{prop}
\hangindent\leftmargini
{\phantom{.}}
	\begin{enumerate} \item[{\em (i)}] If $T^4=1$ in $\sS$ then $h_\sS(T_{3,4})=1$. \item[{\em (ii)}] If $T^4\neq 1$ in $\sS$ then $h_\sS(T_{2,3})=1$.
\end{enumerate}
\end{prop}

This proposition follows from the computations in \cite[Section 9]{DS}, which rely on the equivariant ADHM construction. Here we give an alternative route to these computations.

\begin{proof}	
	From Corollary \ref{cor:mmplus1}, we have $h_\sS(T_{3,4})\geqslant 1$, and 
	we just need to show $h_\sS(T_{3,4})\leqslant 1$. 
	The complex $C_*(T_{3,4};\Delta_\sS)$ has three generators $\alpha$, $\alpha'$ and $\beta$, 
	where the first two elements have degree one and $\beta$ has degree $3$ \cite[Section 9.4]{DS}. 
	We pick honest lifts 
	$\widetilde \alpha$, $\widetilde \alpha'$ and $\widetilde \beta$ 
	with 
	\[
	  \widetilde {\rm gr}(\widetilde \alpha)=\widetilde {\rm gr}(\widetilde \alpha')=1,\hspace{1cm}
	  \widetilde {\rm gr}(\widetilde \beta)=3.
	\]
	To prove the claim it suffices to show that $v(\beta)=0$. Since $T^4=1$, Lemma \ref{lift-morphisms} implies that 
	$\langle v(\widetilde \beta),\widetilde \alpha\rangle$ and $\langle v(\widetilde \beta),\widetilde \alpha'\rangle$ 
	are both integers. Therefore, it suffices to prove the claim for the case that $\sS=\Z$.
	If $v(\beta)\neq 0$, then it is easy to see from this and $h_\sS(T_{3,4})\geqslant 1$ that the homology of 
	the complex $\widetilde C_*(T_{3,4})$ over $\Z$ has rank at most $3$. This 
	is a contradiction because the homology of $\widetilde C_*(T_{3,4})$ is isomorphic to
	$I^\natural(T_{3,4})$ \cite{DS}, which has rank $5$.
	
	The complex $C_*(T_{2,3};\Delta_\sS)$ has only one generator $\alpha$ with  degree $1$. 
	Therefore, the maps $v$ and $\delta_2$ are trivial and $0\leq h_\sS(T_{2,3}) \leq 1$. To determine the exact value of $h_\sS(T_{2,3})$ in the case $T^4\neq 1$, 
	we will make use of the pair $(\overline{\C\P}^2\setminus B^4, S_2)$. Note that $S_2$ has genus $0$, and self-intersection $-4$. We find that 
	\[
	  \kappa_\text{min}(\overline{\C\P}^2\setminus B^4, S_2)=\min_{a\in \Z}\left(a - \frac{1}{2}\right)^2= \frac{1}{4},
	\]
	realized by $a=0$ and $a=1$. The corresponding monopole numbers for these reducibles are $-2$ and $2$. It follows that $\eta(\overline{\C\P}^2\setminus B^4, S_2)=1-T^4$, which is non-zero by assumption. 
	So Proposition \ref{prop:relinv} implies that the map $\delta_1$ is non-zero and $h_\sS(T_{2,3})=1$.
\end{proof}

\begin{proof}[Proof of Theorem \ref{thm:hineq}]
	Let $(W,S,c)$ and $(Y,K)$ be as in the statement of Theorem \ref{thm:hineq}, and $n$ as in \eqref{eq:indexforineq}. By Lemma \ref{lemma:inequalityforh} we may assume $n\leqslant -2$.

First suppose $T^4=1$ in $\sS$. Take the boundary sum of $(W,S)$ with $N$ copies of the pair $(\overline{\C\P}^2\setminus B^4,S_3)$ from Lemma \ref{blowuptwist} to obtain a new pair $(W',S')$ with boundary $(Y',K')=(Y,K)\#(S^3,NT_{3,4})$. For $(W',S',c)$ the left side of \eqref{eq:hinvinequality} is
\[
	h_\sS(Y',K') = h_\sS(Y,K) + Nh_\sS(T_{3,4})  = h_\sS(Y,K) + N,
\]
and, similar to the computation leading to Corollary \ref{cor:mmplus1}, the right side of \eqref{eq:hinvinequality} is $n+1+N$. Furthermore, the condition $\eta\neq 0$ is preserved under this operation. Thus the inequality for $(W',S',c)$ is equivalent to the one for $(W,S,c)$. Letting $N\geqslant -n-1$, we see that $(W',S',c)$ satisfies the condition of Lemma \ref{lemma:inequalityforh}, completing the proof in this case.

Next suppose $T^4\neq 1$ in $\sS$. We follow the same strategy, but using the trefoil and $(\overline{\C\P}^2\setminus B^4, S_2)$ in place of $T_{3,4}$ and $(\overline{\C\P}^2\setminus B^4, S_3)$. 
For the pair $(\overline{\C\P}^2\setminus B^4, S_2)$, the left hand side of \eqref{eq:hinvinequality} is $h_\sS(T_{2,3})=1$ and the right hand side is computed to $1$ as well. We then repeat the above strategy of taking the boundary sum of $(W,S)$ with enough copies of $(\overline{\C\P}^2\setminus B^4, S_2)$.
\end{proof}

Now suppose $K$ is an unknot $U_1\subset Y$ and let $(W,S)$ be such that $b^1(W)=b^+(W)=0$ and $S$ is an unknotted disk with boundary $U_1$. Then inequality \eqref{eq:hinvinequality} implies
\begin{equation}\label{eq:hunknot1}
	h_\sS(Y,U_1) \geqslant |w^2|
\end{equation}
whenever $w=2z-c$ minimizes $-(z-\frac{1}{2}c)^2$ over all $z$, and $\eta(W,S,c)\neq 0 \in \sS$. The cohomology class $w$ is called an {\em extremal vector} in the terminology of \cite{froyshov, froyshov-inequality}. Note that if $w$ is non-zero, $\eta(W,S,c)$ is twice the corresponding count of reducibles in the non-singular case. We will use the following in the next section.

\begin{lemma}\label{lemma:poincare}
Suppose $2\neq 0$ in $\sS$. Let $\Sigma$ be the Poincar\'{e} sphere oriented as the boundary of a negative definite plumbing  $W$ with intersection form $-E_8$. 
Then $h_\sS(\Sigma,U_1) \geqslant 4$.
\end{lemma}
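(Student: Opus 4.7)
The plan is to apply Theorem~\ref{thm:hineq} to $(W,S,c)$ where $S$ is an unknotted disk in $W$ bounding $U_1$ and $c\in H^2(W;\Z)$ is chosen so that $c/2$ lies at a deep hole of the $E_8$ lattice. Concretely, I would identify $H^2(W;\Z)$ with the $E_8$ lattice in standard coordinates (so the intersection form is $-E_8$) and take $c$ corresponding to $(2,0,\ldots,0)$, so that $c/2=(1,0,\ldots,0)$ sits at the covering radius $1$ from the lattice. For $S$, I would push a Seifert disk for $U_1\subset\Sigma$ slightly into $W$, so that $[S]=0$ in $H_2(W,\partial W;\Z)$; consequently $g(S)=S\cdot S=0$ and the monopole number $\nu(A_L)=(2z-c)\cdot S$ vanishes for every reducible $A_L$.

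With these choices, $\kappa_{\min}(W,S,c)=1$ and the right hand side of \eqref{eq:hinvinequality} equals $4$. The remaining task is to verify $\eta(W,S,c)\neq 0$ in $\sS$. A direct enumeration using $E_8 = D_8 \cup (D_8+(\tfrac12)^8)$ shows that the lattice points at squared distance $1$ from $c/2=(1,0,\ldots,0)$ are exactly the $16$ vectors
\[
	0,\qquad (2,0,\ldots,0),\qquad (1,0,\ldots,\pm 1,\ldots,0)
\]
with $\pm 1$ in one of the positions $2,\ldots,8$; the shifted coset $D_8+(\tfrac12)^8$ contributes nothing, as each of its elements has squared distance at least $2$ from any integer point. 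Because $-E_8$ is an even lattice, $z\cdot z$ is even for every such $z$, so each sign $(-1)^{z\cdot z}$ equals $+1$. Combined with the trivial $T$-powers, this gives $\eta(W,S,c)=16=2^4$. Since $\sS$ is an integral domain with $2\neq 0$, this is nonzero in $\sS$, and Theorem~\ref{thm:hineq} yields $h_\sS(\Sigma,U_1)\geq 4$.

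The main subtlety is the computation of $\eta$: the approach requires the kissing number of a deep hole of $E_8$ to be nonzero in every $\sS$ with $2\neq 0$. The fact that this kissing number is a power of $2$ (namely $16$), rather than some number with odd prime factors, is precisely what makes the argument work uniformly in all characteristics different from $2$.
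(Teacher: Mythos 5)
Your proof is correct and is essentially the paper's argument: the paper proves this lemma by specializing Theorem \ref{thm:hineq} to an unknotted disk (inequality \eqref{eq:hunknot1}) with an extremal vector of square $-4$ in $-E_8$ and reducible count $\eta=16$, citing Fr\o yshov's Section 4 for that lattice fact. Your explicit choice $c=(2,0,\dots,0)$ and the hands-on enumeration of the $16$ minimal classes (with even squares and trivial $T$-powers, so $\eta=16\neq 0$ when $2\neq 0$) simply verifies that citation directly, and your computation checks out.
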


\begin{proof}
The result follows from \eqref{eq:hunknot1} because $-E_8$, the lattice of $W$, has an extremal vector of square $-4$ with count of reducibles $\eta=16\neq 0 \in \sS$, see \cite[Section 4]{froyshov}.
\end{proof}

Theorem \ref{thm:hineq} can be generalized by incorporating $\mu$-classes of surfaces into the argument, as Fr\o yshov does in the non-singular case. The statement is as follows. Choose a non-negative integer $m$, a class $a\in H^2(W;\Z)$, and suppose
\[
	 \sum (-1)^{z^2} T^{(2z-c)\cdot S} (a\cdot (2z-c))^m \neq 0 \in \sS
\]
where the sum is over all $z$ minimal with respect to $c$. Then:
\begin{equation*}
		h_\sS(Y,K) \geqslant 4\kappa_\text{min}(W,S,c) - g(S) + \frac{1}{4}S\cdot S - \frac{1}{2}\sigma(Y,K)-m.
\end{equation*}
We leave the details to the interested reader. Upon establishing $h_\Q(Y,U_1)=4h_\Q(Y)$, this inequality recovers the inequalities for negative definite 4-manifolds from \cite{froyshov-inequality}.

\subsection{Higher height morphisms of $\cS$-complexes}\label{sec:higherlevelmorphisms}
In Proposition \ref{prop:relinv}, associated to $(W,S,c)$ with boundary $(Y,K)$, we constructed an element of the complex $C_*(Y,K;\Delta_\sS)$. Although this relative element does not come from a morphism of $\cS$-complexes, it interacts nicely with the structure of the $\cS$-complex associated to $(Y,K)$. The goal of this subsection is to introduce a general algebraic structure which captures the information of such relative elements in the context of $\cS$-complexes. We start with the following definition which is a generalization of morphisms of $\cS$-complexes.

\begin{definition}
	Let $R$ be an integral domain. Suppose $\widetilde C_\ast = C_\ast \oplus C_{\ast-1}\oplus R$ and 
	$\widetilde C'_\ast = C'_\ast \oplus C'_{\ast-1}\oplus R$ are $\cS$-complexes over $R$ with structure maps $d$, $v$, $\delta_1$, $\delta_2$ 
	and $d'$, $v'$, $\delta_1'$, $\delta_2'$ respectively. 
	For a positive integer $i$, 
	a {\emph{ height $i$ morphism}} from $\widetilde C_\ast$ to $\widetilde C_\ast'$ is a chain map 
	$\widetilde \lambda:(\widetilde C_\ast,\widetilde d)\to (\widetilde C_\ast',\widetilde d')$ which has the form
	\begin{equation}\label{eq:morphism-level}
		\widetilde \lambda = \left[ \begin{array}{ccc} \lambda & 0 & 0 \\ \mu & \lambda & \Delta_2 
		\\ \Delta_1 & 0 & 0  \end{array} \right].
	\end{equation}
	Furthermore, we require that there is a  {\em non-zero} $\eta\in R$ such that
	\begin{equation}\label{level-j-rel}
	  \delta_{1}'v'^j\Delta_2+\Delta_1v^j\delta_2+\sum_{l=0}^{j-1}\delta_{1}'v'^l\mu v^{j-1-l}\delta_2
	  =\left\{
	  \begin{array}{cc}
	  \eta&j=i-1\\
	  0&j<i-1
	  \end{array}
	  \right..
	\end{equation}
	In the case that $i=0$, a {\it height $0$ morphism} is defined to be a morphism of $\cS$-complexes in 
	the previous sense. $\diamd$
\end{definition}

The topological counterpart of height $i$ morphisms of $\cS$-complexes is given by the following definition, which generalizes Definition \ref{defn:negdefpair}.

\begin{definition}\label{defn:negdefpair-higher-level}
	Let $(W,S):(Y,K)\to (Y',K')$ be a cobordism of pairs between oriented knots in integer homology 
	3-spheres, where $S$ is embedded, connected and oriented, and $c\in H^2(W;\Z)$. 
	We say $(W,S,c)$ is {\emph{negative definite of height $i\in \Z_{\geq 0}$ over $\sS$}} if:
	\begin{itemize}
		\item[(i)] $b^1(W)=b^+(W)=0$;
		\item[(ii)] the index of one (and thus every) minimal reducible $A_L$ is $2i-1$;
		\item[(iii)] $\eta(W,S,c)$ is non-zero as an element in $\sS$. 
	\end{itemize}
	If $c=0$, we also say $(W,S)$ is a {\emph{negative definite pair of height $i$ over $\sS$}}.  $\diamd$
\end{definition}

The reader should note that, similar to Definition \ref{defn:negdefpair}, the conditions in Definition \ref{defn:negdefpair-higher-level} are about the cohomology groups of $W$, the homology class of $S$ and the cohomology class $c$. In particular, one can easily check it in many explicit cases.

\begin{prop}\label{level-n-cob-mor}
	Let $(W,S,c):(Y,K)\to (Y',K')$ be negative definite of height $i$ over $\sS$. Then $(W,S,c)$ induces a 
	height $i$ morphism 
	$\widetilde \lambda_{(W,S,c)}:\widetilde C_*(Y,K;\Delta_\sS) \to \widetilde C_*(Y',K';\Delta_\sS)$ 
	of $\cS$-complexes. At the level of $\Z\times \R$-graded enriched complexes, we have
	\begin{equation}\label{enriched-level-i}
	  \widetilde \lambda_{(W,S,c)} \left(\widetilde C_{l,j}(Y,K;\Delta_\sS)\right) \subseteq  
	  \bigcup_{k\leqslant j+2\kappa_{\rm min}(W,S,c)+\delta}\widetilde C_{l+2i,k}(Y',K';\Delta_\sS)
	\end{equation}
	where $\delta$ can be made arbitrary small by arranging the auxiliary perturbations in the definition of
	$\widetilde C_*(Y,K;\Delta_\sS)$, $\widetilde C_*(Y',K';\Delta_\sS)$
	and $\widetilde \lambda_{(W,S,c)}$ small enough.
\end{prop}
\begin{proof}
	The construction of the matrix components of $\widetilde \lambda_{(W,S,c)}$ and showing that it is a chain map are similar to the case of maps associated to 
	negative definite pairs as it 
	is reviewed in Subsection \ref{sec:morphisms}. To obtain the relations in \eqref{level-j-rel}, consider the moduli space $M(W,S,c;\theta,\theta')_{2j+1}$ for $0\leq j \leq i-1$
	where $\theta$ and $\theta'$ are reducibles associated to $(Y,K)$ and $(Y',K')$. The assumption on $(W,S,c)$ implies that for $j<i-1$, $M(W,S,c;\theta,\theta')_{2j+1}$
	does not contain any reducible, and only minimal reducibles contribute to this moduli space if $j=i-1$, which we can assume are regular \cite[Section 7.3]{dcx}. 
	Analogous to the proof of Proposition \ref{prop:relinv}, we
	cut down $M(W,S,c;\theta,\theta')_{2j+1}$ by $j$ codimension two divisors associated to $j$ points $x_1$, $x_2$, $\dots$, $x_j\subset S$, and then study the ends 
	(and boundary components) of the resulting 1-manifold to obtain the desired relations.
	
	To simplify our analysis it is convenient to make some additional assumptions on the points $x_1$, $\dots$, $x_j$. The surface cobordism $S$ (after adding cylindrical ends)
	has an end modeled on $(-\infty,0]\times K$. Let $y\in K$ be the basepoint of the knot $K$. Then we assume $x_l=(t_l,y)\in (-\infty,0]\times K$ are such that 
	\[
	  t_1<t_2<\dots<t_j<0
	\] 
	where $t_{l+1}-t_l$ ($1\leq l\leq j-1$) and $-t_j$ are very large. Later in the proof we make the latter condition more precise. From now on we	also assume that $j=i-1$. The proof for $j<i-1$ is similar. We also focus on the case that the coefficient ring is $\bF=\Z/2$. By taking into account orientations 
	of moduli spaces using the conventions from \cite{DS} and Subsection \ref{subsec:reducibles}, we can lift the coefficients to $\Z$. We recall that we used slightly different sign conventions in the definitions of $\delta_2$ and $\Delta_2$ in \cite{DS}, which should be taken into account when the signs of different terms appearing below are characterized. By working with local  systems, the proof can be
	easily adapted to obtain the extension to the coefficient ring $\Z[U^{\pm 1},T^{\pm1}]$ and the claim in \eqref{enriched-level-i} about morphisms at the level of enriched
	complexes (cf. Proposition \ref{prop:morphismigraded}). 
	
	Studying the ends and the boundary components of the intersection of the space $M(W,S,c;\theta,\theta')_{2i-1}$
	with the $i-1$ codimension $2$ divisors, after having removed small neighborhoods of the reducibles, implies that
	\begin{equation}\label{identity-red-ends}
	  \Delta_{1,i-1}\delta_2+\delta_1'\Delta_{2,i-1}=\eta(W,S,c).
	\end{equation}
	The term on the left hand side of the identity are the contributions of the ends of the moduli space and the term on the right hand side are the boundary points corresponding to 
	the minimal reducibles. Furthermore, $\Delta_{1,i-1}:C_*(Y,K)\to \bF$ and $\Delta_{2,i-1}:\bF \to C_*(Y',K')$ are homomorphisms defined as follows. For a generator $\alpha\in C_*(Y,K)$ we cut down $M(W,S,c;\alpha,\theta')_{2i-2}$ by the codimension 2
	divisors associated to $x_1, \ldots, x_{i-1}$, and $\Delta_{1,i-1}(\alpha)$ is the number of elements of this $0$-dimensional space.
	Similarly, the coefficient of a generator $\alpha'\in C_*(Y',K')$ in $\Delta_{2,i-1}(1)$ is the count of elements of 
	$M(W,S,c;\theta,\alpha')_{2i-2}$ after being cut down by the divisors associated to $x_1,\dots,x_{i-1}$. In particular, $C(W,S,c)$ in Proposition \ref{prop:relinv}
	can be regarded as a special case of the map $\Delta_{2,i-1}$.

	Next we identify the left hand side of \eqref{identity-red-ends} in terms of $\widetilde \lambda_{(W,S,c)}$ and the differentials of $\widetilde C_*(Y,K)$ and $\widetilde C_*(Y',K')$.
	Our assumption on the arrangement of $x_1$, $\dots$, $x_{i-1}$ implies that $\Delta_{1,i-1}=\Delta_1 v^{i-1}$. To analyze $\Delta_{2,i-1}$, we need to 
	move the points $x_i$ one by one to the other end of the cobordism. Part of the data of the cobordism map $\widetilde \lambda_{(W,S,c)}$ is a path $\gamma$ 
	from the basepoint of $K$ to that of $K'$. To be more precise, $\gamma:\R\to S$ is a map such that $\gamma(t)=(t,y)$ for $t\leq -1$ and $\gamma(t)=(t,y')$ for $t\geq 1$
	with $y'$ being the basepoint of $K'$. Take the homotopy which moves the point $x_{i-1}$ along the path $\gamma$ in the positive direction. 
	This defines a codimension $1$ divisor of $M(W,S,c;\theta,\alpha')_{2i-2}$ with boundary being the divisor associated to the point $x_{i-1}$.
	Further cutting down $M(W,S,c;\theta,\alpha')_{2i-2}$ by the codimension $2$ divisors associated to $x_1,\dots,x_{i-2}$ gives rise to a (possibly non-compact) 
	1-dimensional manifold $N_{\alpha'}$ where the count of boundary components is equal to $\Delta_{2,i-1}$.

\begin{figure}[t]
\centering
\includegraphics[scale=1]{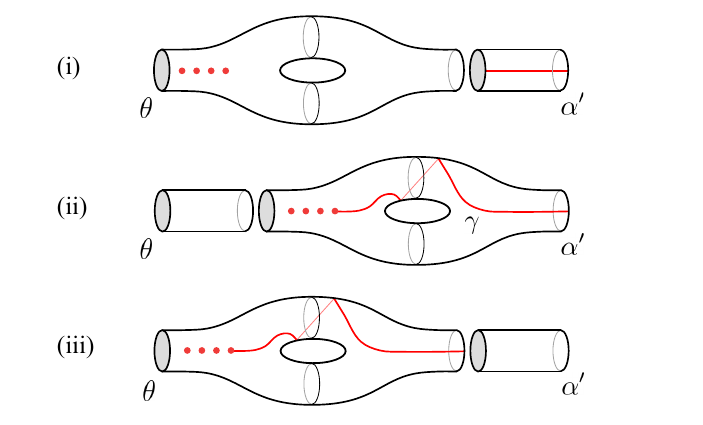}
\caption{The three types of ends of the moduli space $N_{\alpha'}$}
\label{fig:nalphaends}
\end{figure}

The 1-manifold $N_{\alpha'}$ can be compactified by compactifying its ends, which are of the three types demonstrated in Figure \ref{fig:nalphaends}. The count of the ends of type (i) is equal to the coefficient of $\alpha'$ in $v\Delta_{2,i-2}(1)$. Here 
	$\Delta_{2,i-2}$ is defined similar to $\Delta_{2,i-1}$ with the difference that we only use the points $x_1,\dots,x_{i-2}$ to cut down the moduli spaces
	$M(W,S,c;\theta,\alpha')_{2i-4}$. The count of the ends of type (ii) gives the coefficient of $\alpha'$ in $\mu_{i-1}\delta_2$ where $\mu_{i-1}:C_*(Y,K)\to C_*(Y',K')$
	is the map given by cutting the moduli spaces $M(W,S,c;\alpha,\alpha')_{2i-3}$ after cutting down by codimension divisors $2$ associated to the points 
	$x_1,\ldots,x_{i-2}$ and the codimension $1$ divisor associated to the homotopy of the point $x_{i-1}$. Our assumption about the points 
	$x_1,\ldots,x_{i-1}$ implies that $\mu_{i-1}=\mu v^{i-2}$. Finally, the ends of type (iii) gives the coefficient of $\alpha'$ in a term of the form $d'\Phi(1)$ where
	$\Phi:\bF\to C_*(Y,K)$ is defined in terms of the moduli spaces $M(W,S,c;\theta,\alpha')_{2i-3}$ after cutting down by codimension $2$ divisors associated to $x_1,\dots,x_{i-2}$ and the codimension $1$ divisor associated to the homotopy of the point $x_{i-1}$. Since we are eventually interested in 
	$\delta_1'\Delta_{2,i-1}$, the term $d'\Phi(1)$ does not play any role as a consequence of $\delta_1'd'=0$. In summary, we obtain the relation
	\[
	  \delta_1'\Delta_{2,i-1}=\delta_1'v'\Delta_{2,i-2}+\delta_1'\mu v^{i-2}\delta_2.
	\]
	We iterate this process to study the term $\delta_1'v'\Delta_{2,i-2}$ by moving the remaining points $x_1,\ldots,x_{i-2}$ to the outgoing end of the cobordism, one by one.
	When we apply this iteration to the point $x_{k}$, we obtain an additional term of the form $\delta_1'v^{i-1-k}\mu v^{k-1}\delta_2$. 
	Thus the desired relation is given at the end of this process.
\end{proof}

\subsection{Immersed cobordisms, crossing changes and twisting}\label{subsec:immersedcrossing}

In this subsection, we firstly extend the scope of Theorem \ref{thm:hineq} to the case of normally immersed cobordisms. Then we apply this extension to some special cobordisms to obtain further information about the invariant $h_\sS(Y,K)$. As already manifested in the proof of Theorem \ref{thm:hineq}, the behavior of $h_\sS(Y,K)$ depends on whether $T^4=1$ or not.

Let $(W,S):(Y,K) \to (Y',K')$ be a cobordism of pairs such that $b^1(W)=b^+(W)=0$, where $S$ is a normally immersed oriented cobordism of genus $g$ with $s_+$ and $s_-$ positive and negative double points, and $c\in H^2(W;\Z)$.  We can extend the definition of $\kappa_{\rm min}(W,S,c)$ and $\eta(W,S,c)$ of Subsection \ref{subsec:reducibles} to the immersed case because they only depend on the cohomology ring of $W$ and the homology class of $S$ and the cohomology class $c$. After blowing up we have the pair $(\overline{W},\overline{S})$. Let $e_1,\ldots, e_{s_+}$ denote the homology classes of exceptional spheres at the positive double points, and $f_1,\ldots,f_{s_-}$ the remaining exceptional spheres. The homology class of $\overline S$ is 
\[
  [\overline S] = [S]-2e_1-2e_2-\cdots-2e_{s_+}.
\]
Let $\overline c$ be the class in $H^2(\overline W,\Z)$ determined by $c$. Then any reducible connection $A_L$ associated to $(\overline{W},\overline{S},\overline c)$ is determined by its Chern class $c_1(L)$, given as
\begin{equation}\label{first-Chern-class}
 	z+\sum_{i=1}^{s_+} k_i e_i + \sum_{j=1}^{s_-} l_j f_j \in H^2(W;\Z)\oplus \Z^{s_++s_-}.
\end{equation}
The topological energy and the monopole number of any such reducible are
\[
  \kappa(A_L)=-\left(z+\frac{1}{4}S-\frac{1}{2}c\right)^2+\sum_{i=1}^{s_+}\left(k_i-\frac{1}{2}\right)^2+\sum_{j=1}^{s_-}l_j^2,
\]
\[  
  \nu(A_L) = (2z-c) \cdot S+4\sum_{i=1}^{s_+}k_i.
\]
In particular, the minimum topological energy $\kappa_\text{min}(\overline W,\overline S,\overline c)$ is equal to $\kappa_\text{min}(W,S,c)+s_+/4$ and is realized by the cohomology classes as in \eqref{first-Chern-class} such that $z$ is minimal for $(W,S,c)$, $k_i\in \{0,1\}$ and $l_j=0$. Note $\overline S\cdot \overline S = S\cdot S-4s_+$. Moreover, it is straightforward to check that in this situation we have
\begin{equation}
  \eta(\overline W, \overline S,\overline c) =(1-T^4)^{s_+}\eta(W, S,c).\label{eq:etacrossing}
\end{equation}
In particular, non-vanishing of $\eta(W, S,c)$ implies the non-vanishing of $\eta(\overline W, \overline S,\overline c)$ in the case that $T^4\neq 1$.

If $T^4=1$, the class $\overline c$ needs to be modified to avoid vanishing of \eqref{eq:etacrossing}. Define
\[
   \overline c=c-e_1-\dots-e_{s_+}.
\]
Then $\eta(\overline W, \overline S,\overline c)=\eta(W, S,c)$, $\kappa_\text{min}(\overline W,\overline S,\overline c)=\kappa_\text{min}(W,S,c)$ and the minimum topological energy is realized by the cohomology classes as in \eqref{first-Chern-class} such that $z$ is minimal for $(W,S,c)$ and $k_i=l_j=0$.

\begin{theorem}\label{thm:hineq-cob}
	Suppose $(Y,K)$, $(Y',K')$ and $(W,S,c)$ are given as above. Suppose that $\eta(W,S,c)\neq 0 \in \sS$. Then we have the following inequality:
	\begin{align}
		h_\sS(Y',K')-h_\sS(Y,K) &\geqslant\nonumber\\ 
		4\kappa_\text{\emph{min}}(W,S,c)& - g(S) + \frac{1}{4}S\cdot S - \varepsilon_\sS -
		\frac{1}{2}(\sigma(Y',K')-\sigma(Y,K)), \label{eq:hinvinequality-cob}
	\end{align}
where $\epsilon_\sS=0$ if $T^4\neq 1$ in $\sS$ and $\epsilon_\sS = s_+$ if $T^4=1$.
\end{theorem}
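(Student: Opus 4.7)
The plan is to prove Theorem \ref{thm:hineq-cob} in three stages: reduce the immersed case to the embedded case by blowing up, reduce the general embedded case to a regime where Proposition \ref{level-n-cob-mor} applies, and then use the induced level $i$ morphism to derive the Fr\o yshov inequality. Because $W$ is a cobordism between integer homology $3$-spheres with $b^1=b^+=0$, the standard computation of Betti numbers (and $\sigma(W) = -b^-(W)$) gives $\chi(W)+\sigma(W)=0$, so for embedded $S$ the index formula \eqref{eq:index} for a minimal reducible $A_L$ yields $\ind(A_L)=2i-1$ where
\[
  i = 4\kappa_{\min}(W,S,c) - g + \tfrac14 S\cdot S + \tfrac12\bigl(\sigma(Y,K)-\sigma(Y',K')\bigr).
\]
Hence the embedded-case inequality \eqref{eq:hinvinequality-cob} (with $\varepsilon_\sS=0$) is precisely $h_\sS(Y',K')-h_\sS(Y,K)\geq i$.

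For embedded $S$ with $i\geq 0$, the triple $(W,S,c)$ is negative definite of level $i$ over $\sS$, and Proposition \ref{level-n-cob-mor} produces a level $i$ morphism $\widetilde\lambda_{(W,S,c)}\colon \widetilde C_\ast(Y,K;\Delta_\sS)\to \widetilde C_\ast(Y',K';\Delta_\sS)$. One then invokes the algebraic principle that a level $i$ morphism of $\cS$-complexes raises the Fr\o yshov invariant by at least $i$, discussed in the next paragraph. For $i<0$, I would boundary-sum $(W,S,c)$ with sufficiently many copies of $(\overline{\C\P}^2\setminus B^4, S_2)$ when $T^4\neq 1$ in $\sS$, or $(\overline{\C\P}^2\setminus B^4, S_3)$ when $T^4=1$, exactly as in the proof of Theorem \ref{thm:hineq}; each such summand preserves the condition $\eta\neq 0$, raises $i$ by $1$, and raises both sides of the desired inequality by the same amount, so finitely many connect-sums bring the cobordism into the $i\geq 0$ regime. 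For the immersed case, blow up all double points to obtain $(\overline W,\overline S)$ and set $\overline c=c$ when $T^4\neq 1$, or $\overline c = c-e_1-\cdots-e_{s_+}$ when $T^4=1$; the formulas preceding the theorem statement then ensure $\eta(\overline W,\overline S,\overline c)\neq 0$, give $\overline S\cdot\overline S=S\cdot S-4s_+$ and $g(\overline S)=g$, and record the behavior of $\kappa_{\min}$ under blowup ($+s_+/4$ in the first case, unchanged in the second). Applying the embedded case to $(\overline W,\overline S,\overline c)$ and substituting these relations rearranges exactly to \eqref{eq:hinvinequality-cob} with the prescribed $\varepsilon_\sS$.

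The principal obstacle is the algebraic lemma that a level $i$ morphism of $\cS$-complexes raises $h_\sS$ by at least $i$. For $i=0$ this is immediate from Proposition \ref{h-reinterpret} combined with the chain-map identity $\delta_1'\lambda=\eta\delta_1+\Delta_1 d$: a cycle $\alpha$ with $\delta_1(\alpha)\neq 0$ maps to a cycle $\lambda(\alpha)$ with $\delta_1'\lambda(\alpha)=\eta\delta_1(\alpha)\neq 0$. For $i\geq 1$, given $\alpha\in C_\ast$ realizing $h_\sS(Y,K)=h$ via Proposition \ref{h-reinterpret}, one must produce a cycle $\beta'\in C'_\ast$ with $\delta_1'(v')^{h+i-1}\beta'\neq 0$ and $\delta_1'(v')^{j}\beta'=0$ for $j<h+i-1$. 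A natural candidate combines $\lambda(\alpha)$ with correction terms built from $\mu$, $\Delta_2$, and the relative data of the cobordism, and the required vanishing and non-vanishing properties follow by iterated application of the chain-map identities and the level-$i$ relations \eqref{level-j-rel}. Alternatively, one can obtain $\beta'$ directly by adapting the moduli-theoretic construction of Proposition \ref{prop:relinv} to the cobordism setting, counting cut-down singular instanton moduli spaces on $(W,S,c)$ with prescribed flat limits at $(Y,K)$ and reducible limit at $(Y',K')$, and identifying the resulting element as a cycle in $\widetilde C_\ast(Y',K';\Delta_\sS)$ with the desired interaction with the $\cS$-complex structure.
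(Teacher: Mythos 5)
Your reductions are all sound and match the paper's setup: the blow-up bookkeeping (the two choices of $\overline c$ depending on whether $T^4=1$, the formulas $\overline S\cdot\overline S=S\cdot S-4s_+$ and the behavior of $\kappa_{\rm min}$ and $\eta$), the index computation $\ind(A_L)=2i-1$ using $\chi(W)+\sigma(W)=0$, and the boundary-sum/suspension trick with $(\overline{\C\P}^2\setminus B^4,S_2)$ or $(\overline{\C\P}^2\setminus B^4,S_3)$ for $i<0$. Your route — working directly with the cobordism via Proposition \ref{level-n-cob-mor} — is essentially the ``instructive'' second argument the paper records; the paper's official proof is shorter still: blow up, remove a tubular neighborhood of a path in $\overline S$ joining the two ends to obtain a pair $(\overline W',\overline S')$ with a single boundary component $(-Y\#Y',-K\#K')$, then apply Theorem \ref{thm:hineq} and additivity of $h_\sS$ under connected sum. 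That reduction sidesteps entirely the algebraic point you flag as the principal obstacle, because Theorem \ref{thm:hineq} only ever invokes the positive-$h$ criterion of Proposition \ref{h-reinterpret} via the relative cycle of Proposition \ref{prop:relinv}.

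The genuine gap is in your key lemma that a level $i$ morphism raises $h_\sS$ by at least $i$: your sketch only treats $h:=h_\sS(Y,K)\geq 1$, since Proposition \ref{h-reinterpret} characterizes only \emph{positive} Fr\o yshov invariants and is what supplies the cycle $\alpha$ you feed into $\lambda$. When $h\leq 0$ the input is not such a cycle but an element with $d\alpha=\sum_{l=0}^{-h}v^l\delta_2(a_l)$, $a_{-h}\neq 0$, and when moreover $h+i\leq 0$ the desired output is not ``a cycle $\beta'$ with $\delta_1'(v')^{h+i-1}\beta'\neq 0$'' at all (that exponent is negative) but data of the same $\delta_2'$-type on the target with non-zero top coefficient; so the statement you propose to prove does not even cover the conclusion needed in that range. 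This is exactly where the paper's argument does its real work: it produces the corrected element \eqref{alphap}, uses the identity \eqref{deltavlambda} together with its companion identity for $\lambda v^l\delta_2$, and runs three separate cases $h\geq 1$, $-i<h\leq 0$, $h\leq -i$, extracting in the last case coefficients $a_k'$ with $a_{-h-i}'=\eta\,a_{-h}\neq 0$. ``Iterated application of the chain-map identities and \eqref{level-j-rel}'' is the right instinct but is not yet an argument, and your alternative of redoing Proposition \ref{prop:relinv} moduli-theoretically on a two-ended cobordism would amount to reproving Proposition \ref{level-n-cob-mor} without resolving the algebra. To close the gap, either carry out the three cases as in the paper, or avoid them altogether by the paper's path-removal reduction to Theorem \ref{thm:hineq} plus additivity of $h_\sS$.
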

\begin{proof}
	This is a slight variation of Theorem \ref{thm:hineq} and in fact it can be reduced to that theorem.
	By firstly blowing up $(W,S)$ to obtain $(\overline W, \overline S)$ and then removing a neighborhood 
	of the path from the basepoint of $K$ to the basepoint of $K'$ one obtains a pair 
	$(\overline W', \overline S')$ with boundary $(-Y\#Y,-K\#K')$. Then we apply Theorem \ref{thm:hineq}
	to $(\overline W', \overline S')$ and a choice of $\overline c'\in H^2(\overline W';\Z)$ which is determined 
	by the above discussion. This and the additivity of $h_\sS$ with respect to connected sum
	gives the desired inequality.
	
	It is instructive to consider the cobordism version of the proof of Theorem \ref{thm:hineq} at least in the 
	case that the right side of \eqref{eq:hinvinequality-cob}, denoted by $i$, is positive. 
	The argument will be useful later
	when we study the behavior of $\Gamma_{(Y,K)}$ with respect to cobordisms.
	By Proposition \ref{level-n-cob-mor} there is a height $i$ morphism 
	$\widetilde \lambda_{(\overline W,\overline S,\overline c)}:(\widetilde C_*(Y,K;\Delta_\sS),\widetilde d)\to 
	(\widetilde C_*(Y',K';\Delta_\sS),\widetilde d')$
	with the matrix components $\lambda$, $\mu$, $\Delta_1$ and $\Delta_2$. Let also the matrix 
	components of $\widetilde d$ and $\widetilde d'$ be given by $d$, $v$, $\delta_1$, $\delta_2$,
	and $d'$, $v'$, $\delta_1'$, $\delta_2'$ respectively. Using the defining relations of height $i$ morphisms, it is straightforward to check 
	\begin{align}
	  \delta_1'v'^l\lambda=&\left(\Delta_1v^l+\sum_{j=0}^{l-1}\delta_1'v'^j\mu v^{l-1-j}\right) d\nonumber\\
	  &+\sum_{j=0}^{l-1}\left(\Delta_1v^j\delta_2+\delta_1'v'^j\Delta_2
	  +\sum_{k=0}^{j-1}\delta_1'v'^k\mu v^{j-1-k}\delta_2\right)\delta_1v^{l-1-j}, \label{deltavlambda}
	\end{align}
	\begin{align}
	  \lambda v^l\delta_2=&-d'\left(v'^l\Delta_2+\sum_{j=0}^{l-1}v'^j\mu v^{l-1-j}\delta_2\right)\nonumber\\
	  &+\sum_{j=0}^{l-1}v'^{l-1-j}\delta_2'\left(\delta_1'v'^{j}\Delta_2+\Delta_1v^{j}\delta_2
	  +\sum_{k=0}^{j-1}\delta_1'v'^k\mu v^{j-1-k}\delta_2\right).
	\end{align}
	
	Suppose $h=h_\sS(Y,K)\geq 1$. There is $\alpha\in C_*(Y,K;\Delta_\sS)$ such that $d\alpha=0$ and $h-1$
	is the smallest integer $l$ such that $\delta_1v^l(\alpha)$ is non-zero. For $\alpha'=\lambda(\alpha)$, 
	\eqref{deltavlambda} implies that
	\[
	  \delta_1'v'^l(\alpha')=\left\{\begin{array}{ll}0&l\leq h+i-2\\ \eta(\overline W,\overline S,\overline c)\delta_1v^{h-1}(\alpha)&l=h+i-1 \end{array}\right.
	\]
	Since $d'\alpha'=0$, we conclude that $h_\sS(Y',K')\geq h+i$. In the case $h\leq 0$, there are 
	$\alpha\in C_*(Y',K';\Delta_\sS)$, $a_0$, $\dots$, $a_{-h}\in \sS$ with $a_{-h}\neq 0$ such that
	\begin{equation}\label{rel-h-neg}
	  d\alpha=\sum_{l=0}^{-h}v^{l}\delta_2(a_{l}).
	\end{equation}
	If $-i<h\leq 0$, then define $\alpha'$ as follows:
	\begin{equation}\label{alphap}
	  \alpha'=\lambda(\alpha)+\sum_{l=0}^{-h}\left(v'^{l}\Delta_2(a_{l})+\sum_{j=0}^{l-1}v'^j\mu v^{l-1-j}\delta_2(a_l) \right).
	\end{equation}
	By \eqref{rel-h-neg} we have $d\alpha'=0$ and \eqref{deltavlambda} implies that
	\[
	  \delta_1'v'^l(\alpha')=\left\{\begin{array}{ll}0&l\leq h+i-2\\ \eta(\overline W,\overline S,\overline c)a_{-h}&l=h+i-1 \end{array}\right..
	\]
	Finally for $h\leq -i$, applying $\lambda$ to \eqref{rel-h-neg} implies that 
	\[
	  d\alpha'=\delta_2(a_0')+v\delta_2(a_1')+\dots+v^{-h-i}\delta_2(a_{-h-i}')
	\]
	with $\alpha'$ again defined as in \eqref{alphap}, and where each $a_k'$ is given by
	\[
	  a_k'=\sum_{j=i-1}^{-h-k-1}\left(\delta_1'v'^{j}\Delta_2(a_{k+j+1})+\Delta_1v^{j}\delta_2(a_{k+j+1})
	  +\sum_{m=0}^{j-1}\delta_1'v'^m\mu v^{j-1-m}\delta_2(a_{k+j+1})\right).
	\]
	In particular, $a_{-h-i}'=\eta(\overline W,\overline S,\overline c)a_{-h}$, which is non-zero. This completes the proof.
\end{proof}

Examples of cobordisms as in Theorem \ref{thm:hineq-cob} arise from crossing changes. Namely, suppose the knot $(Y,K)$ is obtained from $(Y,K')$ by replacing $s_+$ positive crossings with negative crossings and $s_-$  negative crossings with positive crossings. Then there is an annulus immersed in $W=I\times Y$ giving a cobordism $S:K\to K'$ with the properties discussed above. We refer to $(\overline W,\overline S)$ in this situation as a {\em crossing change cobordism}. Since $W$ is the product cobordism, we have $\eta(W,S,c)=1$ and $\kappa_\text{\rm{min}}(W,S,c)=0$ for $c=0$.

We already made use of one crossing change cobordism in the proof of Theorem \ref{thm:hineq}:
\[
	(S^3,U_1) \to (S^3,T_{2,3}),
\]
obtained from deleting a 4-ball from the pair $(\overline{\C\P}^2\setminus B^4, S_2)$ of Lemma \ref{blowuptwist}, is the same as the cobordism induced by turning the unknot into the trefoil by a positive crossing change. Thus $s_+=1$ and $s_-=0$ for this example. We now apply Theorem \ref{thm:hineq-cob} to crossing change cobordisms to obtain the main part of Theorem \ref{thm:htwistedsign}:

\begin{prop}\label{prop:hinvcrossing}
	Suppose $K$ and $K'$ are homotopic knots in $Y$, and $T^4\neq 1$. Then 
	\[
	  h_\sS(Y,K)+\sigma(Y,K)/2=h_\sS(Y,K')+\sigma(Y,K')/2. 
	\]
	In particular, if $K$ is a null-homotopic knot in $Y$, then
	\[
	  h_\sS(Y,K)-h_\sS(Y,U_1)= -\sigma(Y,K)/2.
	\]
\end{prop}

\begin{proof}
	Take a crossing change cobordism $S:K\to K'$, which exists because $K$ and $K'$ are 
	homotopic knots. Then Theorem \ref{thm:hineq-cob} applied to this cobordism implies that
	\[
	   h_\sS(Y,K)+\sigma(Y,K)/2\leq h_\sS(Y,K')+\sigma(Y,K')/2.
	\]
	We obtain the reverse inequality by applying Theorem \ref{thm:hineq-cob} to the reverse cobordism.
\end{proof}

\begin{remark}
	In the next section we shall show that $h_\sS(Y,U_1)=4h_{\sS}(Y)$, where $h_{\sS}(Y)$ is the (non-singular) Fr\o yshov invariant for the integral homology sphere $Y$. This will complete the proof of the claim 
	$ h_\sS(Y,K)= -\sigma(Y,K)/2+4h_\sS(Y)$ for a null-homotopic knot. In general, the expression $h_\sS(Y,K)+\sigma(Y,K)/2$ determines 
	a map on conjugacy classes of $\pi_1(Y)$, i.e. free homotopy classes of loops in $Y$, which is equal to $4h_{\sS}(Y)$ at the trivial conjugacy class. 
	However, the authors do not know if the invariant is ever different for a non-trivial conjugacy class. $\diamd$
\end{remark}

Crossing change cobordisms form a special case of twist cobordisms which appeared in Lemma \ref{blowuptwist}. In the light of Proposition \ref{prop:hinvcrossing}, we study the behavior of $h_\sS$ with respect to twist cobordisms in the case that $T^4= 1$.

\begin{prop}\label{eq:blowuptwistineq}
	Let $K$ and $K'$ be knots in $Y$ related by a full twist around a collection of strands with 
	linking number $d$, as in Figure \ref{fig:blowuptwist}. If $T^4=1$ in $\sS$ then
	\begin{equation}\label{ineq-twist}
	  h_\sS(Y,K') - h_\sS(Y,K) \geqslant -\left \lfloor\frac{d^2}{4}\right\rfloor - \frac{1}{2}(\sigma(Y,K')-\sigma(Y,K)).
	\end{equation}
\end{prop}
\begin{proof}
	In the case that $d$ is odd, the twist cobordism $(W,S):(Y,K)\to (Y,K')$ with $c=0$ satisfies the 
	assumption of Theorem \ref{thm:hineq-cob} with $\kappa_\text{min}=1/16$. Since $S$ has $g=0$
	and self-intersection $-d^2$, the claim follows from Theorem \ref{thm:hineq-cob}. The same
	argument applies in the case $d$ is congruent to $0$ modulo $4$, with the difference that 
	$\kappa_\text{min}=0$. If $d$ is congruent to $2$ modulo $4$, then we need to modify $c$ to 
	a generator of $H^2(W;\Z)$ to guarantee $\eta\neq 0$. This produces $(W,S,c)$ with 
	$\kappa_\text{min}=0$ again, which in turn gives \eqref{ineq-twist}.
\end{proof}

\begin{remark}
	Although it is possible to apply the argument of Proposition \ref{eq:blowuptwistineq} 
	to the case that $T^4\neq1$, this does not give rise to any interesting information
	because the knots $K$ and $K'$ in Proposition \ref{eq:blowuptwistineq} are homotopic
	and Proposition \ref{prop:hinvcrossing} already determines $h_\sS(Y,K') - h_\sS(Y,K)$. $\diamd$
\end{remark}

\subsection{An inequality for the $\Gamma$-invariant}

We now turn to an inequality for the invariant $\Gamma_{(Y,K)}^R$. Here we fix an integral domain $R$ which is an algebra over $\Z[T^{\pm 1}]$. The following is an analogue of an inequality for the $\Gamma_Y$ invariant in the non-singular setting \cite{AD:CS-Th}.

\begin{prop}\label{Gamma-ineq}
	Let $(W,S):(Y,K)\to (Y',K')$ be a cobordism of pairs with $b^1(W)=b^+(W)=0$ such that $S$ is normally immersed
	with $s_+$ positive and $s_-$ negative double points, and
	let $c\in H^2(W;\Z)$. Suppose that $\eta(W,S,c)\neq 0\in R[U^{\pm 1}]$. Let
	\begin{equation}
		i := 4\kappa_\text{\em min}(W,S,c) - g(S)+ \frac{1}{4}S\cdot S-\epsilon_R  
		+ \frac{1}{2}\sigma(Y,K)- \frac{1}{2}\sigma(Y',K') \label{eq:iforgammauntwisted}
	\end{equation}
	where $\epsilon_R=0$ if $T^4\neq 1$ in $R$, and $\epsilon_R=s_+$ if $T^4=1$. Let $k\in \Z$. If $i\geqslant 0$, then
	\begin{equation}\label{eq:Gamma-ineq}
	  \Gamma_{(Y',K')}^{R}(k+i) \leqslant 2\kappa_\text{\em min}(W,S,c) + \frac{1}{2}(s_+ - \epsilon_R)  + 
	  \Gamma_{(Y,K)}^{R}(k) .
	\end{equation}
\end{prop}

\begin{proof}
	Let $(\overline W,\overline S)$ be given by blowing up $(W,S)$ and $\overline c$ be the cohomology class given 
	in Subsection \ref{subsec:immersedcrossing}, which depends on whether $T^4=1$ or not. 
	Proposition \ref{level-n-cob-mor} provides a height $i$ morphism 
	$\widetilde \lambda_{(\overline W,\overline S,\overline c)}:\widetilde C_\ast(Y,K;\Delta_\sS) \to 
	\widetilde C_\ast(Y',K';\Delta_\sS)$.
	We suppose no perturbations are needed in these constructions, so that the $\cS$-complexes $\widetilde C_\ast(Y,K;\Delta_\sS)$ and 
	$\widetilde C_\ast(Y',K';\Delta_\sS)$ are I-graded and the cobordism map 
	$\widetilde \lambda_{(\overline W,\overline S,\overline c)}$ 
	increases the $\Z$-grading $\widetilde {\rm gr}$ by $2i$ and is of level 
	$2\kappa_{\rm min}(\overline W,\overline S,\overline c)$ with respect to the $\R$-grading. 
	The proof can be adapted to the general case of enriched complexes by a limiting argument as in \cite{AD:CS-Th}.
	
	For each integer $k$, $\Gamma_{(Y,K)}^{R}(k) $ is defined by taking the minimum of 
	the instanton grading of all $\alpha\in C_{*}(Y,K;\Delta_\sS)$ 
	with $\widetilde {gr}=2k-1$ which satisfy an appropriate condition depending on $k$. The proof of Theorem 
	\ref{thm:hineq-cob} explains how one can define $\alpha'\in C_{*}(Y',K';\Delta_\sS)$ with $\widetilde {\rm gr}=2k+2i-1$
	which satisfies the condition relevant for the definition of $\Gamma_{(Y',K')}^{R}(k+i)$. Using the behavior of 
	$\widetilde \lambda_{(\overline W,\overline S,\overline c)}$ with respect to the $\Z\times \R$ bigrading, we see that 
	\begin{equation}\label{ineq-alpha-alphap}
	  \deg_I(\alpha')\leq \max(\deg_I(\alpha),0)+
	  2\kappa_{\rm min}(\overline W,\overline S,\overline c).
	\end{equation}
	Moreover, if $k>0$, then $\max(\deg_I(\alpha),0)$ can be replaced with $\deg_I(\alpha)$
	because $\delta_1v^{k-1}(\alpha)$ is non-trivial and hence $\deg_I(\alpha)>0$.
	The inequality in \eqref{ineq-alpha-alphap} and its modification for positive values of $k$	
	give rise to \eqref{eq:Gamma-ineq}.
\end{proof}

\begin{remark}
	The reader might wonder to what extent the assumption on non-negativity of $i$ in Proposition \ref{Gamma-ineq} is necessary. We believe the same result holds even after 
	removing this assumption. This will be discussed elsewhere. $\diamd$
%	In any case, there is a straightforward way to obtain a weaker result by following the same trick as in the proof of Theorem \ref{thm:hineq}. 
%	In the case that $T^4\neq 1$ and the integer $i$ associated to the cobordism $(W,S):(Y,K)\to (Y',K')$
%	is negative, we take the boundary sum of $(W,S)$ with $-i$ copies of $(\overline{\C\P}^2\setminus B^4,S_2)$ to obtain a cobordism $(W',S'):(Y,K)\to (Y',K'\#(-i)T_{2,3})$
%	such that the corresponding value of $i$ is equal to $0$. In particular, we can apply Proposition \ref{Gamma-ineq} to see 
%	\[
%	  \Gamma_{(Y',K'\#(-i)T_{2,3})}^{R}(k) \leqslant 2\kappa_\text{min}(W,S,c)-\frac{i}{2}+\frac{s_+}{2} + \Gamma_{(Y,K)}^{R}(k).
%	\]
%	Using the connected sum theorem of \cite{DS} one can see that the left hand side of the above inequality is not less than $\Gamma_{(Y',K')}^{R}(k+i)$.
%	In the case that $T^4=1$, we may replace $T_{2,3}$ and $(\overline{\C\P}^2\setminus B^4,S_2)$ with $T_{3,4}$ and $(\overline{\C\P}^2\setminus B^4,S_3)$ to obtain 
%	a similar inequality in which $s_+$ is removed and the term $i/2$ is replaced with $i/8$. $\diamd$
\end{remark}

We have the following immediate corollary of Proposition \ref{Gamma-ineq}:

\begin{cor}\label{Gamma-ineq-classical-cob}
	Suppose $S:K\to K'$ is a normally immersed cobordism in $I\times S^3$ with $s_+$ 
	positive and $s_-$ negative	 double points. Suppose $T^4\neq 1$ in $R$, and
	 \[
	   i :=\frac{1}{2}\sigma(K)- \frac{1}{2}\sigma(K') - g(S) \geq 0.
	 \]
	 Then we have the following inequality:
	 \[
	   \Gamma_{K'}^{R}(k+i) \leqslant \frac{s_+}{2} + \Gamma_{K}^{R}(k).
	 \]
	 In particular, if $-\sigma(K)/2\geqslant 0$, then we have the inequality
	\[
	  \Gamma_K^R\left(-\frac{1}{2}\sigma(K)\right) \leqslant \frac{1}{2} c_s^+(K).
	\]
\end{cor}
\noindent This corollary implies Theorem \ref{thm:gammaintro}, and thus completes the proof of Theorems \ref{thm:clasp} and \ref{thm:doubletwist}.

\begin{proof}
	For any immersed disc $D$ in the $4$-dimensional ball with boundary $K\subset S^3$, we obtain a
	cobordism $S:U_1\to K$ of genus $0$ immersed into $I\times S^3$ by removing a neighborhood of a point on the interior of $D$
	away from the immersed points. This allows us to obtain the second part from the first part.
\end{proof}

The following is a generalization of Corollary \ref{Gamma-ineq-classical-cob} to the case that $W:S^3\to S^3$ is the blow up of the product cobordism.

\begin{cor}\label{Gamma-ineq-blownup-classical-cob}
	Suppose $(W,S):(S^3,K)\to (S^3,K')$ is a cobordism of pairs where $W$ 
	is the connected sum of $I\times S^3$ and $\overline{\C\P}^2$, and $S$ is an embedded cobordism of degree $d$. 
	Let
	\begin{equation}
		i :=  \epsilon_R(d)- \frac{d^2}{4}-s_+ 
		+ \frac{1}{2}\sigma(Y,K)- \frac{1}{2}\sigma(Y',K')- g(S)
		\label{eq:iforgamma-blownup-classical-cob}
	\end{equation}
	where $\epsilon_R(d)\in \frac{1}{4}\Z$ is defined as follows:
	\begin{equation}\label{epsilon}
		\epsilon_R(d)=\left\{
		\begin{array}{ll}
			1+s_+ & \quad \text{$T^4\neq 1$, \em $d$ even, $d\neq 0$;}\\
			 s_+& \quad\text{$T^4\neq 1$, $d=0$;}\\
			\frac{1}{4}+s_+& \quad\text{$T^4\neq 1$, \em $d$ odd;}\\
			 0& \quad\text{$T^4= 1$, \em $d$ even;}\\
			\frac{1}{4}& \quad\text{$T^4= 1$, \em $d$ odd}.
		\end{array}
		\right.
	\end{equation}
	If $i\geq 0$, then for any integer $k$ we have:
	\begin{equation}  \label{eq:Gamma-ineq-CP}
		\Gamma_{K'}^{R}(k+i) \leqslant \frac{\epsilon_R(d)}{2} + \Gamma_{K}^{R}(k).
	\end{equation}
\end{cor}

We give the following application of Corollary \ref{Gamma-ineq-blownup-classical-cob} to the {\it $\overline{\C\P}^2$-genus} of the knot $7_4$.

\begin{cor}\label{sevenfour-CP}
	The knot $7_4$ is not slice in $\overline{\C\P}^2$. That is to say, there is 
	no cobordism $(W,S):(S^3,U_1)\to (S^3,7_4)$ where $W$ is the blow up of the product
	cobordism and $S$ is diffeomorphic to an annulus.
\end{cor}
\begin{proof}
	Most of this problem was addressed in \cite{Pic:CP-genus}. 
	It is shown there that there is no genus 
	zero cobordism $S:U_1\to 7_4$ in the cobordism $W:S^3\to S^3$  unless the degree of
	$S$ is equal to $2$. If there is one such degree two cobordism, then the quantity
	$i$ and $\epsilon_R(2)$ in \eqref{eq:iforgamma-blownup-classical-cob} and 
	\eqref{epsilon} for $R=\Z[T^{\pm1}]$ are both equal to $1$. 
	Thus \eqref{eq:Gamma-ineq-CP} for $k=0$ gives
	\[
	  \Gamma_{7_4}(1) \leqslant \frac{1}{2}
	\]
	which contradicts with our computation of $\Gamma_{7_4}(1)$ in Section \ref{sec:comps}.
\end{proof}

\begin{remark}
	Given a knot $K$, one can ask if it can be obtained from the unknot by a full-twist of linking 
	number $d$. This problem has been recently studied in \cite{All-Liv:twisting}. 
	Corollary \ref{sevenfour-CP} implies that the knot $7_4$ cannot be obtained from the unknot
	by a twist of linking number $2$. This determines one of the unknown values in 
	\cite{All-Liv:twisting}. $\diamd$
\end{remark}

\subsection{Torus knots}

In this subsection we study how the special structure of the singular instanton chain complexes for torus knots imposes constraints on cobordisms and their induced maps.

\begin{theorem}\label{conc-torus}
	Suppose $(W,S):(S^3,T_{p,q}) \to (S^3,T_{p,q})$ is a homology concordance and $\sS=\Z[U^{\pm1},T^{\pm 1}]$. Then $\widetilde \lambda_{(W,S)}:\widetilde C_*(T_{p,q};\Delta_\sS) \to \widetilde C_*(T_{p,q};\Delta_\sS)$ is 
	$\cS$-chain homotopy equivalent to an isomorphism.
\end{theorem}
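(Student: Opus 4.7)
The plan is to exploit the rigidity of $\widetilde C_*(T_{p,q};\Delta_\sS)$ established by Corollary \ref{cor:torusknots} combined with the Fr\o yshov computation of Theorem \ref{thm:htwistedsign}. First, since $W$ is an integer homology cobordism and $S$ is an embedded annulus with $S \cdot S = 0$, taking $c = 0$ gives $\kappa_{\rm min}(W,S) = 0$, realized uniquely by the flat reducible. Hence $\eta(W,S) = 1 \in \sS$, so $(W,S)$ is negative definite over $\sS$ in the sense of Definition \ref{defn:negdefpair}, and Proposition \ref{prop:morphism} produces a morphism $\widetilde\lambda := \widetilde\lambda_{(W,S)}$ whose matrix decomposition has $\eta$-entry equal to $1$.

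By Corollary \ref{cor:torusknots}, the differential $d$ vanishes on $C_*(T_{p,q};\Delta_\sS)$ and the total $\sS$-rank equals $k := -\sigma(T_{p,q})/2$. Since $T^4 \neq 1$ in $\sS = \Z[U^{\pm 1}, T^{\pm 1}]$, Theorem \ref{thm:htwistedsign} gives $h_\sS(T_{p,q}) = k$, so by Proposition \ref{h-reinterpret} there exists $\alpha \in C_*$ with $\delta_1 v^{k-1}\alpha$ a nonzero element of $\sS$ and $\delta_1 v^i \alpha = 0$ for $i < k-1$. With $d = d' = 0$, the chain-map relations for $\widetilde\lambda$ reduce to the three identities $\delta_1 \lambda = \delta_1$, $\lambda \delta_2 = \delta_2$, and $\lambda v - v\lambda = \delta_2 \Delta_1 - \Delta_2 \delta_1$.

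The main step is to show that $\lambda : C_* \to C_*$ is an $\sS$-module isomorphism. Applying $\delta_1 v^j$ to $\lambda(v^i\alpha)$ and repeatedly substituting via the commutation relation, one obtains $\delta_1 v^j \lambda(v^i\alpha) = \delta_1 v^{i+j}\alpha$ plus lower-order correction terms whenever $i + j \leq k - 1$. A careful analysis, using the extremality of $\alpha$ to cancel the corrections together with a linear independence argument based on the rank equality, should show that $\alpha, v\alpha, \ldots, v^{k-1}\alpha$ form an $\sS$-basis of $C_*$ and that $\lambda$ is upper-triangular with unit diagonal in this basis, hence invertible. Once $\lambda$ is an isomorphism, a standard algebraic construction inverts $\widetilde\lambda$ up to $\cS$-chain homotopy by absorbing the off-diagonal components $\mu, \Delta_1, \Delta_2$ into chain homotopies; this is possible because the conditions $d = d' = 0$ collapse the $\cS$-chain-map relations to purely module-theoretic identities. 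The principal obstacle is controlling the lower-order corrections in the induction and verifying that the diagonal entries of $\lambda$ are genuine units of $\sS$, which requires finer information about the $v$-filtration and the interaction of $\delta_1, \delta_2$ on torus knot instanton homology than what Corollary \ref{cor:torusknots} alone provides.
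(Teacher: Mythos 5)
Your setup is right ($\eta(W,S)=1$, $d=0$, and the reduction of the chain-map identities to $\delta_1\lambda=\delta_1$, $\lambda\delta_2=\delta_2$, $\lambda v-v\lambda=\delta_2\Delta_1-\Delta_2\delta_1$), but the central step --- that $\lambda$ is an isomorphism --- has a genuine gap, and the route you sketch cannot close it. The elements $\alpha, v\alpha,\dots,v^{k-1}\alpha$ produced by Proposition \ref{h-reinterpret} are linearly independent over ${\rm Frac}(\sS)$ but do \emph{not} form an $\sS$-basis of $C_*$: already for $T_{2,5}$ one has $v\zeta^2=(T^2-T^{-2})\zeta^1$, so $v\alpha$ is a non-unit multiple of a generator, and ``upper-triangular with unit diagonal in this basis'' is not available. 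More fundamentally, the purely module-theoretic identities above do not force $\lambda$ to be invertible: on an abstract $\cS$-complex with two degree-one generators $a,b$, with $v=\delta_2=0$, $\delta_1(a)=T^2-T^{-2}$, $\delta_1(b)=0$, the map $\lambda(a)=a$, $\lambda(b)=0$ satisfies all the relations with $\eta=1$ and is not injective. So some geometric input beyond Corollary \ref{cor:torusknots} and Theorem \ref{thm:htwistedsign} is indispensable, which is exactly the ``finer information'' you flag as the obstacle.

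The paper supplies that input in three steps. First, since the Chern--Simons functional of $T_{p,q}$ is non-degenerate, the generators carry a $\Z\times\R$-bigrading and, for sufficiently small perturbations, $\lambda$ cannot increase the instanton grading; ordering the flat connections by $(\widetilde{\rm gr},\deg_I)$ makes $\lambda$ block upper-triangular. Second, normalizing the lifts so that ${\rm hol}_{T_{p,q}}(\widetilde\alpha_i)\in[0,1)$ and using that the relevant instantons are flat (hence have zero monopole number) forces the diagonal entries to be integers rather than general elements of $\sS$ (cf.\ Lemma \ref{lift-morphisms}). Third, invertibility of these integer diagonal blocks is obtained by reducing modulo every prime $r$ and running your Fr\o yshov argument over the field $\sS'={\rm Frac}((\Z/r)[T^{\pm1}])$, where $h_{\sS'}(T_{p,q})=k$ still holds and where $k$ linearly independent vectors in a rank-$k$ vector space genuinely form a basis, yielding a contradiction if any diagonal block fails to be invertible. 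Finally, once $\lambda$ is an isomorphism you do not need to absorb $\mu,\Delta_1,\Delta_2$ into chain homotopies: $\widetilde\lambda$ preserves the filtration $0\subseteq C_{*-1}\subseteq C_{*-1}\oplus\sS\subseteq\widetilde C_*$ with associated graded pieces $\lambda$, $1$, $\lambda$, and is therefore itself an isomorphism.
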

\begin{proof}
	Since the Chern-Simons functional of a torus knot $T_{p,q}$ is non-degenerate, we may define the enriched complex $\widetilde C_*(T_{p,q};\Delta_\sS)$
	using a perturbation of 
	the Chern-Simons functional of $T_{p,q}$ which is trivial in a neighborhood of singular flat connections, the critical points of the Chern-Simons functional. 
	In particular, $C_*(T_{p,q};\Delta_\sS)$
	is generated by lifted irreducible singular flat connections $\widetilde \alpha_1,\ldots,\widetilde \alpha_m$ associated to $T_{p,q}$ where $m=-\sigma(T_{p,q})/2$, 
	and the differential $d:C_*(T_{p,q};\Delta_\sS) \to C_*(T_{p,q};\Delta_\sS)$ vanishes. Let $\widetilde \alpha_i$ be the lift of the flat connection $\alpha_i$ such that 
	$\widetilde {\rm gr}(\alpha_i)=1$ or $3$ and ${\rm hol}_{T_{p,q}}(\widetilde \alpha_i)\in [0,1)$.
	Define $\widetilde \lambda_{(W,S)}$ using a compatible perturbation of the ASD 
	equation for singular connections on $(W,S)$. 
	
	Next, we study the map $\lambda_{(W,S)}:C_*(T_{p,q};\Delta_\sS) \to C_*(T_{p,q};\Delta_\sS)$ by looking at the coefficient of 
	$\widetilde \alpha_j$ in $\lambda_{(W,S)}(\widetilde \alpha_i)$
	for any $i$ and $j$, which is an element of $\Z[U^{\pm1}, T^{\pm1}]$ denoted by $\langle \lambda_{(W,S)}(\widetilde \alpha_i),\widetilde \alpha_j\rangle $. 
	This coefficient is zero unless $\widetilde {\rm gr}(\alpha_i)=\widetilde {\rm gr}(\alpha_j)$. Furthermore, if $\deg_I(\widetilde \alpha_j)>\deg_I(\widetilde \alpha_i)$,
	then $\langle \lambda_{(W,S)}(\widetilde \alpha_i),\widetilde \alpha_j\rangle $ is again zero if we take the perturbations involved in the definition of $\widetilde C_*(T_{p,q};\Delta_\sS)$
	and $\widetilde \lambda_{(W,S)}$ small enough. In the case that the $\Z\times \R$-bigradings of $\widetilde \alpha_i$ and $\widetilde \alpha_j$ agree with each other, 
	$\langle \lambda_{(W,S)}(\widetilde \alpha_i),\widetilde \alpha_j\rangle $ is an element of $\Z[T^{\pm 1}]$. This Laurent polynomial is in fact an integer
	if the perturbations are small enough. To see this, we momentarily assume that all perturbations are trivial. Then the instantons involved in the definition of  
	$\langle \lambda_{(W,S)}(\widetilde \alpha_i),\widetilde \alpha_j\rangle $ are flat singular connections on $(W,S)$. In particular, their monopole numbers are equal to zero. 
	This together with the assumption ${\rm hol}_{T_{p,q}}(\widetilde \alpha_i), {\rm hol}_{T_{p,q}}(\widetilde \alpha_j)\in [0,1)$ imply that 
	$\langle \lambda_{(W,S)}(\widetilde \alpha_i),\widetilde \alpha_j\rangle \in \Z$. In the case that we need to use perturbations, we can obtain the same conclusion 
	by taking a perturbation which is small enough (see \cite{AD:CS-Th} for similar arguments). In summary, if we order $\widetilde \alpha_1, \ldots, \widetilde \alpha_m$
	based on their $\Z$-gradings and then their
	$\R$-gradings, the map $\lambda_{(W,S)}$ is given by an upper-triangular block matrix such that the diagonal blocks take values in integers. To prove our claim 
	it suffices to show that the diagonal blocks are invertible.
	
	If the diagonal blocks of $\lambda_{(W,S)}$ are not invertible, then $\lambda_{(W,S)}$ is not an isomorphism over the field 
	$\sS'={\rm Frac}((\Z/r)[T^{\pm 1}])$ where $r$ is a prime number. Since $h_{\sS'}(T_{p,q})$ equals $m=-\sigma(T_{p,q})/2$ as a consequence of Proposition \ref{prop:hinvcrossing}, there is some
	$\zeta \in C_*(T_{p,q};\Delta_{\sS'})$ such that 
	\[
	  m-1=\min\{j\mid \delta_1v^j(\zeta)\neq 0\}.
	\]
	Let $\zeta'=\lambda_{(W,S)}(\zeta)$. Since $\widetilde \lambda_{(W,S)}$
	induces a morphism of $\cS$-complexes, we have
	\[
	  m-1=\min\{j\mid \delta_1v^j(\zeta')\neq 0\}.
	\]
	In particular, this shows that $v^k\zeta'$ for $0\leq k\leq m-1$ gives a basis for $C_*(T_{p,q};\Delta_{\sS'})$, which is a contradiction, and hence 
	$\lambda_{(W,S)}$ is an isomorphism.
	
	The map $\widetilde \lambda_{(W,S)}$ is filtered with respect to the filtration
	\[
	  0\subseteq C_{*-1}(T_{p,q};\Delta_{\sS})\subseteq C_{*-1}(T_{p,q};\Delta_{\sS})\oplus \sS\subseteq \widetilde C_*(T_{p,q};\Delta_\sS)
	\]
	and the induced map by $\widetilde \lambda_{(W,S)}$ on the graded part of the above filtration is given by the maps $\lambda_{(W,S)}$, $1$ and $\lambda_{(W,S)}$.
	Thus $\widetilde \lambda_{(W,S)}$ is an isomorphism, too.
\end{proof}

The following is a consequence of the proof of Theorem \ref{conc-torus}, and is a slight generalization of Theorem \ref{torus-conc-reps} from the introduction.
\begin{cor}\label{torus-conc-reps-gen}
	Let $(W,S):(S^3,T_{p,q}) \to (Y,K)$ be a homology concordance. Then any traceless $SU(2)$ representation of $\pi_1(S^3\setminus T_{p,q})$ extends over 
	the concordance complement. 
\end{cor}

\begin{proof}
	Since we can compose $(W,S)$ with its flipped copy as a cobordism from $(Y,K)$ to $(S^3,T_{p,q})$, it suffices to prove this for the case that $(Y,K)=(S^3,T_{p,q})$.
	Firstly suppose $\widetilde \lambda_{(W,S)}$ can be defined with trivial perturbations. Following the terminology of the proof of Theorem \ref{conc-torus}, in the upper-triangular
	 block representation of $\lambda_{(W,S)}$ all diagonal blocks have to be invertible. However, if for a given $\alpha_i$ there is no singular flat connection on $(W,S)$ extending
	 $\alpha_i$ on the incoming end, then a column of one of the diagonal blocks of $\lambda_{(W,S)}$ vanishes, which is a contradiction. 
	 In the presence of perturbations, one still obtains a similar contradiction for small perturbations using a limiting argument.
\end{proof}

\begin{cor}\label{instanton-map-summand-torus}
	Suppose $(W,S):(S^3,T_{p,q}) \to (Y,K)$ is a homology concordance. Then for any of the invariants $I_*$, $\widehat I_*$, $\widecheck I_*$, $I_*^\sharp$ and 
	$I_*^\natural$ with any choice of 
	coefficient ring (twisted or untwisted), the invariant of $T_{p,q}$ is a summand of the invariant of $K$.
\end{cor}
\begin{proof}
	Let $(W',S'):(Y,K) \to (S^3,T_{p,q}) $ be given by flipping the cobordism $(W,S)$. Theorem \ref{conc-torus} implies that 
	$\widetilde \lambda_{(W',S')}\circ \widetilde \lambda_{(W,S)}$ is $\cS$-chain homotopy equivalent to an isomorphism. In particular, $\lambda_{(W',S')}\circ \lambda_{(W,S)}$
	is chain homotopy equivalent to an isomorphism. This implies that $\lambda_{(W,S)}:I_*(S^3,T_{p,q})\to I_*(Y,K)$ is an isomorphism. Similar arguments apply to the other flavors 
	of singular instanton homology using the definitions and results of \cite{DS}.
\end{proof}

\begin{remark}
	In the case that $(W,S):(S^3,T_{p,q})\to (Y,K)$ is a homology concordance such that the concordance complement $W\backslash S$ has a handle decomposition with no
	3-handles (e.g. $W=I\times S^3$ and $S$ is a ribbon concordance), Corollary \ref{torus-conc-reps-gen} can be proved using an elementary argument without appealing
	to Yang-Mills gauge theory \cite{Gor:Rib,dlvw}. On the other hand, the proof of Theorem \ref{torus-conc-reps-gen} relies 
	heavily on the properties of equivariant singular instanton homology developed in the current paper. 
	To see whether Corollary \ref{torus-conc-reps-gen} is a genuine application of gauge theory, it would be desirable to find 
	a knot concordant to a torus knot $T_{p,q}$ such that there is no ribbon concordance from $T_{p,q}$ to $K$. As pointed out in the introduction, 
	the authors do not know whether any such knot exists. 
	One possible approach to obtain such a knot is to find a $K$ concordant to $T_{p,q}$ and then use one of the known obstructions to rule out the existence of a 
	ribbon concordance from $T_{p,q}$ to $K$. Various versions of instanton Floer homology provide such obstructions \cite{dlvw,km-concordance}. 
	However, Corollary \ref{instanton-map-summand-torus} asserts that they are not useful in this direction. $\diamd$
\end{remark}

\begin{remark}
	Another obstruction to the existence of ribbon concordances is given by the Alexander polynomial \cite{Gor:Rib,Gil:Rib}. If there is a ribbon concordance from $K$ to $K'$,
	then $\Delta_{K'}(t)=\Delta_{K}(t)f(t)f(t^{-1})$ where $\Delta_K(t)$ and $\Delta_{K'}(t)$ are the (symmetrized) Alexander polynomials of $K$ and $K'$, and $f(t)$ is a 
	polynomial with integer coefficients. If a torus knot $T_{p,q}$ is concordant to a knot $K'$, then the Fox-Milnor theorem implies that $\Delta_{T_{p,q}}(t)\Delta_{K'}(t)$
	is equal to $g(t)g(t^{-1})$ for a polynomial $g$ with integer coefficients. Any root $\omega$ of $\Delta_{T_{p,q}}(t)$ is simple with norm $1$. 
	For any such root $g(\omega)g(\omega^{-1})$ vanishes and hence $g(\omega)=0$ or $g(\omega^{-1})=0$. The latter case is also equivalent to $g(\omega)=0$ because
	$\omega$ has norm one and $g$ has integer coefficients. In summary $g(t)=\Delta_{T_{p,q}}(t)f(t)$ for a polynomial $f(t)$ with integer coefficients. 
	Thus $\Delta_{K}(t)=\Delta_{T_{p,q}}(t)f(t)f(t^{-1})$, i.e., the Alexander polynomial cannot be used to rule out the existence of a ribbon concordance from $T_{p,q}$ to $K$. $\diamd$
\end{remark}

%\newpage

%!TEX root = main.tex

\section{The twisted Fr\o yshov invariant and suspension}\label{sec:froyshovandsuspension}

In this section we prove the following, which completes the proof of Theorem \ref{thm:htwistedsign}:

\begin{theorem}\label{thm:hinvissign}
	Suppose $T^4\neq 1$ and $2\neq 0$ in $\sS$. Let $K$ be a null-homotopic knot in an integer homology 3-sphere $Y$. Then we have
\[
	h_\sS(Y,K) = -\frac{1}{2}\sigma(Y,K) + 4h_\sS(Y)
\]
where $h_\sS(Y)$ is the Fr\o yshov invariant for integer homology 3-spheres.
\end{theorem}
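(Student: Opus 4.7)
The plan is to first reduce the theorem to a statement about the unknot $U_1 \subset Y$ via the crossing-change results already established, and then prove that statement by explicitly identifying the singular $\cS$-complex for $(Y,U_1)$ with the non-singular $\cS$-complex for $Y$, up to a grading rescaling which accounts for the factor of $4$.

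For the reduction, note that since $K$ is null-homotopic in $Y$, a sequence of crossing changes connects $K$ to the unknot $U_1 \subset Y$. Applying Proposition \ref{prop:hinvcrossing} (which uses $T^4 \neq 1$) along this sequence yields
\[
	h_\sS(Y,K) + \tfrac{1}{2}\sigma(Y,K) = h_\sS(Y,U_1) + \tfrac{1}{2}\sigma(Y,U_1) = h_\sS(Y,U_1),
\]
so it is enough to prove $h_\sS(Y,U_1) = 4h_\sS(Y)$.

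The key step is to construct an $\cS$-complex identification between $\widetilde C_\ast(Y,U_1;\Delta_\sS)$ and (a base-change and regrading of) the non-singular $\cS$-complex associated to $Y$. The main inputs are: (i) since $U_1$ bounds an embedded disk in $Y$, one has $\pi_1(Y\setminus U_1)\cong \pi_1(Y)\ast\langle\mu\rangle$ with $\mu$ a meridian, so traceless singular flat connections on $(Y,U_1)$ (those sending $\mu\mapsto\mathbf{i}$) are in natural bijection with arbitrary $SU(2)$-representations of $\pi_1(Y)$, and this bijection identifies reducibles on both sides (each consisting of only the trivial representation since $H_1(Y)=0$); and (ii) a neck-stretching argument along the torus boundary of a tubular neighborhood of $U_1$ identifies finite-energy singular instantons on $\mathbb{R}\times(Y,U_1)$ with non-singular instantons on $\mathbb{R}\times Y$. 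Because $U_1$ bounds a disk, the monopole number of every such singular instanton is determined by its topological energy, so the local coefficient system $\Delta_\sS$ degenerates up to an overall unit, and the variable $T$ effectively acts trivially on the resulting $\cS$-complex. This is the \emph{suspension} referred to in the section title: the singular $\cS$-complex for $(Y,U_1)$ is the suspension of the non-singular one for $Y$.

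Once this identification is in place, one compares Fr\o yshov invariants directly. The degrees of the respective $v$-maps differ by a factor of $2$ in the relevant $\mathbb{Z}$-gradings (equivalently, the non-singular $\mathbb{Z}/8$-grading halves to the singular $\mathbb{Z}/4$-grading), and the corresponding moduli-space dimension formulas translate an element $\alpha$ realizing $h_\sS(Y)=k$ via Proposition \ref{h-reinterpret} into an element $\alpha'$ of $C_\ast(Y,U_1;\Delta_\sS)$ satisfying $\delta_1 v^{4k-1}(\alpha')\neq 0$ and $\delta_1 v^j(\alpha')=0$ for $j<4k-1$ (and symmetrically for $k<0$). Proposition \ref{h-reinterpret} applied in the singular theory then gives $h_\sS(Y,U_1)=4h_\sS(Y)$, completing the proof.

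The main obstacle I expect is carrying out the moduli-space correspondence of point (ii) above with enough precision to preserve the $\cS$-complex structure and the local coefficient system. Specifically, one must verify that throughout the stretching of the metric along $\partial N(U_1)$ no spurious contributions from flat connections on the neck arise, and that the monopole number in each relevant moduli space is topologically rigid so that the $T$-variable factors out. The hypothesis $T^4\neq 1$ enters through Proposition \ref{prop:hinvcrossing} (and the need for the $T$-dependence to be non-degenerate), while the hypothesis $2\neq 0$ in $\sS$ is used to ensure that counts of reducibles on auxiliary negative definite cobordisms remain invertible, for example as in Lemma \ref{lemma:poincare}, which provides the matching inequality $h_\sS(Y,U_1)\geq 4h_\sS(Y)$ from the negative definite bounding manifold realizing $h_\sS(Y)$.
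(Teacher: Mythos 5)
Your reduction to the statement $h_\sS(Y,U_1)=4h_\sS(Y)$ via Proposition \ref{prop:hinvcrossing} is correct and is exactly how the paper begins. The gap is in your proof of that statement. Your point (i) is false as stated: since $\pi_1(Y\setminus U_1)\cong\pi_1(Y)\ast\langle\mu\rangle$ and the traceless conjugacy class in $SU(2)$ is a $2$-sphere, each irreducible flat connection on $Y$ contributes an entire $S^2$ of traceless flat connections on $(Y,U_1)$ modulo gauge, not a single point. The critical set is Morse--Bott (this is precisely why the paper can assert $\rk I_1(\Sigma,U_1;\Delta_\sS)\leq 2$ for the Poincar\'e sphere: two irreducibles on $\Sigma$ give two copies of $S^2$), so there is no bijection of generators and no direct moduli-space correspondence of the kind your neck-stretching argument requires. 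Relatedly, your translation of $\delta_1 v^{k-1}(\alpha)\neq 0$ into $\delta_1 v^{4k-1}(\alpha')\neq 0$ is asserted without justification; relating the degree-$2$ map $v$ in the singular theory to the degree-$4$ map $U$ in the non-singular theory is exactly the point the authors themselves defer --- Section \ref{sec:froyshovandsuspension} of the paper sketches the $(Y,U_1)$ versus $Y$ comparison only ``without proof,'' states that this material ``is not used elsewhere in the paper,'' and flags the $v$--$U$ relationship as requiring further investigation. Finally, Lemma \ref{lemma:poincare} gives $h_\sS(\Sigma,U_1)\geq 4$ for the Poincar\'e sphere only; it does not provide a general inequality $h_\sS(Y,U_1)\geq 4h_\sS(Y)$, since an arbitrary $Y$ need not bound a negative definite manifold with an extremal vector realizing $h_\sS(Y)$.

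The paper's actual argument (Proposition \ref{prop:hinvunknot}) avoids any chain-level identification. The unknotted disk filling $U_1$ gives a cobordism $(Y,U_1)\to(Y,\emptyset)$ and hence a chain map $\phi$ together with a relation $\fD\circ d+\delta_1-D_1\circ\phi=0$, which yields the one-directional implication $h_\sS(Y,U_1)>0\Rightarrow h_\sS(Y)>0$. Both $h_\sS(\cdot)$ and $h_\sS(\cdot,U_1)$ are homomorphisms $\Theta^3_\Z\to\Z$; the implication plus the computations $h_\sS(\pm\Sigma,U_1)=\pm 4$ and $h_\sS(\Sigma)=1$ force the two kernels to coincide, so the homomorphisms are proportional, and the constant $4$ is pinned down by $h_\sS(\Sigma,U_1)=4$ --- the lower bound from Lemma \ref{lemma:poincare} and the upper bound from the Morse--Bott rank estimate $\rk I_1(\Sigma,U_1;\Delta_\sS)\leq 2$ combined with \eqref{h-twisted-ineq-irr-I}. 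If you want to salvage your approach, you would need to carry out the Morse--Bott or equivariant analysis of the $S^2$ orbits (as outlined in Section \ref{sec:froyshovandsuspension}) and prove the $v$--$U$ comparison, which is a substantially harder project than the theorem itself.
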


\noindent The Fr\o yshov invariant $h_\sS(Y)$ is defined following the construction in \cite{froyshov}, using the chain complex $\widetilde C_\ast(Y;\Delta_\sS)$. In general we can form such a chain complex with local coefficients for an algebra over $\Z[U^{\pm 1}]$ where $U$ is tied to the Chern--Simons functional. Thus in defining $h_\sS(Y)$ the local coefficient system is formed by viewing $\sS$ as an algebra over $\Z[U^{\pm 1}]$, and the $T$ variable no longer plays an essential role.

By Proposition \ref{prop:hinvcrossing}, to prove Theorem \ref{thm:hinvissign} we need only establish
\begin{equation}
	h_\sS(Y,U_1) = 4h_\sS(Y) \label{eq:4hunknot}
\end{equation}
whenever $2\neq 0 $ in $\sS$ where $U_1$ is an unknot in a small 3-ball contained in $Y$. The condition That $2\neq 0$ in $\sS$ is required by the constructions of \cite{froyshov}.

The second subsection establishes a kind of categorification of Theorem \ref{thm:hinvissign}. Let $Y=S^3$. If $\sS$ is a field, then the Fr\o yshov invariant $h_\sS(K)$ determines the local equivalence class of the $\cS$-complex $\widetilde C_\ast(K;\Delta_\sS)$, and Theorem \ref{thm:hinvissign} implies that this local equivalence class is determined by the signature of $K$. But in fact more is true: the signature determines the $\cS$-chain homotopy type of $\widetilde C_\ast(K;\Delta_\sS)$. In establishing this we are lead to the notion of suspending cobordisms by the trefoil cobordism. This idea was already utilized to an extent in Section \ref{sec:inequalities}, and will play an important role in the sequel.

In the final subsection we formulate a connected sum theorem which further explains the relation \eqref{eq:4hunknot}. We only sketch the ideas and leave some details for future investigation.

\subsection{The Fr\o yshov invariant for an unknot}

Here we establish \eqref{eq:4hunknot}, which completes the proof of Theorem \ref{thm:hinvissign}.

\begin{prop}\label{prop:hinvunknot}
	Suppose $2\neq 0 $ in $\sS$. Then $h_\sS(Y,U_1)=4h_\sS(Y)$.
\end{prop}

\begin{proof}
	Suppose $S$ is an unknotted disc in $W=I\times Y$ that fills the unknot $U_1$ in $\{0\}\times Y$. Then $(W,S):(Y,U_1) \to (Y,\emptyset)$ is a cobordism of pairs, which admits a reducible singular flat connection with index $-1$. The moduli space of (perturbed) singular ASD connections on $(W,S)$ which are asymptotic to irreducible (perturbed) flat connections and have dimension zero 
	gives rise to a chain map $\phi: C_*(Y,U_1;\Delta_\sS)\to C_*(Y;\Delta_\sS)$ where $(C_*(Y;\Delta_\sS),d_{Y})$ is Floer's instanton complex with local coefficient system $\Delta_\sS$. Considering the $0$-dimensional moduli spaces on $(W,S)$ which are asymptotic to an irreducible (perturbed) flat connection on the end $(Y,U_1)$ and the trivial connection on the end $Y$ determines a map $\fD: C_*(Y,U_1;\sS)\to \sS$ which satisfies the following property:
	\begin{equation}\label{fD-chain}
	  \fD\circ d+\delta_1-D_1\circ \phi=0.
	\end{equation}
	Here $D_1:C_*(Y;\sS)\to \sS$ is the map defined by counting (perturbed non-singular) ASD instantons over the cylinder $\R\times Y$ which are asymptotic to an irreducible (perturbed) flat connection on the incoming end and 
	the trivial connection on the outgoing end. The above identity follows from a standard argument by considering 1-dimensional moduli spaces of singular ASD connections over $(W,S)$. In particular, the term $\delta_1$ comes from gluing
	the reducible flat connection on $(W,S)$ to singular instantons on $(\R\times Y,\R\times U_1)$ from an irreducible (perturbed) flat connection to the reducible flat connection on $(Y,U_1)$.

	The identity in \eqref{fD-chain} and the chain map property of $\phi$ implies that 
	\begin{equation}\label{prop-h-hs}
		h_\sS(Y,U_1) > 0  \qquad \Longrightarrow \qquad h_\sS(Y) > 0
	\end{equation}
	The two invariants $h_\sS(\,\cdot\, )$ and $h_\sS(\,\cdot\, ,U_1\,)$ define homomorphisms $\Theta^3_{\Z}\to \Z$ from the integer homology cobordism group to integers. The implication in \eqref{prop-h-hs} implies that 
	the kernel of $h_\sS(Y)$ is a subset of the kernel of $h_\sS(Y,U_1)$. On the other hand, if $h_\sS(Y,U_1)=0$, then $h_\sS(Y\#\Sigma,U_1)>0$ and $h_\sS(Y\#-\Sigma,U_1)<0$ where $\Sigma$ is the Poincar\'{e} homology sphere. 
	This is a consequence of Corollary \ref{lemma:poincare}. Using \eqref{prop-h-hs}, we conclude that $h_\sS(Y\#\Sigma)>0$ and $h_\sS(Y\#-\Sigma)<0$. 
	Since $h_\sS(\Sigma)=1$  \cite{froyshov}, we have $h_\sS(Y)=0$. Thus the two invariants $h_\sS(\,\cdot\, )$ and $h_\sS(\,\cdot\, ,U_1\,)$ have equal kernels, 
	and to complete the proof it suffices to show that $h_\sS(\Sigma,U_1)=4$.
	
	The Chern-Simons functional of the pair $(\Sigma,U_1)$ is Morse-Bott and its irreducible critical set consists of two copies of $S^2$. Thus $\text{rk} (I_1(\Sigma,U_1;\Delta_\sS))$
	is at most 2. From this, \eqref{h-twisted-ineq-irr-I} implies that $h_\sS(\Sigma,U_1) \leqslant  4$. This inequality and Lemma \ref{lemma:poincare} shows that $h_\sS(\Sigma,U_1)=4$, completing the proof.
\end{proof}

In the statement of Theorem \ref{thm:hinvissign} with $\sS=\Z[U^{\pm 1},T^{\pm 1}]$ the non-singular Fr\o yshov invariant $h_\sS(Y)$ for the homology 3-sphere $Y$ that appears is a priori different from the one defined by Fr\o yshov. Indeed, we are using a local coefficient system over the ring $\Z[U^{\pm 1}]$ while Fr\o yshov works over $\Z$. Similarly, we might inquire about the role of the variable $U$ for singular Fr\o yshov invariants. The following shows that this role is inessential. 

\begin{prop}
	Let $R$ be an integral domain algebra over $\Z[T^{\pm 1}]$ and $\sS=R[U^{\pm 1}]$. Then $h_\sS(Y,K)= h_R(Y,K)$. 
\end{prop}

\begin{proof}
	Let $\alpha_1,\ldots,\alpha_n$ be singular flat connections which generate $C_1(Y,K;\Delta_R)$. Suppose that $d\alpha = 0$ where $\alpha=\sum c_i \alpha_i$ where $c_i\in R$ and $\delta_1\alpha\neq 0$. The flat connections $\alpha_i$ have lifts $\widetilde \alpha_i$ each having $\Z$-grading $1$. Let $\widetilde \alpha = \sum c_i \widetilde \alpha_i$. Then in the enriched complex over $R[U^{\pm 1}]$,
\[
	d\widetilde \alpha = \sum a_i \widetilde \beta_i
\]
where the $\widetilde\beta_i$ are lifts of flat connections and have $\Z$-grading $0$. The important observation here is that $a_i\in R$ and contains no powers of $U$. This is in fact a special case of Lemma \ref{lift-morphisms}. From this we see that $d\widetilde \alpha=0$. Since $\delta_1\alpha\neq 0$ over $R$ the same is true over $\sS$. Thus $h_R(Y,K)>0$ implies $h_\sS(Y,K) >0$. Similar to the proof of Proposition \ref{prop:hinvunknot}, equality then follows by the fact that the invariants agree for the trefoil (see \cite{DS} for the computation of $h_\sS(T_{2,3})$ for all $\sS$). 
\end{proof}

A similar argument shows that $U$ plays essentially no role in the determination of Fr\o yshov's $h$-invariant for homology 3-spheres.

\subsection{Suspension and $\cS$-complexes}

Consider an immersed cobordism $(W,S):(Y,K)\to (Y',K')$ with normal crossings as in Subsection \ref{subsec:immersedcrossing}. For simplicity we restrict to the case in which $W=I\times Y$ is a product and so $Y=Y'$. Let $(\overline W,\overline S)$ be the resulting blown-up cobordism. We determine when such a cobordism induces a morphism of $\cS$-complexes. The discussion in Subsection \ref{subsec:immersedcrossing} and the index formula \eqref{eq:index} implies that the index of a minimal reducible $A_L$ on $(W,S)$ is given by
\[
  \ind(A_L) = \sigma(Y,K)-\sigma(Y',K')-2g(S)-1.
\]
Moreover we recall from \eqref{eq:etacrossing} that $\eta(W,S)\neq 0$ under the assumption that $T^4\neq 1$ in $\sS$. If $T^4=1$ then $\eta(W,S)\neq 0$ if and only if the number of positive double points $s_+$ is zero. From this we obtain:

\begin{cor}\label{neg-pair}
	The pair $(W,S)$ is negative definite over $\sS$ if and only if
	\begin{equation} \label{genus-cond}
	   \sigma(Y,K)-\sigma(Y,K')=2g(S)
	\end{equation}
	and either $T^4\neq 1$ as elements of $\sS$, or $s_+=0$.
\end{cor}

This observation, together with the cobordism relations of Subsection \ref{subsec:cobrels}, leads to:

\begin{theorem}\label{knot-inv-twisted}
	Suppose $T^2-T^{-2}$ is invertible in $\sS$. For any null-homotopic knot $K\subset Y$ in an integer homology 3-sphere, we have an $\cS$-chain homotopy equivalence
	\begin{equation}
	\widetilde C(Y,K,\Delta_{\sS})\simeq\widetilde C(Y,U_1,\Delta_{\sS})\otimes \widetilde C(S^3,{-{\tfrac{\sigma(Y,K)}{2}}}T_{2,3},\Delta_{\sS})\label{eq:schaineq}
\end{equation}
	In particular, if $K$ is a knot in the 3-sphere, we have
	\[
\widetilde C(K;\Delta_{\sS})\simeq \widetilde C(S^3,{-{\tfrac{\sigma(K)}{2}}}T_{2,3};\Delta_{\sS}).
\]
\end{theorem}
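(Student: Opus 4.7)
The plan is to exhibit explicit $\cS$-chain homotopy equivalences by realizing negative definite pairs that connect $(Y,K)$ to a model knot, then use invertibility of $T^2-T^{-2}$ to normalize away excess factors arising from cobordism moves. Set $n = -\sigma(Y,K)/2$ (replacing $K$ by $-K$ if $n<0$, and using duality of $\cS$-complexes) and let $K' = U_1 \# n T_{2,3}$ inside $Y$, realized by inserting the trefoil summands into a small 3-ball. Since $K$ and $K'$ are both null-homotopic in $Y$, they are freely homotopic, and so there is a normally immersed annulus $S$ giving a cobordism $(I \times Y, S):(Y,K)\to(Y,K')$. Both knots have signature $-2n$, so the condition \eqref{genus-cond} of Corollary \ref{neg-pair} holds with $g(S)=0$; and since invertibility of $T^2-T^{-2}$ forces $T^4\neq 1$, Corollary \ref{neg-pair} tells us that $(I\times Y, S)$ is a negative definite pair over $\sS$. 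Proposition \ref{prop:morphism} then produces a morphism of $\cS$-complexes $\widetilde\lambda:\widetilde C(Y,K;\Delta_\sS)\to \widetilde C(Y,K';\Delta_\sS)$, and the reversed immersed cobordism gives $\widetilde\lambda'$ in the opposite direction. The connected sum decomposition (Theorem \ref{thm:connectedsum}) identifies the target of $\widetilde\lambda$ with the tensor product appearing in \eqref{eq:schaineq}.

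To upgrade this to an $\cS$-chain homotopy equivalence, the key task is to show that the compositions $\widetilde\lambda'\widetilde\lambda$ and $\widetilde\lambda\widetilde\lambda'$ are $\cS$-chain homotopic to a unit scalar times the identity. Each such composition comes from the glued immersed annulus $S\cup\bar S$ in the product $I\times Y$, whose boundary consists of two parallel copies of $K$ (respectively $K'$). The reference product annulus $K\times I$ induces the identity, so it suffices to compare $S\cup\bar S$ to $K\times I$. Two normally immersed annuli in the simply-connected cobordism $I\times Y$ with the same boundary and homology class are related by a sequence of ambient isotopies, finger moves, Whitney moves, and twist moves (possibly with framing compensation). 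Proposition \ref{homotopy-moves} shows that positive twists and finger moves each multiply the induced morphism by $(T^2-T^{-2})$, negative twists are trivial, and Whitney moves (inverse to finger moves) divide by $(T^2-T^{-2})$. Under our invertibility hypothesis, the net effect of any such sequence is multiplication by a unit $u\in \sS^\times$, so $\widetilde\lambda'\widetilde\lambda \sim u\cdot\mathrm{id}$ and symmetrically for $\widetilde\lambda\widetilde\lambda'$. Rescaling $\widetilde\lambda'$ by $u^{-1}$ produces the desired $\cS$-chain homotopy equivalence, and the ``In particular'' statement for $Y=S^3$ follows because $\widetilde C(S^3,U_1;\Delta_\sS)$ is the trivial $\cS$-complex.

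The main obstacle is the regular-homotopy step: one must verify that $S\cup\bar S$ can be joined to $K\times I$ by a sequence built only from the elementary moves that Proposition \ref{homotopy-moves} controls, and keep track of the framing correction (since $S\cup\bar S$ need not have the same normal Euler number as $K\times I$). Although regular homotopy between two null-homologous annuli with common boundary in a simply-connected $4$-manifold exists by general position, decomposing it into finger/Whitney/twist moves and verifying that the scalar produced by Proposition \ref{homotopy-moves} is truly a unit (and not an element that vanishes under some specialization we still permit) requires careful bookkeeping of the self-intersection data of the immersed composite and its sign contributions.
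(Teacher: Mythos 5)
Your strategy is the same as the paper's: take an immersed annulus in $I\times Y$ coming from a free homotopy between $K$ and a connected sum of trefoils with matching signature, observe via Corollary \ref{neg-pair} that these pairs are negative definite over $\sS$ (since invertibility of $T^2-T^{-2}$ forces $T^4\neq 1$), get morphisms in both directions, show the composites are a unit multiple of the identity using Proposition \ref{homotopy-moves}, and finish with the connected sum theorem. However, the step you yourself flag as the main obstacle is exactly where your justification breaks, and it breaks for a concrete reason: you assert that $I\times Y$ is simply connected, which is false for a general integer homology $3$-sphere (e.g.\ the Poincar\'e sphere), and you then argue that two normally immersed annuli with the same boundary and homology class are related by isotopies, finger, Whitney and twist moves. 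In a non-simply-connected $4$-manifold, agreement of homology classes does not give a homotopy rel boundary between the two annuli, and without a homotopy rel boundary there is no reason the two surfaces are related by the elementary moves that Proposition \ref{homotopy-moves} controls. So as written the argument does not establish that $\widetilde\lambda'\widetilde\lambda$ is chain homotopic to a unit times the identity for general $Y$.

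The missing idea, which is how the paper closes this gap, is to build the two cobordisms symmetrically from a single homotopy: take $S$ to be the trace of a generic homotopy $h$ from $K$ to $K'=-\tfrac{\sigma(Y,K)}{2}T_{2,3}\subset Y$, and take $S'$ to be $S$ flipped with reversed orientation, i.e.\ the trace of $\bar h$. Then $S'\circ S$ is the trace of $h\ast\bar h$, which is homotopic rel endpoints to the constant homotopy, so the composite annulus is obtained from the product annulus $K\times I$ by a sequence of finger moves (its double points occur in cancelling mirror pairs); no appeal to $\pi_1(I\times Y)$, to homology classes, or to a framing correction is needed, and in particular your worry about twist moves and normal Euler numbers does not arise. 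Proposition \ref{homotopy-moves} then gives $\widetilde\lambda_{S'}\widetilde\lambda_{S}\sim (T^2-T^{-2})^m\,\mathrm{id}$, a unit multiple by hypothesis, and symmetrically for the other composite. Two further small points: your preliminary reduction ``replace $K$ by $-K$ if $n<0$'' is both unnecessary and slightly off, since the mirror of a knot in $Y$ naturally lives in $-Y$; the case $\sigma(Y,K)>0$ is handled directly by using left-handed trefoil summands (the paper's convention $nK=n(-K)$ for $n<0$), as the homotopy argument is insensitive to the sign. Finally, note that the scalar produced is automatically a power of $T^2-T^{-2}$ (finger moves and positive twists contribute $T^2-T^{-2}$, negative twists contribute $1$), so its invertibility is immediate from the hypothesis and requires no extra bookkeeping of signs.
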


\begin{proof}
	Suppose $K$ and $K'$ are two homotopic knots in an integer homology sphere $Y$ with the same signature. A generic smooth homotopy from $K$ to $K'$ determines an immersed cylinder 
	$S:K\to K'$ in $W=I\times Y$ with double points. Suppose $S':K'\to K$ is obtained from $S$ by flipping and then changing the orientation. 
	According to Corollary \ref{neg-pair}, $S$ and $S'$ are negative definite over $\sS$.
	By applying a sequence of finger moves, 
	the composed cobordisms $S'\circ S:K\to K$ and $S\circ S':K'\to K'$ can be turned into the product cobordisms. 
	Therefore, Proposition \ref{homotopy-moves} implies that $S$ and $S'$ induce $\cS$-chain homotopy equivalences 
	if  $T^2-T^{-2}$ is invertible. 
	An arbitrary null-homotopic knot $K\subset Y$ is homotopic to and has the same signature as $-\frac{\sigma(Y,K)}{2}T_{2,3}\subset Y$. An application of Theorem \ref{thm:connectedsum} completes the proof.
\end{proof}

\begin{remark}
	One of the implications of Theorem \ref{knot-inv-twisted} is that the Euler characteristic of $I_*(K)$ for a knot $K\subset S^3$ is $-\sigma(K)/2$. This approach to compute the Euler characteristic of $I_*(K)$
	 does not appeal to \cite{herald}. $\diamd$
\end{remark}

The cobordisms used in the above proof were rather special. Indeed, Corollary \ref{neg-pair} indicates, for example, that most crossing change cobordisms are not negative definite. However, keeping the assumption that $T^4\neq 1 \in \sS$, given any cobordism we may ``suspend'' it so that the result is negative definite over $\sS$. 

First, for any knot $K\subset Y$ we define the suspension $\Sigma^i K\subset Y$ to be the knot $(Y,K)\# (S^3, i T_{2,3})$. When $i$ is negative it should be understood that we are connect summing $K$ with $i$ copies of the left-handed trefoil. Next, let $(W,S):(Y,K)\to (Y,K')$ be any pair as above where $W$ is a product and $S$ is a connected, oriented surface which is possibly immersed with normal crossings. Let $s_+$ and $s_-$ be the number of positive and negative double points of $S$, respectively. For any pair of positive integers $i,j$ define the suspension
\[
	(W,\Sigma^{i,j} S):(Y,\Sigma^iK) \to (Y,\Sigma^jK)
\] 
to be the cobordism obtained by the composition of three parts: (i) the crossing change cobordism $\Sigma^iK\to K$ obtained by changing a crossing in a standard diagram in each right or left handed trefoil summand; (ii) the original cobordism $S\subset W$; and (iii) a crossing change cobordism $K\to \Sigma^j K$ which is the reverse of the kind in (i).

\begin{lemma}
	Assume $T^4\neq 1$ in $\sS$. If $i-j = \frac{1}{2}(\sigma(Y,K)-\sigma(Y,K'))-g(S)$ then the suspension $(W,\Sigma^{i,j}S)$ is negative definite over $\sS$.
\end{lemma}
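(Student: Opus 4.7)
The plan is to reduce the lemma to a direct application of Corollary \ref{neg-pair}, which characterizes when a pair $(W = I\times Y, S)$ with $S$ normally immersed is negative definite over $\sS$ in terms of (a) the signature equation $\sigma(Y,K) - \sigma(Y,K') = 2g(S)$ and (b) either $T^4\neq 1$ in $\sS$ or $s_+ = 0$. Since the lemma assumes $T^4\neq 1$, condition (b) is automatic, so the only nontrivial task is to verify the signature equation for the suspended surface $\Sigma^{i,j}S$.

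First I would unpack the definition of $\Sigma^{i,j}S$: it is the concatenation inside $W = I\times Y$ of (i) a genus-zero immersed annulus $\Sigma^i K \to K$ realizing the crossing changes that unknot the $i$ trefoil summands, (ii) the original cobordism $S:K\to K'$, and (iii) a genus-zero immersed annulus $K'\to \Sigma^j K'$ that introduces the new trefoil summands. Since each piece is connected and they are glued along connected knots, $\Sigma^{i,j}S$ is connected, oriented, and normally immersed; the absence of genus in the two annular pieces gives $g(\Sigma^{i,j}S) = g(S)$. The ambient cobordism $W$ is still the product $I\times Y$, so $b^1(W) = b^+(W) = 0$ is automatic.

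Next I would verify the signature condition. By additivity of the knot signature under connected sum together with $\sigma(T_{2,3}) = -2$, one has $\sigma(Y, \Sigma^i K) = \sigma(Y,K) - 2i$ and $\sigma(Y, \Sigma^j K') = \sigma(Y,K') - 2j$, so
\[
\sigma(Y, \Sigma^i K) - \sigma(Y, \Sigma^j K') \;=\; \sigma(Y,K) - \sigma(Y,K') - 2(i-j).
\]
Substituting the hypothesis $i-j = \tfrac{1}{2}(\sigma(Y,K) - \sigma(Y,K')) - g(S)$ collapses the right-hand side to $2g(S) = 2g(\Sigma^{i,j}S)$, which is exactly condition (a). Together with the standing hypothesis $T^4\neq 1$, Corollary \ref{neg-pair} applied to $(W,\Sigma^{i,j}S)$ then delivers the conclusion.

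There is no real obstacle here; the argument is essentially an arithmetic verification, and the whole point of the suspension construction is precisely to tune $i-j$ so that the signature deficit exactly cancels the genus contribution. The only minor bookkeeping needed is to confirm that the construction and the genus/signature computations go through for any integers $i, j$ (possibly after first replacing $(i, j)$ by $(i + N, j + N)$ for large $N$, so that both become positive as in the original definition); negative values simply correspond to connect-summing with left-handed trefoils and using crossing changes of the opposite sign, neither of which affects $g(\Sigma^{i,j}S)$ or the signature identity above.
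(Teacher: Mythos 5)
Your proof is correct and takes the same route as the paper, which simply notes that the lemma ``follows from Corollary \ref{neg-pair} by direct computation'': the suspension pieces are genus-zero immersed annuli, so $g(\Sigma^{i,j}S)=g(S)$, and additivity of the signature with $\sigma(T_{2,3})=-2$ reduces the condition $\sigma(Y,\Sigma^iK)-\sigma(Y,\Sigma^jK')=2g(\Sigma^{i,j}S)$ to the stated formula for $i-j$. Your extra care about non-positive $i,j$ is consistent with the paper's conventions and does not change the argument.
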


This follows from Corollary \ref{neg-pair} by direct computation. In particular, we can always choose a suspension so that the result is negative definite over $\sS$. We note that the number of positive and negative double points of the suspended surface $\Sigma^{i,j} S$ are given by
\[
  s_\pm(\Sigma^{i,j} S)=s_\pm+\max(\mp i,0)+\max(\pm j,0).
\]

\subsection{Further remarks on the invariants for $(Y,U_1)$}

The equivalence in \eqref{eq:schaineq} leads us to ask for the $\cS$-chain homotopy type of the complex $\widetilde C_\ast(Y,U_1;\Delta_\sS)$. It is natural to expect this complex to be related to Floer's instanton chain complex for the integer homology 3-sphere $Y$, and this is validated in part by Proposition \ref{prop:hinvunknot}. Here we partially explain this relationship, outlining some ideas without proof. The content here is not used elsewhere in the paper.

For simplicity we restrict to $\Q$ coefficients throughout this section, although everything works for a local system induced by an algebra over $R[U^{\pm 1}]$ so long as $\frac{1}{2}\in R$. To an integer homology 3-sphere $Y$ with basepoint $y$ there is a $\Z/8$-graded chain complex which is an $\cS$-complex exept for a grading difference:
\begin{equation}
  \widetilde C_*(Y,y)=C_*(Y) \oplus C_{*-3}(Y) \oplus \Q, \qquad   \widetilde d =\left[
  \begin{array}{ccc}
  d&0&0\\
  U&-d&D_2\\
  D_1&0&0
  \end{array}\right].\label{eq:socomplex}
\end{equation}
See \cite[Chapter 7]{donaldson-book} for the construction of this complex. (The normalization of the map $U$ above is the same as the one in \cite{AD:CS-Th}.) Here $(C_\ast(Y),d)$ is Floer's $\Z/8$-graded instanton chain complex generated by irreducibles. To distinguish our terminology, we call any such chain complex as above an $\cSO$-complex. In terms of our definition of an $\cS$-complex $(\widetilde C_\ast, \widetilde d , \chi)$ in Subsection \ref{subsec:scomplexes}, an $\cSO$-complex only differs in that $\chi$ has degree $3$ instead of $1$. Morphisms and homotopies are defined as before. The $\cSO$-chain homotopy type of $(\widetilde C_*(Y,y),\widetilde d)$ is a {\it natural} invariant of $(Y,y)$.

Roughly speaking, $\widetilde C_\ast(Y,y)$ is the Morse-Bott complex associated to the Chern--Simons functional on the space of framed $SU(2)$ connections on $Y$. There is an $SO(3)=SU(2)/\pm 1$ action on this space. After a perturbation we have a finite set of critical orbits of type $SO(3)$, and an orbit which is a point, corresponding to the trivial connection. 

The $\cS$-complex $\widetilde C_\ast(Y,K)=\widetilde C_\ast(Y,K,p)$ for a knot introduced in \cite{DS} and studied in this paper has a similar interpretation. It also requires a choice of basepoint $p$, where $p$ is on the knot $K$, although this choice is usually supressed. Then $\widetilde C_\ast(Y,K,p)$ is morally the Morse-Bott complex for the Chern--Simons functional on the space of singular $SU(2)$ connections on $(Y,K)$ which are framed at $p$. Recall that at $p\in K$ the bundle splits as $L\oplus L^\ast$, and a framing here is an identification of $L_p$ with $\C$. This space has an $S^1$-action. After a perturbation we have a finite set of critical orbits of type $S^1$, and an orbit which is a point, corresponding to the reducible singular connection $\theta$.

There is a third construction which is a hybrid of the above two. Choose a basepoint $y\in Y\setminus K$. The space of singular $SU(2)$ connections on $(Y,K)$ framed at $y$ now has an $SO(3)$ action; there is no preferred reduction at $y$. After a perturbation the Chern--Simons functional has a finite set of critical orbits of type $SO(3)$ and an orbit of type $S^2=SO(3)/S^1$ generated by the reducible $\theta$. We may form a $\Z/4$-graded complex of the form
\begin{equation}\label{eq:newsocomplex}
	\widetilde C_\ast(Y,K,y) = C_\ast \oplus C_{\ast-3} \oplus \Q_{(0)} \oplus \Q_{(2)},\quad   \widetilde d =\left[
  \begin{array}{cccc}
  d&0&0&0\\
  U&-d&V_2&\delta_2\\
  \delta_1&0&0&0\\
  V_1&0&0&0
  \end{array}\right]
\end{equation}
wher $C_\ast=C_\ast(Y,K)$. Here $\Q_{(0)}\oplus \Q_{(2)}$ corresponds to the homology of the reducible 2-sphere orbit. The maps $d$, $\delta_1$, $\delta_2$ are the usual types of maps which count $0$-dimensional moduli spaces of unparametrized instantons. The map $U$ is defined as for $\widetilde C_\ast(Y,y)$, while $V_1$ and $V_2$ also use holonomy, but involve the reducible $S^2$ orbit. For more on these kinds of constructions (in a more general context) see e.g. \cite[Section 7]{miller}.

The complex $\widetilde C_\ast(Y,K,y)$ is not strictly what we have been calling an $\cSO$-complex, but it does have a similar structure in so far as there is a degree $3$ endomorphism $\chi$ such that $\chi^2=0$ and $\widetilde d \chi + \chi  \widetilde d=0$: in the decomposition \eqref{eq:newsocomplex}, $\chi$ maps $C_\ast$ to $C_{\ast-3}$ identically and otherwise is zero. Let us use the following terminology: an $\cSO$-complex of the form \eqref{eq:socomplex} is {\em type (i)}, while one of the form \eqref{eq:newsocomplex} is {\em type (ii)}.

We can then form the tensor products of $\cSO$-complexes of either type: given $(\widetilde C_\ast, \widetilde d, \chi)$ and $(\widetilde C'_\ast, \widetilde d', \chi')$ we proceed as in \eqref{eq:tensorprod}. A straightforward calculation shows that the tensor product of a type (i) complex with a type (ii) complex is type (ii).

We can now formulate a connected sum formula, as follows. Suppose that $(Y,y)$ and $(Y',y')$ are based integer homology 3-spheres and $K'\subset Y'$ is a knot disjoint from $y'$. Form the connected sum $(Y\#Y', K',y_\#)$ so that $y_\#$ is disjoint from $K'$. Then there is an $\cSO$-chain homotopy equivalence of type (ii) $\cSO$-complexes:
\begin{equation}\label{eq:neqconnectedsum}
	  \widetilde C_*(Y\#Y',K',y_\#)\simeq  \widetilde C_*(Y,y)\otimes \widetilde C_*(Y',K',y').
\end{equation}
A proof of \eqref{eq:neqconnectedsum} should follow by adapting the ideas in the proof of \cite[Theorem 6.1]{DS}.

Let $(Y',K')=(S^3,U_1)$ with $y'$ a basepoint away from $U_1$. The complex $\widetilde C_\ast(S^3,U_1,y')$ is very simple: it has $C_\ast=C_{\ast-3}=0$ in \eqref{eq:newsocomplex}, as there is only the critical $S^2$ orbit of the reducible. Then \eqref{eq:neqconnectedsum} computes the type (ii) complex $C_\ast(Y,U_1,y)$ to be homotopy equivalent to $C_\ast\oplus C_{\ast-3}\oplus \Q_{(0)}\oplus \Q_{(2)}$ where $C_\ast=C_\ast(Y)\oplus C_{\ast-2}(Y)$ and $\widetilde d$ is given by
\[
	\left[  \begin{array}{cc|cc|cc} d & 0 & 0 & 0 &  0 & 0\\  0 & d & 0 & 0 & 0 & 0 \\ \midrule U & 0 &-d & 0 & D_2 & 0 \\ 0 & U & 0 & -d & 0 & D_2 \\ \midrule D_1 & 0 & 0 & 0 & 0 & 0\\  0 & D_1 & 0 & 0 & 0 & 0\end{array}  \right]
\]
This then immediately implies that the kernels of $h_\Q(\cdot,U_1)$ and $h_\Q(\cdot)$ as homomorphisms $\Theta_\Z^3\to \Z$ are equal. Furthermore, it implies that we have a $\Z/4$-graded isomorphism of irreducible theories: $I_\ast(Y,U_1)\cong \oplus^2 I_\ast(Y)$.

Further investigation should relate the type (ii) $\cSO$-complex $\widetilde C_\ast(Y,K,y)$, where $y\not\in K$, to the $\cS$-complex $\widetilde C_\ast(Y,K,p)$, where $p\in K$. It is likely that the relationship between the degree $4$ map $U$ and the degree 2 map $v$ discussed in \cite[Section 4.1]{Kr:obs} is relevant to this investigation.

%\newpage

%!TEX root = main.tex

\section{Singular Fr\o yshov invariants and the unoriented 4-ball genus}\label{sec:unoriented}

In this section we study the invariant $h_\sS(K)$, the singular instanton Fr\o yshov invariant defined over the coefficient ring $\sS$ where $T^4=1$. We show that $h_\sS(K)$ provides a lower bound for the unoriented 4-ball genus of $K$, also called the 4-dimensional crosscap number of $K$. We also compute $h_\sS(K)$ for torus knots and quasi-alternating knots.

\subsection{Singular instantons and non-orientable surfaces}

The goal of this section is to extend the construction of maps associated to knot cobordisms to the non-orientable case. We provide this extension for a limited family of cobordisms $(W,S):(Y,K)\to (Y,K')$ where $W$ is the product cobordism $I\times Y$ which is convenient for our applications. Another restriction, which is more essential, is the choice of the coefficient ring. In this section, we work over $\Z$. The discussion can be adapted to enriched complexes and one can work over any integral domain algebra over the ring $\Z[U^{\pm 1}]$. However, non-orientable cobordisms do not give rise to morphisms between $\cS$-complexes over the ring $\Z[U^{\pm 1},T^{\pm 1}]$ as in the case of orientable cobordisms in Section \ref{sec:prelims}.

Suppose $K$, $K'$ are knots in an integer homology sphere $Y$, and $S:K\to K'$ is a connected non-orientable cobordism embedded in $W=I\times Y$. Assuming $S$ satisfies a certain topological constraint, there is a morphism of $\cS$-complexes $\widetilde C_*(S):\widetilde C_*(K)\to \widetilde C_*(K')$ where $\widetilde C_*(K)$, $\widetilde C_*(K')$ are shorthand for the $\cS$-complexes of $(Y,K)$ and $(Y,K')$. The morphism $\widetilde C_*(S)$ is defined in terms of singular connections on $(W,S)$. However, it will be convenient to describe the situation after passing to double branched covers.

There is a unique element of $H^1(W\setminus S;\Z/2)$ which does not extend to $W$. This gives rise to $\widetilde W$, the double cover of $W$ branched along $S$. We have a covering involution $\tau:\widetilde W\to \widetilde W$ with fixed point set the lift $\widetilde S$ of $S$ to the branched cover. Fix the trivial $SO(3)$-bundle $\widetilde E_0:=\underline\R^3$ on $\widetilde W$ with the lift of $\tau$ to the involution $\ft_0:\widetilde E_0 \to \widetilde E_0$ given as
\begin{equation}\label{trivial-singular-bundle}
  ((t_1,t_2,t_3),x)\in \R^3\times \widetilde W\xrightarrow{\ft_0}((-t_1,-t_2,t_3),\tau(x)).
\end{equation}
A singular connection on $(W,S)$ can be lifted to an equivariant connection on the double branched cover. We proceed to (re)define the notion of a singular connection in terms of this latter description.

A singular connection for the pair $(W,S)$ is a connection $\widetilde A$ on an $SO(3)$-bundle $\widetilde E$ over $\widetilde W$ together with an involution $\ft:\widetilde E\to \widetilde E$ lifting the involution $\tau$ such that $\ft^*\widetilde A=\widetilde A$. The map $\ft$ induces an automorphism of order two on $\widetilde E|_{\widetilde S}$ and we require that this automorphism to be non-trivial. In particular, the involution $\ft$ induces a reduction of $\widetilde E$ into a direct sum $Q\oplus l$ over $\widetilde S$ where $l$ is a real line bundle and $Q$ is an $O(2)$-bundle with orientation bundle $l$. We require that $w_2(\widetilde E)=0$ and $w_1(l)=0$. That is to say, $l$ is isomorphic to the trivial line bundle and $\widetilde E$ can be lifted into an $SU(2)$-bundle. The final part of the data of a singular connection is an automorphism $\sigma:\widetilde E|_{\widetilde W\setminus \widetilde S}\to \widetilde E_0|_{\widetilde W\setminus \widetilde S}$ such that $\sigma\ft=\ft_0\sigma$. 

Two singular connections $A_1=(\widetilde A_1,\widetilde E_1,\ft_1,\sigma_1)$ and $A_2=(\widetilde A_2,\widetilde E_2,\ft_2,\sigma_2)$ are isomorphic if there is an isomorphism $\widetilde u:\widetilde E_1\to \widetilde E_2$ of $SO(3)$-bundles such that $\widetilde u^*\widetilde A_2=\widetilde A_1$, $\widetilde u\ft_1=\ft_2\widetilde u$, and $\sigma_2\widetilde u\sigma_1^{-1}$ lifts to a determinant $1$ automorphism of $\widetilde E_0|_{\widetilde W\setminus \widetilde S}$.

Given singular flat connections $\alpha$, $\alpha'$ on $(Y,K)$, $(Y,K')$ we may define moduli space of singular ASD connections $M(W,S;\alpha,\alpha')$. The pull-back of $\alpha$  to $\widetilde Y$, the double cover of $Y$ branched along $K$, defines a connection $\widetilde \alpha$ which is invariant with respect to the covering involution. Similarly, we have $\widetilde \alpha'$, a connection on $\widetilde Y'$. Then $M(W,S;\alpha,\alpha')$ is the space of isomorphism classes of finite energy singular connections $(\widetilde A,\widetilde E,\ft,\sigma)$ such that $\widetilde A$ is a singular connection on $\widetilde W^+$, the result of attaching cylindrical ends to $\widetilde W$, asymptotic to $\widetilde \alpha$ and $\widetilde \alpha'$, satisfying the ASD equation. 

As usual, we might have to perturb the Chern-Simons functionals for $(Y,K)$, $(Y,K')$ to form the $\cS$-complexes $\widetilde C_*(K)$, $\widetilde C_*(K')$. Similar arguments as in the orientable case show that there are equivariant perturbations of the ASD equation on $\widetilde W$ which are compatible with perturbations of the Chern-Simons functionals such that the elements of the moduli spaces $M(W,S;\alpha,\alpha')$ are regular. In a neighborhood of an element of $M(W,S;\alpha,\alpha')$ represented by $A=(\widetilde A,\widetilde E,\ft,\sigma)$ the moduli space is a smooth manifold whose dimension, denoted by $\ind(A)$, is the difference between the dimensions of $\ft$-invariant elements in the kernel and the cokernel of the ASD operator associated to the connection $\widetilde A$.  Since $A$ is regular, the contribution from the cokernel to $\ind(A)$ is trivial. As usual this equivariant index extends to an arbitrary singular connection $\widetilde A$ where there might be some non-trivial contribution from the cokernel.

\begin{lemma}\label{index-reducible-non-orientable}
	Let $A=(\widetilde A,\widetilde E,\ft,\sigma)$ be a singular connection on $(W,S)$ asymptotic to the reducibles on 
	$(Y,K)$, $(Y,K')$.
	Then the index of the associated ASD operator is given as
	\begin{equation}\label{index-reducible-non-orientable-formula}
		\ind(A)=8\kappa(A)+ \chi(S) + \frac{1}{2}S\cdot S + \sigma(K) - \sigma(K') -1.
	\end{equation}
	where the topological energy is given by
	\[
	  \kappa(A)=\frac{1}{16\pi^2}\int_{\widetilde W^+} {\rm Tr}(F_{\widetilde A}\wedge F_{\widetilde A}).
	\]
\end{lemma}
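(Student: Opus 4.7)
The plan is to reduce the computation to the double branched cover $\widetilde W$, where the singular connection becomes an honest $SO(3)$ connection, and then invoke the equivariant Atiyah--Patodi--Singer index theorem. By the setup of singular connections in the section, $\widetilde A$ is $\ft$--equivariant, and the quantity $\ind(A)$ is by definition the dimension of the $\ft$--invariant part of the index bundle of the ASD operator $D_{\widetilde A}$ on $\widetilde W^+$, with APS boundary conditions at the two cylindrical ends modeled on $\widetilde Y$ (the double branched cover of $Y$ along $K$) and $\widetilde Y'$. Equivariantly, this decomposes as
\[
	\ind(A) \;=\; \tfrac{1}{2}\bigl(\ind(\widetilde A) + L_{\ft}(\widetilde A)\bigr),
\]
where $\ind(\widetilde A)$ is the ordinary (non-equivariant) APS index on $\widetilde W^+$ and $L_{\ft}(\widetilde A)$ is the Lefschetz number, localized on the fixed surface $\widetilde S$.

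First I would evaluate $\ind(\widetilde A)$ using the standard ASD index formula, $\ind(\widetilde A) = 8\kappa(\widetilde A) - 3\bigl(\chi(\widetilde W) + \sigma(\widetilde W)\bigr) + (\text{boundary $\rho$--invariants})$. The topological inputs are standard identities for a double branched cover: $\kappa(\widetilde A) = 2\kappa(A)$ directly from the definition of topological energy given in the statement, $\chi(\widetilde W) = 2\chi(W) - \chi(S) = -\chi(S)$ because $W = I\times Y$ has $\chi(W) = 0$, and $\sigma(\widetilde W) = 2\sigma(W) - \tfrac{1}{2}S\cdot S = -\tfrac{1}{2}S\cdot S$ via Rokhlin's formula. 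The last identity requires justification in the non-orientable setting: $S\cdot S$ denotes the Euler number of the normal bundle of $S$, and the formula extends because the $G$--signature theorem for the involution $\tau$ applies just as in the orientable case, depending only on the normal Euler number of the fixed set.

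Second, I would compute the Lefschetz contribution $L_{\ft}(\widetilde A)$ by the equivariant index theorem localized on $\widetilde S$. Because the reducible structure is prescribed along $\widetilde S$ through the eigenbundle decomposition of $\ft$ on $\widetilde E|_{\widetilde S}$, the integrand is explicit in terms of the normal bundle and the reduction, and integrates to a linear combination of $\chi(S)$ and $S\cdot S$. Combining with the bulk contribution computed in the previous step yields a total interior contribution of $8\kappa(A) + \chi(S) + \tfrac{1}{2}S\cdot S$, matching the first three terms of the claimed formula.

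Third, I would handle the boundary contributions from the $\rho$--invariants at the two reducible flat connections. As in the orientable analysis of \cite[Lemma 2.6]{DS}, these $\eta$--invariant terms are read off from the double branched covers $\widetilde Y$ and $\widetilde Y'$ and evaluate to $\sigma(K) - \sigma(K') - 1$, where the $-1$ reflects the gauge-theoretic correction coming from the $S^1$--stabilizer of the reducible. Assembling all contributions produces \eqref{index-reducible-non-orientable-formula}. The main obstacle will be the careful bookkeeping of the Lefschetz and boundary contributions, specifically verifying the signature identity $\sigma(\widetilde W) = -\tfrac{1}{2}S\cdot S$ and its interaction with the $\rho$--invariants when $S$ is non-orientable; all other ingredients are formally identical to the orientable argument, since when $W = I\times Y$ the orientable formula \eqref{eq:index} already numerically coincides with \eqref{index-reducible-non-orientable-formula}.
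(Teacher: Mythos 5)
Your strategy---pass to the double branched cover, write $\ind(A)=\tfrac{1}{2}\bigl(\ind(\widetilde A)+L_{\ft}(\widetilde A)\bigr)$, and evaluate bulk, fixed-surface, and boundary contributions via the equivariant APS index theorem---is a genuinely different organization from the paper's proof, which instead adapts the argument of Lemma 2.26 of \cite{DS}: an excision argument reduces the computation to a closed (possibly non-orientable) surface embedded in a closed 4-manifold, where the index is supplied by Lemma 2.11 of \cite{KM:unknot}. Your branched-cover identities $\kappa(\widetilde A)=2\kappa(A)$, $\chi(\widetilde W)=-\chi(S)$, and $\sigma(\widetilde W)=-\tfrac{1}{2}S\cdot S$ (with $S\cdot S$ the normal Euler number, via the $G$-signature theorem for the covering involution) are correct ingredients, and the observation that the ends are literally the same as in the orientable case is the right guiding principle.

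However, as written the argument has a genuine gap at its central step: the fixed-surface (Lefschetz) contribution along $\widetilde S$ is never computed. You assert that it is ``a linear combination of $\chi(S)$ and $S\cdot S$'' and then fix its coefficients by demanding that the total interior contribution equal $8\kappa(A)+\chi(S)+\tfrac{1}{2}S\cdot S$---that is, by matching the formula you are trying to prove, which is circular. The same issue affects the boundary terms: in the equivariant APS theorem the end contributions are equivariant eta invariants of the boundary operators (note the boundary involution has fixed points along the knots), and identifying them with $\sigma(K)-\sigma(K')-1$ is exactly as much work as in the orientable case; citing the orientable formula \eqref{eq:index} tells you what the answer ought to be, not that the equivariant eta terms in your decomposition take that value. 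A clean way to close both gaps---and essentially the paper's route---is a relative comparison: since $(W,S)$ has the same ends as an orientable cobordism between the same pairs, the difference of the two indices is computed by excision from a closed surface in a closed 4-manifold, where the non-orientable index formula of \cite{KM:unknot} applies, so the unevaluated local fixed-point and boundary contributions cancel rather than having to be computed.
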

\begin{proof}
	A similar result in the case that $S$ is an orientable surface is proved in \cite{DS} as Lemma 2.26. The proof there reduces the index computation to the case that $S$ 
	is a closed surface embedded into a smooth 4-manifold. The proof can be easily adapted to the non-orientable case using \cite[Lemma 2.11]{KM:unknot}. 
\end{proof}

There is a unique reducible in the moduli spaces $M(W,S;\alpha,\alpha')$. A singular connection $A=(\widetilde A,\widetilde E,\ft,\sigma)$ has a non-trivial stabilizer if there is an $S^1$-reduction of $\widetilde E$ as $\widetilde L\oplus \underline \R$ such that the connection $\widetilde A$ respects this reduction and the involution $\ft$ acts as
\[
  ((v,t),x)\xrightarrow{\ft} ((\ft'(v),t),\tau(x))
\]
where $\ft'$ is an involution on the $S^1$-bundle $\widetilde L$ which acts as multiplication by $-1$ on $\widetilde L|_{\widetilde S}$. In particular, the quotient of $\widetilde L$ by $-\ft'$ induces an $S^1$ bundle on $W$ which is necessarily isomorphic to the trivial bundle. Therefore, $A$ is isomorphic to $(\widetilde A,\widetilde E_0,\ft_0,\sigma_0=\text{id})$ where $(\widetilde E_0,\ft_0)$ is given in \eqref{trivial-singular-bundle}. Moreover, if the connection $\widetilde A$ on $\widetilde E_0$ is ASD, then $A$ is isomorphic to $\Theta:=(\widetilde \Theta,\widetilde E_0,\ft_0,\sigma_0)$ with $\widetilde \Theta$ being the trivial connection on $\widetilde E_0$. Lemma \ref{index-reducible-non-orientable} asserts that the index of this ASD connection is equal to 
\[
  \ind(\Theta)=\chi(S) + \frac{1}{2}S\cdot S + \sigma(K) - \sigma(K') -1.
\]
In the case that $\ind(\Theta)=-1$, we can proceed as in the orientable case to define the morphism $\widetilde C_*(S)$ as in \cite{DS}. More generally, if $\ind(\Theta)\geq -1$, there is a height $\frac{\ind(\Theta)+1}{2}$ morphism $\widetilde C_*(S)$ induced by $S$.

\begin{remark}
	Moduli spaces of singular instantons for non-orientable surfaces are studied in more detail in \cite{KM:unknot}. 
	Following \cite[Subsection 4.2]{KM:unknot}, a $2$-dimensional submanifold $\omega$ in $W$ whose boundary $\partial \omega$ is contained in $S$ determines singular bundle data on $(W,S)$.
	(The surface $S$ and the interior of $\omega$ are allowed to have finitely many transverse intersection points.) In particular, $\partial \omega$ induces an element of $H_1(S;\Z/2)$. 
	In the above discussion, we have specialized to a particular choice of singular bundle data on $(W,S)$, where the homology class determined by $\partial \omega$ is dual to $w_1(TS)$.
	The importance of this particular singular bundle data is the presence of the unique reducible described above. To let this point stand out, we redefined singular connections for a non-orientable surface cobordism $S$ of knots in 
	terms of branched double covers.
	 $\diamd$ 
\end{remark}
%\begin{remark}
%	Moduli spaces of singular instantons for non-orientable surfaces are studied in more detail in \cite{KM:unknot}. In the above discussion, we have specialized to a particular choice of singular bundle data on $(W,S)$, in the terminology of \cite{KM:unknot}. The importance of this particular singular bundle data is the presence of the unique reducible described above. $\diamd$ 
%\end{remark}

\subsection{Negative definite cobordisms and non-orientable surfaces}

In this subsection we study how non-orientable surface cobordisms provide new information about the singular Fr\o yshov invariant $h_\sS$. Initially we start with the case that $\sS=\Z$. 
\begin{theorem}\label{crosscap-number-ineq}
	Suppose $S:K\to K'$ is a connected surface cobordism smoothly embedded into $W=I\times S^3$, not necessarily orientable.	Then:
	\begin{equation}\label{non-orientable-ineq}
		h_\Z(K')-h_\Z(K)\geq \frac{1}{2}\chi(S) + \frac{1}{4}S\cdot S + \frac{1}{2}(\sigma(K) - \sigma(K')).
	\end{equation}
\end{theorem}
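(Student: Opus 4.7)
The plan is to follow the strategy of Theorem \ref{thm:hineq-cob}, adapted to the non-orientable setting via the framework developed in the preceding subsection. Set
\[
	i := \tfrac{1}{2}\chi(S) + \tfrac{1}{4}S\cdot S + \tfrac{1}{2}(\sigma(K) - \sigma(K')).
\]
By Lemma \ref{index-reducible-non-orientable} the unique reducible $\Theta$ on the pair $(W,S)$ has index $2i-1$, and the observation recorded immediately before the theorem states that when $i\geq 0$ there is an induced level $i$ morphism of $\cS$-complexes over $\Z$:
\[
	\widetilde C_*(S)\co \widetilde C_*(K) \longrightarrow \widetilde C_*(K').
\]
The structural components of this morphism are defined, as in Proposition \ref{level-n-cob-mor}, by counting $\ft$-invariant singular instantons on the double branched cover $\widetilde W$ with the specified asymptotics, cut down by codimension-two divisors at $i-1$ generic points of $S$. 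The unique reducible $\Theta$ now plays the role of the collection of minimal reducibles in the orientable setting, contributing the nonzero element $\eta=\pm 1 \in \Z$; the defining relation \eqref{level-j-rel} for a level $i$ morphism is then verified by the same bookkeeping of cylindrical ends and reducible boundary components of the cut-down one-dimensional moduli spaces, with the simplification that only a single reducible orbit contributes. Mild equivariant perturbations in the spirit of \cite[Section 7.3]{dcx} guarantee regularity.

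Once the level $i$ morphism is available, the algebraic argument from the proof of Theorem \ref{thm:hineq-cob} applies verbatim: the identities derived from \eqref{level-j-rel} together with Proposition \ref{h-reinterpret} yield, via a case analysis on the sign of $h_\Z(K)$ relative to $i$, the estimate $h_\Z(K') \geq h_\Z(K) + i$. This proves the inequality when $i\geq 0$. For the remaining case $i<0$, one reduces to the preceding case by boundary-summing $(W,S)$ on the $K'$ side with suitable copies of the orientable pair $(\overline{\mathbb{CP}}^2\setminus B^4, S_m)$ from Lemma \ref{blowuptwist}: non-orientability of $S$ is preserved, the index of the resulting reducible and the Fr\o yshov invariant of the new outgoing knot shift in a controlled way, and enough stabilizations render the index non-negative, after which the preceding argument applies and the resulting inequality transfers back to the original $(W,S)$.

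The main obstacle is the careful construction of $\widetilde C_*(S)$ in the equivariant setup: providing an equivariant perturbation scheme under which the relevant singular-instanton moduli spaces on $\widetilde W$ are regular of the expected equivariant dimension, and verifying that the unique reducible $\Theta$ indeed contributes a nonzero $\eta\in \Z$ with the correct sign conventions. Once these geometric inputs are in place, the remaining chain-level verifications and the passage to the Fr\o yshov invariant are entirely formal and parallel to the orientable discussion in Proposition \ref{level-n-cob-mor} and Theorem \ref{thm:hineq-cob}.
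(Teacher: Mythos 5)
Your proposal follows the paper's proof. For $i\geq 0$ the paper likewise invokes the level $i$ morphism $\widetilde C_*(S)$ furnished by the non-orientable construction of the preceding subsection (the unique reducible $\Theta$ has index $2i-1$ by Lemma \ref{index-reducible-non-orientable} and contributes $\eta=\pm 1$), and then runs the formal Fr\o yshov argument of Theorem \ref{thm:hineq-cob}; for $i<0$ it stabilizes, exactly in the spirit of your last step.

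The one point you leave underspecified --- and where the natural default would fail --- is the choice of $m$ in the stabilizing pair $(\overline{\C\P}^2\setminus B^4,S_m)$. The coefficient ring here is $\Z$, so in effect $T^4=1$, and the trefoil pair $S_2$ (the one used for suspension throughout the rest of the paper, where $T^4\neq 1$ is assumed) has $\eta(\overline{\C\P}^2\setminus B^4,S_2)=1-T^4=0$ and induces no morphism. One must take $m=3$, i.e.\ stabilize by $T_{3,4}$, as in the $T^4=1$ branch of the proof of Theorem \ref{thm:hineq}: each copy raises the index of the minimal reducible by $2$ (so the level of the morphism rises by one), while $h_\Z$ of the outgoing knot rises by exactly $h_\Z(T_{3,4})=1$; the equality, and not merely the lower bound of Corollary \ref{cor:mmplus1}, is what lets the stabilized inequality transfer back to $(W,S)$. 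Note also that the stabilized $4$-manifold is no longer a product and its minimal reducible is not flat, so strictly speaking one uses the (straightforward) extension of the non-orientable set-up to this case --- a point the paper glosses over as well. With $m=3$ made explicit, your argument is the paper's.
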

\begin{proof}
	If $S$ is orientable, this follows from Theorem \ref{thm:hineq}. For non-orientable surfaces it can be verified by following a similar argument. In the case that the right hand side of \eqref{non-orientable-ineq}, denoted by $i$, is non-negative, the proof follows from the existence of the height $i$ morphism 
	$\widetilde C_*(S):\widetilde C_*(K)\to \widetilde C_*(K')$. If $i$ is negative, we can use the suspension trick in the proof of Theorem \ref{thm:hineq} which uses the torus 
	knot $T_{3,4}$. 
\end{proof}
As an immediate corollary of Theorem \ref{crosscap-number-ineq} we have:
\begin{cor}
	Suppose $S$ is an embedded surface in the $4$-ball with boundary $K$. Then:
	\begin{equation}\label{non-orientable-ineq-1}
		\left\vert h_\Z(K)+\frac{1}{2}\sigma(K)-\frac{1}{4}S\cdot S \right\vert \leq -\frac{\chi(S)}{2}.
	\end{equation}
	In particular, $h_\Z(K)$ gives a lower bound for the smooth $4$d crosscap number of $K$:
	\begin{equation}\label{non-orientable-ineq-2}
		\left\vert h_\Z(K)\right\vert \leq \gamma_4(K).
	\end{equation}	
\end{cor}
\begin{proof}
	By removing a ball from $S\subset B^4$, we obtain a cobordism from the unknot to $K$ embedded in $I\times S^3$. Applying the inequality in \eqref{non-orientable-ineq}
	to this cobordism and its reverse gives rise to \eqref{non-orientable-ineq-1} in the case that $S$ is non-orientable. The orientable case is a consequence of 
	Theorem \ref{thm:hineq}. As in \cite{OSS:upsilon}, inequality \eqref{non-orientable-ineq-2} can be obtained by combining \eqref{non-orientable-ineq-1} with:
	\[
	  \left\vert \frac{1}{2}\sigma(K)-\frac{1}{4}S\cdot S\right\vert \leq -\frac{\chi(S)}{2}
	\]
	which is in turn based on the Gordon--Litherland formula \cite{GL:signature}. (A closely related computation goes into the proof of Lemma 
	\ref{index-reducible-non-orientable}.)
\end{proof}

\begin{prop}\label{prop:gammazquasialt}
	For any quasi-alternating knot $K$, the enriched $\cS$-complex $\widetilde C(K;\Z)$ is locally equivalent to $\widetilde C(U_1;\Z)$, the trivial $\cS$-complex 
	defined over $\Z$.
	In particular, $h_\Z(K)=0$ and $\Gamma_K^\Z=\Gamma_{U_1}^\Z$ for any quasi-alternating knot $K$.
\end{prop}

\begin{proof}
	If $K$ is a quasi-alternating knot, then there exist surface cobordisms $S:U_1\to K$ and $S':K\to U_1$ (not necessarily orientable) whose double branched covers over $[0,1]\times S^3$ are negative definite (see \cite{os-branched}), which is equivalent to $\text{ind}(\Theta)=-1$ in Lemma \ref{index-reducible-non-orientable}. These cobordisms yield morphisms of $\cS$-complexes $\widetilde C_\ast(S):\widetilde C_\ast(U_1) \to \widetilde C_\ast(K)$ and $\widetilde C_\ast(S'):\widetilde C_\ast(K) \to \widetilde C_\ast(U_1)$ over $\Z$, implying the result.
\end{proof}

The following proposition provides a relation which can be used to compute $h_\Z$ for torus knots recursively. This inductive approach to compute $h_\Z(T_{p,q})$ is formally the same as the ones given in \cite{JV:unorientable,ballinger} for $\upsilon(T_{p,q})$ and $t(T_{p,q})$. 

\begin{prop}
	The invariant $h_\Z$ of torus knots $T_{p,q}$ and $T_{p,q+p}$ satisfies the identity
		\[
		  h_\Z(T_{p,p+q}) +\frac{1}{2}\sigma(T_{p,p+q}) = h_\Z(T_{p,q}) + \frac{1}{2}\sigma(T_{p,q}) - \left\lfloor p^2/4 \right\rfloor.
		\]
\end{prop}
\begin{proof}
	The proof uses the twist cobordism of Proposition \ref{eq:blowuptwistineq} and {\it pinch move} cobordisms between torus knots \cite{batson,JV:unorientable}. Given positive coprime integers $(p,q)$, there is another pair of positive coprime $(r,s)$ and a non-orientable cobordism $S_{p,q}^{r,s}:T_{p,q} \to T_{r,s}$ 
	such that $\chi(S_{p,q}^{r,s})=-1$, $r$ has the same parity as $p$, $s$ has the same parity as $q$, 
	$r+s \leq p+q$ and this inequality is strict unless $(p,q)=(1,1)$. Although the values of $(r,s)$ can be 
	determined by $(p,q)$, we only point out that the pair associated to $(p,p+q)$ is equal to $(r,r+s)$. Moreover, there is the following relation 
	among the self-intersection numbers \cite{ballinger}:
	\begin{equation}\label{selfintersectionrelation}
	  S_{p,p+q}^{r,r+s}\cdot S_{p,p+q}^{r,r+s}=S_{p,q}^{r,s}\cdot S_{p,q}^{r,s}+p^2-r^2.
	\end{equation}
	By induction on $p+q$, we show 
	\begin{align*}
	  h_\Z(T_{p,p+q}) +\frac{1}{2}\sigma(T_{p,p+q}) &= h_\Z(T_{p,q}) + \frac{1}{2}\sigma(T_{p,q}) - \left\lfloor p^2/4\right\rfloor\\
	 &= h_\Z(T_{r,r+s}) +\frac{1}{2}\sigma(T_{r,r+s})+\frac{1}{2}-\frac{1}{4}S_{p,p+q}^{r,r+s}\cdot S_{p,p+q}^{r,r+s}.
	\end{align*}
	
	Firstly note that Proposition \ref{eq:blowuptwistineq} implies that 
	\begin{equation}\label{ineq-1-twist-torus}
	  h_\Z(T_{p,p+q}) +\frac{1}{2}\sigma(T_{p,p+q})\geq h_\Z(T_{p,q}) + \frac{1}{2}\sigma(T_{p,q}) - \left\lfloor p^2/4 \right\rfloor.
	\end{equation}
	Theorem \ref{crosscap-number-ineq} applied to the cobordism $S_{p,p+q}^{r,r+s}$ and \eqref{selfintersectionrelation} imply
	\begin{equation}\label{ineq-2-twist-torus}
	  h_\Z(T_{p,p+q}) +\frac{1}{2}\sigma(T_{p,p+q})\leq h_\Z(T_{r,r+s}) +\frac{1}{2}\sigma(T_{r,r+s})+\frac{1}{2} -\frac{1}{4}S_{p,q}^{r,s}\cdot S_{p,q}^{r,s} - \frac{p^2-r^2}{4}.
	\end{equation}
	Combining the above inequalities and the induction assumption for $(r,s)$ implies that
	\[
	  h_\Z(T_{p,q}) + \frac{1}{2}\sigma(T_{p,q})\leq h_\Z(T_{r,s}) + \frac{1}{2}\sigma(T_{r,s})+\frac{1}{2}-\frac{1}{4}S_{p,q}^{r,s}\cdot S_{p,q}^{r,s}. 
	\]
	Another application of the induction assumption asserts that the above inequality is sharp. Therefore, the inequalities in \eqref{ineq-1-twist-torus} and \eqref{ineq-2-twist-torus}
	can be improved to identities.
\end{proof}

Having established some computations for $h_\Z(K)$, we point out the following analogue of Theorem \ref{thm:existrep2}, which follows from \cite[Theorem, 1.17]{DS}.

\begin{theorem}\label{thm:existrep3}
	Suppose $(W,S):(Y,K)\to (Y',K')$ is a homology concordance and that:
\begin{equation}
	h_\Z(Y,K) + 4h(Y) \neq 0. 
\end{equation}
Then there exists a traceless representation $\pi_1(W\setminus S)\to SU(2)$ which extends non-abelian traceless representations of $\pi_1(Y\setminus K)$ and $\pi_1(Y'\setminus K')$.
\end{theorem}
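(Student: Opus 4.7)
The plan is to derive this result directly from \cite[Theorem 1.17]{DS}, which is the singular analogue of the classical Fr\o yshov-type representation-existence theorem: given a homology concordance $(W,S)\colon (Y,K)\to (Y',K')$, it produces a traceless representation $\pi_1(W\setminus S)\to SU(2)$ extending non-abelian traceless representations on $\pi_1(Y\setminus K)$ and $\pi_1(Y'\setminus K')$ whenever the singular Fr\o yshov invariant $h_\sS(Y,K)$ is nonzero for some admissible coefficient ring $\sS$. Since this is exactly the mechanism used to deduce Theorem \ref{thm:existrep2} from Theorem \ref{thm:htwistedsign}, the task here is to supply an analogous identification of $h_\sS(Y,K)$ in the $T^4=1$ regime, using the computations of the present section in place of Theorem \ref{thm:htwistedsign}.

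To do so, I would take $\sS=\Z$ equipped with the trivial local coefficient system, so that $T=1$ (and in particular $T^4=1$), placing us squarely in the setting of this section. For $K$ null-homotopic in $Y$, the pair $(Y,K)$ is isotopic to the split sum $(Y,U_1)\#(S^3,K)$, so the connected-sum formula (Theorem \ref{thm:connectedsum}), together with Proposition \ref{prop:hinvunknot}, yields
\[
  h_\sS(Y,K) \;=\; h_\sS(Y,U_1)+h_\sS(K) \;=\; 4h(Y)+h_\sS(K).
\]
Under the natural reading, the hypothesis $h_\Z(Y,K)+4h(Y)\neq 0$ of the theorem is precisely the statement that the right-hand side of this identity is nonzero for $\sS=\Z$, i.e., $h_\sS(Y,K)\neq 0$; invoking \cite[Theorem 1.17]{DS} then produces the desired traceless representation on the concordance complement.

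The main technical point to verify is that \cite[Theorem 1.17]{DS} genuinely applies with $\sS=\Z$, that is, in the $T^4=1$ regime. This reduces to checking that neither the $\cS$-complex construction nor the representation-extraction argument of that reference relies on the invertibility of $T^4-1$; only the simple non-vanishing of the Fr\o yshov invariant is used, and the relevant moduli space arguments are insensitive to the $T$-variable once the local system is fixed. Because the connection between $h_\sS$ and traceless representations in \cite{DS} is set up purely in terms of the algebraic structure of the $\cS$-complex (and, on the geometric side, of flat singular connections on the concordance), this reduction is formal. Hence the proof requires no new geometric input beyond Proposition \ref{prop:hinvunknot}, Theorem \ref{thm:connectedsum}, and the reference \cite[Theorem 1.17]{DS} itself.
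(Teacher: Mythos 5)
Your first and last paragraphs do capture what the paper actually does: its entire proof of Theorem \ref{thm:existrep3} is the observation that \cite[Theorem 1.17]{DS} --- which produces the traceless representation on the concordance complement whenever the singular Fr\o yshov invariant $h_\sS(Y,K)$ is non-zero --- is stated for an arbitrary integral domain algebra $\sS$ over $\Z[U^{\pm 1},T^{\pm 1}]$, so it can simply be applied with $\sS=\Z$ (where $T^4=1$); no invertibility of $T^4-1$ is needed, and no computation or decomposition of $h_\Z(Y,K)$ is performed. For $Y=S^3$, which is the case of the example given right after the theorem, the stated hypothesis is literally $h_\Z(K)\neq 0$, i.e.\ exactly the input of \cite[Theorem 1.17]{DS}.

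The middle step of your argument, however, contains a genuine error. A null-homotopic knot $K\subset Y$ is not in general isotopic to a split sum $(Y,U_1)\#(S^3,K_0)$: lying inside an embedded $3$-ball is much stronger than being null-homotopic (a null-homotopic knot with irreducible, e.g.\ hyperbolic, complement in the Poincar\'e sphere cannot be so split, since a knot contained in a ball in $Y\neq S^3$ has reducible complement). The paper's mechanism for comparing $h_\sS(Y,K)$ with the invariant of a local knot is the crossing-change/homotopy argument of Proposition \ref{prop:hinvcrossing}, which requires $T^4\neq 1$ and is therefore unavailable over $\Z$; no identity of the shape $h_\Z(Y,K)=h_\Z(K_0)+4h(Y)$ is proved anywhere in the paper, and the failure of such formulas when $T^4=1$ is precisely why $h_\Z$ yields examples beyond the signature. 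Two further problems: Theorem \ref{thm:existrep3} carries no null-homotopy hypothesis, so your argument would not cover the stated generality even if the splitting were valid; and even granting your identity $h_\Z(Y,K)=4h(Y)+h_\Z(K_0)$, the stated hypothesis $h_\Z(Y,K)+4h(Y)\neq 0$ would read $h_\Z(K_0)+8h(Y)\neq 0$, which is not the condition $h_\Z(Y,K)\neq 0$ you need in order to invoke \cite[Theorem 1.17]{DS}, so your ``natural reading'' does not actually bridge the hypothesis to the cited theorem. The repair is to delete the splitting step entirely and quote \cite[Theorem 1.17]{DS} over $\Z$ directly, as the paper does.
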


\noindent This provides different examples than does Theorem \ref{thm:existrep2}, simply because $h_\Z$ and $-\sigma/2$ are linearly independent homomorphisms from the homology concordance group to the integers, as follows from the above computations. For example, for $K=T_{3,4}\#-T_{2,7}$, we have $h_\Z(K)=1$ while $-\sigma(K)/2=0$. In this case, for a homology concordance $(W,S)$ from $K$ to itself, Theorem \ref{thm:existrep3} implies the existence of a non-abelian traceless representation of $\pi_1(W\setminus S)$ into $SU(2)$, while Theorem \ref{thm:existrep2} gives no information.

We may extend the results of this section for slightly more general coefficient rings. For example, the results of this section (definition of non-orientable cobordism maps and the implications for $h_\sS$) extend word for word to the case that $\sS$ is an abelian group. The same claims hold when $\sS$ is an integral domain algebra over $\Z[U^{\pm 1}, T^{\pm 1}]/(T^4-1)$. In this case, we may regard $\sS$ as an algebra over $\Z[U^{\pm 1}, T^{\pm 1}]$ in two different ways by requiring $T\in \Z[U^{\pm 1}, T^{\pm 1}]$ acts as $1$ or $T\in \Z[U^{\pm 1}, T^{\pm 1}]/(T^4-1)$, and Lemma \ref{lift-morphisms} implies that we have similar results for $h_\sS$ in both cases. In the case that $T$ acts as $1$, the proofs of this section adapt to give the same results for the coefficient ring $\sS$.

%\newpage

\addcontentsline{toc}{section}{References}

\bibliography{references}
\bibliographystyle{alpha.bst}
\Addresses
\end{document}